\newcommand{\BrMcInvAff}{Proposition~3.2}
\newcommand{\McReflections}{Theorem~9.6}
\newcommand{\McReducibleBowties}{Theorem~10.3}
\theoremstyle{plain}
\newtheorem{thm}{Theorem}[section]
\newtheorem{main}{Theorem}
\newtheorem{lem}[thm]{Lemma}
\newtheorem{prop}[thm]{Proposition}
\newtheorem{cor}[thm]{Corollary}
\newtheorem{conj}[thm]{Conjecture}
\theoremstyle{definition}
\newtheorem{defn}[thm]{Definition}
\newtheorem{exmp}[thm]{Example}
\newtheorem{rem}[thm]{Remark}
\newtheorem{quest}[thm]{Question}
\newcommand{\Z}{\mathbb{Z}}
\newcommand{\R}{\mathbb{R}}
\newcommand{\fix}{\textsc{Fix}}\newcommand{\ms}{\textsc{Min}}
\newcommand{\mov}{\textsc{Mov}}
\newcommand{\isom}{\textsc{Isom}}
\newcommand{\cay}{\textsc{Cay}}
\newcommand{\cox}{\textsc{Cox}}\newcommand{\art}{\textsc{Art}}
\newcommand{\midd}{\textsc{Mid}}
\newcommand{\gar}{\textsc{Gar}}\newcommand{\cryst}{\textsc{Cryst}}
\newcommand{\dart}{\textsc{Art${}^*$}}\newcommand{\dcox}{\textsc{Cox${}^*$}}
\newcommand{\sym}{\textsc{Sym}}
\newcommand{\proj}{\textrm{proj}}
\newcommand{\ccox}{W}
\newcommand{\ccryst}{C}
\newcommand{\chor}{H}
\newcommand{\cdiag}{D}
\newcommand{\cfac}{F}
\newcommand{\gart}{\ccox_w}
\newcommand{\ggar}{\ccryst_w}
\newcommand{\ghor}{\chor_w}
\newcommand{\gdiag}{\cdiag_w}
\newcommand{\gfac}{\cfac_w}
\newcommand{\wt}{\widetilde}
\newcommand{\onto}{\twoheadrightarrow}\newcommand{\into}{\hookrightarrow}
\newcommand{\bigjoin}{\bigvee}\newcommand{\bigmeet}{\bigwedge}
\newcommand{\join}{\vee}\newcommand{\meet}{\wedge}
\newcommand{\bc}{\begin{center}}\newcommand{\ec}{\end{center}}
\newcommand{\bt}{\begin{tabular}}\newcommand{\et}{\end{tabular}}
\newcommand{\drawEdge}[2]{\draw[-,thick] #1--#2;}
\newcommand{\drawDashEdge}[2]{\draw[-,dashed,thick] #1--#2;}
\newcommand{\drawArrow}[3]{
  \draw[-,thick] ($#1!.45!#2$)--($#1!.55!#2+#3$);
  \draw[-,thick] ($#1!.45!#2$)--($#1!.55!#2-#3$);
}
\newcommand{\drawTripleEdge}[3]{
  \draw[-,thick] #1--#2; 
  \draw[-,thick,yshift=.5mm] #1--#2; 
  \draw[-,thick,yshift=-.5mm] #1--#2;
  \drawArrow{#1}{#2}{#3}
}
\newcommand{\drawDoubleEdge}[3]{
  \draw[-,thick,yshift=.3mm] #1--#2; 
  \draw[-,thick,yshift=-.3mm] #1--#2;
  \drawArrow{#1}{#2}{#3}
}
\newcommand{\drawDashDoubleEdge}[3]{
  \draw[-,dashed,thick,yshift=.3mm] #1--#2; 
  \draw[-,dashed,thick,yshift=-.3mm] #1--#2;
  \drawArrow{#1}{#2}{#3}
}
\newcommand{\drawInfEdge}[2]{
  \drawDashEdge{#1}{#2} 
  \node[anchor=south] at ($#1!.5!#2$) {$\infty$};
}
\newcommand{\drawDotEdge}[2]{
  \fill ($#1!.3!#2$) circle (.4mm);
  \fill ($#1!.5!#2$) circle (.4mm);
  \fill ($#1!.7!#2$) circle (.4mm);
}
\newcommand{\drawRegDot}[1]{\fill #1 circle (.7mm);\fill[color=black!80] #1 circle (.5mm);}
\newcommand{\drawSpeDot}[1]{\fill #1 circle (1mm);\fill[color=white] #1 circle (.7mm);}
\newcommand{\drawESpDot}[1]{\fill #1 circle (1mm);\fill[color=red!70] #1 circle (.7mm);}
\begin{document}

\title[Euclidean Artin groups]{Artin groups of euclidean type}
\date{\today}

\author{Jon McCammond}
\address{Dept. of Math., University of California, Santa Barbara, CA 93106} 
\thanks{Partial support by the National Science Foundation is
  gratefully acknowledged}
\email{jon.mccammond@math.ucsb.edu}

\author{Robert Sulway}
\email{robertsulway@gmail.com}

\subjclass[2010]{20F36}
\keywords{Coxeter groups, Artin groups, Garside structures, crystallographic groups}

\begin{abstract}
  This article resolves several long-standing conjectures about Artin
  groups of euclidean type.  Specifically we prove that every
  irreducible euclidean Artin group is a torsion-free centerless group
  with a decidable word problem and a finite-dimensional classifying
  space.  We do this by showing that each of these groups is
  isomorphic to a subgroup of a group with an infinite-type Garside
  structure.  The Garside groups involved are introduced here for the
  first time.  They are constructed by applying semi-standard
  procedures to crystallographic groups that contain euclidean Coxeter
  groups but which need not be generated by the reflections they
  contain.
\end{abstract}
\maketitle

\tableofcontents

Arbitrary Coxeter groups are groups defined by a particularly simple
type of presentation, but the central motivating examples that lead to
the general theory are the irreducible groups generated by reflections
that act geometrically (i.e. properly discontinuously and cocompactly
by isometries) on spheres and euclidean spaces.  Presentations for
these spherical and euclidean Coxeter groups are encoded in the
well-known Dynkin diagrams and extended Dynkin diagrams, respectively.

Arbitrary Artin groups are groups defined by a modified version of
these simple presentations, a definition designed to describe the
fundamental group of a space constructed from the complement of the
hyperplanes in a complexified version of the reflection arrangement
for the corresponding Coxeter group.  

The spherical Artin groups, i.e. the Artin groups corresponding to
Coxeter groups that act geometrically on spheres, have been well
understood ever since Artin groups themselves were introduced in 1972
by Pierre Deligne \cite{De72} and by Egbert Brieskorn and Kyoji Saito
\cite{BrSa72} in adjacent articles in the \emph{Inventiones}.  Given
the centrality of euclidean Coxeter groups in Coxeter theory and Lie
theory more generally, it has been somewhat surprising that the
structure of most euclidean Artin groups has remained mysterious for
the past forty plus years.

In this article we clarify the structure of all euclidean Artin groups
by showing that they are isomorphic to subgroups of a new class of
Garside groups that we believe to be of independent interest.  More
specifically we prove four main results.  The first establishes the
existence of a new class of Garside groups based on intervals in
crystallographic groups closely related to the irreducible euclidean
Coxeter groups.

\newcommand{\maingarside}{0}
\begin{main}[Crystallographic Garside groups]\label{main:garside}
  Let $W = \cox(\wt X_n)$ be an irreducible euclidean Coxeter group
  and let $R$ be its set of reflections.  For each Coxeter element
  $w\in W$ there exists a set of translations $T$ and a
  crystallographic group $\cryst(\wt X_n,w)$ containing $W$ with
  generating set $R \cup T$ so that the weighted factorizations of $w$
  over this expanded generating set form a balanced lattice.  As a
  consequence, this collection of factorizations define a group
  $\gar(\wt X_n,w)$ with a Garside structure of infinite-type.
\end{main}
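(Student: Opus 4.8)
The plan is to establish Theorem A by reducing it to the standard machinery that promotes a balanced lattice to a Garside structure. The key conceptual point is that the statement has two halves: first, the *combinatorial/geometric* claim that the weighted factorizations of the Coxeter element $w$ over the expanded generating set $R \cup T$ form a balanced lattice; and second, the *formal* claim that any balanced lattice yields a group with a Garside structure of infinite type. The second half is essentially a citation of the general theory of Garside structures arising from lattices (the interval-group construction), so the entire mathematical weight of the theorem rests on proving the lattice property.

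Let me think about how to organize this.

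**First I would** fix the crystallographic group $C = \cryst(\wt X_n, w)$ together with its generating set $R \cup T$, where $R$ is the reflection set of $W$ and $T$ is the adjoined set of translations. I would introduce a weighted length function $\ell$ on $C$ with respect to $R \cup T$ — reflections counting with one weight and translations with another — and define the interval $[1, w]$ consisting of all elements lying on a geodesic (minimal weighted factorization) from the identity to $w$. The factorizations of $w$ then correspond to the maximal chains in the partial order defined by $u \le v$ iff $\ell(u) + \ell(u^{-1}v) = \ell(v)$. **Then I would** need to verify that this poset is a lattice: every pair of elements in $[1,w]$ has a meet and a join. This is the heart of the matter.

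**The hard part will be** proving that $[1,w]$ is a lattice, and this is where I expect the entire difficulty to concentrate. For finite (spherical) Coxeter groups the analogous noncrossing-partition lattice is classical, but here $C$ is infinite and, crucially, is *not* generated by its reflections — the translations in $T$ have been added precisely to repair the failure of the lattice property in the pure reflection poset. So my strategy would be to exploit the geometry: each element of $C$ acts on euclidean space, and I would attach to each factor its fixed-point/moved-space data (using the fix/move decomposition of an isometry) so that the partial order is controlled by dimension of move-sets together with a combinatorial condition on the directions and translation parts. I would try to show that meets and joins are computed by intersecting the relevant affine subspaces, with the adjoined translations $T$ exactly supplying the joins that are missing when two reflection-hyperplanes fail to interact in the expected way (the "bowtie" or reducibility phenomena flagged by the referenced \McReducibleBowties). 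Concretely, I would proceed by (a) reducing to the case where $w$ is a specific well-chosen Coxeter element using \McHurwitz\ so that all factorizations are Hurwitz-equivalent, (b) giving an explicit geometric model for the interval in terms of affine subspaces of the Coxeter arrangement, and (c) verifying the lattice axioms in that model by a case analysis organized by the Dynkin-diagram type $\wt X_n$.

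**Finally**, once the balanced-lattice property is in hand, I would invoke the general interval-to-Garside construction: a balanced lattice whose maximal element is $w$ defines a Garside category, and since $w$ has infinitely many factorizations over $R \cup T$ (the translation subgroup being infinite) the resulting Garside structure is of infinite type rather than finite type. The group $\gar(\wt X_n, w)$ is then by definition the group of this Garside structure, and balancedness guarantees it is well-defined and that the lattice embeds as the interval $[1,w]$. I would close by remarking that torsion-freeness, solvability of the word problem, and finite cohomological dimension all follow formally from possessing a Garside structure, deferring those consequences to the later main theorems. I expect the balancedness verification — checking that left and right divisibility give the same lattice and that the weighting is compatible — to be routine once the lattice is established, so it is the lattice property itself, and specifically handling the non-reflection-generated nature of $C$, that constitutes the genuine obstacle.
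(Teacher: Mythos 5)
There is a genuine gap here, and it sits exactly where you say the weight of the theorem lies. First, the theorem is an \emph{existence} statement: it asserts that a suitable set of translations $T$ exists, and constructing it is a substantive part of the proof that your proposal never addresses. In the paper $T$ is the set of \emph{factored translations}: one decomposes the horizontal root system into its $k$ irreducible components, writes the diagonal translation $t_\lambda$ below $w$ as a product of $k$ translations $t_i$ whose displacement vectors are the projections of $\lambda$ to $V_0\oplus V_i$, and assigns each $t_i$ weight $2/k$ so that weighted lengths are preserved. This choice is not incidental; it is engineered so that the sub-interval generated by horizontal reflections and factored translations becomes a direct product of $k$ type~$B$ noncrossing partition lattices (via the middle groups of Part~II), and that product structure is what ultimately repairs the bowties. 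Treating $T$ as given forfeits the main new idea.

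Second, your proposed route to the lattice property would not go through. The heuristic that meets and joins are computed by intersecting affine subspaces is precisely what fails for reducible horizontal root systems --- that failure is the content of the bowtie obstruction you cite --- and adding translations does not restore it: the join of two horizontal reflections from different components is an element involving a factored translation, not an elliptic determined by a subspace intersection. Hurwitz transitivity also does not ``reduce to a well-chosen Coxeter element''; it acts on factorizations of a fixed $w$ and is used elsewhere for presentations, not for the lattice property. The paper's actual strategy is combinatorial: a lattice-induction lemma reducing everything to joins of pairs of atoms in a discretely graded bounded poset; a projection lemma sending middle-row elements of the coarse structure to unique top- and bottom-row elements; the preservation of meets and joins under the inclusion of the factored sub-lattice; and, unavoidably, a reduction of the remaining reflection--reflection cases to rank at most $9$, where the lattice property is verified by computer (this also disposes of all five sporadic types). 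Your case analysis ``organized by Dynkin type'' gestures at the right granularity but supplies neither the induction scheme nor the low-rank base cases, so the central claim remains unproved. The final step --- balancedness from conjugation-closure of the generating set and the passage from a balanced lattice to an infinite-type Garside group --- is indeed the routine citation you describe.
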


The second shows that these crystallographic Garside groups contain
subgroups that we call dual euclidean Artin groups.

\newcommand{\mainsubgroup}{1}
\begin{main}[Dual Artin Subgroups]\label{main:subgroup}
  For each irreducible euclidean Coxeter group $\cox(\wt X_n)$ and for
  each choice of Coxeter element $w$, the Garside group $\gar(\wt
  X_n,w)$ is an amalgamated free product of explicit groups with the
  dual Artin group $\dart(\wt X_n,w)$ as one of its factors.  In particular,
  the dual Artin group $\dart(\wt X_n,w)$ injects into the Garside group
  $\gar(\wt X_n,w)$.
\end{main}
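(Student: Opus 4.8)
The plan is to realize the Garside group $\gar(\wt X_n,w)$ as an amalgam by tracking how the balanced lattice of weighted factorizations produced in Theorem~\ref{main:garside} restricts to the sub-structure spanned by the reflections alone. Since $R\subseteq R\cup T$, every reflection factorization of $w$ inside $\cox(\wt X_n)$ is also a factorization of $w$ inside $\cryst(\wt X_n,w)$, and this inclusion of posets induces a homomorphism $\dart(\wt X_n,w)\to\gar(\wt X_n,w)$. The goal is to show that this map is the inclusion of one factor of an amalgamated free product, so the first task is to name the remaining pieces: using the classification of the reflections and translations relative to $w$ in \McReflections, I would partition $R\cup T$ into the reflections that generate $\dart(\wt X_n,w)$ and the added translations, and then identify the common subgroup $K$ along which the two factors are to be glued, namely the abelian subgroup carried by the translation directions shared between the Coxeter group and the enlarging crystallographic group.

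Next I would establish the decomposition at the level of the defining intervals rather than the groups. The extra translations in $T$ were introduced precisely to supply the joins and meets that fail to exist because of the reducible bowties of \McReducibleBowties, and away from those bowties the interval $[1,w]$ in $\cryst(\wt X_n,w)$ agrees with the interval $[1,w]$ in $\cox(\wt X_n)$. I would make this precise by exhibiting the lattice of Theorem~\ref{main:garside} as the union of the reflection poset that defines $\dart(\wt X_n,w)$ and an explicit translation poset, overlapping exactly in the sub-poset that encodes $K$. Translating this union of intervals into a product of interval groups --- using the Hurwitz action of \McHurwitz to control how the two generating families interact and \BrMcInvAff to keep the underlying affine geometry consistent --- should produce a natural surjection from the amalgamated product $\dart(\wt X_n,w)*_{K}B$ onto $\gar(\wt X_n,w)$, where $B$ is the explicit factor assembled from $T$.

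The hard part will be proving that this surjection is injective, i.e.\ that $\gar(\wt X_n,w)$ satisfies no relations beyond those forced by the amalgamation. Here I would exploit the fact that $\gar(\wt X_n,w)$ is a genuine Garside group: its greedy normal form supplies canonical representatives, and I would use this normal form to build retractions of $\gar(\wt X_n,w)$ onto each factor that are compatible with the amalgamated structure, thereby certifying that the factors embed and that the gluing along $K$ is faithful. Once the isomorphism $\gar(\wt X_n,w)\cong\dart(\wt X_n,w)*_{K}B$ is established, the final assertion is immediate, since a factor of an amalgamated free product always injects; in particular $\dart(\wt X_n,w)$ injects into $\gar(\wt X_n,w)$.
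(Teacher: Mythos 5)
Your overall skeleton --- decompose the defining interval of $\ggar$ as a union of two subposets, read off a pushout of interval groups, and then use Garside normal forms to certify injectivity --- is the right shape, and it matches the paper's strategy (Lemma~\ref{lem:presentation}, Proposition~\ref{prop:pushout}, Proposition~\ref{prop:inj}). But you have misidentified both the second factor and the amalgamating subgroup, and this is not a cosmetic error. The paper's decomposition is $\ggar \cong \gart *_{\gdiag} \gfac$ (Theorem~\ref{thm:amalgamated}), where the second factor $\gfac$ is generated by the horizontal reflections $R_H$ \emph{together with} the factored translations $T_F$ --- it is a direct product of annular braid groups $\art(B_{n_i})$, one for each irreducible component of the horizontal root system (Proposition~\ref{prop:products}) --- and the amalgamating subgroup is the diagonal interval group $\gdiag$, generated by $R_H \cup T$, which contains a direct product of euclidean braid groups and is very far from abelian. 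Your proposed second factor ``assembled from $T$'' and your abelian gluing subgroup $K$ ``carried by the translation directions'' cannot work: the elements of $[1,w]^{\ccryst}$ lying outside the Coxeter interval (the ``Factored'' region of Figure~\ref{fig:cryst-coarse}) are built from factored translations \emph{and} horizontal reflections simultaneously, so the relations between these two families, which are visible in $[1,w]^{\cfac}$ but not in $[1,w]^{\ccox}$, would belong to neither of your factors. The resulting amalgam would surject onto $\ggar$ with nontrivial kernel, and no injectivity argument could repair that. Note also that the intersection of the two subposets is forced by Lemma~\ref{lem:int-rel} to be the full diagonal interval $[1,w]^{\cdiag} = [1,w]^{\ccox} \cap [1,w]^{\cfac}$, which contains all the horizontal reflections; an overlap consisting only of translations is too small to make the union of presentations into a pushout over it.

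Two further points on the injectivity step. First, your plan to build ``retractions of $\gar(\wt X_n,w)$ onto each factor'' is not how amalgamated products are usually certified and such retractions need not exist; what is actually needed is injectivity of the two legs $\gdiag \to \gfac$ and $\gdiag \to \gart$ of the pushout diagram, after which the pushout is an amalgam by general nonsense. Second, the genuinely delicate leg is $\gdiag \into \gart$, and the paper obtains it indirectly: it first proves $\gfac \into \ggar$ using the normal-form criterion of Proposition~\ref{prop:inj} (which requires Lemma~\ref{lem:factor-join}, that the inclusion $[1,w]^{\cfac} \into [1,w]^{\ccryst}$ preserves meets and joins), and then observes that $\gdiag \into \gfac \into \ggar$ factors through $\gart$. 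The other leg $\gdiag \into \gfac$ is handled by a vertical-displacement argument using the alternate generating set $R_H \cup \{w\}$ for $\gdiag$ and the identification of $\ghor$ inside $\gfac$ via Proposition~\ref{prop:hor-maps}. None of this machinery appears in your outline, and your appeal to the Hurwitz action and to the invariant-subspace results plays no role in the paper's argument for this theorem.
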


The third shows that this dual euclidean Artin group is isomorphic to
the corresponding Artin group.

\newcommand{\mainisomorphic}{2}
\begin{main}[Naturally isomorphic groups]\label{main:isomorphic}
  For each irreducible euclidean Coxeter group $W = \cox(\wt X_n)$ and
  for each choice of Coxeter element $w$ as the product of the
  standard Coxeter generating set $S$, the Artin group $A=\art(\wt
  X_n)$ and the dual Artin group $W_w = \dart(\wt X_n,w)$ are
  naturally isomorphic.
\end{main}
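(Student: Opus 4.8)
The plan is to build the natural map in the easy direction and then concentrate all the work on injectivity, where the Garside machinery of the two previous theorems is brought to bear. Since $w = s_0 s_1 \cdots s_n$ is the product of the standard generating set $S$, each simple reflection $s_i$ is a reflection lying below $w$ and hence is one of the standard generators of the dual Artin group $W_w = \dart(\wt X_n, w)$. I would therefore define a homomorphism $\phi\colon A \to W_w$ sending each Artin generator $s_i$ to the corresponding reflection generator of $W_w$, which is the identity on simple reflections and so is the ``natural'' map of the statement. Surjectivity is the first point to settle: every reflection $r \leq w$ occurs in some reduced reflection factorization of $w$, and by transitivity of the Hurwitz action (\McHurwitz) every such factorization is obtained from $(s_0, \dots, s_n)$ by a sequence of Hurwitz moves. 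Reading those moves as equations in $W_w$ expresses each reflection generator as a product of $\langle S\rangle$-conjugates of the $s_i$, so $S$ generates $W_w$ and $\phi$ is onto.

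For $\phi$ to be well defined I must check that each braid relation between a pair $s_i, s_j$ already holds in $W_w$, and the key is that this relation is entirely local. By the subword description of the elements below a Coxeter element (\McReflections), one of $s_i s_j$ or $s_j s_i$ has reflection length two and lies below $w$, so the whole finite dihedral group $\langle s_i, s_j\rangle$ sits inside the interval $[1,w]$. Restricting the defining relations of $W_w$ to this dihedral subinterval produces exactly the cyclic family of rank-two factorization relations of a dihedral group, and a short classical computation shows that this family is equivalent to the single braid relation $\mathrm{prod}(s_i, s_j; m_{ij}) = \mathrm{prod}(s_j, s_i; m_{ij})$. Hence every defining relation of $A$ holds in $W_w$ and $\phi$ is a well-defined surjection.

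The main obstacle is injectivity, and this is precisely where the euclidean case diverges from the spherical one: the interval $[1,w]$ is not a lattice, so $W_w$ carries no Garside structure of its own and the classical argument identifying a dual Artin group with its Artin group is unavailable. My plan is to route injectivity through the ambient Garside group. By Theorem~\ref{main:subgroup} the dual Artin group embeds as an amalgamation factor, $W_w \into \gar(\wt X_n, w)$, so it suffices to prove that the composite $A \to W_w \into \gar(\wt X_n, w)$ is injective. Because $\gar(\wt X_n, w)$ is a Garside group (Theorem~\ref{main:garside}), its positive monoid has unique greedy normal forms, and I would show that the images of the $s_i$ generate a submonoid isomorphic to the Artin monoid $A^+$ by matching normal forms; the fact that an Artin group is the group of fractions of its Artin monoid then upgrades this to injectivity of $\phi$ on all of $A$.

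Equivalently, injectivity amounts to exhibiting a two-sided inverse $\rho\colon W_w \to A$, i.e.\ showing that every defining relation of $W_w$ follows from the braid relations. Defining $\rho$ on each reflection generator by lifting a reduced $S$-expression (well defined up to braid moves by the Matsumoto--Tits property) reduces each rank-two dual relation to a computation inside a standard spherical parabolic subgroup of rank at most three, where the classical dual-equals-standard isomorphism applies. The genuinely euclidean difficulty is that some pairs of factorizations admit no common refinement inside $[1,w]$; these are the bowties, and I expect the heart of the argument to be the decomposition furnished by \McReducibleBowties, which splits each such obstruction into spherical pieces so that the associated relation still follows from the braid relations. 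Controlling these bowties is the step I anticipate to be hardest, and it is exactly what the passage to the crystallographic group $\cryst(\wt X_n, w)$ and its Garside structure was designed to make tractable.
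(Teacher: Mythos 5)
Your first half---the well-definedness and surjectivity of $\phi\colon A \to W_w$ via the Hurwitz action---is exactly the paper's argument, and your high-level skeleton for injectivity (construct $\rho\colon W_w \to A$ and observe that $\rho\circ\phi$ fixes $S$, hence is the identity) is also the paper's. The gaps are in how you propose to execute the hard direction.

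First, the Garside-monoid route does not close: even if the images of the $s_i$ in $\gar(\wt X_n,w)$ generate a submonoid isomorphic to the Artin monoid $A^+$, this does not give injectivity of $A \to \gar(\wt X_n,w)$, because a euclidean Artin group is \emph{not} the group of fractions of its Artin monoid. The Ore condition (existence of common right multiples) fails outside spherical type, so an element of $A$ need not be expressible as $a^{-1}b$ with $a,b \in A^+$, and injectivity on the positive monoid says nothing about the group. Second, your construction of $\rho$ is underspecified exactly where the real difficulty lives. Each dual generator must be sent to a \emph{conjugate} $u s u^{-1}$ of a standard generator, and the choice of conjugating word matters: in the paper's $\wt G_2$ example the two segments of a single hyperplane determine $(ad)c(ad)^{-1}$ and $(dc)a(dc)^{-1}$, which are \emph{not} equal in $\art(\wt G_2)$. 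The paper resolves this by conjugating along paths in an ``axial flat'' and proving a Consistency lemma (which in turn rests on a Convexity lemma about axial vertices) to get a coherent assignment; a Tits-section lift of a reduced $S$-expression is a positive, non-conjugate element and would not even satisfy the dual braid relations, let alone invert $\phi$.

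Third, not every dual relation reduces to a rank-$\leq 3$ spherical parabolic computation: the factorizations of a pure translation $t = r_{i+1} r_i$ with $r_i = w^{ip} r w^{-ip}$ form an infinite family spread across infinitely many chambers, and the paper needs a separate argument (Proposition on pure Coxeter elements: $w^p$ centralizes the standard horizontal reflections in $A$) to verify them. Finally, \McReducibleBowties\ of \cite{Mc-lattice} concerns the failure of the lattice property and plays no role in checking that the dual relations follow from the Artin relations; the bowties are circumvented once and for all by the embedding $W_w \into \gar(\wt X_n,w)$ of Theorem~B and are not re-encountered in the proof of Theorem~C.
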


And finally, our fourth main result uses the Garside structure of the
crystallographic Garside supergroup to derive structural consequences
for its euclidean Artin subgroup.

\newcommand{\mainartin}{3}
\begin{main}[Euclidean Artin groups]\label{main:artin}
  Every irreducible euclidean Artin group $\art(\wt X_n)$ is a
  torsion-free centerless group with a solvable word problem and a
  finite-dimensional classifying space.
\end{main}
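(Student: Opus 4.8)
The plan is to deduce all four properties from the chain of results already in hand. By Theorem~\ref{main:isomorphic} the Artin group $\art(\wt X_n)$ is isomorphic to the dual Artin group $\dart(\wt X_n,w)$; by Theorem~\ref{main:subgroup} the latter injects into the crystallographic Garside group $\gar(\wt X_n,w)$; and by Theorem~\ref{main:garside} this supergroup carries a Garside structure of infinite type. Thus it suffices to show that $\art(\wt X_n)$, realized as the subgroup $\dart(\wt X_n,w)\into\gar(\wt X_n,w)$, inherits each of the four properties. Three of them are closed under passage to subgroups and transfer essentially for free; the vanishing of the center is the one genuinely new point.

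For the inherited properties I would argue as follows. Garside groups are torsion-free, and this persists for the infinite-type structure at hand, so the subgroup $\art(\wt X_n)$ is torsion-free. A Garside group admits a finite-dimensional classifying space built from its lattice of simple elements---here that lattice is infinite but of finite height---and the cover of this $K(\gar(\wt X_n,w),1)$ corresponding to the subgroup $\art(\wt X_n)$ is a classifying space of the same finite dimension; equivalently, cohomological dimension does not increase on subgroups. Finally, the greedy normal form solves the word problem in any Garside group, and since $\art(\wt X_n)$ is finitely generated while the embedding sends each standard generator to an explicit element of $\gar(\wt X_n,w)$, any word in the Artin generators can be rewritten over the generators of the supergroup and tested for triviality there. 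Injectivity of the embedding then gives a solution to the word problem in $\art(\wt X_n)$.

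The remaining and essential task is to prove that the center is trivial. Here I would first exploit the natural surjection $\art(\wt X_n)\onto\cox(\wt X_n)$ onto the Coxeter group---equivalently $\dart(\wt X_n,w)\onto\langle R\rangle=\cox(\wt X_n)$, since the dual generators are reflections. A surjection carries center into center, and an irreducible infinite Coxeter group has trivial center, so the image of any central element is trivial and $Z(\art(\wt X_n))$ lies in the kernel of this map. To finish, I would analyze such a hypothetical central element $z$ inside the Garside supergroup: $z$ commutes with every reflection generator and in particular with their product, the Coxeter element $w$, which is precisely the Garside element $\delta$ of $\gar(\wt X_n,w)$. Thus $z$ lies in the centralizer of $\delta$, and the conjugation-by-$\delta$ automorphism of the Garside structure permutes the atoms reflecting the infinite order of the euclidean Coxeter element, so that tracing $z$ through the normal form should force $z=1$.

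The hard part is precisely this last step. Showing that no nontrivial kernel element is central amounts to understanding centralizers inside an infinite-type Garside group together with the dynamics of the Garside automorphism, neither of which is as transparent as in the classical finite-type setting where the center is generated by a power of $\Delta$. I expect the crux to be verifying that the centralizer of $\delta=w$ meets the kernel of $\art(\wt X_n)\onto\cox(\wt X_n)$ trivially, and the cleanest route is likely the explicit amalgamated-product decomposition of Theorem~\ref{main:subgroup}---which constrains centralizers of elements that are not conjugate into the amalgamated subgroup---combined with the lattice-theoretic control on factorizations supplied by Theorem~\ref{main:garside}.
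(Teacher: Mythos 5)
Your handling of torsion-freeness, the word problem, and the classifying space matches the paper's: all three are pulled back from the Garside structure on $\gar(\wt X_n,w)$ through the embedding $\art(\wt X_n)\cong\dart(\wt X_n,w)\into\gar(\wt X_n,w)$, with the classifying space obtained as the cover of the Garside $K(\pi,1)$ corresponding to the subgroup and torsion-freeness following from finite cohomological dimension. Your reduction of the center computation is also sound in outline: a central $z$ commutes with $w$ and maps to the (trivial) center of $\cox(\wt X_n)$, so it suffices to show that the centralizer of $w$ in $\dart(\wt X_n,w)$ meets the kernel of the projection to the Coxeter group trivially.

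The genuine gap is that you have not computed that centralizer, and this computation is the entire substance of the paper's proof of this theorem. Your heuristic --- that conjugation by $\delta=w$ ``permutes the atoms'' and should force $z=1$ via normal forms --- does not work as stated, because the centralizer of $w$ in the Garside group $\gar(\wt X_n,w)$ is \emph{not} small in any obvious dynamical sense: it contains a free abelian group $\Z^k\cong\langle w_i\rangle$ generated by the $k$ special elements whose product is $w$, one for each irreducible component of the horizontal root system, and conjugation by $w$ fixes all $2^k$ simples that are products of subsets of the $w_i$. The paper pins this down through a chain of three lemmas: first, a simple commuting with $w$ must (as a euclidean isometry) have a vertically invariant fixed set, hence lies in the factorable interval and the centralizer lands in $\gfac$ (Lemma~\ref{lem:g-to-f}, resting on Proposition~\ref{prop:nf} and Lemma~\ref{lem:cox-center}); second, inside $\gfac\cong\prod\art(B_{n_i})$ the only simples commuting with each factor $w_i$ are $1$ and $w_i$, by the rotation-invariance argument for noncrossing partitions (Proposition~\ref{prop:commuting-with-w}), so the centralizer there is exactly $\langle w_i\rangle\cong\Z^k$ (Lemma~\ref{lem:f-to-z^k}); third, the amalgamated product structure forces a central element of $\gart$ lying in $\gfac$ into $\gdiag$, and a winding-number/vertical-displacement computation shows $\gdiag\cap\langle w_i\rangle=\langle w\rangle$ (Lemma~\ref{lem:z^k-to-z}). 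Only then does the centerlessness of the Coxeter group enter, to rule out nontrivial powers of $w$ (Lemma~\ref{lem:w-powers}). You correctly guessed that the amalgamated-product decomposition is the right tool, but without the identification of the fixed simples and the $\Z^k\leadsto\Z$ step, the argument is not complete.
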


\begin{figure}
  $\begin{array}{ccccc}
    \art(\wt X_n) & \cong & \dart(\wt X_n,w) & \into & \gar(\wt X_n,w)\\
    \twoheaddownarrow & & \twoheaddownarrow & & \twoheaddownarrow\\
    \cox(\wt X_n) & \cong & \dcox(\wt X_n,w) & \into & \cryst(\wt X_n,w)
  \end{array}$
  \caption{For each Coxeter element $w$ in an irreducible euclidean
    Coxeter group of type $\wt X_n$ we define several related
    groups.\label{fig:maps}}
\end{figure}

The relations among these groups are shown in Figure~\ref{fig:maps}.
The notations in the middle column refer to the Coxeter group and the
Artin group as defined by their dual presentations.  These dual
presentations facilitate the connection between the Coxeter group
$\cox(\wt X_n)$ and the crystallographic group $\cryst(\wt X_n,w)$ and
between the Artin group $\art(\wt X_n)$ and the crystallographic
Garside group $\gar(\wt X_n,w)$.

Theorem~\ref{main:artin} represents a significant advance over what
was previous known.  In 1987, Craig Squier analyzed the euclidean
Artin groups with three generators: $\art(\wt A_2)$, $\art(\wt C_2)$
and $\art(\wt G_2)$ \cite{Squier87}.  His main technique was to
analyze the presentations as amalgamated products and HNN extensions
of known groups, a technique that does not appear to generalize to the
remaining groups.  The ones of type $A$ have been understood via a
semi-classical embedding $\art(\wt A_{n-1}) \into \art(B_n)$ into a
type $B$ spherical Artin group \cite{Al02,ChPe03,KePe02,tD98}.

More recently Fran\c{c}ois Digne used dual Garside structures to
successfully analyze the euclidean Artin groups of types $A$ and $C$
\cite{Digne06, Digne12}.  This article is the third in a series which
continues the investigation along these lines.  The first two papers
are \cite{BrMc-factor} and \cite{Mc-lattice} and there also is a
survey article \cite{Mc-artin-survey} that discusses the results in
all three papers.  The main result of \cite{Mc-lattice} was a negative
one: types $A$, $C$ and $G$ are the only euclidean types whose dual
presentations are Garside.  The results in this article show how to
overcome the deficiencies that arise in types $B$, $D$, $E$ and $F$.

\medskip\noindent \textbf{Overview:} The article is divided into four
parts.  Part I contains basic background definitions for posets,
Coxeter groups, intervals and Garside structures.  Part II introduces
an interesting discrete group generated by coordinate permutations and
translations by integer vectors whose structure is closely related by
the Coxeter and Artin groups of type $B$.  These ``middle groups'' and
the structure of their intervals play a major role in the proofs of
the main results.  Part III shifts attention to intervals in arbitrary
irreducible euclidean Coxeter groups and introduces various new groups
including the crystallographic groups and crystallographic Garside
groups mentioned above.  Part IV contains the proofs of our four main
results.

\part{Background}\label{part:background}
This part contains background material with one section focusing on
posets and Coxeter groups, another on intervals and Garside
structures.

\section{Posets and Coxeter groups}\label{sec:basic}

This section reviews some basic definitions for the sake of
completeness.  Our conventions follows \cite{Humphreys90}, \cite{EC1},
and \cite{DaPr02}.

\begin{defn}[Coxeter groups]\label{def:coxeter}
  A \emph{Coxeter group} is any group $W$ that can be defined by a
  presentation of the following form.  It has a standard finite
  generating set $S$ and only two types of relations.  For each $s\in
  S$ there is a relation $s^2=1$ and for each unordered pair of
  distinct elements $s,t\in S$ there is at most one relation of the
  form $(st)^m=1$ where $m = m(s,t) > 1$ is an integer.  When no
  relation involving $s$ and $t$ occurs we consider $m(s,t)=\infty$.
  A \emph{reflection} in $W$ is any conjugate of an element of $S$ and
  we use $R$ to denote the set of all reflections in $W$.  In other
  words, $R=\{ wsw^{-1} \mid s \in S, w \in W\}$.  This presentation
  is usually encoded in a labeled graph $\Gamma$ called a
  \emph{Coxeter diagram} with a vertex for each $s\in S$, an edge
  connecting $s$ and $t$ if $m(s,t)>2$ and a label on this edge if
  $m(s,t)>3$.  When every $m(s,t)$ is contained in the set
  $\{2,3,4,6,\infty\}$ the edges labeled $4$ and $6$ are replaced with
  double and triple edges, respectively.  The group defined by the
  presentation encoded in $\Gamma$ is denoted $W = \cox(\Gamma)$.  A
  Coxeter group is \emph{irreducible} when its diagram is connected.
\end{defn}

\begin{figure}
  \begin{tikzpicture}
    \begin{scope}[yshift=5.25cm,xshift=2cm]
      \node at (-3,0) {$\wt A_1$};
      \drawInfEdge{(0,0)}{(1,0)}
      \drawSpeDot{(1,0)}
      \drawESpDot{(0,0)}
    \end{scope}
    \begin{scope}[yshift=4cm,xshift=.5cm]
      \node at (-1.5,0) {$\wt A_n$};
      \drawEdge{(0,0)}{(2,0)}
      \drawDashEdge{(0,0)}{(2,.7)}
      \drawDashEdge{(4,0)}{(2,.7)}
      \drawEdge{(3,0)}{(4,0)}
      \drawDotEdge{(2,0)}{(3,0)}
      \foreach \x in {0,1,2,4} {\drawRegDot{(\x,0)}}
      \drawSpeDot{(2,.7)}
      \drawESpDot{(3,0)}
    \end{scope}
    \begin{scope}[yshift=3cm]
      \node at (-1,0) {$\wt C_n$};
      \drawDoubleEdge{(1,0)}{(0,0)}{(0,.1)}
      \drawEdge{(1,0)}{(2,0)}
      \drawDotEdge{(2,0)}{(3,0)}
      \drawEdge{(3,0)}{(4,0)}
      \drawDashDoubleEdge{(4,0)}{(5,0)}{(0,.1)}
      \foreach \x in {1,2,3,4} {\drawRegDot{(\x,0)}}
      \drawESpDot{(0,0)}
      \drawSpeDot{(5,0)}
    \end{scope}
    \begin{scope}[yshift=1.8cm]
      \node at (-1,0) {$\wt B_n$};
      \drawDoubleEdge{(0,0)}{(1,0)}{(0,.1)}
      \drawEdge{(1,0)}{(2,0)}
      \drawDotEdge{(2,0)}{(3,0)}
      \drawEdge{(3,0)}{(4,0)}
      \drawDashEdge{(4,0)}{(4.707,.707)}
      \drawEdge{(4,0)}{(4.707,-.707)}
      \drawRegDot{(4.707,-.707)}
      \foreach \x in {0,2,3,4} {\drawRegDot{(\x,0)}}
      \drawESpDot{(1,0)}
      \drawSpeDot{(4.707,.707)}
    \end{scope}
    \begin{scope}
      \node at (-1,0) {$\wt D_n$};
      \drawEdge{(.293,.707)}{(1,0)}
      \drawEdge{(.293,-.707)}{(1,0)}
      \drawEdge{(1,0)}{(2,0)}
      \drawDotEdge{(2,0)}{(3,0)}
      \drawEdge{(3,0)}{(4,0)}
      \drawDashEdge{(4,0)}{(4.707,.707)}
      \drawEdge{(4,0)}{(4.707,-.707)}
      \drawRegDot{(.293,.707)}
      \drawRegDot{(.293,-.707)}
      \drawRegDot{(4.707,-.707)}
      \foreach \x in {2,3,4} {\drawRegDot{(\x,0)}}
      \drawESpDot{(1,0)}
      \drawSpeDot{(4.707,.707)}
    \end{scope}
  \end{tikzpicture}
  \caption{Diagrams for the four infinite families.\label{fig:dynkin-families}}
\end{figure}
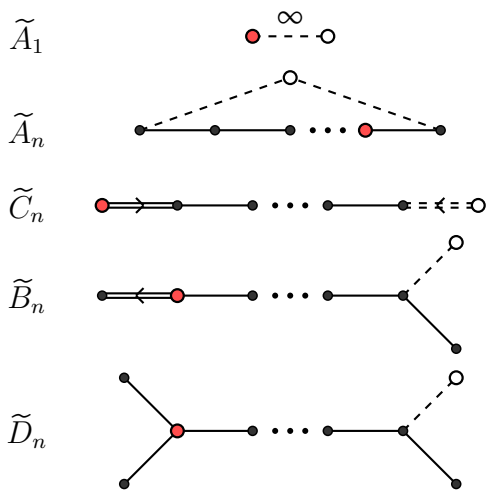

\begin{defn}[Artin groups]\label{def:artin}
  For each Coxeter diagram $\Gamma$ there is an \emph{Artin group}
  $\art(\Gamma)$ defined by a presentation with a relation for each
  two-generator relation in the standard presentation of
  $\cox(\Gamma)$.  More specifically, if $(st)^m=1$ is a relation in
  $\cox(\Gamma)$ then the presentation of $\art(\Gamma)$ has a
  relation that equates the two length~$m$ words that strictly
  alternate between $s$ and $t$.  Thus $(st)^2 = 1$ becomes $st=ts$,
  $(st)^3=1$ becomes $sts=tst$, $(st)^4=1$ becomes $stst=tsts$, etc.
  There is no relation when $m(s,t)$ is infinite.
\end{defn}

The general theory of Coxeter groups is motivated by those which act
\emph{geometrically} (i.e. properly discontinuously and cocompactly by
isometries) on spheres and euclidean spaces and they are classified by
the famous Dynkin diagrams and extended Dynkin diagrams, respectively.

\begin{defn}[Extended Dynkin diagrams]
  There are four infinite families and five sporadic examples of
  irreducible euclidean Coxeter groups.  The extended Dynkin diagrams
  for the infinite families, including the unusual $\wt A_1$ diagram,
  are shown in Figure~\ref{fig:dynkin-families} and the five sporadic
  examples are shown in Figure~\ref{fig:dynkin-sporadic}.  The large
  white dot connected to the rest of the diagram by dashed lines is
  the \emph{extending root} and the diagram with this dot removed is
  the ordinary Dynkin diagram for the corresponding spherical Coxeter
  group.  The large shaded dot is called the \emph{vertical root} of
  the diagram.  Its definition and meaning are discussed in
  Section~\ref{sec:horizontal}.
\end{defn}

\begin{figure}
  \begin{tikzpicture}
    \begin{scope}[yshift=5.8cm,xshift=1cm]
      \node at (-2,0) {$\wt G_2$};
      \drawTripleEdge{(0,0)}{(1,0)}{(0,.1)}
      \drawDashEdge{(1,0)}{(2,0)}
      \drawRegDot{(0,0)}
      \drawESpDot{(1,0)}
      \drawSpeDot{(2,0)}
    \end{scope}
    \begin{scope}[yshift=5.1cm]
      \node at (-1,0) {$\wt F_4$};
      \drawDoubleEdge{(1,0)}{(2,0)}{(0,.1)}
      \drawEdge{(0,0)}{(1,0)}
      \drawEdge{(2,0)}{(3,0)}
      \drawDashEdge{(3,0)}{(4,0)}
      \foreach \x in {0,1,3} {\drawRegDot{(\x,0)}}
      \drawESpDot{(2,0)}
      \drawSpeDot{(4,0)}
    \end{scope}
    \begin{scope}[yshift=3.4cm]
      \node at (-1,0) {$\wt E_6$};
      \drawEdge{(0,0)}{(4,0)}
      \drawEdge{(2,0)}{(2,1)}
      \drawDashEdge{(2,1)}{(3,1)}
      \foreach \x in {0,1,3,4} {\drawRegDot{(\x,0)}}
      \drawRegDot{(2,1)}
      \drawESpDot{(2,0)}
      \drawSpeDot{(3,1)}
    \end{scope}
    \begin{scope}[yshift=1.7cm]
      \node at (-1,0) {$\wt E_7$};
      \drawEdge{(0,0)}{(5,0)}
      \drawEdge{(2,0)}{(2,1)}
      \drawDashEdge{(0,0)}{(0,1)}
      \foreach \x in {0,1,3,4,5} {\drawRegDot{(\x,0)}}
      \drawRegDot{(2,1)}
      \drawESpDot{(2,0)}
      \drawSpeDot{(0,1)}
    \end{scope}
    \begin{scope}
      \node at (-1,0) {$\wt E_8$};
      \drawEdge{(0,0)}{(6,0)}
      \drawDashEdge{(6,0)}{(6,1)}
      \drawEdge{(2,0)}{(2,1)}
      \foreach \x in {0,1,3,4,5,6} {\drawRegDot{(\x,0)}}
      \drawRegDot{(2,1)}
      \drawESpDot{(2,0)}
      \drawSpeDot{(6,1)}
    \end{scope}
  \end{tikzpicture}
  \caption{Diagrams for the five sporadic examples.\label{fig:dynkin-sporadic}}
\end{figure}
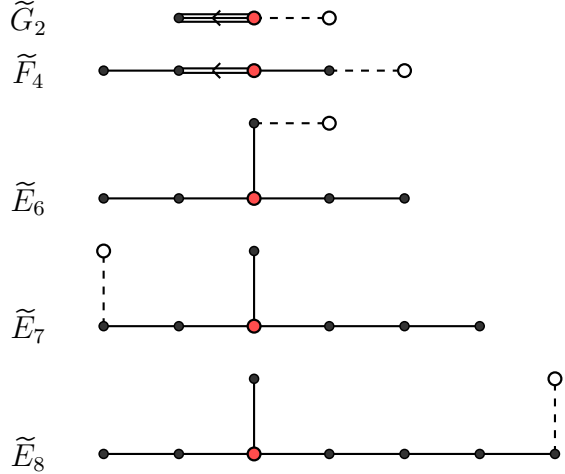

Since we do not need most of the heavy machinery developed to study
euclidean Coxeter groups, it suffices to loosely introduce some
standard terminology.

\begin{defn}[Simplices and tilings]
  One way to understand the meaning of the extended Dynkin diagrams is
  that they encode the geometry of a euclidean simplex $\sigma$ in
  which each dihedral angle is $\frac{\pi}{m}$ for some integer $m$.
  The vertices correspond to facets of $\sigma$ and the integer $m$
  associated to a pair of vertices encodes the dihedral angle between
  these facets.  The reflections that fix the facets of $\sigma$ then
  generate a group of isometries which tile euclidean space with
  copies of $\sigma$.  For example, the $\wt G_2$ diagram corresponds
  to a triangle in the plane with dihedral angles $\frac{\pi}{2}$,
  $\frac{\pi}{3}$, and $\frac{\pi}{6}$ and the corresponding tiling is
  shown in Figure~\ref{fig:g2-axis} on page~\pageref{fig:g2-axis}.
  The top dimensional simplices in this tiling are called
  \emph{chambers}.
\end{defn}

\begin{defn}[Roots and reflections]
  The \emph{root system $\Phi_{X_n}$} associated with the $\wt X_n$
  tiling is a collection of pairs of antipodal vectors called
  \emph{roots} which includes one pair $\pm \alpha$ normal to each
  infinite family of parallel hyperplanes and the length of $\alpha$
  encodes the consistent spacing between these hyperplanes.  The $G_2$
  root system is shown in Figure~\ref{fig:g2-roots}.  The $\wt X_n$
  tiling can be reconstructed from $\Phi_{X_n}$ root system as
  follows.  For each $\alpha \in \Phi_{X_n}$ and for each $k \in \Z$,
  let $H_{\alpha,k}$ be a hyperplane $\{ x \mid x \cdot \alpha = k\}$
  orthogonal to $\alpha$ and let $r_{\alpha,k}$ be the reflection that
  fixes $H_{\alpha,k}$ pointwise.  The chambers of the tiling are the
  connected components of the complement of the union of all such
  hyperplanes and the set $R = \{r_{\alpha,k}\}$ is the full set of
  reflections in the irreducible euclidean Coxeter group $W = \cox(\wt
  X_n)$.  The reflections $S \subset R$ that reflect in the facets of
  a single chamber $\sigma$ are a minimal generating set corresponding
  to the vertices of the $\wt X_n$ Dynkin diagram.
\end{defn}

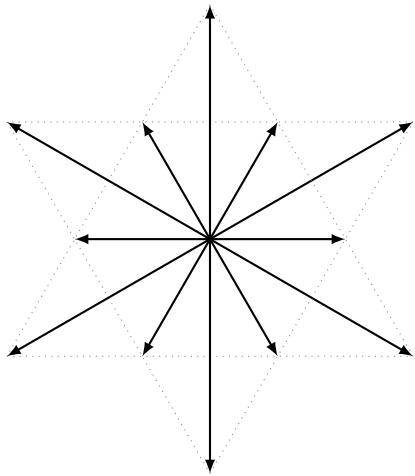
\begin{figure}
  \begin{tikzpicture}[>=latex]
    \foreach \x in {0,1,2,3,4,5} {
      \draw[-,color=black!50,dotted] (30+\x*60:1.8*1.732)--(150+\x*60:1.8*1.732);
      \draw[->,thick] (0,0)--(\x*60:1.8);
      \draw[->,thick] (0,0)--(30+\x*60:1.8*1.732);
    }
  \end{tikzpicture}
  \caption{The $G_2$ root system.\label{fig:g2-roots}}
\end{figure}

\begin{defn}[Coroots and translations]
  One consequence of these definitions is that a longer root
  corresponds to a family of hyperplanes that are more closely spaced.
  Let $t_\lambda$ be the translation produced by multiplying
  reflections associated with adjacent and parallel hyperplanes such
  as $r_{\alpha,k+1}$ and $r_{\alpha,k}$.  The translation vector is a
  multiple of $\alpha$ and one can compute $\lambda =
  \left(\frac{2}{\alpha \cdot \alpha}\right) \alpha$.  This vector is
  called the \emph{coroot} $\alpha^\vee$ corresponding to $\alpha$.
  In other words, $t_{\alpha^\vee} = r_{\alpha,k+1} r_{\alpha,k}$.
\end{defn}

We also record basic terminology for lattices and posets.

\begin{defn}[Posets]\label{def:poset}
  Let $P$ be a partially ordered set.  If $P$ contains both a minimum
  element and a maximum element then it is \emph{bounded}.  For each
  $Q\subset P$ there is an induced \emph{subposet} structure on $Q$ by
  restricting the partial order on $P$.  A subposet $C$ in which any
  two elements are comparable is called a \emph{chain} and its
  \emph{length} is $\vert C \vert -1$.  Every finite chain is bounded
  and its maximum and minimum elements are its \emph{endpoints}.  If a
  finite chain $C$ is not a subposet of a strictly larger finite chain
  with the same endpoints, then $C$ is \emph{saturated}.  Saturated
  chains of length~$1$ are called \emph{covering relations}.  If every
  saturated chain in $P$ between the same pair of endpoints has the
  same finite length, then $P$ is \emph{graded}.  There is also a
  weighted version where one defines a weight or length to each
  covering relation and calls $P$ \emph{weighted graded} when every
  saturated chain in $P$ between the same pair of endpoints has the
  same total weight.  When varying weights are introduced they shall
  always be \emph{discrete} in the sense that the set of all weights
  is a discrete subset of the positive reals bounded away from zero.
  The \emph{dual} $P^*$ of a poset $P$ has the same underlying set but
  the order is reversed, and a poset is \emph{self-dual} when it and
  its dual are isomorphic.
\end{defn}

\begin{defn}[Lattices]\label{def:lattice}
  Let $Q$ be any subset of a poset $P$.  A lower bound for $Q$ is any
  $p\in P$ with $p \leq q$ for all $q\in Q$.  When the set of lower
  bounds for $Q$ has a unique maximum element, this element is the
  \emph{greatest lower bound} or \emph{meet} of $Q$.  Upper bounds and
  the \emph{least upper bound} or \emph{join} of $Q$ are defined
  analogously.  The meet and join of $Q$ are denoted $\bigmeet Q$ and
  $\bigjoin Q$ in general and $u \meet v$ and $u \join v$ if $u$ and
  $v$ are the only elements in $Q$.  When every pair of elements has a
  meet and a join, $P$ is a \emph{lattice} and when every subset has a
  meet and a join, it is a \emph{complete lattice}.
\end{defn}

\begin{figure}
  \begin{tikzpicture}
    \coordinate (one) at (0,1.2);
    \coordinate (zero) at (0,-1.2);
    \coordinate (a) at (-.6,.4);
    \coordinate (b) at (.6,.4);
    \coordinate (c) at (-.6,-.4);
    \coordinate (d) at (.6,-.4);
    \draw[-,thick] (b)--(one)--(a)--(c)--(zero)--(d);
    \draw[-,thick] (a)--(d)--(b)--(c);
    \fill (one) circle (.6mm) node[anchor=south] {$1$};
    \fill (a) circle (.6mm) node[anchor=east] {$a$};
    \fill (b) circle (.6mm) node[anchor=west] {$b$};
    \fill (c) circle (.6mm) node[anchor=east] {$c$};
    \fill (d) circle (.6mm) node[anchor=west] {$d$};
    \fill (zero) circle (.6mm) node[anchor=north] {$0$};
  \end{tikzpicture}
  \caption{A bounded graded poset that is not a
    lattice.\label{fig:non-lattice}}
\end{figure}
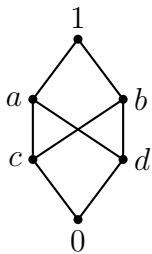

\begin{defn}[Bowties]\label{def:bowtie}
  Let $P$ be a poset.  A \emph{bowtie} in $P$ is a $4$-tuple of
  distinct elements $(a,b:c,d)$ such that $a$ and $b$ are minimal
  upper bounds for $c$ and $d$ and $c$ and $d$ are maximal lower
  bounds for $a$ and $b$.  The name reflects the fact that when edges
  are drawn to show that $a$ and $b$ are above $c$ and $d$, the
  configuration looks like a bowtie.  See
  Figure~\ref{fig:non-lattice}.
\end{defn}

In \cite[Proposition~1.5]{BrMc10} Tom Brady and the first author noted
that a bounded graded poset $P$ is a lattice if and only if $P$
contains no bowties.  The same result holds, with the same proof, when
$P$ is graded with respect to a discrete weighting of its covering
relations.  We reproduce the proof for completeness.

\begin{prop}[Lattice or bowtie]\label{prop:bowtie}
  If $P$ is a bounded poset that is graded with respect to a set of
  discrete weights, then $P$ is a lattice if and only if $P$ contains
  no bowties.
\end{prop}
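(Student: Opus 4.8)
The plan is to prove the two implications separately, using the weighted grading only to guarantee that $P$ has finite height. First I would record this finiteness: since $P$ is bounded with maximum $\hat 1$ and minimum $\hat 0$, and since the covering weights form a discrete set bounded below by some $\epsilon > 0$ while every saturated chain from $\hat 0$ to $\hat 1$ has the same total weight $N$, no chain in $P$ can have more than $N/\epsilon$ covering relations. Consequently every nonempty subset of $P$ has both maximal and minimal elements, and ascending and descending chains terminate. A useful consequence I would isolate at the outset is that a pair $\{x,y\}$ with a \emph{unique} minimal upper bound must in fact have a least upper bound: given any upper bound $z$, the upper bounds lying below $z$ have a minimal element, which is necessarily a minimal upper bound of $\{x,y\}$ and hence equal to the unique one, so the unique minimal upper bound lies below every upper bound. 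The dual statement holds for maximal lower bounds.

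The forward direction is immediate. If $P$ is a lattice and $(a,b:c,d)$ were a bowtie, then $c \vee d$ exists; since $a$ and $b$ are upper bounds of $\{c,d\}$ we would have $c \vee d \le a$ and $c \vee d \le b$, and minimality of $a$ and $b$ as upper bounds of $\{c,d\}$ forces $c \vee d = a = b$, contradicting the distinctness of a bowtie's four elements.

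For the converse I would argue contrapositively and produce a bowtie whenever $P$ fails to be a lattice. Because the dual poset $P^*$ is again bounded and weighted graded, and because $(a,b:c,d)$ is a bowtie in $P$ exactly when $(c,d:a,b)$ is a bowtie in $P^*$, it suffices to treat the case where some pair $\{x,y\}$ has no join. By the finiteness observation, the absence of a join means $\{x,y\}$ has at least two distinct minimal upper bounds $a \neq b$. I would then look at the common lower bounds of $\{a,b\}$, a set containing $x$ and $y$. If this set had a unique maximal element $m$, then $m$ would be the greatest lower bound of $\{a,b\}$; but $x,y \le m \le a,b$ would make $m$ an upper bound of $\{x,y\}$ below $a$, forcing $m=a=b$ by minimality, a contradiction. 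Hence $\{a,b\}$ has two distinct maximal lower bounds $c \neq d$.

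The final and most delicate step is that the tuple $(a,b:c,d)$ need not yet be a bowtie, since $a$ and $b$ are not obviously \emph{minimal} upper bounds of $\{c,d\}$. To repair this I would replace $a$ and $b$ by minimal upper bounds $a'$ and $b'$ of $\{c,d\}$ chosen with $a' \le a$ and $b' \le b$ (these exist by finite height). The key verification is that $a' \not\le b$: otherwise $a'$ would be a lower bound of $\{a,b\}$ with $c \le a'$, forcing $a' = c$ by maximality of $c$ and hence $d \le c$, contradicting the incomparability of $c$ and $d$. This gives $a' \neq b'$, and symmetrically $b' \not\le a$. Since $a' \le a$ and $b' \le b$, every lower bound of $\{a',b'\}$ is a lower bound of $\{a,b\}$, so $c$ and $d$ remain maximal lower bounds of $\{a',b'\}$; and $a' = c$ or $a' = d$ would again force $c$ and $d$ comparable, so all four elements are distinct. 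Thus $(a',b':c,d)$ is a genuine bowtie, completing the contrapositive. The main obstacle is precisely this last refinement: arranging the minimality conditions on both halves of the bowtie simultaneously, for which the finite-height hypothesis supplied by the discrete weighting is essential.
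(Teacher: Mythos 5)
Your proof is correct, and its overall skeleton matches the paper's: use boundedness plus the discrete grading to guarantee minimal upper bounds, deduce that a join-less pair $\{x,y\}$ has two distinct minimal upper bounds $a \neq b$, complete these to a bowtie using maximal lower bounds, and dispose of the no-meet case by passing to the dual poset. The difference lies in how the four bowtie conditions are secured. The paper chooses the maximal lower bounds $c,d$ of $\{a,b\}$ \emph{anchored} above $x$ and $y$ respectively (i.e.\ $c \geq x$ and $d \geq y$); with that one choice every remaining verification is automatic, since any upper bound of $\{c,d\}$ is then an upper bound of $\{x,y\}$, so a failure of minimality of $a$ or $b$ over $\{c,d\}$, or an equality $c = d$, would directly contradict the minimality of $a$ and $b$ over $\{x,y\}$. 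You instead take arbitrary distinct maximal lower bounds $c,d$ --- which costs you a separate argument that two such exist --- and then repair minimality at the top by descending to minimal upper bounds $a' \leq a$ and $b' \leq b$ of $\{c,d\}$, which in turn requires the verifications that $a' \not\leq b$ and that $c,d$ remain maximal lower bounds of $\{a',b'\}$. All of these checks go through (the key one, $a' \leq b$ forcing $a' = c$ and hence $d \leq c$, is exactly right), so both routes are sound; the paper's anchoring trick simply collapses your two-stage descent into a single choice.
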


\begin{proof}
  If $P$ contains a bowtie $(a,b:c,d)$, then $c$ and $d$ have no join
  and $P$ is not a lattice.  In the other direction, suppose $P$ is
  not a lattice because $x$ and $y$ have no join.  An upper bound
  exists because $P$ is bounded, and a minimal upper bound exists
  because $P$ is weighted graded.  Thus $x$ and $y$ must have more
  than one minimal upper bound.  Let $a$ and $b$ be two such minimal
  upper bounds and note that $x$ and $y$ are lower bounds for $a$ and
  $b$.  If $c$ is a maximal lower bound of $a$ and $b$ satisfying $c
  \geq x$ and $d$ is a maximal lower bound of $a$ and $b$ satisfying
  $d \geq y$, then $(a,b:c,d)$ is a bowtie.  We know that $a$ and $b$
  are minimal upper bounds of $c$ and $d$ and that $c$ and $d$ are
  distinct since either failure would create an upper bound of $x$ and
  $y$ that contradicts the minimality of $a$ and $b$.  When $x$ and
  $y$ have no meet, the proof is analogous.
\end{proof}

We conclude with a remark about subposets.  Notice that bowties remain
bowties in induced subposets.  Thus if $P$ is not a lattice because it
contains a bowtie $(a,b:c,d)$ and $Q$ is any subposet that contains
all four of these elements, then $Q$ is also not a lattice since it
contains the same bowtie.

\section{Intervals and Garside structures}\label{sec:int-garside}

As mentioned in the introduction, attempts to understand euclidean
Artin groups of euclidean type directly have only met with limited
success.  The most promising progress has been by Fran\c{c}ois Digne
and his approach is closely related to the dual presentations derived
from an interval in the corresponding Coxeter group \cite{Mc-lattice}.
We first recall how a group with a fixed generating set naturally acts
on a graph and how assigning discrete weights to its generators turns
this graph into a metric space invariant under the group action.

\begin{defn}[Marked groups]
  A \emph{marked group} is a group $G$ with a fixed generating set $S$
  which, for convenience, we assume is symmetric and injects into $G$.
  The (right) \emph{Cayley graph of $G$ with respect to $S$} is a
  labeled directed graph denoted $\cay(G,S)$ with vertices indexed by
  $G$ and edges indexed by $G \times S$.  The edge $e_{(g,s)}$ has
  \emph{label} $s$, it starts at $v_g$ and ends at $v_{g'}$ where $g'
  = g\cdot s$. There is a natural faithful, vertex-transitive, label
  and orientation preserving left action of $G$ on its right Cayley
  graph and these are the only graph automorphisms that preserve
  labels and orientations.
\end{defn}

\begin{defn}[Weights]
  Let $S$ be a generating set for a group $G$.  We say $S$ is a
  \emph{weighted generating set} if its elements are assigned positive
  weights bounded away from $0$ that form a discrete subset of the
  positive reals.  The elements $s$ and $s^{-1}$ should, of course,
  have the same weight.  For finite generating sets discreteness and
  the lower bound are automatic but these are important restrictions
  when $S$ is infinite.  One can always use a \emph{trivial weighting}
  which assigns the same weight to each generator.  When $G$ is
  generated by a weighted set $S$, its Cayley graph can be made into a
  metric space where the length of each edge is its weight.  The
  length of a combinatorial path in the Cayley group is then the sum
  of the weights of its edges and the distance between two vertices is
  the minimum length of such a combinatorial path.  For infinite
  generating sets the lower bound on the weights can be used to bound
  on the number of edges involved in a minimum length path and the
  discreteness condition ensures that the infimum of these path
  lengths is actually achieved by some path.
\end{defn}

In any metric space, one can define the notion of an interval.

\begin{defn}[Intervals in metric spaces]
  Let $x$, $y$ and $z$ be points in a metric space $(X,d)$.  We say
  $z$ is \emph{between} $x$ and $y$ if the triangle inequality is an
  equality: $d(x,z) + d(z,y) = d(x,y)$.  The \emph{interval} $[x,y]$
  is the collection of points between $x$ and $y$, and note that this
  includes both $x$ and $y$.  Intervals can also be endowed with a
  partial ordering by defining $u \leq v$ when $d(x,u) + d(u,v) +
  d(v,y) = d(x,y)$.
\end{defn}

We are interested in intervals in groups.

\begin{defn}[Intervals in groups]  
  Let $G$ be a group with a fixed symmetric discretely weighted
  generated set and let $d(g,h)$ denote the distance between $v_g$ and
  $v_h$ in the corresponding metric Cayley graph.  Note that the
  symmetry assumption on the generating set allows us to restrict
  attention to directed paths.  From this metric on $G$ we get bounded
  intervals with a weighted grading: for $g,h \in G$, the
  \emph{interval} $[g,h]^G$ is the poset of group elements between $g$
  and $h$ with $g' \in [g,h]^G$ when $d(g,g') + d(g',h) = d(g,h)$ and
  $g' \leq g''$ when $d(g,g') + d(g',g'') + d(g'',h) = d(g,h)$.  In
  this article we include the superscript $G$ as part of the notation
  since we often consider similar intervals in closely related groups.
\end{defn}

\begin{rem}[Intervals in Cayley graphs]
  The interval $[g,h]^G$ is a bounded poset with discrete levels whose
  Hasse diagram is embedded as a subgraph of the weighted Cayley graph
  $\cay(G,S)$ as the union of all minimal length directed paths from
  $v_g$ to $v_h$.  This is because $g'\in [g,h]^G$ means $v_{g'}$ lies
  on some minimal length path from $v_g$ to $v_h$ and $g' < g''$ means
  that $v_{g'}$ and $v_{g''}$ both occur on a common minimal length
  path from $v_g$ to $v_h$ with $v_{g'}$ occurring before $v_{g''}$.
  Because the structure of such a poset can be recovered from its
  Hasse diagram, we let $[g,h]^G$ denote the edge-labeled directed
  graph that is visible inside $\cay(G,S)$.  The left action of a
  group on its right Cayley graph preserves labels and distances.
  Thus the interval $[g,h]^G$ is isomorphic (as a labeled oriented
  directed graph) to the interval $[1,g^{-1}h]^G$.  In other words,
  every interval in the Cayley graph of $G$ is isomorphic to one that
  starts at the identity.  We call $g^{-1}h$ the \emph{type} of the
  interval $[g,h]^G$ and note that intervals are isomorphic if and
  only if they have the same type.
\end{rem}

Intervals in groups can be used to construct new groups.

\begin{defn}[Interval groups]\label{def:interval-gps}
  Let $G$ be a group generated by a weighted set $S$ and let $g$ and
  $h$ be distinct elements in $G$.  The \emph{interval group}
  $G_{[g,h]}$ is defined as follows.  Let $S_0$ be the elements of $S$
  that actually occurs as labels of edges in $[g,h]^G$.  The group
  $G_{[g,h]}$ has $S_0$ as its generators and we impose all relations
  that are visible as closed loops inside the portion of the Cayley
  graph of $G$ that we call $[g,h]^G$.  The elements in $S \setminus
  S_0$ are not included since they do not occur in any relation.  More
  precisely, if they were included as generators, they would generate
  a free group that splits off as a free factor.  Thus it is
  sufficient to understand the group defined above.  Next note that
  this group structure only depends on the type of the interval so it
  is sufficient to consider interval groups of the form $G_{[1,g]}$.
  For these groups we simplify the notation to $G_g$ and say that
  $G_g$ is the interval group obtained by \emph{pulling $G$ apart at
    $g$}.
\end{defn}

The interval $[1,g]^G$ implicitly encodes a presentation of $G_g$ and
various explicit presentations can be found in \cite{Mc-lattice} and
\cite{McCammond-cont-braids}.  Dual Artin groups are examples of
interval groups.

\begin{defn}[Dual Artin groups]\label{def:dual-artin}
  Let $W = \cox(\Gamma)$ be a Coxeter group with standard generating
  set $S$ and let $R$ be the full set of reflections with a trivial
  weighting.  For any fixed total ordering of the elements of $S$, the
  product of these generators in this order is called a \emph{Coxeter
    element} and for each Coxeter element $w$ there is a dual Artin
  group defined as follows.  Let $[1,w]^W$ be the interval in the left
  Cayley graph of $W$ with respect to $R$ and let $R_0 \subset R$ be
  the subset of reflections that actually occur in some minimal length
  factorizations of $w$.  The \emph{dual Artin group with respect to
    $w$} is the group $W_w = \dart(\Gamma,w)$ generated by $R_0$ and
  subject only to those relations that are visible inside the interval
  $[1,w]^W$.
\end{defn}

An explicit presentation for the dual $\wt G_2$ Artin group is given
at the end of Section~\ref{sec:euc-intervals}.

\begin{rem}[Artin groups and dual Artin groups]
  In general the relationship between the Artin group $\art(\Gamma)$
  and the dual Artin group $\dart(\Gamma,w)$ is not yet completely
  clear.  It is straightforward to show using the Tits representation
  that the product of the elements in $S$ that produce $w$ is a
  factorization of $w$ into reflections of minimum length which means
  that this factorization describes a directed path in $[1,w]^W$.  As
  a consequence $S$ is a subset of $R_0$.  Moreover, the standard
  Artin relations are consequences of relations visible in $[1,w]^W$
  (as illustrated in \cite{BrMc00} and in Example~\ref{ex:dihedral})
  so that the injection of $S$ into $R_0$ extends to a group
  homomorphism from $\art(\Gamma)$ to $\dart(\Gamma,w)$.  See also
  Proposition~\ref{prop:onto-dart}. When this homomorphism is an
  isomorphism, we say that the interval $[1,w]^W$ encodes a \emph{dual
    presentation} of $\art(\Gamma)$.
\end{rem}

To date, every dual Artin group that has been successfully analyzed is
isomorphic to the corresponding Artin group and as a consequence its
group structure is independent of the Coxeter element $w$ used in its
construction.  In particular, this is known to hold for all spherical
Artin groups \cite{Be03, BrWa02a} and we prove it here for all
euclidean Artin groups as our third main result.  It is precisely
because this assertion has not been proved in full generality that
dual Artin groups deserve a separate name.  The following example
illustrates the relationship between Artin group presentations and
dual presentations.

\begin{exmp}[Dihedral Artin groups]\label{ex:dihedral}
  The spherical Coxeter groups with two generators are the dihedral
  groups.  Let $W$ be the dihedral group of order $10$ with Coxeter
  presentation $\langle a,b \mid a^2 = b^2 = (ab)^5 = 1 \rangle$ where
  $a$ and $b$ are reflections of $\R^2$ through the origin with an
  angle of $\pi/5$ between their fixed lines.  The corresponding Artin
  group has presentation $\langle a,b \mid ababa = babab \rangle$.
  The set $S = \{a,b\}$ is a standard generating set for $W$ and the
  set $R = \{a,b,c,d,e\}$ is its full set of reflections where these
  are the five reflections in $W$ in cyclic order.  The Coxeter
  element $w = ab$ is a $2\pi/5$ rotation and its minimum length
  factorizations over $R$ are $ab$, $bc$, $cd$, $de$ and $ea$.  The
  dual Artin group has presentation $\langle a,b,c,d,e \mid ab = bc =
  cd = de = ea \rangle$. Systematically eliminating $c$, $d$ and $e$
  recovers the original Artin group presentation.
\end{exmp}

The dual presentations for the spherical Artin groups were introduced
and studied by David Bessis \cite{Be03} and by Tom Brady and Colum
Watt \cite{BrWa02a}.  Here we pause to record one technical fact about
Coxeter elements in the corresponding spherical Coxeter groups.

\begin{prop}[Spherical Coxeter elements]\label{prop:sph-cox-elt}
  Let $w_0$ be a Coxeter element for a spherical Coxeter group $W_0 =
  \cox(X_n)$ and let $R$ be its set of reflections.  For every $r \in
  R$ there is a chamber in the corresponding spherical tiling and an
  ordering on the reflections fixing its facets so that (1) the
  product of these reflections in this order is $w_0$ and (2) the
  leftmost reflection in the list is $r$.
\end{prop}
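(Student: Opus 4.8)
The plan is to produce, for a given reflection $r\in R$, a chamber whose $n$ walls can be ordered as $(r,t_2,\dots,t_n)$ with $r\,t_2\cdots t_n = w_0$, where $n$ denotes the rank of $W_0$ (the number of reflections fixing the facets of a chamber). I would argue by induction on $n$. The base case $n=2$ is exactly the cyclic picture for dihedral groups already visible in Example~\ref{ex:dihedral}: the reflections occur in cyclic order and each one is the left factor of precisely one length-two factorization of the rotation $w_0$. Two standard facts about Coxeter elements drive the induction. First, every reflection lies weakly below a Coxeter element, so $r\le w_0$ in the absolute order and we may write $w_0 = r\,w'$ with $\lr(w')=n-1$. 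Second, because $w'$ is covered by $w_0$, it is itself a Coxeter element of the rank-$(n-1)$ parabolic subgroup $P$ that fixes the line $\ell=\fix(w')$ pointwise: the reflections in any minimal factorization of $w'$ all fix $\ell$, hence lie in $P$, and their product is a Coxeter element there.

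Granting these, the inductive step runs as follows. The group $P$ acts as a reflection group on $\ell^\perp\cong\R^{n-1}$ with $w'$ as one of its Coxeter elements, so by induction there is a chamber $C'$ of $P$ together with an ordering $(t_2,\dots,t_n)$ of its walls satisfying $t_2\cdots t_n=w'$. Since $r$ does not fix $\ell$ (otherwise $r\in P$ and $w_0=r\,w'$ would not be reduced), the hyperplane $H_r$ is transverse to $\ell$, and the plan is to glue $H_r$ onto $C'$ to obtain a chamber $C$ of $W_0$ whose walls are exactly $\{H_r\}$ together with the walls of $C'$. Reading $r$ first and then the walls of $C'$ in the order supplied by induction yields $r\,t_2\cdots t_n = r\,w' = w_0$, the desired factorization with leftmost reflection $r$.

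The main obstacle is this gluing step: I must verify that $\{H_r\}$ together with the walls of $C'$ really bounds a single chamber of $W_0$. The $n-1$ walls of $C'$ all contain $\ell$, so their common intersection is $\ell$ itself; for these and $H_r$ to be the walls of a simplicial chamber $C$, the ray along $\ell$ must be an extreme ray of $C$ whose unique opposite wall is $H_r$, and the roots must be co-oriented so that the resulting simple system of $W_0$ has the correct non-acute Gram matrix. Neither the existence of such a $C$ nor the compatibility of the induced ordering on its $P$-walls with the specific Coxeter element $w'$ is automatic from the inductive hypothesis alone, so the chamber $C'$ must be chosen with the attachment in mind. I expect to resolve this by selecting the $W_0$-chamber $C$ first---using the geometry of the Coxeter element in its Coxeter plane, where the reflecting hyperplanes project to a cyclically ordered family of lines through which $w_0$ rotates, so that suitable consecutive families of $n$ reflections form the simple systems realizing $w_0$---and only then reading off $C'=C\cap\ell^\perp$ with its induced ordering. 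Making this selection precise, and checking that every reflection $r$ arises as the distinguished transverse wall of exactly one such chamber, is the crux of the argument.
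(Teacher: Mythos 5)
The paper never proves this proposition: it is recorded as a known technical fact from the Bessis/Brady--Watt theory of dual presentations (it follows, for instance, from Steinberg's analysis of the Coxeter element acting on the Coxeter plane, after reducing to the bipartite case by conjugacy). So your proposal must stand on its own, and as written it does not: the step you yourself label ``the crux'' is a genuine gap, not a deferred verification, and the inductive framework you set up does not supply the means to close it.

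Concretely, the induction hands you \emph{some} chamber $C'$ of the parabolic $P$ whose ordered walls multiply to $w'$, but nothing ties that choice to $r$, and the gluing can simply fail. Take $W_0=\cox(A_3)=S_4$, $w_0=(1\,2\,3\,4)$, $r=(1\,2)$, so $w'=rw_0=(2\,3\,4)$ and $P=\langle (2\,3),(3\,4)\rangle$. The $P$-chamber $4>2>3$ with ordered walls $\bigl((2\,4),(2\,3)\bigr)$ satisfies $(2\,4)(2\,3)=w'$, and indeed $(1\,2)(2\,4)(2\,3)=w_0$; yet the three hyperplanes $H_{12},H_{24},H_{23}$ form a star at the vertex $2$ rather than a path and bound no chamber of $S_4$. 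This example isolates the real difficulty: producing a reduced reflection factorization of $w_0$ with first letter $r$ is the easy half (it already follows from $r\le w_0$), whereas the proposition demands a factorization whose letters are the wall reflections of a \emph{single chamber}, and that is exactly what your construction does not guarantee. Even the unique $W_0$-chamber having a ray of $\ell=\fix(w')$ as an extreme ray inside a given $C'$ has its one transverse wall already determined by $C'$, with no reason for it to be $H_r$; so $C'$ must be chosen as a function of $r$, and the inductive hypothesis gives no handle for that. Your proposed repair---select the chamber directly from the cyclic family of $h$ lines cut out on the Coxeter plane and verify that every reflection occurs as the transverse wall of one of the resulting chambers---is essentially the standard proof, but it is the entire content of the statement and is left unexecuted here.
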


One reason that dual presentations of Artin groups are of interest is
that they satisfy almost all of the requirements of a Garside
structure.  In fact, there is only one property that they might lack.

\begin{prop}[Garside structures]\label{prop:lattice-garside}
  Let $G$ be a group with a symmetric discretely weighted generating
  set that is closed under conjugation.  If for some element $g$ the
  weighted interval $[1,g]^G$ is a lattice, then the group $G_g$ is a
  Garside group.  In particular, if $W = \cox(\Gamma)$ is a Coxeter
  group generated by its full set of reflections with Coxeter element
  $w$ and the interval $[1,w]^W$ is a lattice, then the dual Artin
  group $W_w = \dart(\Gamma,w)$ is a Garside group.
\end{prop}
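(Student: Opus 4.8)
The plan is to realize the interval $[1,g]^G$ as the poset of \emph{simple elements} of a Garside structure on $G_g$ and to verify the Garside axioms one at a time, letting the lattice hypothesis and the discrete weighting do most of the work. Recall that $G_g$ is presented by the relations visible inside $[1,g]^G$; the natural candidate for the Garside element is the image of $g$ itself, and the candidate set of simples is the interval $[1,g]^G$, regarded as a set of letters together with the partial product inherited from $G$. The discreteness of the weighting is what replaces the usual finiteness of the simples: by the remarks following the definition of weights, it bounds the number of letters in a geodesic factorization of $g$ and guarantees that the relevant infima of path-lengths are attained, which is exactly the input the infinite-type Garside formalism needs.

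First I would show that $g$ is \emph{balanced}, meaning that its set of left divisors (prefixes) inside the interval coincides with its set of right divisors (suffixes). This is the one place the hypotheses on the generating set are used. Suppose $x$ is a prefix, so that $g=xy$ with $d(1,x)+d(x,g)=d(1,g)$, i.e. $\ell(x)+\ell(y)=\ell(g)$. Because $S$ is symmetric and closed under conjugation, conjugation by $x$ carries $S$ bijectively to itself and hence preserves word length; therefore $\ell(xyx^{-1})=\ell(y)$, and $g=(xyx^{-1})\cdot x$ is again a geodesic factorization, exhibiting $x$ as a suffix. The reverse implication is symmetric, so the left and right divisor sets of $g$ agree. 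This cyclic symmetry is the interval-theoretic shadow of the Hurwitz action; it reconciles left and right divisibility so that the single lattice $[1,g]^G$ simultaneously records the left and the right divisors of $g$.

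Next I would assemble the Garside structure from this balanced lattice. The lattice hypothesis supplies meets and joins in $[1,g]^G$, which translate into left and right gcds and lcms of simples, while balance guarantees that $g$ is a genuine Garside element whose divisors generate. Together with the cancellativity of $G$ and the absence of nontrivial invertible simples (every nonidentity simple has positive weight bounded away from $0$), these are precisely the ingredients that the standard lattice-to-Garside machinery requires, in its form valid for structures of possibly infinite type, to conclude that $G_g$ is a Garside group with Garside element $g$ and set of simples $[1,g]^G$; here the discreteness condition is invoked once more to ensure that greedy normal forms terminate and that the relevant infima and suprema are realized.

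Finally, the specialization to Coxeter groups is immediate once the general statement is in hand. For $W=\cox(\Gamma)$ generated by its full reflection set $R$, every reflection is an involution, so $R$ is symmetric, and $R$ is closed under conjugation by the very definition of a reflection; equipping $R$ with the trivial weighting makes it a discretely weighted, symmetric, conjugation-closed generating set. Thus if $[1,w]^W$ is a lattice the first part applies verbatim and the dual Artin group $W_w=\dart(\Gamma,w)$ is a Garside group. I expect the main obstacle to lie not in the balance computation, which is short, but in setting up the infinite-type formalism so that the lattice and discreteness hypotheses genuinely produce all of the Garside axioms: in particular one must check that the poset meets and joins furnished by the lattice hypothesis interact correctly with the partial product on simples, rather than merely existing as abstract order operations.
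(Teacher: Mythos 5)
Your proposal is correct and follows essentially the same route as the paper, which states this proposition without an independent proof, attributing it to Bessis (and the germ/interval-group machinery of Dehornoy et al.) with exactly the observation you make: that discreteness of the weighting substitutes for finiteness of the generating set so that the standard arguments go through unchanged. The one ingredient you spell out explicitly, the balance of $g$ via conjugation-closure of the generating set, is a standard part of that machinery (note only that it tacitly uses that conjugation preserves weights as well as the generating set, which holds for the weightings used in the paper).
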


The reader should note that we are using ``Garside structure'' and
``Garside group'' in the expanded sense of Digne \cite{Digne06,
  Digne12} rather than the original definition that requires the
generating set to be finite.  The discreteness of the grading of the
interval substitutes for finiteness of the generating set.  In
particular, the discreteness of the grading forces the standard
Garside algorithms to terminate.  The standard proofs are otherwise
unchanged.  Proposition~\ref{prop:lattice-garside} was stated by David
Bessis in \cite[Theorem~0.5.2]{Be03}, except for the shift from finite
to infinite discretely weighted generating sets.  For a more detailed
discussion see \cite{Be03} and particularly the book
\cite{DDGKM-garside}.  Interval groups appear in
\cite{DehornoyDigneMichel13} and in \cite[Chapter VI]{DDGKM-garside}
as the ``germ derived from a groupoid''.  The terminology is different
but the translation is straightforward.  When an interval such as
$[1,w]$ is a lattice and it is used to construct a Garside group, the
interval $[1,w]$ itself embeds in the Cayley graph of the new group
$G$ and the element $w$, viewed as an element in $G$, is called a
\emph{Garside element}.  Being a Garside group has many consequences.

\begin{thm}[Consequences]\label{thm:consequences}
  If $G$ is a group with a Garside structure in the expanded sense of
  Digne, then its elements have normal forms and it has a
  finite-dimensional classifying space whose dimension is equal to the
  length of the longest chain in its defining interval.  As a consequence,
  $G$ has a decidable word problem and it is torsion-free.
\end{thm}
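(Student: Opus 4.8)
The plan is to invoke the standard machinery of Garside theory, as developed in \cite{DDGKM-garside}, and to check only that each step survives the passage from a finite generating set to the infinite, discretely weighted generating sets permitted here. The single input needed is supplied by Proposition~\ref{prop:lattice-garside}: the interval $[1,g]^G$ is a lattice, and its elements are exactly the \emph{simple elements} (the divisors of the Garside element $g$). Two features of the weighted grading do the work throughout. First, the interval is bounded, so $d(1,g)$ is a fixed finite number. Second, the weights are bounded away from zero by some $\epsilon>0$, so any saturated chain in $[1,g]^G$ has at most $d(1,g)/\epsilon$ covering relations; in particular the longest chain has finite length $N$. This finiteness is exactly where discreteness substitutes for finiteness of the generating set.

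First I would establish normal forms. Because $[1,g]^G$ is a lattice, every element of $G$ has a well-defined maximal simple left-divisor, obtained as a meet computed inside the interval. Iterating the extraction of this maximal simple prefix produces the left-greedy (Garside) normal form, and uniqueness follows from the usual local characterization of greedy factorizations. Termination is guaranteed because each extracted simple factor has weight at least $\epsilon$, so the process halts after finitely many steps. Comparing normal forms then decides equality of words, giving a solvable word problem; here one must only note that the finitely many lattice operations needed to compute a maximal simple prefix are themselves effective, which follows from the explicit description of the interval sitting inside $\cay(G,S)$.

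Next I would build the classifying space. Following the standard construction of a $K(G,1)$ from a Garside structure, one forms the simplicial complex whose cells record chains of simple elements, so that its universal cover is assembled from translated copies of the order complex of $[1,g]^G$; the group $G$ then acts freely on this cover, and a contracting homotopy built from the left-greedy normal form shows the cover is contractible. The dimension of the resulting classifying space equals the length $N$ of the longest chain in $[1,g]^G$, which we have already seen is finite. I expect the main obstacle to be verifying that this contractibility argument is genuinely unaffected by the infinitude of the simple set: the homotopy is constructed simplex by simplex from the lattice operations on $[1,g]^G$, so once one confirms that these operations behave exactly as in the finite case (which the lattice property guarantees) and that every cell has dimension at most $N$ (which discreteness guarantees), the standard proof applies essentially verbatim.

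Finally, torsion-freeness is immediate from the finite-dimensional classifying space. A nontrivial torsion element would generate a finite cyclic subgroup $H\leq G$, and the cover of the $N$-dimensional classifying space would be a finite-dimensional contractible complex on which $H$ acts freely. But a nontrivial finite group has infinite cohomological dimension and so admits no free action on a finite-dimensional contractible complex; hence $G$ is torsion-free. Thus all four asserted consequences—normal forms, a finite-dimensional $K(G,1)$ of dimension $N$, a decidable word problem, and torsion-freeness—follow formally from the lattice property of $[1,g]^G$ together with the discreteness of the weighting.
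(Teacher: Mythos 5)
Your proposal is correct and follows essentially the same route as the paper, which simply cites the standard Garside machinery of Dehornoy--Paris and Charney--Meier--Whittlesey ``with minor modifications to allow for infinite discretely weighted generating sets'' and notes that decidability and torsion-freeness are immediate corollaries. You have merely unpacked those citations, correctly identifying the two places (termination of greedy factorization and finiteness of the chain length $N$) where discreteness of the weighting substitutes for finiteness of the generating set.
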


\begin{proof}
  The initial consequences follow from \cite{DePa99} and
  \cite{ChMeWh04} with minor modifications to allow for infinite
  discretely weighted generating sets, and the latter ones are
  immediate corollaries.
\end{proof}

A detailed description of the Garside normal form is never needed, but
we give a coarse description sufficient to state a key property of
elements that commute with the Garside element.

\begin{defn}[Normal forms]
  Let $G$ be a Garside group in the expanded sense used here and with
  Garside element $w$. The elements in the interval $[1,w]$ are called
  \emph{simple elements}.  For every $u \in G$ there is an integer $n$
  and simple elements $u_i$ such that $u = w^n u_1 u_2 \cdots u_k$.
  If we impose a few additional conditions, the integer $n$ and the
  simples $u_i$ are uniquely determined by $u$ and this expression is
  called its (left-greedy) \emph{normal form} $NF(u)$.  Note that the
  integer $n$ might be negative.  When this happens, it indicates that
  the word $u$ does not belong to the positive monoid generated by the
  simple elements.  The value of $n$ is the smallest integer such that
  $w^{-n} u$ lies in this positive monoid.
\end{defn}

One consequence of being a Garside group is that the set of simples is
closed under conjugation by $w$.  In fact, conjugation by $w$ is a
lattice isomorphism (but one that typically does not preserve
edge-labels) sending each simple to the left complement of its left
complement.  In particular, the simple by simple conjugation of the
normal form for $u$ remains in normal form, its product is $u^w$ and
by the uniqueness of normal forms, this must be the normal form for
$u^w$.  In particular, this proves the following.

\begin{prop}[Normal forms]\label{prop:nf}
  Let $G$ be a Garside group with Garside element $w$.  For each $u
  \in G$, the Garside normal form of $u^w$ is obtained by conjugating
  each simple in the Garside normal form for $u$.  In other words, if
  $NF(u) = w^n u_1 u_2 \cdots u_k$ then $NF(u^w) = w^n u_1^w u_2^w
  \cdots u_k^w$.  In particular, an element in $G$ commutes with $w$
  if and only if its normal form is built out of simples that commute
  with $w$.
\end{prop}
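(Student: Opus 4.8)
The plan is to reduce the claim to the uniqueness of the left-greedy normal form, after checking that conjugation by $w$ carries one normal form to another. The only inputs needed are the facts recorded in the paragraph immediately preceding the statement: conjugation by $w$ restricts to an automorphism of the interval $[1,w]$ (it is the second iterate of the complement map), it fixes both $1$ and $w$, and it preserves the positive monoid $G^+$ together with the left-divisibility order. Granting these standard facts (see \cite{DDGKM-garside}, with the routine modifications for infinite discretely weighted generating sets), the strategy is to show that conjugating each simple in $NF(u)$ produces an expression that still satisfies the defining conditions of a normal form and whose product is $u^w$, so that by uniqueness it must be $NF(u^w)$.

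First I would record that conjugation by $w$ is an automorphism of the divisibility poset: since it preserves $G^+$, we have $a \preceq b$ iff $a^{-1}b \in G^+$ iff $(a^w)^{-1}b^w \in G^+$ iff $a^w \preceq b^w$, and since it fixes $1$ and $w$ it maps the set of simples $[1,w]$ bijectively to itself and commutes with the meet $\meet$. Next I would use the lattice-theoretic description of the normal form: a product $u_1 u_2 \cdots u_k$ of simples is left-weighted exactly when $u_i = w \meet (u_i u_{i+1} \cdots u_k)$ for each $i$. Because conjugation by $w$ fixes $w$ and commutes with $\meet$, applying it to this identity gives $u_i^w = w \meet (u_i^w u_{i+1}^w \cdots u_k^w)$, so the conjugated sequence of simples is again left-weighted. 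For the power of $w$, recall that $n$ is the least integer with $w^{-n}u \in G^+$; since $w^{-n}u^w = (w^{-n}u)^w$ lies in $G^+$ iff $w^{-n}u$ does, this minimal $n$ is unchanged. Finally a telescoping computation, $w^n u_1^w \cdots u_k^w = w^n (u_1 \cdots u_k)^w = (w^n u_1 \cdots u_k)^w = u^w$ (using that $w^n$ commutes with $w$), shows the conjugated expression represents $u^w$. Thus $w^n u_1^w \cdots u_k^w$ is a normal form for $u^w$, and uniqueness of normal forms gives the stated formula $NF(u^w) = w^n u_1^w \cdots u_k^w$.

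The concluding assertion then drops out: $u$ commutes with $w$ iff $u^w = u$ iff $NF(u^w) = NF(u)$, and by the displayed formula together with uniqueness of the simple factors this happens precisely when $u_i^w = u_i$ for every $i$, i.e. when every simple appearing in $NF(u)$ commutes with $w$. The only genuine obstacle is confirming that the normal-form conditions are truly lattice-theoretic, that is, expressible purely through $\meet$ and the element $w$, so that they transport across the poset automorphism; in the present infinite, discretely weighted setting one must also know that the head map $v \mapsto w \meet v$ and the normal-form algorithm remain well defined, which follows from the discreteness of the grading exactly as in the finite case.
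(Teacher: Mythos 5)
Your proposal is correct and takes essentially the same route as the paper: the paper's (very terse) argument likewise observes that conjugation by $w$ is a lattice isomorphism of the set of simples (the square of the left-complement map), so that the simple-by-simple conjugate of a normal form is again in normal form with product $u^w$, and then invokes uniqueness of normal forms. You have simply made explicit the details the paper leaves implicit, namely the lattice-theoretic characterization of left-weightedness, the invariance of the exponent $n$, and the telescoping computation.
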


There is one final fact about Garside structures that we need in the
later sections, and that is an elementary observation about nicely
situated sublattices of lattices and how they relate to normal forms.

\begin{prop}[Injective maps]\label{prop:inj}
  Let $G'$ be a subgroup of $G$ and let $S'$ and $S$ be their
  conjugacy closed generating sets with $S' \subset S$.  If $w$ is an
  element of $G'$ and there is a weighting on $S$ such that (1) both
  $[1,w]^{G'}$ and $[1,w]^G$ are lattices and (2) the inclusion map
  $[1,w]^{G'} \into [1,w]^G$ is a lattice homomorphism preserving
  meets and joins, then the interval groups $G'_w$ and $G_w$ are both
  Garside groups and the natural map $G'_w \to G_w$ is an injection.
\end{prop}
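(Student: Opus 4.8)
The plan is to deduce that both interval groups are Garside directly from Proposition~\ref{prop:lattice-garside}, and then to prove injectivity by showing that the natural map preserves Garside normal forms. Since $S'$ and $S$ are conjugacy closed and the weighted intervals $[1,w]^{G'}$ and $[1,w]^G$ are lattices, Proposition~\ref{prop:lattice-garside} immediately gives that $G'_w$ and $G_w$ are Garside groups with Garside element $w$. The inclusion of intervals sends each simple of $G'_w$ to a simple of $G_w$ and carries every closed loop in $[1,w]^{G'}$ to a closed loop in $[1,w]^G$; since these loops are exactly the defining relations, we obtain a homomorphism $\phi\colon G'_w\to G_w$ that fixes $w$, sends $\id$ to $\id$, and extends the inclusion on simples.

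The heart of the argument is the claim that $\phi$ sends normal forms to normal forms. First I would recall that for a simple $a$ its right complement is the simple $a^{-1}w$, and that a consecutive pair of simples $(a,b)$ is left-weighted precisely when $(a^{-1}w)\meet b=\id$, a condition phrased entirely in terms of the meet on the poset of simples and the Garside element. Because $\phi$ is a homomorphism with $\phi(w)=w$, it automatically preserves complements: $\phi(a^{-1}w)=\phi(a)^{-1}w$. Combining this with hypothesis~(2), that the inclusion preserves meets, I would show that if $(a,b)$ is left-weighted in $G'_w$ then $(\phi(a)^{-1}w)\meet\phi(b)=\phi((a^{-1}w)\meet b)=\phi(\id)=\id$, so $(\phi(a),\phi(b))$ is left-weighted in $G_w$. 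The remaining normal-form conditions are preserved for the same formal reasons: each simple $u_i$ is nontrivial and $\phi$ is injective at $\id$, while the value of the exponent $n$ is pinned down by $u_1\neq w$, which is preserved since $\phi$ is injective on simples and fixes $w$.

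With this in hand, injectivity is immediate. If $u\in G'_w$ lies in the kernel of $\phi$, write its normal form $NF(u)=w^n u_1\cdots u_k$ with each $u_i$ a nontrivial simple of $G'_w$. By the previous paragraph, $w^n\phi(u_1)\cdots\phi(u_k)$ satisfies all the normal-form conditions in $G_w$ and hence is the normal form of $\phi(u)=\id$. The normal form of the identity is the empty expression, so uniqueness of normal forms in $G_w$ (Proposition~\ref{prop:nf} and the surrounding discussion) forces $n=0$ and $k=0$; therefore $NF(u)$ is also empty and $u=\id$.

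The step I expect to be the main obstacle is the middle paragraph: pinning down that the left-weighted condition is genuinely captured by the single lattice identity $(a^{-1}w)\meet b=\id$, so that preservation of meets together with $\phi(w)=w$ is exactly what makes $\phi$ compatible with the greedy algorithm. One must also confirm that the meet appearing here — the meet in the left-divisibility order on the simples inside $G_w$ — coincides with the meet in the interval $[1,w]^G$ on which hypothesis~(2) is stated; this identification is precisely the content of the remark following Proposition~\ref{prop:lattice-garside} that $[1,w]$ embeds in the Cayley graph of the Garside group as its poset of simples. Once these identifications are secured, the remainder is bookkeeping with the uniqueness of normal forms.
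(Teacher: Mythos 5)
Your proposal is correct and follows essentially the same route as the paper: the paper's proof also establishes the natural map from the inclusion of relation sets and then argues that, because the inclusion of intervals preserves meets and joins, the Garside normal form of an element of $G'_w$ remains in normal form when read in $G_w$, forcing nontrivial elements to have nontrivial images. Your middle paragraph simply spells out the detail the paper leaves implicit, namely that the left-weighted condition is the lattice identity $(a^{-1}w)\meet b=\id$ and is therefore preserved under a meet-preserving inclusion fixing $w$.
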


\begin{proof}
  First note that there is a natural map from $G'_w$ to $G_w$ because
  the relations defining $G'_w$ are included among the relations that
  define $G_w$.  For injectivity, let $u$ be a nontrivial element of
  $G'_w$ with normal form $NF(u) = w^n u_1 u_2 \cdots u_k$.  Suppose
  we view this as an expression representing an element of $G_w$.  The
  fact that the inclusion of the smaller interval into the larger one
  preserves meets and joins means that this expression remains in
  normal form in this new context.  Therefore the image of $u$ in
  $G_w$ is also nontrivial and the map is an injection.
\end{proof}

\part{Middle groups}\label{part:middle}
This part focuses on a series of elementary groups that we call
``middle groups'' and it establishes their key properties.

\section{Permutations and translations}\label{sec:perm-trans}

The discrete group of euclidean isometries generated by all coordinate
permutations and all translations by vectors with integer coordinates
is a group that plays an important role in the proofs of our main
results.  In this section we record its basic properties and relate it
to the spherical Coxeter group $\cox(B_n)$ which encodes the
symmetries of the $n$-cube.

\begin{defn}[Cubical symmetries]\label{def:n-cube}
  Let $[-1,1]^n$ denote the points in $\R^n$ where every coordinate
  has absolute value at most $1$.  This $n$-dimensional cube of side
  length $2$ centered at the origin has isometry group $\cox(B_n)$
  also called the \emph{signed symmetric group}. It has $n^2$
  reflection symmetries.  We give these reflections nonstandard names
  based on an alternative realization of this group described below.
  Let $r_{ij}$ be the reflection which switches the $i$-th and $j$-th
  coordinates fixing the hyperplane $x_i = x_j$ and let $t_i$ denote
  the reflection which changes the sign of the $i$-th coordinate
  fixing the hyperplane $x_i = 0$.  We call these collections
  $\mathcal{R}$ and $\mathcal{T}$ respectively.  Together they
  generate $\cox(B_n)$, but they are neither a minimal generating set
  nor all of the reflections.  The remaining reflections are obtained
  by conjugation.  Conjugating $r_{ij}$ by $t_i$, for example,
  produces an isometry which switches the $i$-th and $j$-th
  coordinates and changes both signs fixing the hyperplane $x_i =
  -x_j$.
\end{defn}

The unusual names for the reflections are explained by an alternative
geometric realization of $\cox(B_n)$ as isometries of an $n$-torus.

\begin{defn}[Toroidal symmetries]\label{def:n-torus}
  Let $T^n$ be the $n$-dimensional torus formed by identifying
  opposite sides of the $n$-cube $[-1,1]^n$ and note that the
  previously defined action of action of $\cox(B_n)$ the $n$-cube
  descends to $T^n$ and it permutes the $2^n$ special points with
  every coordinate equal to $\pm \frac{1}{2}$.  In fact, the action of
  $\cox(B_n)$ on these $2^n$ special points is faithful.  A new action
  of $\cox(B_n)$ on $T^n$ is obtained by leaving the action of the
  $r_{ij} \in \mathcal{R}$ unchanged and by replacing the
  ``reflection'' $t_i \in \mathcal{T}$ with a ``translation'' which
  adds $1$ to the $i$-th coordinate mod $2$.  This has the net effect
  of switching the sign of the $i$-th coordinate of each special point
  because the translation $x \mapsto x+1$ in $\R/2\Z$ switches
  $\frac12$ and $-\frac12$.  Since the elements in $\mathcal{R} \cup
  \mathcal{T}$ act on the $2^n$ special points as before, they
  generate the same group up to isomorphism.
\end{defn}

The group we wish to discuss is generated by lifts of these toriodal
isometries to all of $\R^n$.

\begin{defn}[Permutations and translations]\label{def:perm-trans}
  Let $r_{ij}$ act on $\R^n$ as before and let $t_i$ denote the
  translation which adds $1$ to the $i$-th coordinate leaving the
  others unchanged.  The reflections $\mathcal{R} = \{ r_{ij} \}$ and
  the translations $\mathcal{T} = \{ t_i \}$ generate a group
  $\midd(B_n)$ that we call the \emph{middle group} or more formally
  the \emph{annular symmetric group}.  The names are explained below.
  The isometries in $\mathcal{R}$ generates the symmetric group
  $\sym_n$ and the isometries in $\mathcal{T}$ generate a free abelian
  group $\Z^n$.  Moreover, because the $\Z^n$ subgroup generated by
  the translations is normalized by the permutations in $\sym_n$ with
  trivial intersection, the full group $\midd(B_n)$ has the structure
  of a semidirect product $\Z^n \rtimes \sym_n$.  Every element of
  $\midd(B_n)$ can be written uniquely in the form $t_\lambda r_\pi$
  where $\lambda\in \Z^n$ is a vector with integer entries and $\pi
  \in \sym_n$ is a permutation.
\end{defn}

\begin{defn}[Reflections]\label{def:reflections}
  As in $\cox(B_n)$ there are other reflections in $\midd(B_n)$
  obtained by conjugation.  The basic translations in $\mathcal{T}$
  are closed under conjugation but infinitely many new reflections are
  added to $\mathcal{R}$ when we close this set under conjugation.
  For example, $t_1 r_{12} t_1^{-1}$ is a reflection whose fixed
  hyperplane is parallel to that of $r_{12}$.  We call this reflection
  $r_{12}(1)$.  More generally, for each integer $k$ we define
  $r_{ij}(k) := t_i^k r_{ij} t_i^{-k} = t_j^{-k} r_{ij} t_j^k$.  The
  original reflections are $r_{ij} = r_{ij}(0)$.  Let $\mathcal{R}'$
  denote the set of all these reflections.  The set $\mathcal{R}' \cup
  \mathcal{T}$ is called the \emph{full generating set} of
  $\midd(B_n)$.
\end{defn}

The center of a middle group is easy to compute.

\begin{prop}[Center]\label{prop:midd-center}
  The center of the middle group $\midd(B_n)$ is an infinite cyclic
  subgroup generated by the pure translation $t_{\mathbf{1}} =
  \prod_{i=1}^n t_i$.
\end{prop}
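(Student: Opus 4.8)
The plan is to compute the center of $\midd(B_n) = \Z^n \rtimes \sym_n$ directly using the semidirect product structure established in Definition~\ref{def:perm-trans}. Every element can be written uniquely as $t_\lambda r_\pi$ with $\lambda \in \Z^n$ and $\pi \in \sym_n$, so I would determine exactly which pairs $(\lambda,\pi)$ give an element commuting with everything. First I would record the basic commutation rule: since the permutations act on the translation subgroup by permuting coordinates, one has $r_\pi t_\lambda r_\pi^{-1} = t_{\pi(\lambda)}$, where $\pi(\lambda)$ denotes the vector obtained by permuting the entries of $\lambda$ according to $\pi$. This single formula governs all the computations below.

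**The two conditions for centrality.**

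Suppose $t_\lambda r_\pi$ is central. First I would test commutation against the translations $t_\mu$ for arbitrary $\mu \in \Z^n$. Since translations commute among themselves, conjugating $t_\mu$ by $t_\lambda r_\pi$ reduces to conjugating by $r_\pi$, giving $t_{\pi(\mu)}$. For this to equal $t_\mu$ for all $\mu$ we need $\pi(\mu) = \mu$ for every $\mu$, which forces $\pi$ to be the identity permutation. So a central element must be a pure translation $t_\lambda$. Second, I would test this $t_\lambda$ against the reflections $r_{ij}$: commuting with $r_{ij}$ requires $r_{ij}(\lambda) = \lambda$, i.e.\ swapping the $i$-th and $j$-th entries of $\lambda$ must leave $\lambda$ unchanged, so $\lambda_i = \lambda_j$. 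Requiring this for all pairs $i,j$ forces all coordinates of $\lambda$ to be equal, say to a common integer $k$, so $\lambda = k \cdot \mathbf{1}$ and $t_\lambda = t_{\mathbf{1}}^{\,k}$.

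**Concluding.**

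Conversely, the pure translation $t_{\mathbf{1}} = \prod_{i=1}^n t_i$ visibly commutes with every translation (translations form an abelian subgroup) and is fixed by every coordinate permutation (permuting the entries of the all-ones vector does nothing), hence commutes with every $r_\pi$ and so lies in the center; the same holds for every power $t_{\mathbf{1}}^{\,k}$. Combining the two directions shows the center is exactly $\{t_{\mathbf{1}}^{\,k} \mid k \in \Z\}$, the infinite cyclic group generated by $t_{\mathbf{1}}$. I do not anticipate any serious obstacle here: the computation is entirely mechanical once the conjugation formula $r_\pi t_\lambda r_\pi^{-1} = t_{\pi(\lambda)}$ is in hand. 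The only point requiring a moment's care is that one must test against \emph{both} families of generators separately --- the translations to pin down $\pi = \id$, and the reflections to pin down $\lambda = k\cdot\mathbf{1}$ --- since neither family alone suffices to cut the center down to the claimed subgroup.
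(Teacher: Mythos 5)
Your proof is correct and follows essentially the same route as the paper: first testing a candidate central element $t_\lambda r_\pi$ against the translations to force $\pi = \id$, then against the reflections to force all coordinates of $\lambda$ to be equal, and finally checking that $t_{\mathbf{1}}$ does commute with everything. The paper phrases the second step as $\lambda$ being orthogonal to all the roots, which is exactly your condition $\lambda_i=\lambda_j$, so there is no substantive difference.
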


\begin{proof}
  Let $u = t_\lambda r_\pi$ be an element in the center.  If $\pi$ is
  a nontrivial permutation and $i$ is an index that is moved by $\pi$
  then $u$ conjugates $t_i$ to $t_{\pi(i)}$, contradiction.  Thus $u$
  must be a pure translation $t_\lambda$.  In order for $t_\lambda$ to
  be central $\lambda$ must be orthogonal to all of the roots of the
  reflections and thus in the direction $\mathbf{1} = \langle 1^n
  \rangle$.  In particular, the center is contained in the infinite
  cyclic subgroup generated by $t_{\mathbf{1}} = \prod_{i=1}^n t_i$
  which adds $1$ to every coordinate.  Conversely, $t_{\mathbf{1}}$
  commutes with every element of $M$.
\end{proof}

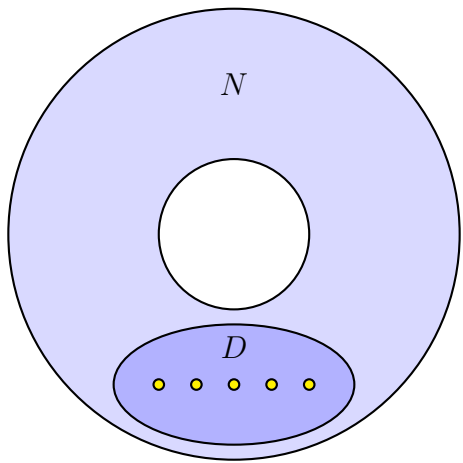
\begin{figure}
  \begin{tikzpicture}
    \fill[color = blue!15!white] (0,0) circle (30mm);
    \fill[color = white] (0,0) circle (10mm);
    \fill[color = blue!30!white] (0,-20mm) ellipse (16mm and 8mm);
    \draw[thick] (0,0) circle (30mm);
    \draw[thick] (0,0) circle (10mm);
    \draw[thick] (0,-20mm) ellipse (16mm and 8mm);
    \foreach \x in {-2,-1,0,1,2} {
      \fill[color=yellow] (.5*\x,-20mm) circle (.6mm);
      \draw[thick] (.5*\x,-20mm) circle (.7mm);
    }
    \draw (0,20mm) node {$N$};
    \draw (0,-15mm) node {$D$};
  \end{tikzpicture}
  \caption{The configuration that explains the name annular symmetric
    group.\label{fig:ann-sym}}
\end{figure}

We call the spherical Artin group $\art(B_n)$ an \emph{annular braid
  group} because it is the \emph{braid group of the annulus} in the
sense of Birman \cite[p.11]{Bi74}.  The analogous definition of an
\emph{annular symmetric group} leads to an alternative perspective on
the group $\midd(B_n)$.

\begin{defn}[Annular symmetric groups]\label{def:annular-gps}
  Let $N$ be an annulus, let $D$ be a disk contained in $N$ and let
  $p_i$, $i \in \{1,2,\ldots,n\}$ be a set of $n$ distinct points in
  $D$.  See Figure~\ref{fig:ann-sym}.  The annular braid group is
  defined as the fundamental group of the configuration space of $n$
  unordered distinct points in the annulus with this configuration as
  its base point.  It keeps track of the way in which the points braid
  around each other well as how much they wind around the annulus.  An
  \emph{annular symmetric group} ignores the braiding and only keeps
  track of how the points permute and wind around the annulus.  More
  concretely, if we define $r_{ij}$ as the motion which swaps $p_i$
  and $p_j$ without leaving the disk $D$ and define $t_i$ as the
  motion which wraps the point $p_i$ once around the annulus $N$ in
  the direction considered positive in its fundamental group, then the
  elements of this group can be identified with the elements of
  $\midd(B_n)$ and its normal form $t_\lambda r_\pi$ can be recovered
  as follows.  The permutation $\pi$ records the permutation of the
  points and the vector $\lambda$ is a tuple of winding numbers
  obtained by viewing the path of the point $p_i$ as a (near) loop
  that starts and ends in the disk $D$ and letting its winding number
  be the $i$-th coordinate of $\lambda$.
\end{defn}

The name ``middle group'' refers to its close connections with 
various other groups as shown in Figure~\ref{fig:middle-maps}.

\begin{rem}[Affine braid groups]\label{rem:affine-braid-gps}
  Since the groups $\cox(A_{n-1})$ and $\art(A_{n-1})$ are symmetric
  groups and braid groups, we call their natural euclidean extensions,
  the \emph{euclidean symmetric group} $\cox(\wt A_{n-1})$ and the
  \emph{euclidean braid group} $\art(\wt A_{n-1})$.  The name ``affine
  braid group'' often appears in the literature but its meaning is not
  stable.  For geometric group theorists it refers to the euclidean
  braid group $\art(\wt A_{n-1})$ \cite{ChPe03} but for representation
  theorists it refers to the annular braid group $\art(B_n)$
  \cite{OrRa07}.  Our alternative names aim to limit this potential
  confusion.  The adjective ``euclidean'' also highlights that the
  Coxeter group preserves lengths and angles.
\end{rem}

\begin{figure}
  $\begin{array}{ccccc}
    \art(\wt A_{n-1}) & \into & \art(B_n) & \onto &\Z\\
    \twoheaddownarrow & & \twoheaddownarrow & & \parallel\\
    \cox(\wt A_{n-1}) & \into & \midd(B_n) & \onto & \Z\\
     & & \twoheaddownarrow\\
     & & \cox(B_n)\\
  \end{array}$
  \caption{Middle groups and their relatives.\label{fig:middle-maps}}
\end{figure}

The maps in Figure~\ref{fig:middle-maps} are easy to describe.

\begin{defn}[Maps]\label{def:middle-maps}
  The map from $\midd(B_n)$ onto $\cox(B_n)$ can be seen
  geometrically.  The squares of the basic translations in
  $\mathcal{T}$ generate a normal subgroup $K \cong (2\Z)^n$ in
  $\midd(B_n)$ and if we quotient $\R^n$ by the action of $K$ the
  result is the $n$-torus $T^n$.  The kernel of the induced action of
  $\midd(B_n)$ on $T^n$ is $K$ and the action itself is easily seen to
  be the toroidal action of $\cox(B_n)$ on $T^n$.  To understand the
  horizontal map from $\cox(\wt A_{n-1})$ to $\midd(B_n)$ we note that
  the reflections in $\midd(B_n)$ acting on the hyperplane in $\R^n$
  perpendicular to the vector $\mathbf{1} = \langle 1^n \rangle$ is
  the standard realization of the euclidean symmetric group $\cox(\wt
  A_{n-1})$.  Its image in $\midd(B_n)$ is normal and the quotient
  sends $u \in \midd(B_n)$ to the sum of the coordinates of the image
  of the origin under $u$. We call the map from $\midd(B_n) \onto \Z$
  the \emph{vertical displacement map}.  The map from $\art(B_n)$ to
  $\midd(B_n)$ is clear when these are viewed as the annular braid
  group and annular symmetric group and the horizontal maps along the
  top row are well-known \cite{ChPe03}.  More precisely, the map from
  $\art(B_n)$ to $\Z$ sends elements to the sum of the winding numbers
  of the various paths from the disk to itself and the kernel of this
  map, the set of annular braids with global winding number $0$ is the
  group $\art(\wt A_{n-1})$.  We call the map from $\art(B_n) \onto
  \Z$ the \emph{global winding number map}.
\end{defn}

The middle column of Figure~\ref{fig:middle-maps} can be understood
via presentations.

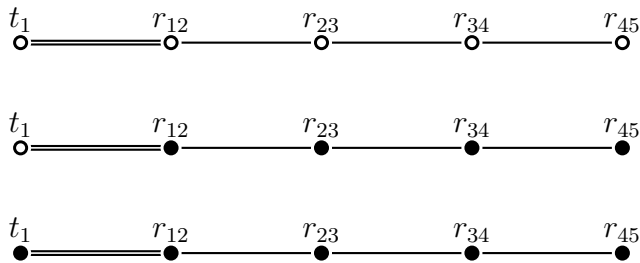
\begin{figure}
  \begin{center}
  \begin{tikzpicture}[scale=2]
    \foreach \y in {-.7,0,.7} {
      \draw[double,thick] (-2,\y)--(-1,\y);
      \draw[-,thick] (-1,\y)--(2,\y);
      \foreach \x in {-2,-1,0,1,2} {
        \fill[color=white] (\x,\y) circle (.7mm);
        \fill (\x,\y) circle (.5mm);}
      \draw (-2,\y) node [anchor=south] {$t_1$};
      \draw (-1,\y) node [anchor=south] {$r_{12}$};
      \draw (0,\y) node [anchor=south] {$r_{23}$};
      \draw (1,\y) node [anchor=south] {$r_{34}$};
      \draw (2,\y) node [anchor=south] {$r_{45}$};
    }
    \foreach \x in {-2,-1,0,1,2} \fill[color=white] (\x,.7) circle (.3mm);
    \fill[color=white] (-2,0) circle (.3mm);
  \end{tikzpicture}
  \end{center}
  \caption{Dynkin-diagram style presentations for the three groups
    $\art(B_5)$, $\midd(B_5)$ and $\cox(B_5)$.  In these diagrams
    solid circles indicate generators of order $2$ and empty circles
    indicate generators of infinite order. \label{fig:middle-dynkin}}
\end{figure}

\begin{defn}[Presentation]\label{def:middle-pres}
  A standard minimal generating set for $\midd(B_n)$ consists of
  adjacent transpositions $S = \{r_{ij} \mid j = i+1 \} \subset
  \mathcal{R}$ and the single translation $t_1$.  The set $S$
  generates all coordinate permutations and the other basic
  translations can be obtained by conjugating $t_1$ by a permutation.
  There are a number of obvious relations among these generators in
  addition to the standard Coxeter presentation for the symmetric
  group.  For example, $t_1$ commutes with $r_{ij}$ for $i,j >1$ and
  $t_1 r_{12} t_1 r_{12} = r_{12} t_1 r_{12} t_1$ since both motions
  are translations which add $1$ to the first two coordinates leaving
  the others unchanged.  These relations can be summarized in a
  diagram following the usual conventions: generators label the
  vertices, vertices not connected by an edge indicate generators
  which commute, vertices connect by a single edge indicate generators
  $a$ and $b$ which ``braid'' (i.e. $aba = bab$) and vertices
  connected by a double edge indicate generators $a$ and $b$ which
  satisfy the relation $abab = baba$.  For Coxeter groups, the
  generators have order $2$ and for Artin groups they have infinite
  order.  The middle groups are a mixed case: $t_1$ has infinite order
  but the adjacent transpositions have order~$2$.  See
  Figure~\ref{fig:middle-dynkin}.  It is an easy exercise to show that
  the relations encoded in the diagram for $\midd(B_n)$ are a
  presentation and as a consequence the surjections from $\art(B_n)$
  to $\midd(B_n)$ to $\cox(B_n)$ become clear.  Also note that the
  composition of these maps is the standard projection map from
  $\art(B_n)$ to $\cox(B_n)$.
\end{defn}

And finally we record a slightly more general context where groups
isomorphic to middle groups arise. These are the exact conditions
which occur in the later sections.

\begin{prop}[Recognizing middle groups]\label{prop:middle-recognize}
  Suppose the symmetric group $\sym_n$ acts faithfully by isometries
  on an $m$-dimensional euclidean space with root system $\Phi$ and $m
  \geq n$.  In addition, let $r$ be an element of a Coxeter generating
  set $S$ for $\sym_n$ representing one of two ends of the
  corresponding Dynkin diagram (so that $r$ does not commute with
  exactly one element of $S$).  If $t=t_\lambda$ is a translation such
  that $t$ does not commute with $r$, $t$ does commute with the rest
  of $S$, and $\lambda$ is not in the span of root system $\Phi$, then
  the group of isometries generated by $S \cup \{t\}$ is isomorphic to
  $\midd(B_n)$.
\end{prop}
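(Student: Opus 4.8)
The plan is to compare $G := \langle S \cup \{t\}\rangle$ directly with the semidirect-product description $\midd(B_n) = \Z^n \rtimes \sym_n$ and the Dynkin-style presentation of Definition~\ref{def:middle-pres}. Since $\langle S\rangle = \sym_n$ is finite it fixes a point (the barycenter of an orbit), so after translating coordinates I may assume $\sym_n$ acts linearly, fixing the origin; then each $s\in S$ is a linear reflection $r_{\alpha_s}$, and conjugation of a translation obeys $\sigma t_\mu \sigma^{-1}=t_{\sigma(\mu)}$ for $\sigma\in\sym_n$. In particular $t_\lambda$ commutes with $r_{\alpha}$ exactly when $\lambda\perp\alpha$, which is how I will read the two commuting hypotheses.

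First I would produce a surjection $\phi\colon \midd(B_n)\onto G$. Label $S=\{s_1,\dots,s_{n-1}\}$ so that $r=s_1$ is the prescribed end node (adjacent only to $s_2$), and set $\phi(t_1)=t$, $\phi(r_{12})=s_1$, and $\phi(r_{i,i+1})=s_i$. The Coxeter relations among the $s_i$ hold by hypothesis, $t$ commutes with $s_2,\dots,s_{n-1}$ by hypothesis, and $t$ has infinite order as a nontrivial translation. The only remaining defining relation is the double-edge relation $t\,r\,t\,r = r\,t\,r\,t$, and this holds \emph{automatically} for any translation $t=t_\lambda$ and reflection $r$: using $r\,t_\lambda\,r = t_{r(\lambda)}$ one checks that both sides equal the translation $t_{\lambda+r(\lambda)}$. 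Since these relations present $\midd(B_n)$, the assignment extends to a surjection $\phi$.

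For injectivity I would use the semidirect structure of $G$. Let $L\trianglelefteq G$ be the normal closure of $t$; because $\sym_n$ acts linearly and translations commute, $L$ is the abelian group generated by the orbit $\{\sigma(\lambda):\sigma\in\sym_n\}$, and $G = L\rtimes \sym_n$ since $L\cap\sym_n=\{1\}$ (a nonidentity translation is never a linear isometry). The entire content is then to show $L\cong\Z^n$ with $\sym_n$ permuting a basis. Writing $\lambda=\lambda_0+\mu$ with $\lambda_0\in V^\perp$ and $\mu\in V:=\spn(\Phi)$, the commuting hypotheses say $\mu\perp\alpha_j$ for $j\ge 2$ and $\mu\cdot\alpha_1\neq 0$, so $\mu$ is fixed by the maximal parabolic $\langle s_2,\dots,s_{n-1}\rangle\cong\sym_{n-1}$ but not by $s_1$; hence its orbit $\{\mu_1,\dots,\mu_n\}$ has exactly $n$ elements permuted in the standard way and $\sigma(\lambda)=\lambda_0+\mu_{\sigma(i)}$.

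The key step, and the one that consumes both hypotheses, is the linear independence of the $n$ vectors $v_i=\lambda_0+\mu_i$. I would verify it by separating $V^\perp$ and $V$ components: a relation $\sum c_i v_i=0$ forces $(\sum c_i)\lambda_0=0$ and $\sum c_i\mu_i=0$. The hypothesis $\lambda\notin\spn(\Phi)$ gives $\lambda_0\neq 0$, so $\sum c_i=0$; and since $\sym_n$ acts on $V$ as its standard reflection representation, the $\sym_{n-1}$-fixed line in $V$ is spanned by the projection $e_1-\tfrac1n\mathbf 1$ of a coordinate vector, so $\mu_i$ is a nonzero multiple of $e_i-\tfrac1n\mathbf 1$ (nonzero because $\mu\cdot\alpha_1\neq 0$), whence $\sum c_i\mu_i=0$ together with $\sum c_i=0$ forces all $c_i=0$. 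Thus $L$ is free abelian of rank $n$ with basis the orbit, $\phi$ restricts to isomorphisms on both $\Z^n$ and $\sym_n$, and $\phi$ is an isomorphism. The main obstacle is exactly this non-degeneracy computation: its two failure modes—two orbit vectors colliding, or the whole orbit collapsing into $\spn(\Phi)$—are precisely the ones excluded by the assumptions that $t$ does not commute with $r$ and that $\lambda$ lies outside $\spn(\Phi)$.
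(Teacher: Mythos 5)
Your proof is correct, but it takes a genuinely different route from the paper's. The paper's argument is purely geometric: it restricts to the $n$-dimensional invariant subspace spanned by $\Phi\cup\{\lambda\}$, chooses coordinates in which $\sym_n$ permutes coordinates and $\lambda=(a,b,\dots,b)$ with $a\neq b$, and then conjugates $G$ by a linear map that fixes $\spn(\Phi)$ pointwise and rescales the perpendicular direction $\mathbf 1$ --- a map that commutes with every generator of $G$ --- so as to land exactly on the standard realization of $\midd(B_n)$ from Definition~\ref{def:perm-trans}. That yields an explicit conjugacy of isometry groups in one stroke, at the cost of the slightly delicate observation that such rescalings centralize $G$. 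Your argument is instead algebraic: a surjection $\midd(B_n)\onto G$ built from the Dynkin-style presentation of Definition~\ref{def:middle-pres} (note that you are leaning on the paper's unproved ``easy exercise'' that those relations do present $\midd(B_n)$; your observation that $trtr=rtrt$ holds automatically for any translation and any linear reflection is a nice touch), followed by injectivity via the decomposition $G=L\rtimes\sym_n$ and the verification that the $\sym_n$-orbit of $\lambda$ is a free basis of the translation lattice $L$. What your version buys is transparency about the hypotheses: $t\not\perp$ the root of $r$ forces the orbit of the $\spn(\Phi)$-component to have exactly $n$ distinct elements, and $\lambda\notin\spn(\Phi)$ forces $\lambda_0\neq 0$, which is exactly what pushes the rank of $L$ up from $n-1$ to $n$. (In fact, once you know $L\cong\Z^n$ with $\sym_n$ permuting the orbit basis, the isomorphism with $\Z^n\rtimes\sym_n$ follows directly from the semidirect-product structure, so the presentation step could be bypassed entirely.) One trivial slip: $\sigma(\lambda)=\lambda_0+\mu_{\sigma(1)}$, not $\mu_{\sigma(i)}$; this does not affect the argument.
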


\begin{proof}
  Let $G$ be the group generated by these elements.  First pick a
  point fixed by $\sym_n$ to serve as our origin and consider the
  $n$-dimensional subspace of $\R^m$ through this point spanned by the
  vectors $\Phi \cup \{\lambda\}$.  Because $S \cup \{t\}$ preserves
  this subspace and fixes its orthogonal complement, the same is true
  for the group $G$ that these isometries generate.  Thus we may
  restrict our attention to this subspace.  Next establish a
  coordinate system on this $\R^n$ so that $\sym_n$ is acting by
  coordinate permutations with the elements of $S$ switching adjacent
  coordinates and $r = r_{12}$.  From the conditions imposed on $t$ we
  know that in this coordinate system $\lambda = (a,b,b,\ldots,b)$
  with $a \neq b$.  Finally, note that the generators of $G$ (and thus
  every element of $G$) commute with the linear maps which fix the
  codimension one subspace spanned by $\Phi$ and rescale the vectors
  perpendicular to this subspace, i.e. the vectors with all
  coordinates equal.  After conjugating $G$ by the appropriate such
  map, we get the standard realization of $\midd(B_n)$ and because
  this conjugation is reversible the groups are isomorphic.
\end{proof}

\section{Intervals and noncrossing partitions}\label{sec:special-noncross}

Section~\ref{sec:perm-trans} discussed the groups $\cox(B_n)$,
$\midd(B_n)$ and $\art(B_n)$, the maps between them, and a
consistently labeled minimal generating set $\{t_1\} \cup S$ with $S =
\{r_{ij} \mid j = i+1\}$.  In this section we investigate intervals in
these groups and relate them to noncrossing partitions.

\begin{defn}[Special elements]\label{def:special-elts}
  Recall that a \emph{Coxeter element} is a element obtained by
  multiplying together the elements of some Coxeter generating set in
  a Coxeter group in some order and that for spherical Coxeter groups,
  or more generally for Coxeter groups whose Dynkin diagram is a tree,
  all Coxeter elements belong to a single conjugacy class.  For the
  group $\cox(B_n)$ we pick as our \emph{standard Coxeter element} $w$
  the product on the standard minimal generating set in the order they
  appear in the Dynkin diagram: $t_1$, $r_{12}$, $r_{23}$, and so on.
  Thus, for $\cox(B_5)$, shown in Figure~\ref{fig:middle-dynkin}, we
  have $w = t_1 r_{12} r_{23} r_{34} r_{45}$ and since we compose
  these as functions (from right to left) the element $w$ sends the
  point $(x_1,x_2,x_3,x_4,x_5)$ to the point $(-x_5, x_1, x_2, x_3,
  x_4)$.  The groups $\midd(B_n)$ and $\art(B_n)$ have analogues of the
  standard Coxeter element obtained by multiplying the corresponding
  generators together in the same fashion.  In $\midd(B_5)$ the
  resulting euclidean isometry $w = t_1 r_{12} r_{23} r_{34} r_{45}$
  sends the point $(x_1,x_2,x_3,x_4,x_5)$ to the point $(x_5+1, x_1,
  x_2, x_3, x_4)$ which is consistent with the reinterpretation of
  $t_1$ as a translation.  We call $w$ the \emph{special element} in
  all three contexts but in $\cox(B_n)$ it is more properly called a
  Coxeter element and in $\art(B_n)$ it is a \emph{dual Garside
    element}.
\end{defn}

\begin{figure}
  \begin{tikzpicture}[scale=3]
    \definecolor{lightblue}{rgb}{.85,.85,1};
    \definecolor{mediumblue}{rgb}{.7,.7,1};
    \fill[color=lightblue]
    (0:1cm)--(36:1cm)--(72:1cm)--(108:1cm)--(144:1cm)--(180:1cm)--
    (216:1cm)--(252:1cm)--(288:1cm)--(324:1cm)--cycle;
    \draw[dashed] (0:1cm)--(36:1cm)--(72:1cm)--(108:1cm)--(144:1cm)--(180:1cm)--
    (216:1cm)--(252:1cm)--(288:1cm)--(324:1cm)--cycle;

    \fill[color=mediumblue]  (36:1cm)--(72:1cm)--(324:1cm)--cycle;
    \fill[color=mediumblue]  (216:1cm)--(252:1cm)--(144:1cm)--cycle;
    \draw[thick] (36:1cm)--(72:1cm)--(324:1cm)--cycle;
    \draw[thick] (216:1cm)--(252:1cm)--(144:1cm)--cycle;
    \draw[thick] (108:1cm)--(288:1cm);

    \draw (0:1cm) node [anchor=west] {$-e_1$};
    \draw (36:1cm) node [anchor=south west] {$e_5$};
    \draw (72:1cm) node [anchor=south west] {$e_4$};
    \draw (108:1cm) node [anchor=south east] {$e_3$};
    \draw (144:1cm) node [anchor=south east] {$e_2$};
    \draw (180:1cm) node [anchor=east] {$e_1$};
    \draw (216:1cm) node [anchor=north east] {$-e_5$};
    \draw (252:1cm) node [anchor=north east] {$-e_4$};
    \draw (288:1cm) node [anchor=north west] {$-e_3$};
    \draw (324:1cm) node [anchor=north west] {$-e_2$};
    \foreach \x in {0,1,2,3,4,5,6,7,8,9} {\fill (36*\x:1cm) circle (.3mm);}
    \foreach \x in {0,1,2,3,4,5,6,7,8,9} {\fill[color=white] (36*\x:1cm) circle
      (.2mm);}
  \end{tikzpicture}
  \caption{A centrally symmetric noncrossing partition in a regular
    convex decagon.\label{fig:nc}}
\end{figure}
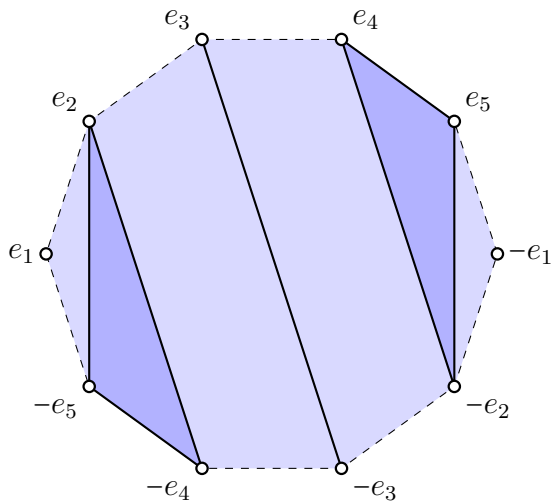

\begin{defn}[Noncrossing partitions]
  A \emph{noncrossing partition} is a partition of the vertices of a
  regular convex polygon so that the convex hulls of distinct blocks
  are disjoint.  A \emph{noncrossing partition of type $B$} is a
  noncrossing partition of an even-sided polygon whose blocks are
  symmetric with respect to a $\pi$-rotation about its center.
  Figure~\ref{fig:nc} shows a type $B$ noncrossing partition.  One
  partition is below another if every block of the first is contained
  in some block of the second.  Thus the partition where every block
  is a singleton is the minimum element and the partition with only
  one block is the maximum element.  
\end{defn}

It is well-known, at this point, that the type $B$ noncrossing
partitions correspond to the special interval in the type $B$ Coxeter
group \cite{Re97}.

\begin{lem}[Type $B$ intervals]
  If $W = \cox(B_n)$ is the type $B$ Coxeter group generated by all of
  its reflections and $w$ is its special element, then there is a
  natural identification of the interval $[1,w]^W$ and the type $B$
  noncrossing partitions of a $2n$-gon.
\end{lem}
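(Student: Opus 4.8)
The plan is to exhibit an explicit bijection between minimal-length factorizations of the special element $w$ over the reflection set $R$ and the centrally symmetric noncrossing partitions of a $2n$-gon, and then check that this bijection is order-preserving and order-reflecting so that it becomes a poset isomorphism. First I would set up the geometry. The group $\cox(B_n)$ acts on $\R^n$ by signed permutations, and I would label the $2n$ vertices of a regular convex $2n$-gon by the vectors $e_1,\ldots,e_n,-e_1,\ldots,-e_n$ arranged cyclically in the manner already pictured in Figure~\ref{fig:nc}, with the antipodal map on the polygon matching $x \mapsto -x$ on coordinates. The special element $w = t_1 r_{12} r_{23}\cdots r_{n-1,n}$ acts as the signed cycle sending $e_i \mapsto e_{i+1}$ and $e_n \mapsto -e_1$, which is precisely a $\pi/n$ rotation of the labeled polygon. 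This identifies $w$ with a rotation whose orbit on vertices is a single $2n$-cycle, exactly the structure that governs type $B$ noncrossing partitions.

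Next I would recall the dictionary between reflections and chords. Each reflection $r_{ij}$ fixes a hyperplane $x_i = x_j$ and corresponds to the chord (or pair of antipodal chords) pairing the appropriate polygon vertices, while each $t_i$-type reflection fixes $x_i = 0$ and corresponds to the long diagonal through an antipodal pair of vertices. Under this correspondence a factorization $w = \rho_1 \rho_2 \cdots \rho_n$ into $n$ reflections assembles the associated chords into a centrally symmetric partition, and I would verify that the factorization has minimum length exactly when the resulting chord diagram is noncrossing (equivalently, when the partial products trace a geodesic, which by the interval definition is automatic for elements of $[1,w]^W$). The standard tool here is the formula for the reflection length in $\cox(B_n)$ in terms of the number of nontrivial blocks and symmetric pairs, so that $\ell_R(g) = n$ forces the noncrossing condition; this is the content already attributed to Reiner~\cite{Re97}.

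The core of the argument is matching the two partial orders. On the poset side, $u \le v$ in $[1,w]^W$ means $d(1,u) + d(u,v) + d(v,w) = d(1,w)$, i.e. there is a minimal factorization of $w$ that passes through $u$ and then $v$. On the partition side, one partition refines another. I would show these coincide by proving that left-divisibility of $w$ by a prefix reflection word corresponds exactly to the chords of the prefix being a noncrossing sub-partition whose blocks merge appropriately as one moves up. This is where I expect the main obstacle to lie: one must handle the centrally symmetric blocks (the ones stabilized by the antipodal map, arising from the $t_i$-type reflections) separately from the paired ordinary blocks, because a symmetric block uses a long diagonal and consumes a reflection of a different geometric type. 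Keeping the central symmetry consistent under refinement — ensuring that whenever a noncrossing partition of type $B$ refines another the corresponding reflection subwords are genuinely comparable in the interval, and conversely — requires care about how the single central block can split. I would organize this by induction on $n$, peeling off the outermost chord incident to a fixed vertex and appealing to the recursive self-similar structure of both noncrossing partitions and the interval, which reduces the symmetric and non-symmetric cases to smaller instances already understood.

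Finally, having established that the map is a well-defined bijection preserving and reflecting the order in both directions, I would conclude that it is an isomorphism of posets, which is the asserted natural identification. The naturality comes for free from the fact that both constructions are built equivariantly from the cyclic action of $w$, so the identification commutes with the rotational symmetry generated by $w$; I would remark on this to justify the word ``natural'' in the statement.
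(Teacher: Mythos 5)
Your proposal is correct and follows essentially the same route as the paper: both label the vertices of the $2n$-gon by $\{\pm e_i\}$ so that $w$ acts as a clockwise rotation, and both identify an element $u$ of the interval with the centrally symmetric noncrossing partition whose blocks are the orbits of $u$ on these labels (equivalently, whose chords are the reflections in a minimal factorization of $u$). The paper simply states this identification and leans on Reiner's result for the substance, whereas you sketch the verification (reflection-length count, order-compatibility by induction) in more detail; there is no genuine difference in approach.
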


\begin{proof}
  Every element in $W$ is determined by how it permutes the $2n$ unit
  vectors $\{ \pm e_i\}$ on the coordinate axes and under the element
  $w$ these form a single cycle of length $2n$: it sends $\pm e_i$ to
  $\pm e_{i+1}$ for $i<n$ and $\pm e_n$ to $\mp e_1$.  If we label the
  vertices of a $2n$-gon with the vectors in $\{ \pm e_i\}$ so that
  $w$ permutes them in a clockwise fashion (the case $n=5$ is shown in
  Figure~\ref{fig:nc}), then the interval $[1,w]^W$ is isomorphic as a
  poset to the type $B$ noncrossing partitions of this $2n$-gon.  The
  identification goes as follows: associate to each type $B$
  noncrossing partition the unique element of $W$ which sends the
  vector $e_i$ to the vector which occurs next in clockwise order in
  the boundary of the block to which $e_i$ belongs.  For example, the
  element $u$ corresponding to the partition shown in
  Figure~\ref{fig:nc} sends $e_1$ to $e_1$, $e_2$ to $-e_4$, $e_3$ to
  $-e_3$ and $e_4$ to $e_5$ and $e_5$ to $-e_2$.  Conversely, $u\in W$
  lies in the interval $[1,w]^W$ if and only if the orbits of these
  vectors under $u$ form noncrossing blocks which $u$ rotates in a
  clockwise manner.
\end{proof}
  
The following is thus only a minor extension of known results.

\begin{thm}[Special intervals]\label{thm:special-ints}
  Let $W = \cox(B_n)$, $M = \midd(B_n)$ and $A = \art(B_n)$ be the
  Coxeter group of type $B$, the middle group, and the Artin group of
  type $B$ with their standard full generating sets and let $w$ denote
  the special element in all three contexts.  The intervals $[1,w]^W$,
  $[1,w]^M$ and $[1,w]^A$ are isomorphic as labeled posets and their
  common underlying poset structure is that of the type $B$
  noncrossing partition lattice.  As a consequence, the group obtained
  by pulling a middle group apart at its special element is an annular
  braid group.
\end{thm}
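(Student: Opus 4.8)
The plan is to leverage the preceding lemma, which already identifies $[1,w]^W$ with the type $B$ noncrossing partition lattice, and to transport that identification across the two surjections $\art(B_n)\onto\midd(B_n)\onto\cox(B_n)$ of Figure~\ref{fig:middle-maps}. First I would record that $w$ has length exactly $n$ in all three groups with respect to their full generating sets. In $W$ this holds because $w$ is a Coxeter element of a rank-$n$ group. In $M$ the vertical displacement map $\midd(B_n)\onto\Z$ sends $w$ to $1$ and each reflection to $0$, so at least one translation generator must appear in any factorization, while the underlying $n$-cycle in $\sym_n$ forces at least $n-1$ transpositions; the explicit factorization $w=t_1 r_{12} r_{23}\cdots r_{n-1,n}$ meets both bounds, so $\ell_M(w)=n$. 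Since the surjection $\art(B_n)\onto\midd(B_n)$ sends band generators to generators preserving labels, the same factorization gives $\ell_A(w)=n$ as well.

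Because each surjection sends full generating set onto full generating set preserving labels and sends the special element to the special element, and because $\ell(w)=n$ is preserved, every minimal factorization of $w$ upstairs maps to a minimal factorization downstairs. This yields label- and rank-preserving poset surjections $[1,w]^A\onto[1,w]^M\onto[1,w]^W$. The crux is to promote these surjections to isomorphisms, for which I would show that a minimal factorization of $w$ in $W$ lifts uniquely along each map. The annular symmetric group description of $\midd(B_n)$ makes this transparent: reducing winding numbers modulo $2$ is exactly the map to $\cox(B_n)$, and since $w$ winds only once, the winding data of any element below $w$ is confined to a single band and is therefore recovered unambiguously from its image. Concretely, the centrally symmetric (type $B$) noncrossing partitions of the $2n$-gon are precisely the annular noncrossing structures on the $n$ points that $[1,w]^M$ parametrizes. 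An equivalent and perhaps cleaner route is a cardinality count: a rank-preserving surjection between finite graded posets of equal size is forced to be a bijection, so it suffices to check that $\vert[1,w]^M\vert=\vert[1,w]^A\vert=\vert[1,w]^W\vert$, all equal to the type $B$ noncrossing partition number. Either way the three labeled posets coincide, and in particular $[1,w]^M$ and $[1,w]^A$ inherit the lattice structure of $NC_B(n)$.

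For the final assertion I would use that an interval group depends only on the labeled interval from which it is built. Since $[1,w]^M$ and $[1,w]^A$ agree as labeled posets, the interval groups agree as well, so $M_w\cong A_w$. Pulling the spherical Artin group $A=\art(B_n)$ apart at its dual Garside element $w$ returns the group itself, because $[1,w]^A$ is precisely the defining interval of the dual Garside structure on $\art(B_n)$ established by Bessis and by Brady and Watt. Hence $M_w\cong A_w\cong\art(B_n)$, which is the annular braid group in the sense of the introduction.

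The main obstacle is the injectivity step. Surjectivity and the compatibility of the maps are essentially formal once the length computation $\ell(w)=n$ is in hand, but ruling out a collapse of two distinct elements of $[1,w]^M$ under the quotient to $\cox(B_n)$ requires genuinely controlling the winding numbers of elements below $w$; this is exactly where the annular picture (or, alternatively, the exact type $B$ noncrossing-partition count) does the real work, and I would organize the write-up so that this single geometric observation supplies injectivity for both maps at once.
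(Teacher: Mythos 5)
Your proposal is correct and follows essentially the same route as the paper: both cite the known isomorphism of $[1,w]^W$ and $[1,w]^A$ with the type $B$ noncrossing partition lattice coming from the dual Garside theory of spherical Artin groups, and both reduce the theorem to showing that minimal factorizations of $w$ lift uniquely from $\cox(B_n)$ to $\midd(B_n)$ --- the paper phrases this as lifting factorizations of an isometry of the torus $T^n$ to $\R^n$ (left as an easy exercise), which is the same geometric observation as your control of winding numbers in the annular model. One small caution about your alternative cardinality-count route: a rank-preserving order-preserving bijection between graded posets need not have an order-preserving inverse, so even after matching cardinalities you would still need the lifting argument to see that comparabilities in $[1,w]^W$ pull back to $[1,w]^M$.
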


\begin{proof}
  It is well known that the intervals $[1,w]^W$ and $[1,w]^A$ are
  isomorphic as labeled posets, that their common underlying poset is
  the type $B$ noncrossing partition lattice and that $A$ is the group
  whose presentation is encoded in $[1,w]^W$.  This is essentially
  what is meant when we say that spherical Artin groups have dual
  Garside presentations.  In particular, the final assertion is
  immediate once we show that $[1,w]^M$ is isomorphic to the others as
  a labeled poset.  Showing that all the factorizations in $[1,w]^W$
  lift from a factorization of an isometry of the $n$-torus $T^n$ to a
  factorization of the corresponding isometry of $\R^n$ and that no
  new factorizations arise is an easy exercise.
\end{proof}

These noncrossing diagrams make it easy to show that very few simples
commute with the special element.

\begin{prop}[Commuting with $w$]\label{prop:commuting-with-w}
  Let $G$ be the group $\cox(B_n)$, $\midd(B_n)$ or $\art(B_n)$ with
  its standard generating set and let $w$ be its special element.  The
  only elements in $[1,w]^G$ that commute with $w$ are the bounding
  elements $1$ and $w$.
\end{prop}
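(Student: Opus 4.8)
The plan is to run the entire argument inside the type $B$ noncrossing partition model supplied by Theorem~\ref{thm:special-ints}. Since the three intervals $[1,w]^W$, $[1,w]^M$ and $[1,w]^A$ are isomorphic as labeled posets, and since conjugation by $w$ restricts on each interval to the same intrinsic lattice automorphism (the one sending a simple to the left complement of its left complement, as recorded just before Proposition~\ref{prop:nf}), it suffices to analyze the fixed points of this automorphism in a single case. I would take $G = \cox(B_n)$, where the automorphism is transparently geometric: an element $u \in [1,w]^G$ satisfies $uw = wu$ exactly when $wuw^{-1} = u$, and $wuw^{-1}$ corresponds to the noncrossing partition obtained from that of $u$ by the rotation $\rho$ of the $2n$-gon by one vertex, because $w$ itself acts on the labeling vectors $\{\pm e_i\}$ as a single $2n$-cycle, i.e.\ as $\rho$. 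Thus commuting with $w$ is equivalent to the noncrossing partition of $u$ being invariant under $\rho$, and the bounding elements $1$ and $w$ correspond to the all-singletons partition and the one-block partition, both of which are visibly $\rho$-invariant.

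The heart of the proof is then the combinatorial claim that these are the only $\rho$-invariant noncrossing partitions of a $2n$-gon. I would argue as follows. Suppose $P$ is $\rho$-invariant and has a block of size at least two, and let $g$ be the smallest positive integer for which some two vertices $i$ and $i+g$ (indices mod $2n$) lie in a common block of $P$. By $\rho$-invariance this forces $j$ and $j+g$ to share a block for every $j$, and since the set of realized differences is symmetric under negation one checks that $g \le n$. If $g \ge 2$, then the chord from $0$ to $g$ is a boundary edge of the convex hull of the block containing $0$ (minimality of $g$ rules out block-mates strictly between them on the short arc), while the interior vertex $1$ shares a block with $1+g$, a vertex lying strictly outside the closed arc from $0$ to $g$; the chord from $1$ to $1+g$ then crosses the chord from $0$ to $g$, contradicting the noncrossing condition. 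Hence $g = 1$, so consecutive vertices always share a block and $P$ is the single-block partition. This leaves only the two trivial partitions, which is exactly the assertion.

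The main obstacle I anticipate is not the combinatorial lemma, which is elementary, but the clean identification in the first paragraph that conjugation by $w$ acts as the one-step rotation on the partition model simultaneously for all three groups. For $\cox(B_n)$ and $\midd(B_n)$ this is immediate, since their elements are honest isometries determined by their action on $\{\pm e_i\}$ and $w$ literally rotates them; for $\art(B_n)$ one must instead invoke that the restriction of conjugation by $w$ to the interval is determined by the labeled lattice structure alone, so that the labeled-poset isomorphism of Theorem~\ref{thm:special-ints} transports the fixed-point analysis verbatim. Once this transport is justified, the conclusion for all three groups follows at once.
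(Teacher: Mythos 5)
Your proof is correct and follows essentially the same route as the paper's: both identify conjugation by $w$ with the one-step rotation of the $2n$-gon under the noncrossing partition model of Theorem~\ref{thm:special-ints} and observe that only the two trivial partitions are invariant under this rotation. The only difference is that you supply an explicit combinatorial argument (via the minimal realized difference $g$) for the invariance claim that the paper simply asserts, and you spell out the transport of the fixed-point analysis to $\art(B_n)$ through the complement map; both additions are sound.
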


\begin{proof}
  If an element $u$ in $[1,w]^G$ commutes with $w$ then it must
  correspond to a centrally symmetric noncrossing partition of a
  $2n$-gon that is invariant under a $\frac{\pi}{n}$-rotation since
  this is how conjugation by $w$ acts on the type $B$ noncrossing
  partitions.  The only noncrossing partitions left invariant under
  this action are the partition in which every vertex belongs to a
  distinct block and the partition in which all the vertices belong to
  single block, and these correspond to $1$ and $w$ respectively.
\end{proof}

\begin{rem}[Generators and relations]
  The generators and relations visible inside $[1,w]^M$ can be given
  more explicitly.  The edge labels in the interval $[1,w]^W$ are
  exactly the $n^2$ reflection generators of $W = \cox(B_n)$ but this
  finite set is far from the full (infinite) generating set of $M =
  \midd(B_n)$.  In fact, the only elements which appear are
  $\mathcal{T} = \{t_i\}$, $\mathcal{R} = \{r_{ij}\}$ and the set
  $\mathcal{R}(1) = \{ r_{ij}(1)\}$.  The element $t_i$ corresponds to
  the diagonal edge connecting $e_i$ and $-e_i$, the element $r_{ij}$
  corresponds to the pair of edges connecting $\pm e_i$ to $\pm e_j$
  and the element $r_{ij}(1)$ corresponds to the pair of edges
  connecting $\pm e_i$ to $\mp e_j$.  If two generators correspond to
  edges which are completely disjoint, then there exists a commutation
  relation visible in the interval $[1,w]^M$.  For example, the length
  four cycle $t_1 r_{23} = r_{23} t_1$ can be found inside $[1,w]^M$.
  If two generators correspond to pairs of nondiagonal edges with only
  one endpoint in common, then dual braid relations are visible inside
  $[1,w]^M$ of the following form: there are three generators $a$, $b$
  and $c$ with $ab = bc = ca$ visible in the interval.  To illustrate,
  the generators $r_{45}$ and $r_{25}(1)$ share an endpoint and we
  have relations $r_{45} r_{25}(1) = r_{25}(1) r_{24}(1) = r_{24}(1)
  r_{45}$ and so $r_{45}$ and $r_{25}(1)$ braid in the corresponding
  interval group.  Finally, the generators $t_i$, $r_{ij}$, $t_j$ and
  $r_{ij}(1)$ (with $i<j$) satisfy a dual Artin relation of length
  $4$: $t_i r_{ij} = r_{ij} t_j = t_j r_{ij}(1) = r_{ij}(1) t_i$.
  These relations, taken together, are a complete presentation for the
  spherical Artin group $\art(B_n)$.
\end{rem}

Our final result in this section gives a new perspective on the
horizontal maps between the first two columns of
Figure~\ref{fig:middle-maps}.

\begin{prop}[Horizontal maps]\label{prop:hor-maps}
  If $M = \midd(B_n)$ is a middle group with special element $w$, then
  $(1)$ the reflections labeling edges in $[1,w]^M$ generate a copy of
  $\cox(\wt A_{n-1})$ inside $M$, $(2)$ the group generated by these
  elements and subject only to the relations among them visible in
  $[1,w]^M$ is isomorphic to $\art(\wt A_{n-1})$, and $(3)$ the
  natural projection map from this group to $M$ factors through and
  injects into the annular braid group $\art(B_n)$.
\end{prop}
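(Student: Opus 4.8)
The plan is to treat the three assertions in turn, leaning on the explicit description of the generators and relations visible in $[1,w]^M$ recorded in the preceding remark and on the identification $M_w \cong \art(B_n)$ from Theorem~\ref{thm:special-ints}.

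For assertion $(1)$, first recall from that description that the \emph{reflections} among the edge labels of $[1,w]^M$ are exactly $\mathcal{R} \cup \mathcal{R}(1) = \{r_{ij}, r_{ij}(1)\}$, the translations $t_i$ being the only nonreflection labels. By Definition~\ref{def:middle-maps} the subgroup of $M$ generated by \emph{all} reflections $\{r_{ij}(k) : k \in \Z\}$, acting on the hyperplane perpendicular to $\mathbf{1}$, is the standard realization of $\cox(\wt A_{n-1})$, and its full reflection set is precisely $\{r_{ij}(k)\}$. So it suffices to observe that for each fixed pair $i<j$ the product $r_{ij}(1)\, r_{ij}(0)$ is a nonzero translation along the root direction $e_i - e_j$; iterating this translation and its inverse against $r_{ij}(0)$ produces every $r_{ij}(k)$. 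Hence $\langle \mathcal{R} \cup \mathcal{R}(1)\rangle$ already contains all the reflections of $\cox(\wt A_{n-1})$ and therefore equals it, which proves $(1)$.

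For assertion $(2)$, I would read off from the same description the complete list of relations among the reflection labels visible in $[1,w]^M$: a commutation $ab = ba$ whenever the corresponding nondiagonal edges are disjoint, and a dual braid triangle $ab = bc = ca$ whenever they share exactly one endpoint. The goal is to recognize this presentation as a dual Artin presentation. Concretely, I would produce a Coxeter element $w'$ of $W' = \cox(\wt A_{n-1})$ --- the natural candidate being the horizontal part of $w$, realized as $r_{12} r_{23} \cdots r_{(n-1)n}\, r_{1n}(1)$ traversing the $n$-cycle $\wt A_{n-1}$ diagram --- and check that the reflections occurring in $[1,w']^{W'}$ and the relations visible there agree, label for label, with those just extracted from $[1,w]^M$. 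This identifies the group $G$ in the statement with $\dart(\wt A_{n-1}, w')$. Since type $\wt A$ is one of the euclidean types whose dual presentation is Garside, the interval $[1,w']^{W'}$ is a lattice, so Proposition~\ref{prop:lattice-garside} applies, and the known identification of the type $\wt A$ dual Artin group with the Artin group \cite{Digne12} gives $G \cong \art(\wt A_{n-1})$. I expect this matching step to be the main obstacle: because the translation $t_1$ inside $w$ cannot itself be written using the reflection labels (its direction $e_1$ is not in the $\wt A_{n-1}$ root span), one must argue carefully that passing to the reflection-only sublattice of the type $B$ noncrossing partition lattice reproduces exactly the type $\wt A$ noncrossing structure, with no relations gained or lost.

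For assertion $(3)$, the natural map $G \to M$ factors through $M_w \cong \art(B_n)$ because the generators and relations defining $G$ form a subset of those defining the interval group $M_w$; under the poset isomorphism of Theorem~\ref{thm:special-ints} the reflection labels $r_{ij}, r_{ij}(1)$ are carried to the corresponding band generators of $\art(B_n)$. For injectivity I would match this map with the classical embedding $\art(\wt A_{n-1}) \into \art(B_n)$ of the top row of Figure~\ref{fig:middle-maps} \cite{ChPe03}, using part $(2)$ to identify the source; the one remaining point is to confirm that the two maps agree on generators. Alternatively, one argues directly in the spirit of Proposition~\ref{prop:inj}: the reflection-labeled simples embed meet- and join-preservingly into $[1,w]^M$, so the Garside normal form in $\art(B_n)$ of any nontrivial element of $G$ stays nontrivial, yielding injectivity without appeal to the external embedding.
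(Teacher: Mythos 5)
Parts (1) and (3) of your argument are essentially sound. For (1) you generate every $r_{ij}(k)$ from the visible reflections via the translations $r_{ij}(1)\,r_{ij}(0)$; this is a valid (and slightly different) route to the conclusion the paper reaches by exhibiting the chamber-bounding subset $\{r_{i,i+1}\}\cup\{r_{1n}(1)\}$ in the $\wt A_{n-1}$ tiling of the hyperplane orthogonal to $\mathbf{1}$. For (3) your primary route --- matching the composite with the standard inclusion $\art(\wt A_{n-1})\into\art(B_n)$ of \cite{ChPe03} --- is exactly what the paper does. (Your fallback for (3) via Proposition~\ref{prop:inj} does not apply as stated: $w$ itself is not an element of the subgroup generated by the reflection labels, so the reflection-labeled portion of $[1,w]^M$ is not an interval of the form $[1,w]^{G'}$ and the hypotheses of that proposition are not met.)

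The genuine gap is in (2). The presentation you extract from $[1,w]^M$ has the finite generating set $\{r_{ij}\}\cup\{r_{ij}(1)\}$, but the dual Artin presentation of $\dart(\wt A_{n-1},w')$ that you propose to match it against has \emph{infinitely} many generators: since $w'$ is a Coxeter element of a euclidean Coxeter group, the interval $[1,w']^{W'}$ contains every vertical reflection of $\cox(\wt A_{n-1})$ (all reflections whose fixed hyperplanes contain an axial vertex occur as edge labels), so the ``label for label'' agreement you are counting on is impossible. Repairing this would require sending those infinitely many dual generators into your finitely generated group and verifying infinitely many relations, which is essentially the hard content of Theorem~\ref{main:isomorphic} and would risk circularity; moreover Digne's Garside and isomorphism results in type $\wt A$ apply only to one conjugacy class of Coxeter element, so your particular $w'$ would still need to be checked against his. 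The paper's proof avoids all of this: the reflections corresponding to boundary edges of the $2n$-gon, namely $r_{12},\dots,r_{n-1,n},r_{1n}(1)$, pairwise either share an endpoint (hence braid) or are disjoint (hence commute), so they satisfy exactly the \emph{standard} Artin presentation of $\art(\wt A_{n-1})$ on the $n$-cycle diagram; the remaining reflection generators are then written in terms of these using the visible dual braid relations and checked to impose nothing new. You should replace the dual-presentation matching with this direct identification.
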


\begin{proof}
  The reflections labeling an edge in $[1,w]^M$ are $\{r_{ij}\} \cup
  \{r_{ij}(1)\}$ and the subset $\{r_{ij} \mid j = i+1\} \cup
  \{r_{1n}(1)\}$ is already sufficient to generate the $\cox(\wt
  A_{n-1})$ subgroup of $M$ since they bound a chamber in the $\wt
  A_{n-1}$ tiling of the hyperplane orthogonal to the vector
  $\mathbf{1} = \langle 1^n \rangle$.  Next, notice that these
  elements correspond to the boundary edges of the $2n$-gon for $w$.
  As such they never cross and either braid or commute depending on
  whether or not they have endpoints in common.  Thus the group
  defined by just these generators and relations is isomorphic to the
  group $\art(\wt A_{n-1})$.  It is now straightforward to check the
  other generators and relations are consistent with this
  identification and what we have described is the standard copy of
  $\art(\wt A_{n-1})$ inside $\art(B_n)$.
\end{proof}

\part{New Groups}\label{part:new}
In this part we introduce several new groups closely related to each
irreducible euclidean Coxeter group and its corresponding Artin group.

\section{Intervals in euclidean Coxeter groups}\label{sec:euc-intervals}

Let $W = \cox(\wt X_n)$ be an irreducible euclidean Coxeter group with
reflections $R$ and Coxeter element $w$.  The coarse structure of the
interval $[1,w]^W$ was determined in the earlier articles
\cite{BrMc-factor} and \cite{Mc-lattice} and in this section we recall
the revelant definitions and results.  The first article, by Noel
Brady and the first author characterized the set of all possible
minimum length factorizations of a fixed euclidean isometry into
arbitrary reflections and the second showed that Coxeter intervals in
irreducible euclidean Coxeter groups are subposets of the unrestricted
intervals analyzed in the first article.  We begin by recalling the
distinction between points and vectors.

\begin{defn}[Points and vectors]
  Let $V$ denote an $n$-dimensional real vector space with the
  standard positive definite inner product and let $E$ be the
  corresponding euclidean analogue where the location of the origin
  has been forgotten leaving only a simply transitive action of $V$ on
  $E$.  The elements of $V$ are called \emph{vectors} and the elements
  of $E$ are called \emph{points}.  Ordered pairs of points in $E$
  determine a vector in~$V$.
\end{defn}

In \cite{BrMc-factor} euclidean isometries are analyzed in terms of
their two basic invariants: min-sets in $E$ and move-sets in $V$.

\begin{defn}[Basic invariants]
  Let $u$ be an isometry of $E$.  If $\lambda$ is the vector from $x$
  to $u(x)$ then we say that $x$ is \emph{moved by $\lambda$ under
    $u$}.  The collection $\mov(u) = \{ \lambda \mid x + \lambda =
  u(x)\} \subset V$ of all such vectors is the \emph{move-set} of $u$.
  The subset $\mov(u)$ is an affine subspace of $V$ and for each
  $\lambda \in \mov(u)$ the points of $E$ moved by $\lambda$ form an
  affine subspace of $E$ \cite[\BrMcInvAff]{BrMc-factor}.  In
  particular, there is a unique vector $\mu$ in $\mov(u)$ of minimal
  length and the corresponding points in $E$ form the \emph{min-set}
  of $u$, $\ms(u)$.  An isometry $u$ is \emph{elliptic} under the
  equivalent conditions that the vector $\mu$ is trivial, $\mov(u)$
  contains the origin in $V$ and there are points fixed by $u$.  For
  elliptic isometries we sometimes write $\fix(u)$ instead of
  $\ms(u)$.  Isometries that are not elliptic are called
  \emph{hyperbolic}.
\end{defn}

Let $L=\isom(E)$ be the Lie group of all euclidean isometries.  The
main results in \cite{BrMc-factor} analyze the structure of the
intervals in $\isom(E)$ with all reflections as its (trivially
weighted) generating set.  We call the interval $[1,w]^L$ an
\emph{ellptic} or \emph{hyperbolic interval} depending on the nature
of $w$.  In both cases, the elements of the intervals and the ordering
can be precisely described in terms of their basic invariants.  See
\cite{BrMc-factor} for details.  In this article we only need the
coarse structure of hyperbolic intervals where $w$ has maximal
reflection length, and in this context we define horizontal and
vertical directions.

\begin{defn}[Horizontal and vertical]
  If $w$ is a hyperbolic isometry whose min-set is a line, then the
  direction this line is translated is declared to be \emph{vertical}
  and the orthogonal directions are \emph{horizontal}.  A reflection,
  or more generally an elliptic isometry is called \emph{horizontal}
  if every point moves in a horizontal direction and it is vertical
  otherwise.  Thus a vertical elliptic isometry merely needs to have
  some vertical component to the motion of some point.
\end{defn}

A Coxeter element for the $\wt G_2$ tiling is a glide reflection and
thus an isometry of this type (see Figure~\ref{fig:g2-axis}).  Of the
$6$ families of parallel reflections there is one family of horizontal
reflections and five families of vertical reflections.  These can be
distinquished by whether or not their fixed hyperplanes cross the
glide axis.

\begin{figure}
  \begin{tikzpicture}[node distance=16mm, auto]
    \begin{scope}
      \tikzstyle{every node}=[rounded corners,draw]
      \node(00){$1$}; 
      \node[right of=00](10){$R_H$}; 
      \node[right of=10](20){}; 
      \node[right of=20](30){}; 

      \node[above of=00](01){$R_V$}; 
      \node[right of=01](11){}; 
      \node[right of=11](21){}; 
      \node[right of=21](31){}; 

      \node[above of=01](02){$T$}; 
      \node[right of=02](12){}; 
      \node[right of=12](22){}; 
      \node[right of=22](32){$w$}; 
    \end{scope}

    \node[right of=30](r1) {(ell,hyp) row};
    \node[right of=31](r2) {(ell,ell) row};
    \node[right of=32](r3) {(hyp,ell) row};

    \draw[-](00)--(10); \draw[-](20)--(30); 
    \draw[-](01)--(11); \draw[-](21)--(31); 
    \draw[-](02)--(12); \draw[-](22)--(32); 

    \fill ($(10)!.4!(20)$) circle (.4mm);
    \fill ($(10)!.5!(20)$) circle (.4mm);
    \fill ($(10)!.6!(20)$) circle (.4mm);
    \fill ($(11)!.4!(21)$) circle (.4mm);
    \fill ($(11)!.5!(21)$) circle (.4mm);
    \fill ($(11)!.6!(21)$) circle (.4mm);
    \fill ($(12)!.4!(22)$) circle (.4mm);
    \fill ($(12)!.5!(22)$) circle (.4mm);
    \fill ($(12)!.6!(22)$) circle (.4mm);

    \draw[-](00)--(01); \draw[-](01)--(02); 
    \draw[-](10)--(11); \draw[-](11)--(12);
    \draw[-](20)--(21); \draw[-](21)--(22); 
    \draw[-](30)--(31); \draw[-](31)--(32); 
  \end{tikzpicture}
  \caption{Coarse structure of a hyperbolic interval.\label{fig:coarse}}
\end{figure}
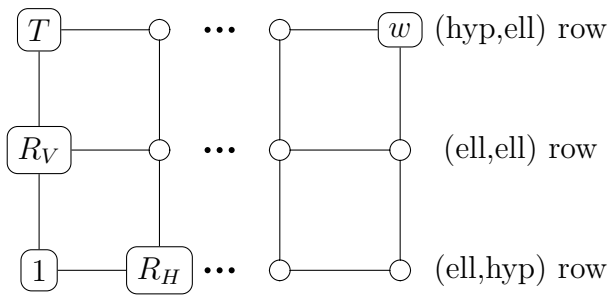

\begin{defn}[Coarse structure]
  Let $L = \isom(E)$ be the Lie group of all euclidean isometries and
  let $w$ be a hyperbolic euclidean isometry whose min-set is a line.
  For each element $u$ in the interval $[1,w]^L$ we consider the pair
  $(u,v)$ where $uv = w$.  There are exactly three possible cases: (1)
  $u$ is a horizontal elliptic isometry and $v$ is hyperbolic, (2)
  both $u$ and $v$ are vertical elliptic isometries, and (3) $u$ is
  hyperbolic and $v$ is horizontal elliptic.  These form the three
  rows of the \emph{coarse structure} of the interval arranged from bottom to
  top and shown in Figure~\ref{fig:coarse}.  The bottom row is graded
  by the dimension of the fixed set of $u$ from the identity element
  on the left to the elliptics fixing only a vertical line on the
  right.  The middle row has a similar grading: from those that fix a
  non-vertically invariant hyperplane on the left to those fixing only
  a single point on the right.  Alternatively, we could focus on $v$
  instead of $u$.  The $v$ on the left end of the middle row fix only
  a point and the $v$ on the right fix a non-vertically invariant
  hyperplane.  Finally, the top row is also graded by the fixed set of
  $v$: from $v$ fixing a vertical line on the left to $v$ equal to the
  identity on the right.  For every affine subspace of $E$ there is
  exactly one elliptic $u$ in one of the bottom two rows whose fix-set
  is this subspace.  Similarly, there is exactly one elliptic $v$ in
  one of the top two rows whose fix-set is this subspace.  Covering
  relations correspond to one horizontal or one vertical step in this
  grid.  Elements higher in the poset order are above and/or to the
  right while those lower down are down and/or to the left.  Finally,
  note that the second box on the bottom row contains the horizontal
  reflections, the first box in the middle row contains the vertical
  reflections, and the first box on the top row contains pure
  translations.
\end{defn}

\begin{figure}
  \includegraphics[scale=1]{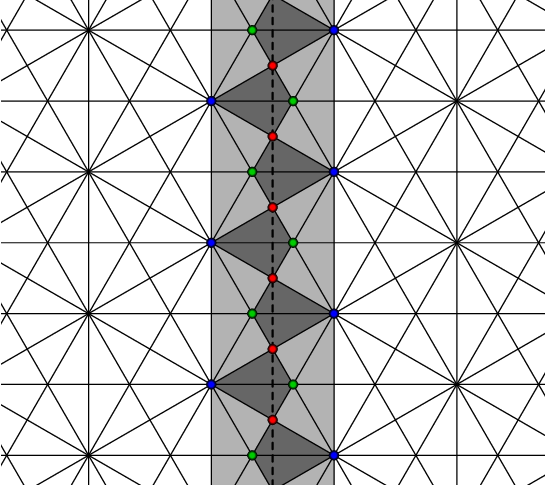}
  \caption{The $\wt G_2$ tiling of the plane with annotations
    corresponding to a particular Coxeter element
    $w$. \label{fig:g2-axis}}
\end{figure}

It turns out that for any irreducible euclidean Coxeter group $W =
\cox(\wt X_n)$ with reflections $R$ and Coxeter element $w$, the
\emph{Coxeter interval} $[1,w]^W$ is a subposet of the corresponding
hyperbolic interval in the full euclidean isometry group
\cite{Mc-lattice}.  In particular, it has the same basic structure.

\begin{defn}[Coxeter intervals]
  As described in \cite{Mc-lattice}, the min-set of the Coxeter
  element $w$ is a line $\ell$ called the \emph{Coxeter axis}.  Every
  point on this line is contained in the interior of some
  top-dimensional simplex, except for a discrete set of equally spaced
  points $x_i$ for $i \in \Z$.  The simplices through which $\ell$
  passes are called \emph{axial simplices} and the vertices of these
  simplices are \emph{axial vertices}.  The reflections which occur as
  edge labels in the interval $[1,w]^W$ are precisely those that
  contain an axial vertex in its fixed hyperplane
  \cite[\McReflections]{Mc-lattice}.  This includes all of the
  vertical reflections in $W$ but only a finite number of the
  horizontal ones.  We call these sets $R_V$ and $R_H$ respectively.
  Since the Coxeter axis passes through the interior of
  top-dimensional simplices, it does not lie on the hyperplane of any
  horizontal reflection.  For each family of parallel horizontal
  reflections, the only ones in the interval are the ones determined
  by the adjacent pair of hyperplanes which contain the Coxeter axis
  between them.  In other words, there are precisely two horizontal
  reflections in the interval for each antipodal pair of horizontal
  roots in the root system.
\end{defn}

The next lemma records a slightly technical fact about roots and axial
vertices that generalizes the observation above about horizontal
reflections.  It was verified by computer for the sporadic types and
by hand for the infinite families.

\begin{lem}[Convexity]\label{lem:convexity}
  Let $W = \cox(\wt X_n)$ be an irreducible euclidean Coxeter group
  with Coxeter element $w$ and let $r$ be a reflection that contains
  at least one axial vertex in its fixed hyperplane $H$.  If $\alpha$
  is a root in the type $X_n$ root system such that $\alpha$ has a
  positive dot product with the direction of the Coxeter axis and the
  image of $\alpha$ under the reflection $r$ has a negative dot
  product with the direction of the Coxeter axis, then the convex hull
  of the axial vertices contained in $H$ lies between two consecutive
  hyperplanes in the Coxeter complex with normal vector $\alpha$.
\end{lem}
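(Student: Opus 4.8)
The plan is to read the hypothesis geometrically, localize the axial vertices lying in $H$, and then reduce the convexity claim to a single quantitative statement about the Coxeter axis, which I verify by inspecting the root system. Throughout, write $\ell$ for the Coxeter axis, let $d$ be a unit vector in its vertical direction, and let $\beta$ be the root and $m\in\Z$ the integer with $H=\{x:x\cdot\beta=m\}$, so that $r$ has linear part $s_\beta$.

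First I would unpack the sign hypothesis. Since $r(\alpha)=s_\beta(\alpha)=\alpha-2\frac{\alpha\cdot\beta}{\beta\cdot\beta}\beta$, we have $r(\alpha)\cdot d=\alpha\cdot d-2\frac{\alpha\cdot\beta}{\beta\cdot\beta}(\beta\cdot d)$, so the required change of sign forces both $\beta\cdot d\neq 0$ and $\alpha\cdot\beta\neq 0$. In particular $r$ is a \emph{vertical} reflection and its mirror $H$ meets $\ell$ transversally in a single point $p$. A short computation using $r(v)=v$ for $v\in H$ then yields the identity $v\cdot r(\alpha)=v\cdot\alpha-C$ for every $v\in H$, where $C=(\alpha\cdot\beta^\vee)\,m$ and $\beta^\vee=2\beta/(\beta\cdot\beta)$. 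Since $\alpha\cdot\beta^\vee$ is a Cartan integer and $m\in\Z$, the constant $C$ is an integer. Hence a hyperplane of the $\alpha$-family separates two axial vertices of $H$ precisely when the corresponding hyperplane of the $r(\alpha)$-family does, so the assertion for $\alpha$ is equivalent to the assertion for $r(\alpha)$. This symmetry explains why the hypothesis names both roots and lets me work with whichever is convenient.

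Next I would localize. Each axial vertex in $H$ is a vertex of an axial simplex, and any simplex carrying a vertex on $H$ must itself meet $H$; since the simplices have bounded diameter and $\ell$ leaves $H$ linearly by transversality, only the finitely many axial simplices whose axis-portion lies within about one diameter of $p$ can meet $H$. Thus the axial vertices in $H$ form a finite cluster near $p$ with bounded convex hull. Now I use that the chambers are exactly the connected components of the complement of all the hyperplanes $H_{\alpha,k}$: no such hyperplane crosses the interior of a chamber, so each axial simplex $\sigma$ lies in a single closed $\alpha$-slab $\{k_\sigma\le x\cdot\alpha\le k_\sigma+1\}$, and that slab also contains the point $\ell\cap\sigma$ on the axis. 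Consequently, if the $\alpha$-coordinate $t\mapsto\ell(t)\cdot\alpha$ stays inside one slab as $t$ ranges over the short window for which the simplex of $\ell(t)$ meets $H$, then every axial simplex meeting $H$ shares that slab, and therefore so do all the axial vertices in $H$. This reduces the lemma to the single assertion that the axis coordinate crosses no integer over this $H$-window.

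The hard part is exactly this last assertion: bounding the $\alpha$-extent of the short stretch of axis that meets $H$, which depends on the packing of the axial simplices against the mirror and hence on the type. I expect this to be the main obstacle, and I would dispatch it as in the source. For the four infinite families $\wt A_n$, $\wt B_n$, $\wt C_n$ and $\wt D_n$ I would use the standard coordinate models of the root system together with the explicit Coxeter axis and its axial vertices, compute $v\cdot\alpha$ for the finitely many axial vertices on each relevant mirror, and check by hand that they all land in one slab; the $\alpha\leftrightarrow r(\alpha)$ symmetry above halves this bookkeeping. For the five sporadic types I would enumerate the axial vertices, the mirrors through them, and the eligible roots $\alpha$ by computer and verify the slab condition directly.
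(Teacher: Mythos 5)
Your plan is sound and ends exactly where the paper's own argument does: the paper offers no proof of this lemma beyond the assertion that it was verified by hand for the four infinite families and by computer for the five sporadic types (plus a heuristic involving a second Coxeter element for which the $\alpha$-hyperplanes are horizontal), and your proposal likewise defers the substance to that same type-by-type check. Your preliminary reductions --- that the sign hypothesis forces $r$ to be vertical, that $C=(\alpha\cdot\beta^{\vee})\,m$ is an integer and hence the claims for $\alpha$ and $r(\alpha)$ are equivalent, and that each chamber lies in a single closed $\alpha$-slab so the problem localizes to the axial simplices meeting $H$ --- are all correct and organize the verification more transparently than the paper does.
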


Heuristically, the reason why Lemma~\ref{lem:convexity} is true is
that there are Coxeter elements whose axial vertices overlap with this
set of axial vertices in the hyperplane $H$ and in this alternative
world, the consecutive reflections with normal vector $\alpha$ are
horizontal with respect to the other Coxeter element and bound its
column of axial vertices.

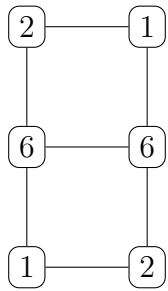
\begin{figure}
  \begin{tikzpicture}[node distance=16mm, auto]
    \tikzstyle{every node}=[rounded corners,draw]
    \node(00){$1$}; 
    \node[right of=00](10){$2$}; 

    \node[above of=00](01){$6$}; 
    \node[right of=01](11){$6$}; 

    \node[above of=01](02){$2$}; 
    \node[right of=02](12){$1$}; 

    \draw[-](00)--(10); 
    \draw[-](01)--(11); 
    \draw[-](02)--(12); 

    \draw[-](00)--(01); \draw[-](01)--(02); 
    \draw[-](10)--(11); \draw[-](11)--(12);
  \end{tikzpicture}
  \caption{Coarse structure of the $\wt G_2$ interval.\label{fig:g2-coarse}}
\end{figure}

\begin{exmp}[$\wt G_2$ interval]
  The $\wt G_2$ tiling of the plane is shown in
  Figure~\ref{fig:g2-axis} with various aspects highlighted.  The
  Coxeter element $w$ is a glide reflection whose glide axis is its
  min-set.  This is shown as a dashed line.  The heavily shaded
  triangles are the axial simplices of $w$ and the large dots indicate
  the axial vertices.  The lightly shaded vertical strip is the convex
  hull of the axial vertices and it is bounded by the only two
  horizontal reflections which occur in the Coxeter interval.  The
  coarse structure of the interval is shown in
  Figure~\ref{fig:g2-coarse}.  The numbers along the top and bottom
  rows represent the finite number of elements of each type in the
  interval.  Thus, $R_H$ contain two horizontal reflections and $T$
  contains two pure translations.  The middle row requires a more
  detailed explanation.  The convex hull has a structure which repeats
  vertically and the numbers in the middle row record how many
  distinct local situations there are in each box.  For example, there
  are infintely many vertical reflections in the interval but only six
  different types and there are infinitely many elliptic isometries in
  the interval that fix a single point but only six different types.
  In the former case the reflections are mostly distinguished by their
  slope but there are two with horizontal fixed lines that have
  distinct local neighborhoods. Similarly, in the latter case the
  rotations are mostly distinguished by the horizontal displacement of
  their fixed point except that there are two distinct types of fixed
  points along the Coxeter axis itself.  Both of these are
  $\pi$-rotations about their fixed point but they have distinct local
  neighborhoods and thus decompose into distinct types of reflections.
\end{exmp}

The coarse structure of the Coxeter interval in the largest of the
sporadic euclidean Coxeter groups offers a more substantial
illustration.

\begin{exmp}[$\wt E_8$ interval]
  The coarse structure of the Coxeter interval $[1,w]^W$ for the group
  $W=\cox(\wt E_8)$ is shown in Figure~\ref{fig:e8-coarse}.  From the
  figure we see that it contains $28$ horizontal reflections, $30$
  pure translations and $270$ infinite families of similarly situated
  vertical reflections.  In general, the numbers along the top and
  bottom refer to the number of individual elements in that box and
  the numbers in the middle row refer to number of infinite families
  of similarly situated elliptic elements.  We should note
  representatives of the roughly quarter-million types summarized in
  the figure were computed by a program \texttt{euclid.sage} written
  by the first author and available upon request.
\end{exmp}

\begin{figure}
  \begin{tikzpicture}[node distance=16mm, auto]
    \tikzstyle{every node}=[rounded corners,draw]
    \node(00){$1$}; 
    \node[right of=00](10){$28$}; 
    \node[right of=10](20){$235$}; 
    \node[right of=20](30){$826$}; 
    \node[right of=30](40){$1345$}; 
    \node[right of=40](50){$1000$}; 
    \node[right of=50](60){$315$}; 
    \node[right of=60](70){$30$}; 

    \node[above of=00](01){$270$}; 
    \node[right of=01](11){$5550$}; 
    \node[right of=11](21){$32550$}; 
    \node[right of=21](31){$75030$}; 
    \node[right of=31](41){$75030$}; 
    \node[right of=41](51){$32550$}; 
    \node[right of=51](61){$5550$}; 
    \node[right of=61](71){$270$}; 

    \node[above of=01](02){$30$}; 
    \node[right of=02](12){$315$}; 
    \node[right of=12](22){$1000$}; 
    \node[right of=22](32){$1345$}; 
    \node[right of=32](42){$826$}; 
    \node[right of=42](52){$235$}; 
    \node[right of=52](62){$28$}; 
    \node[right of=62](72){$1$}; 

    \draw[-](00)--(10); \draw[-](10)--(20); \draw[-](20)--(30); \draw[-](30)--(40); 
    \draw[-](40)--(50); \draw[-](50)--(60); \draw[-](60)--(70); 

    \draw[-](01)--(11); \draw[-](11)--(21); \draw[-](21)--(31); \draw[-](31)--(41); 
    \draw[-](41)--(51); \draw[-](51)--(61); \draw[-](61)--(71); 

    \draw[-](02)--(12); \draw[-](12)--(22); \draw[-](22)--(32); \draw[-](32)--(42); 
    \draw[-](42)--(52); \draw[-](52)--(62); \draw[-](62)--(72); 

    \draw[-](00)--(01); \draw[-](01)--(02); 
    \draw[-](10)--(11); \draw[-](11)--(12);
    \draw[-](20)--(21); \draw[-](21)--(22);
    \draw[-](30)--(31); \draw[-](31)--(32);
    \draw[-](40)--(41); \draw[-](41)--(42);
    \draw[-](50)--(51); \draw[-](51)--(52);
    \draw[-](60)--(61); \draw[-](61)--(62);
    \draw[-](70)--(71); \draw[-](71)--(72);
  \end{tikzpicture}
  \caption{Coarse structure of the $\wt E_8$ interval.\label{fig:e8-coarse}}
\end{figure}
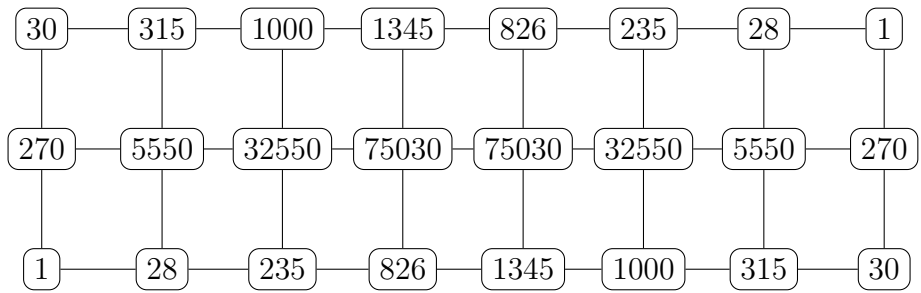

We conclude this section by reviewing an explicit presentation for the
dual euclidean Artin group derived from the Hurwitz action of the
braid group on factorizations of $w$.

\begin{defn}[Hurwitz action]
  Because reflections in $W$ are closed under conjugation,
  factorizations in $[1,w]^W$ can be rewritten in many ways and, in
  fact, there is an action of the braid group on the minimal length
  factorizations of $w$ called the \emph{Hurwitz action}.  The $i$-th
  standard braid generator replaces the two letter subword $ab$ in
  positions $i$ and $i+1$ with the subword $ca$ where $c = aba^{-1}$
  and it leaves the letters in the other positions unchanged.  It is
  easy to check that this action satisfies the relations in the
  standard presentation of the braid group.
\end{defn}

When a standard braid generator replaces $ab$ with $ca$ inside a
minimal length factorization of $w$, the relation $ab=ca$ is visible
in $[1,w]^W$.  Such a relation is called a \emph{Hurwitz relation} or
a \emph{dual braid relation}.  When the Hurwitz action is transitive
on factorizations, these relations are sufficient to define the
interval group $W_w$ \cite[\BrMcInvAff]{Mc-lattice} and we call this
the \emph{Hurwitz presentation}.  In 2010 Igusa and Schiffler proved
transitivity of the Hurwitz action on reflection factorizations of
Coxeter elements in Coxeter groups in complete generality
\cite{IgusaSchiffler10} and in 2014 a short proof of this general fact
was posted by Baumeister, Dyer, Stump and Wegener \cite{BDSW-hurwitz}.
As an illustration, we give the Hurwitz presentation of the dual $\wt
G_2$ Artin group.  We start with the generators.  The dual generators
are closely connected to the Coxeter axis of $w$ and we introduce a
notation that reflects this fact.

\begin{defn}[Dual $\wt G_2$ generators]\label{def:g2-gens}
  In the case of $\wt G_2$ we use the letters $a$ through $f$ to
  indicate the slope of its fixed line in the ascending order:
  $-\sqrt{3}$, $\frac{-1}{\sqrt{3}}$, $0$, $\frac{1}{\sqrt{3}}$,
  $\sqrt{3}$ and $\infty$, respectively.  See
  Figure~\ref{fig:g2-axis}.  Next, recall that the hyperplanes of the
  vertical reflections intersect the axis in an equally spaced set of
  points $x_i$ for $i\in \Z$ \cite[Section~8]{Mc-lattice}.  We use
  subscripts on the vertical reflections that indicates which $x_i$
  its hyperplane contains.  Note that not every combination of letter
  and subscript actually occurs.  For $\wt G_2$ we let $x_0$ be the
  intersection of one of the horizontal lines with the axis,
  specifically one which intersects an axial vertex on the lefthand
  side of the shaded vertical strip.  There are only two horizontal
  reflections in the interval $[1,w]^W$ and we call these $f_{-}$ and
  $f_{+}$.  Putting this all together, the dual generators of
  $\dart(\wt G_2,w)$ are the set $\{a_i,b_j,c_k,d_i,e_j,f_\ell\}$
  where $i = 1 \mod 4$, $j = 3 \mod 4$, $k = 0 \mod 2$ and $\ell \in
  \{+,-\}$.
\end{defn}

The periodicity of the subscripts corresponds to the fact that there
is a power of $w$ which acts as a pure translation in the direction of
the Coxeter axis.  In $\cox(\wt G_2)$ this power is $w^2$ and the
action of $w^2$ on the plane shifts the point $x_i$ to $x_{i+4}$.

\begin{defn}[Dual $\wt G_2$ relations]\label{def:dual-relations}
  The dual braid relations in the $\wt G_2$ case are obtained by
  factoring the elements in the interval $[1,w]^W$ of reflection
  length~$2$.  In the coarse structure, the elements to be factored
  belong to the third box in the bottom row, the second box in the
  middle row and the first box in the top row.  The first type does
  not occur in $\wt G_2$.  The third type are the pure translations
  and they have infinitely many factorizations.  In the case of $\wt
  G_2$ there are exactly two translations in the interval and their
  factorizations are as follows.
  \begin{equation}
    \begin{array}{c}
      \cdots = a_9 a_5 = a_5 a_1 = a_1 a_{-3} = a_{-3} a_{-7} = \cdots\\
      \cdots = e_{11} e_7 = e_7 e_3 = e_3 e_{-1} = e_{-1} e_{-5} =
      \cdots
    \end{array}
  \end{equation}
  It only remains to list the factorizations of the $6$ infinite
  families of elliptic elements that correspond to the second box in
  the middle row.  In the $\wt G_2$ case, these are rotations that fix
  a single point.  Representative sets of equations are as follows.
  \begin{equation}
    \begin{array}{c}
    a_1 d_1 = d_1 a_1\\
    b_3 e_3 = e_3 b_3\\
    c_2 a_1 = e_3 c_2 = a_1 e_3\\
    a_1 c_0 = e_{-1} a_1 = c_0 e_1\\
    a_{-3} f_{-} = b_{-1} a_{-3} = c_0 b_{-1} = d_1 c_0 = e_3 d_1 = f_{-} e_3 \\
    e_{-1} f_{+} = d_1 e_{-1} = c_2 d_1 = b_3 c_2 = a_5 b_3 = f_{+}
    a_5
    \end{array}
  \end{equation}
  To get all of the equations in the six infinite families, one should
  pick an arbitrary multiple of $4$ and consistently add it to each of
  the subscripts in each of six lines of equations above.  This
  corresonds to the vertical shift which conjugation by $w^2$
  produces. The $+/-$ subscripts remain underchanged since these
  reflections are invariant under vertical translation.
\end{defn}

\section{Horizontal roots and factored translations}\label{sec:horizontal}

In this section we describe the roots that are horizontal with respect
to the axis of a Coxeter element and we use their geometry to define a
series of crystallographic groups acting geometrically on euclidean
space.  Although Coxeter elements are usually defined as a product of
the reflections fixing the facets of a chamber in the Coxeter tiling,
there are other factorizations and one in particular where most of the
reflections are horizontal with respect to its axis.

\begin{defn}[Horizontal roots]
  If $w$ is a Coxeter element for the irreducible euclidean Coxeter
  group $W = \cox(\wt X_n)$, then $w$ has a factorization into a pure
  translation and $n-1$ horizontal reflections.  To see this we start
  with a standard factorization such as $w = r_{\alpha,1} w_0$ where
  $r_{\alpha,1}$ is the reflection corresponding to the root $\alpha$
  used to extend the Dynkin diagram $X_n$ shifted so it does not fix
  the origin and $w_0$ is a Coxeter element of the spherical Coxeter
  group $W_0 = \cox(X_n)$.  By Proposition~\ref{prop:sph-cox-elt} we
  can find an alternative factorization of $w_0$ as the product of a
  Coxeter generating set whose leftmost reflection is $r_\alpha$.
  Thus we can write $w_0 = r_\alpha w_h$ where $w_h$ is a Coxeter
  element of a maximal parabolic subgroup of $W_0$.  This means that
  $w = r_{\alpha,1} r_\alpha w_h = t_{\alpha^\vee} w_h$.  Since the
  element $w_h$ is an elliptic isometry fixing a line and $t =
  t_{\alpha^\vee}$ is a pure translation, the fixed line of $w_h$ must
  be parallel to the Coxeter axis of $w$.  As a consequence the $n-1$
  reflections multiplied together to produce $w_h$ are horizontal with
  respect to the axis of $w$.  Moreover, since the fixed line of $w_h$
  passes through the fixed point of $w_0$ and every family of parallel
  hyperplanes contains one which passes through this fixed point,
  these $n-1$ horizontal reflections generate a group $W_h$ with one
  representative from every parallel family of horizontal reflections
  in $W$.  In other words, all reflections in $W_h$ are horizontal and
  every horizontal reflection is parallel to one in $W_h$.  We call
  $W_h$ the \emph{horizontal Coxeter group} and $w = t_{\alpha^\vee}
  w_h$ a \emph{horizontal factorization of $w$}.  The horizontal roots
  associated to these reflections are a root system described by the
  diagram for $W_h$, and this diagram is the diagram for $W_0$ with an
  additional vertex removed, the one shown in
  Figures~\ref{fig:dynkin-families} and~\ref{fig:dynkin-sporadic} as a
  large shaded dot.  We call the corresponding root the \emph{vertical
    root}.
\end{defn}

\begin{rem}[Finding vertical roots]  
  The vertical roots were first found in \cite{Mc-lattice} on a
  case-by-case basis but once the principles are clear they can be
  easily spotted.  Because the simple system for $W_0$ used to create
  the horizontal factorization spans a positive cone, the vertical
  root should be as close to horizontal as possible.  This favors
  branch points and vertices involved in multiple bonds, specifically
  the end corresponding to the longer root.  This rule uniquely
  determines the vertice root in all cases except in type $A$ where
  there are distinct conjugacy classes of Coxeter element that lead to
  distinct choices of vertical root.
\end{rem}

The next proposition records some basic facts about the pure
translations that occur in the interval $[1,w]^W$ of an irreducible
euclidean Coxeter group $W$.  These are easily checked by hand for the
infinite families and by computer for the sporadic types.

\begin{prop}[Pure translations below $w$]\label{prop:trans}
  If $W = \cox(\wt X_n)$ is an irreducible euclidean Coxeter group
  with Coxeter element $w$, then every pure translation $t$ contained
  in the interval $[1,w]^W$ is the translation part of some horizontal
  factorization of $w$.  Moreover, if $t = r' r$ is a factorization of
  $t$ into a pair of reflections, then $r' = (w^p) r (w^{-p})$ where $w^p$
  is the smallest power of $w$ which acts on the Coxeter complex as a
  pure translation.  In fact, all factorizations of $t$ in $[1,w]^W$
  are of the form $t = r_{i+1} r_i$ where $r_i = (w^{ip}) r (w^{-ip})$ for
  some integer~$i$.
\end{prop}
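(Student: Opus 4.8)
The plan is to treat the three assertions in turn, deducing the first from the coarse structure of the ambient hyperbolic interval and the last two from the geometry of a single parallel family of reflections together with the known description of which reflections label edges of $[1,w]^W$.

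First I would locate $t$ in the coarse structure of Figure~\ref{fig:coarse}. A nontrivial pure translation is hyperbolic with $\mov(t)$ equal to a single vector $\mu$ and with $\lr(t)=2$, so its complement satisfies $\lr(t^{-1}w)=\lr(w)-2=n-1$. Since $[1,w]^W$ is a subposet of the full hyperbolic interval $[1,w]^L$ and reflection length is additive along the interval, $t$ lies in the top row of the coarse structure, and because its complement attains the largest reflection length occurring in that row it lies in the leftmost box. The description of that box says that $v=t^{-1}w$ is a horizontal elliptic isometry whose fixed set is a line parallel to the Coxeter axis and that $\lr(v)=n-1$. Thus $w=t\,v$ exhibits $w$ as a pure translation followed by a product of $n-1$ horizontal reflections fixing a common line parallel to the axis, which is exactly a horizontal factorization of $w$ in the sense of Section~\ref{sec:horizontal}; hence $t$ is the translation part of such a factorization.

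For the remaining assertions I would fix a two-reflection factorization $t=r'r$ inside the interval, so that $r$ and $r'$ label edges of $[1,w]^W$. A product of two reflections is a translation precisely when their fixed hyperplanes are parallel, so $r$ and $r'$ reflect in parallel hyperplanes orthogonal to $\mu$; consequently $\mu$ points along a root $\alpha$ and both reflections belong to the single family $\{r_{\alpha,k}\}_{k\in\Z}$ of reflections of $W$ orthogonal to $\alpha$, with $r_{\alpha,k'}r_{\alpha,k}=t_{(k'-k)\alpha^\vee}$. The smallest power $w^p$ acting as a pure translation moves along the axis by one vertical period, so conjugation by $w^p$ preserves this family and shifts its index by a fixed amount. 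The core claim, carrying both remaining assertions, is that the reflections of this family that actually occur in $[1,w]^W$ form a single $w^p$-orbit $\{r_i=w^{ip}rw^{-ip}\}_{i\in\Z}$ whose consecutive members are the two reflections of a factorization of $t$. Granting this, the rest is formal: since $w^p$ and $t$ are both translations they commute, so every consecutive product $r_{i+1}r_i$ equals the same element $t$ while products of non-consecutive members are proper powers of $t$. Therefore the factorizations of $t$ in the interval are exactly the pairs $t=r_{i+1}r_i$ with $r_i=w^{ip}rw^{-ip}$, which yields both $r'=w^prw^{-p}$ and the complete list.

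The hard part, and the reason the statement is verified type by type, is establishing that each such family meets the interval in a single $w^p$-orbit: that exactly one hyperplane of the family per vertical period carries an axial vertex and that the two bounding it multiply to $t$ itself rather than to a power of $t$. Here I would combine the description of edge-labeling reflections as those whose fixed hyperplane contains an axial vertex \cite[\McReflections]{Mc-lattice} with the Convexity Lemma~\ref{lem:convexity}: the latter confines the axial vertices lying in a given hyperplane to the slab between two consecutive parallel hyperplanes with a prescribed normal, which forbids a second hyperplane of the family from acquiring an axial vertex within one period and forces both the clean arithmetic progression and the minimal spacing. Pushing this confinement through is the case-dependent heart of the argument, checked by hand for the four infinite families and by computer for the five sporadic diagrams.
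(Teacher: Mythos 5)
The paper gives no proof of this proposition at all: it is introduced with the single remark that the facts ``are easily checked by hand for the infinite families and by computer for the sporadic types,'' and nothing further is said. Your proposal is therefore strictly more explicit than the source, and its formal skeleton is sound: a nontrivial pure translation has reflection length $2$, its complement has length $n-1$ and sits in the leftmost box of the top row, and since that complement fixes a vertical line its $n-1$ reflections are all horizontal; likewise the reduction of the last two assertions to the ``core claim'' about a single parallel family is correct bookkeeping (to call $w=tv$ literally a \emph{horizontal factorization} in the paper's sense you should also note that $v$ is then a Coxeter element of the horizontal Coxeter group, which holds because every component of the horizontal root system has type $A$ by Table~\ref{tbl:horizontal}, so maximal-length elements are products of Coxeter elements of the factors). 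Where your argument does no better than the paper is exactly where you concede it doesn't: the core claim. Two cautions there. First, the reflections in any two-term factorization of a diagonal translation are vertical, and \emph{every} vertical reflection of $W$ labels an edge of $[1,w]^W$, so every hyperplane of the relevant parallel family carries axial vertices; the content is not that one hyperplane per period acquires an axial vertex, but that conjugation by $w^p$ carries each member of the family to the \emph{adjacent} one and that adjacent members compose to $t$ rather than to a proper power of $t$. Second, Lemma~\ref{lem:convexity} confines the axial vertices lying in one fixed hyperplane to a slab between consecutive hyperplanes of a transverse family --- it is the tool for Lemma~\ref{lem:consistent} --- and it is not a counting statement about how many members of a vertical family occur per vertical period, so it will not deliver the core claim without substantial extra work. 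In the end both you and the paper verify the decisive quantitative facts type by type, so your proposal matches the paper's approach while making explicit the reductions the paper leaves unstated.
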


\begin{defn}[Components]
  The structure of the horizontal root system is listed in
  Table~\ref{tbl:horizontal} for each irreducible type and note that
  the number of irreducible components varies from one to three.  The
  groups of type $\wt C_n$, $\wt A_n$ (with $q=1$) and $\wt G_2$ have
  a single component, the groups of type $\wt B_n$, $\wt A_n$ (with $q
  \geq 2$) and $\wt F_4$ have two components, and the groups of type
  $\wt D_n$, $\wt E_6$, $\wt E_7$ and $\wt E_8$ have three components.
  We orthogonally decompose the space $V$ of vectors into components
  as follows: $V = V_0 \oplus \cdots \oplus V_k$ where $V_0$ is the
  line spanned by the direction of the Coxeter axis and the components
  $V_i$ for $1 \leq i \leq k$ correspond to the subspaces spanned by
  the irreducible components of the horizontal root system.  Since
  every horizontal reflection corresponds to a root in exactly one of
  component $V_i$, we can partition any minimal Coxeter generating set
  $S_H$ for $W_h$ and the full set of reflections $R_H$ into disjoint
  subsets $S_H^{(i)}$ and $R_H^{(i)}$.
\end{defn}

\begin{table}
  $\begin{array}{|c|l|}
    \hline
    \textrm{Type} & \textrm{Horizontal root system}\\
    \hline
    A_n & \Phi_{A_{p-1}} \cup \Phi_{A_{q-1}} \\
    C_n & \Phi_{A_{n-1}} \\
    B_n & \Phi_{A_1} \cup \Phi_{A_{n-2}} \\
    D_n & \Phi_{A_1} \cup \Phi_{A_1} \cup \Phi_{A_{n-3}} \\
    \hline
    G_2 & \Phi_{A_1} \\
    F_4 & \Phi_{A_1} \cup \Phi_{A_2} \\
    E_6 & \Phi_{A_1} \cup \Phi_{A_2} \cup \Phi_{A_2} \\
    E_7 & \Phi_{A_1} \cup \Phi_{A_2} \cup \Phi_{A_3} \\
    E_8 & \Phi_{A_1} \cup \Phi_{A_2} \cup \Phi_{A_4} \\
    \hline
    \end{array}$\medskip
    \caption{The structure of the horizontal root system for each
      irreducible euclidean Coxeter group $\cox(\wt X_n)$.  For the
      group of type $A_n$ we list the structure of system of roots
      horizontal with respect to the axis of the $(p,q)$-bigon Coxeter
      elements defined in \cite{Mc-lattice}.\label{tbl:horizontal}}
\end{table}

It was an early hope that every dual Artin group would be a Garside
group, but it was shown in \cite{Mc-lattice} that this is not always
the case, even when attention is restricted to Artin groups of
euclidean type.  It turns out that the number of components of the
horizontal root system is crucial.

\begin{rem}[Garside structures]
  In \cite{Mc-lattice} the first author proved that the unique dual
  presentation of $\art(\wt X_n)$ is a Garside structure when $X$ is
  $C$ or $G$ and it is not a Garside structure when $X$ is $B$, $D$,
  $E$ or $F$.  When the group has type $A$ there are distinct dual
  presentations and the one investigated by Digne is the only one that
  is a Garside structure.  The positive results for types $A$ and $C$
  are due to Digne.  The negative results are a direct consequence of
  horizontal root systems with more than one irreducible component.
  The reducibility leads directly to a failure of the lattice
  condition \cite[\McReducibleBowties]{Mc-lattice}.  Knowing
  explicitly how and why the lattice property fails led to the groups
  we introduce below.  The second author worked out the structure of
  the Artin group of type $\wt B_3$ along the lines presented here in
  his dissertation under the supervision of the first author and it is
  these arguments that have now been generalized to arbitrary Artin
  groups of euclidean type \cite{Sulway-diss}.
\end{rem}

\begin{defn}[Diagonal translations]\label{def:diagonal-trans}
  Let $w$ be a Coxeter element in an irreducible euclidean Coxeter
  group $W = \cox(\wt X_n)$ and let $w = t_\lambda w_h$ be a
  horizontal factorization of $w$.  We call the translation
  $t_\lambda$ a \emph{diagonal translation} because $\lambda$ projects
  nontrivially to each of the components $V_i$ $0 \leq i \leq k$.  The
  vector $\lambda$ projects nontrivially to $V_0$, the direction of
  the Coxeter axis, because $w$ translates the axis vertically but the
  element $w_h$ only moves points horizontally.  And $\lambda$
  projects nontrivially to each horizontal component $V_i$ with $i>0$
  because the vertical root is connected by an edge to each component
  of the horizontal root system in the diagram $X_n$.  Also note that
  $t_\lambda$ is not orthogonal to exactly one reflection in each
  horizontal component.
\end{defn}

\begin{defn}[Factored translations]\label{def:factored-trans}
  Let $w$ be Coxeter element in an irreducible euclidean Coxeter group
  $W = \cox(\wt X_n)$ with a fixed horizontal factorization of $w$ and
  let $k$ be the number of horizontal components.  Let $t_\lambda$ be
  the corresponding vertical root translation and let $\lambda_i =
  \proj_{V_i}(\lambda)$ denote the nontrivial projection vectors to
  each subspace $V_i$.  Finally let $t_i = t_{\lambda_i} + \frac{1}{k}
  t_{\lambda_0}$ so that $t = \prod_{i=1}^k t_i$.  The translations
  $t_i$ are called \emph{factored translations}.  If we do this for
  every translation in the interval $[1,w]^\ccox$ then we get a
  collection $T_F$ of all factored translations. Like the horizontal
  reflections, they can be partitioned into subsets $T_F^{(i)}$ based
  on the particular component of the horizontal root system involved
  so that $T_F^{(i)}$ contains the factored translations whose
  displacement vector lies in $V_0 \oplus V_i$.
\end{defn}  

\begin{table}
\begin{tabular}{|c|c|l|}
\hline
\textbf{Name} & \textbf{Symbol} & \textbf{Generating set}\\
\hline
Coxeter & $\ccox$ & $R_H \cup R_V\ (\cup\ T )$\\
Horizontal & $\chor$ & $R_H$\\
Diagonal & $\cdiag$ & $R_H \cup T$\\
Factorable & $\cfac$ & $R_H \cup T_F\ (\cup\ T)$\\
Crystallographic & $\ccryst\ $ & $R_H \cup R_V \cup T_F\ (\cup\ T)$\\
\hline
\end{tabular}\medskip
\caption{Five euclidean isometry groups.\label{tbl:five-gps}}
\end{table}

For each Coxeter element in an irreducible euclidean Coxeter group
there are five closely related euclidean isometry groups that are
involved in our proofs.

\begin{defn}[Five euclidean isometry groups]\label{def:five-euc-gps}
  Let $W = \cox(\wt X_n)$ be an irreducible euclidean Coxeter group.
  For each choice of Coxeter element $w$, we have defined four sets of
  euclidean isometries: the horizontal reflections $R_H$ and the
  vertical reflections $R_V$ labeling edges in the interval $[1,w]^W$,
  the translations $T$ and the factored translations $T_F$.  Various
  combinations of these sets generate five euclidean isometry groups
  as shown in Table~\ref{tbl:five-gps}.  The \emph{horizontal group}
  $\chor$ is the euclidean isometry group generated by the set $R_H$
  of horizontal reflections below $w$.  It contains but is bigger than
  the group $W_h$ because it contains two horizontal reflections for
  each horizontal root.  The \emph{diagonal group} $\cdiag$ is the
  euclidean isometry group generated by $R_H \cup T$, the horizontal
  reflections and the pure translations below $w$.  The
  \emph{factorable group} $\cfac$ is the euclidean isometry group
  generated by $R_H \cup T_F$.  And the \emph{crystallographic group}
  $\ccryst = \cryst(\wt X_n,w)$ is the group generated by the union of
  all four sets.  Since every diagonal translation can be written
  either as a product of two parallel vertical reflections or as a
  product of $k$ factored translations, the set $T$ can be optionally
  included in the generating sets for $\ccox$, $\cfac$ and $\ccryst$
  without altering the group.
\end{defn}

The crystallographic group $\ccryst =
\cryst(\wt X_n,w)$ and the Coxeter group $W=\cox(\wt X_n)$ have a very
similar structure.

\begin{rem}[Crystallographic]
  Recall that a group action on a metric space is \emph{geometric}
  when the group acts properly discontinuously and cocompactly by
  isometries and that a group acting geometrically on a finite
  dimensional euclidean space is a \emph{crystallographic group}.
  This category includes but is larger than the class of euclidean
  Coxeter groups since crystallographic groups do not need to be
  generated by reflections.  For example, most of the $17$ distinct
  wallpaper groups acting geometrically on the plane are not euclidean
  Coxeter groups.  The group $\ccryst=\cryst(\wt X_n,w)$ is
  crystallographic because its structure is essentially the same as
  that of the Coxeter group $W=\cox(\wt X_n,w)$.  In particular, it
  has a normal translation subgroup with quotient spherical Coxeter
  group $W_0=\cox(X_n)$.  The only difference is that the new
  translation subgroup is slightly bigger: the old translation
  subgroup is finite index in the new one.
\end{rem}

\section{Intervals in the new groups}\label{sec:new-groups}

In this section we define and analyze intervals in four of the groups
introduced in the previous section.  We begin by extending our system
of weights to the larger generating sets.

\begin{defn}[Weights]
  We extend the trivial weighting on the full set $R$ of reflections
  so that the new factorizations preserve length.  The natural weights
  assign $1$ to each horizontal and vertical reflection, $2$ to each
  diagonal translation and $\frac{2}{k}$ to each factored translation
  where $k$ is the number of components of the horizontal root system.
\end{defn}

\begin{figure}
  \begin{tikzpicture}[node distance=2cm, auto]
    \node(Hw){$\ghor$}; 
    \node[right of=Hw](Dw){$\gdiag$}; 
    \node[right of=Dw](Ww){$\gart$}; 
    \node(Fw)[right of=Dw, above of=Dw, node distance=1cm]{$\gfac$}; 
    \node[right of=Fw](Cw){$\ggar$};
    \node[below of=Fw](F){$\cfac$}; 
    \node[below of=Cw](C){$\ccryst$}; 
    \node[below of=Hw](H){$\chor$};
    \node[below of=Dw](D){$\cdiag$}; 
    \node[below of=Ww](W){$\ccox$};
    \draw[->](Fw)--(Cw); 
    \draw[->](Hw)--(Dw); 
    \draw[->](Dw)--(Ww); 
    \draw[->](Dw)--(Fw); 
    \draw[->](Ww)--(Cw);
    \draw[right hook->](F)--(C); 
    \draw[right hook->](H)--(D); 
    \draw[right hook->](D)--(W);
    \draw[right hook->](D)--(F); 
    \draw[right hook->](W)--(C);
    \draw[->>](Fw)--(F); 
    \draw[->>](Cw)--(C); 
    \draw[->>](Hw)--(H);
    \draw[->>](Dw)--(D); 
    \draw[->>](Ww)--(W);
  \end{tikzpicture}
  \caption{Ten groups defined for each choice of a Coxeter element in
    an irreducible euclidean Coxeter group and some of the maps
    between them.\label{fig:ten-gps}}
\end{figure}
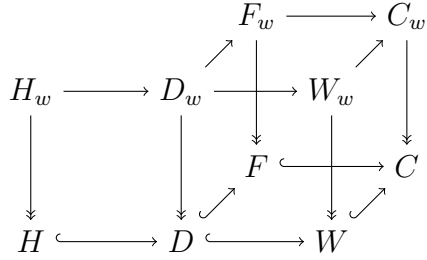

With this system of weights, the intervals behave as expected.  There
are inclusions among the intervals $[1,w]^X$ where $X$ is $D$, $F$,
$W$ or $C$, that mimic the relations between the groups as shown in
Figure~\ref{fig:ten-gps}.  The next lemma records additional relations
among these intervals.

\begin{lem}[Interval relations]\label{lem:int-rel}
  For each choice of a Coxeter element $w$ in an irreducible euclidean
  Coxeter group, the intervals described above are related as follows:
  \[[1,w]^{\ccryst} = [1,w]^{\ccox} \cup [1,w]^{\cfac}\]
  \[[1,w]^{\cdiag} = [1,w]^{\ccox} \cap [1,w]^{\cfac}\]
\end{lem}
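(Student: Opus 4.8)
The plan is to lift two elementary set identities about generating sets to the level of intervals. Once the optional diagonal translations $T$ are included in every generating set, the generators themselves already satisfy
\[ \mathrm{gen}(\ccryst)=\mathrm{gen}(\ccox)\cup\mathrm{gen}(\cfac),\qquad \mathrm{gen}(\cdiag)=\mathrm{gen}(\ccox)\cap\mathrm{gen}(\cfac), \]
because $R_H\cup T$ is common to $\ccox$ and $\cfac$, the vertical reflections $R_V$ occur only in $\ccox$, and the factored translations $T_F$ occur only in $\cfac$. I would promote these to intervals using two inputs: (F1) the chosen weights give $w$ the same minimal length $n+1=\codim_E\ms(w)+2$ in all four groups, and (F2) no geodesic factorization of $w$ in $\ccryst$ uses both a vertical reflection and a factored translation. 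Granting (F1), three of the four inclusions are immediate: if $u$ lies on a geodesic for $w$ in the smaller group, that same edge path uses generators of the larger group with identical weights and total length $n+1$, hence is a geodesic there too, giving $[1,w]^{\ccox}\cup[1,w]^{\cfac}\subseteq[1,w]^{\ccryst}$ and $[1,w]^{\cdiag}\subseteq[1,w]^{\ccox}\cap[1,w]^{\cfac}$.

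For the reverse inclusion in the first identity I would use (F2). A weight-$(n+1)$ factorization must spend weight $n-1$ on reflections, since the linear part of $w$ equals that of $w_h$ and has $\codim\fix=n-1$; the remaining budget of $2$ must manufacture the diagonal translation $t_\lambda$. This is achieved by a parallel pair of vertical reflections (landing in $\ccox$), by the full set of $k$ factored translations (landing in $\cfac$), or by a single diagonal translation (landing in both), but never by a mixture or by a proper subcollection of the $t_i$: removing fewer than $k$ factored translations leaves an isometry that still translates along the Coxeter axis, hence is hyperbolic with reflection length $n+1$ and cannot be produced within the depleted reflection budget. Thus every $\ccryst$-geodesic has all its generators in $\mathrm{gen}(\ccox)$ or all in $\mathrm{gen}(\cfac)$, yielding $[1,w]^{\ccryst}\subseteq[1,w]^{\ccox}\cup[1,w]^{\cfac}$.

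For the reverse inclusion in the second identity, take $u$ in both intervals. By the first identity $u$ lies on a $\ccox$-geodesic for $w$, which by (F2) carries $q\in\{0,2\}$ vertical reflections forming (when $q=2$) the parallel pair whose product is $t_\lambda$. Since $u\in\cfac$ its linear part fixes $V_0$, whereas an odd number of vertical reflections moves $V_0$; hence the sub-geodesics from $1$ to $u$ and from $u$ to $w$ each carry an even number of vertical reflections. Expanding any diagonal translations into reflection pairs, then using transitivity of the Hurwitz action to bring each parallel pair together and replacing it by the diagonal translation it equals, rewrites both halves over $R_H\cup T=\mathrm{gen}(\cdiag)$ without changing weights, so $d_{\cdiag}(1,u)=d_{\ccox}(1,u)$ and $d_{\cdiag}(u,w)=d_{\ccox}(u,w)$; summing gives $u\in[1,w]^{\cdiag}$.

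The hard part is (F1) in the largest group, i.e. the lower bound $d_{\ccryst}(1,w)\ge n+1$, together with the infeasibility of the mixed and partial translation factorizations underlying (F2). These resist the cheap invariants: reflection length in $\isom(E)$ over-counts a factored translation (cost $2$ versus weight $\tfrac{2}{k}$), and there is no vertical-displacement homomorphism on $\ccryst$ because its linear part $\cox(X_n)$ acts irreducibly and admits no invariant functional, so conjugation by a vertical reflection can redistribute the axial component of a factored translation. Making the ``hyperbolic residual'' step rigorous therefore requires the move-set and min-set calculus of \cite{BrMc-factor} applied component by component according to Table~\ref{tbl:horizontal}; this, rather than the bookkeeping above, is where the real work lies. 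A secondary point is confirming $\ccox\cap\cfac=\cdiag$ as subgroups (equivalently that the coordinate and factored translation lattices meet exactly in the diagonal translation lattice), which justifies the replacement step in the intersection argument.
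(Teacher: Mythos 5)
Your outline matches the shape of the paper's argument --- the easy inclusions follow from the generating-set containments, and everything reduces to showing that no minimal-length factorization of $w$ in $\ccryst$ mixes a vertical reflection with a factored translation --- but the proposal does not actually prove that reduction. You concede this yourself: you write that the lower bound $d_{\ccryst}(1,w)\ge n+1$ and the infeasibility of mixed factorizations ``resist the cheap invariants'' and would require the move-set calculus of \cite{BrMc-factor} applied component by component, ``where the real work lies.'' That is precisely the gap. The missing idea is cheap after all: pass to the finite quotient $W_0 = \cox(X_n)$ obtained by killing the normal translation subgroup of $\ccryst$. The image of $w$ there is a Coxeter element of the horizontal Coxeter group $W_h$, an elliptic with $(n-1)$-dimensional move-set, hence of reflection length $n-1$ in $W_0$, and this minimum is achieved only by reflections whose hyperplanes contain its fixed line, i.e.\ only by horizontal reflections. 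A weight-$(n+1)$ factorization of $w$ in $\ccryst$ containing a factored translation spends positive weight on translations and therefore uses strictly fewer than $n+1$ reflections; its image in $W_0$ is a reflection factorization of length less than $n+1$, hence by parity of length exactly $n-1$, hence consists entirely of horizontal reflections. This single observation delivers both your (F1) and (F2) at once and removes the need for the ``hyperbolic residual'' and mixed-budget case analysis you sketch, whose individual steps (e.g.\ that a proper subcollection of the $t_i$ interleaved with reflections cannot close up) are themselves only heuristic because conjugation can redistribute the translations' components.

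A secondary problem is your argument for $[1,w]^{\ccox}\cap[1,w]^{\cfac}\subseteq[1,w]^{\cdiag}$. The claim that ``an odd number of vertical reflections moves $V_0$'' is not a parity invariant: there is no homomorphism $\ccox\to\{\pm1\}$ detecting vertical reflections, since already a product of two non-parallel vertical reflections fails to fix $V_0$ pointwise. The paper treats this containment as immediate from the generating sets, and the clean way to see it is via the coarse structure: an element in both intervals lies in the top or bottom row, a bottom-row element is horizontal elliptic so every reduced reflection factorization of it uses only horizontal reflections, and a top-row element differs from $w$ by such an element; either way the element is witnessed by a geodesic over $R_H\cup T$. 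Your Hurwitz-action rewriting is not needed and, as justified by the faulty parity claim, does not stand on its own.
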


\begin{proof}
  The second equality is an immediate consequence of the relations
  among the generating sets, as is the fact that $[1,w]^{\ccryst}
  \supset [1,w]^{\ccox} \cup [1,w]^{\cfac}$.  It only remains to show
  that there does not exist a minimal length factorization of $w$ in
  $\ccryst$ that includes both a factored translation and a vertical
  reflection.  To see this consider the map from $\ccryst$ to $W_0$
  obtained by quotienting out its normal subgroup of pure
  translations.  The image of $w$ under this map is a Coxeter element
  for the horizontal Coxeter group $W_h$.  It fixes a line parallel to
  the Coxeter axis through the unique point fixed by all of $W_0$.
  Since its move-set is $(n-1)$-dimensional, its minimal reflection
  length is $n-1$, and this length is only possible if each of the
  $n-1$ reflections in the product contain the fixed line in their
  fixed hyperplane.  In other words, this happens only when they are
  all horizontal reflections.  When this minimum is not achieved, at
  least $n+1$ reflections are involved because of parity issues.  If
  we start with a factorization of $w$ that contains a factored
  translation, then its image in $W_0$ has length strictly less than
  $n+1$, and as a consequence all of the reflections involved are
  horizontal.
\end{proof}

\begin{figure}
  \begin{tikzpicture}[scale=1.5]
    \tikzstyle{every node}=[rounded corners,draw]
    \node (T) at (0,1) {Top}; 
    \node (M) at (-1,0) {Middle}; 
    \node (F) at (1,0) {Factored}; 
    \node (B) at (0,-1) {Bottom}; 
    \draw[-](B)--(M)--(T)--(F)--(B);
  \end{tikzpicture}
  \caption{A very coarse overview of the structure of the interval
    $[1,w]^C$.\label{fig:cryst-coarse}}
\end{figure}
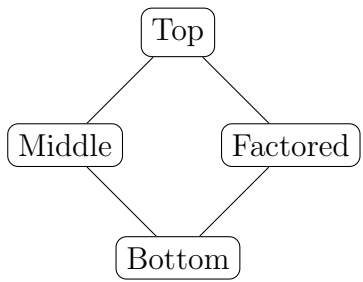

Using Lemma~\ref{lem:int-rel} we extend the notion of a coarse
structure to these new intervals.

\begin{rem}[Coarse structure]\label{rem:diag}
  The crystallographic interval $[1,w]^\ccryst$ is obtained by adding
  additional elements to the original three rows in the coarse
  structure of the Coxeter interval $[1,w]^\ccox$.  This is
  schematically shown in Figure~\ref{fig:cryst-coarse} but the reader
  should note that the box labeled Factored is not a single row but
  rather it includes all factorization pairs $(u,v)$ with $uv=w$ where
  both $u$ and $v$ require a factored translation in their
  construction.  The original Coxeter interval $[1,w]^\ccox$ is the
  subposet containing the top, middle and bottom portion, the diagonal
  interval $[1,w]^\cdiag$ is the poset containing only the top and
  bottom rows, and the factor interval $[1,w]^\cfac$ is the subposet
  containing only the top, bottom and factored portions.  One
  consequence of this is that the groups $\cdiag$ (and the pulled
  apart group $\gdiag$ defined below) have alternate generating sets.
  Instead of using $R_H \cup T$ we could instead use $R_H \cup \{w\}$.
  This is because every element in the bottom row is a product of
  horizontal reflections and every element in the top row differs from
  $w$ by a product of horizontal reflections.
\end{rem}

There are other properties that are nearly immediate.

\begin{prop}[Balanced and self-dual]\label{prop:balanced}
  For each choice of a Coxeter element $w$ in an irreducible euclidean
  Coxeter group, the interval between $1$ and $w$ in each of $\cdiag$,
  $\ccox$, $\cfac$ and $\ccryst$ is a balanced and self-dual poset.
\end{prop}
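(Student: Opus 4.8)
The plan is to treat both properties simultaneously through the single group-theoretic \emph{complementation map} $\delta\colon u\mapsto u^{-1}w$, which sends a left divisor of $w$ to its complementary right divisor. Recalling the definitions, the interval $[1,w]^G$ (for $G\in\{\cdiag,\ccox,\cfac,\ccryst\}$) is \emph{balanced} exactly when $\delta$ carries the interval into itself, i.e.\ when the left and right divisors of $w$ coincide, and it is \emph{self-dual} exactly when $\delta$ is moreover an order-reversing bijection. So the whole proposition reduces to two assertions about $\delta$: that its image lies back in the interval, and that it reverses the order.

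First I would dispatch balance. For any $u\in[1,w]^G$ the element $\delta(u)=u^{-1}w$ is automatically a right divisor, and $\delta(u)^{-1}w = w^{-1}uw = u^w$. Since $\ell(\delta(u)) = \ell(w)-\ell(u)$ and the weighted length is preserved by conjugation by $w$ — because $w$ preserves the weighted generating set, permuting the reflections (weight $1$), the diagonal translations (weight $2$), and the factored translations (weight $\tfrac{2}{k}$) within each weight class — we get $\ell(\delta(u))+\ell(u^w)=\ell(w)$, so $\delta(u)$ is again a left divisor. Running the same argument with $\delta^{-1}\colon x\mapsto wx^{-1}$ shows every left divisor is a right divisor, so each of the four intervals is balanced. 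This step is genuinely immediate.

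Self-duality is where the work lies, and here I would exploit Lemma~\ref{lem:int-rel}. Because $[1,w]^{\ccryst}=[1,w]^{\ccox}\cup[1,w]^{\cfac}$ and $[1,w]^{\cdiag}=[1,w]^{\ccox}\cap[1,w]^{\cfac}$, and $\delta$ is one global map, it suffices to show $\delta$ is an order-reversing bijection on $[1,w]^{\ccox}$ and on $[1,w]^{\cfac}$ and then glue. For the Coxeter interval $[1,w]^{\ccox}$ this is essentially known: reflection length in a Coxeter group is a conjugacy invariant, so $\delta$ reverses the order by the formal computation used in \cite{BrMc-factor} for the ambient interval in the full euclidean isometry group and in \cite{Mc-lattice} for the Coxeter interval itself. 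For the factorable interval $[1,w]^{\cfac}$ I would invoke the middle-group machinery of Part~\ref{part:middle}: along the orthogonal decomposition $V=V_0\oplus V_1\oplus\cdots\oplus V_k$ the horizontal reflections $R_H^{(i)}$ together with the factored translations $T_F^{(i)}$ generate, within each horizontal component, a copy of a middle group $\midd(B_m)$ by Proposition~\ref{prop:middle-recognize}, whose special interval is the type~$B$ noncrossing partition lattice of Theorem~\ref{thm:special-ints}, a manifestly balanced and self-dual poset; assembling these component pieces, which interact only through the shared vertical coordinate $V_0$, yields the required anti-automorphism of $[1,w]^{\cfac}$.

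The main obstacle — and the reason self-duality is not as automatic as balance — is that the weighted length on $\cfac$ and $\ccryst$ is \emph{not} invariant under conjugation by arbitrary elements: the factored translations carry fractional weight $\tfrac{2}{k}$ and are not closed under conjugation, so the one-line ``$\delta$ reverses order because length is a class function'' argument that works for $\ccox$ is unavailable. Two points must therefore be checked with care. First, the component decomposition of $[1,w]^{\cfac}$ into type~$B$ pieces must be shown to respect both the order and the complementation, so that the separate dualities are compatible along $V_0$. Second, for the gluing over $\ccryst$ I would use the key fact extracted in the proof of Lemma~\ref{lem:int-rel} — that no minimal-weight factorization of $w$ simultaneously uses a vertical reflection and a factored translation — to conclude that the ``middle'' wing coming from $[1,w]^{\ccox}$ and the ``factored'' wing coming from $[1,w]^{\cfac}$ are order-incomparable except through their common core $[1,w]^{\cdiag}$ (the top and bottom rows of Figure~\ref{fig:cryst-coarse}). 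Given this incomparability, the order-reversing bijections on the two wings, which agree on the shared core, glue to a single order-reversing bijection of $[1,w]^{\ccryst}$, completing the proof.
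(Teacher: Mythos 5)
Your proof is correct, but it is organized quite differently from the paper's, which disposes of the whole proposition in two sentences: each interval is balanced because the generating sets are closed under \emph{local} conjugations, and this same fact makes $u\mapsto u^{-1}w$ an order-reversing poset isomorphism. The obstacle you flag --- that weighted length on $\cfac$ and $\ccryst$ is not a class function because $T_F$ is not closed under conjugation by vertical reflections --- is real, but the paper sidesteps it without any decomposition: by Lemma~\ref{lem:int-rel} no minimal factorization of $w$ mixes a vertical reflection with a factored translation, so the only conjugations that arise when one letter of a minimal factorization is pushed past an adjacent one (a Hurwitz move) keep each letter inside the generating set and inside its weight class. Concretely, if $w=ABC$ witnesses $u=A\le v=AB$, then $\delta(v)^{-1}\delta(u)=B^{C}$ and $\delta(u)^{-1}w=A^{BC}$, and iterated local conjugation gives $\ell(B^{C})\le\ell(B)$ and $\ell(A^{BC})\le\ell(A)$, forcing equality and hence $\delta(v)\le\delta(u)$; this handles all four groups uniformly. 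Your route instead splits $[1,w]^{\ccryst}$ into its Coxeter and factorable wings, proves self-duality on each (class-function argument for $\ccox$; product of type~$B$ noncrossing partition lattices for $\cfac$), and glues using the incomparability of the two wings outside $[1,w]^{\cdiag}$ --- which does follow from Lemma~\ref{lem:int-rel} exactly as you say, since a chain joining a middle-row element to a factored-wing element would lie on a minimal factorization containing both a vertical reflection and a factored translation. This works, and it has the virtue of making explicit where global conjugation-invariance fails, but it is longer and leans on Proposition~\ref{prop:products} and Theorem~\ref{thm:special-ints}, which the paper does not need at this point. Your balance argument (conjugation by $w$ permutes $R_H$, $R_V$, $T$ and $T_F$ within their weight classes) is a clean variant of the paper's and is fine.
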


\begin{proof}
  Each interval is balanced because the generating sets are closed
  under local conjugations.  This also means that the map sending $u$
  to its left complement is an order-reversing poset isomorphism.
\end{proof}

Using these intervals we can create new groups.

\begin{defn}[Five groups via presentations]\label{def:five-pres-gps}
  Four of the groups on the top level of Figure~\ref{fig:ten-gps} are
  interval groups obtained by pulling apart the corresponding groups on
  bottom level.  The exception is $\ghor$.  We define this group as
  the group generated by the horizontal reflections $R_H$ in the
  interval $[1,w]^W$ and subject only to the relations among them that
  are visible there.  There is not a natural interval group here
  because $w$ itself is not an element of $H$; it is merely the
  horizontal portion of the other groups on the top level.  Finally,
  we should note that the groups $\ggar$ and $\gart$ turn out to be
  the Garside group described in the introduction and the Artin group
  $\art(\wt X_n)$ respectively.
\end{defn}

The inclusion relations among the various generating sets suffice to
establish the injections shown on the lower level of
Figure~\ref{fig:ten-gps} and inclusions among the sets of relations
induce the homomorphisms on the top level.  It turns out that all the
maps on the top level are also injective but this is not immediately
clear.  Several of these groups are easily identified.

\begin{prop}[Products]\label{prop:products}
  If $W = \cox(\wt X_n)$ is an irreducible euclidean Coxeter group
  with Coxeter element $w$ and $k$ horizontal components, then the
  interval $[1,w]^{\cfac}$ is a direct product of $k$ type~$B$
  noncrossing partition lattices and $\cfac$ is a central product of
  $k$ middle groups.  As a consequence:
  \begin{enumerate}
  \item $\gfac$ is a direct product of $k$ annular braid groups,
  \item $\ghor$ is a direct product of $k$ euclidean braid groups, and
  \item $\chor$ is a direct product of $k$ euclidean symmetric groups.
  \end{enumerate}
\end{prop}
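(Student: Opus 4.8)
The plan is to exploit the orthogonal decomposition $V = V_0 \oplus V_1 \oplus \cdots \oplus V_k$ and the induced partitions $R_H = \bigsqcup_i R_H^{(i)}$ and $T_F = \bigsqcup_i T_F^{(i)}$ from Definition~\ref{def:factored-trans}, and to recognize the subgroup $M_i = \langle R_H^{(i)} \cup T_F^{(i)}\rangle$ of $\cfac$ as a middle group. Since every horizontal component is of type $A$ (Table~\ref{tbl:horizontal}), the reflections $R_H^{(i)}$ generate a symmetric group acting on $V_i$ through the root system $\Phi_{A_{n_i-1}}$, while a single factored translation $t \in T_F^{(i)}$ has displacement with a nontrivial $V_0$-component and hence lies outside the span $V_i$ of that root system. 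By Definition~\ref{def:diagonal-trans} the vertical-root translation fails to be orthogonal to exactly one reflection per component, so $t$ fails to commute with exactly one element of a minimal reflection generating set. First I would check that this one reflection is an \emph{end} node of the component's Dynkin diagram; this is read off directly from Figures~\ref{fig:dynkin-families} and~\ref{fig:dynkin-sporadic}, where the vertical root attaches to each type $A$ component at an extremal vertex in every case. Proposition~\ref{prop:middle-recognize} then gives $M_i \cong \midd(B_{n_i})$, and the remaining elements of $R_H^{(i)} \cup T_F^{(i)}$ are identified with the reflections $\mathcal{R}\cup\mathcal{R}(1)$ and translations $\mathcal{T}$ of this middle group that occur inside its interval.

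Next I would assemble the pieces into a central product. A reflection in $R_H^{(i)}$ has its root (normal) in $V_i$, whereas a factored translation in $T_F^{(j)}$ with $j\ne i$ moves within $V_0 \oplus V_j$, which is orthogonal to $V_i$; translating an affine hyperplane by a vector parallel to it leaves it fixed, so every generator of $M_i$ commutes with every generator of $M_j$, and together the $M_i$ generate $\cfac$. To see the overlap is central, note that a tuple in $\prod_i M_i$ mapping to the identity has linear part acting trivially on each $V_i$ (these are orthogonal), hence each coordinate is a pure translation whose $V_i$-displacements cancel separately, forcing each factor to be a pure translation in the $V_0$ direction. By Proposition~\ref{prop:midd-center} the $V_0$-translations in $M_i$ are exactly its infinite cyclic center, so $\cfac$ is the central product of the $M_i$ amalgamated along their common subgroup of vertical translations.

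For the interval, I would group commuting factors in a horizontal factorization to write $w = t_\lambda w_h = \prod_i (t_i w_h^{(i)})$ and set $w^{(i)} = t_i w_h^{(i)}$, which is precisely the special element of $M_i \cong \midd(B_{n_i})$. With the weighting of Section~\ref{sec:new-groups} the reflection length is additive across the orthogonal components, so using the cross-component commutations every minimal length factorization of $w$ over $R_H\cup T_F$ sorts into a factorization of each $w^{(i)}$, and conversely any such tuple reassembles. This identifies $[1,w]^{\cfac}$ with $\prod_i [1,w^{(i)}]^{M_i}$ as labeled posets, and Theorem~\ref{thm:special-ints} identifies each factor with the type~$B$ noncrossing partition lattice.

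Finally the three consequences follow by transporting this product structure through Figure~\ref{fig:ten-gps}. Pulling $\cfac$ apart at $w$ retains only relations visible inside $[1,w]^{\cfac}$, and by Proposition~\ref{prop:commuting-with-w} the central translations of the $M_i$ are invisible there (only $1$ and $w^{(i)}$ commute with $w^{(i)}$), so the central identification disappears and the cross-component relations reduce to commutations; hence $\gfac$ is the honest direct product $\prod_i \art(B_{n_i})$ of annular braid groups, by the final assertion of Theorem~\ref{thm:special-ints}. Applying the same componentwise splitting to the horizontal reflections alone and invoking Proposition~\ref{prop:hor-maps} shows that $\ghor$ is the direct product of the euclidean braid groups $\art(\wt A_{n_i-1})$ and that $\chor$ is the direct product of the euclidean symmetric groups $\cox(\wt A_{n_i-1})$. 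The hard part will be the first step: verifying cleanly that $R_H^{(i)}\cup T_F^{(i)}$ generates a middle group with its standard generators -- in particular that the attaching reflection is genuinely an end node and that no extra factored translations or reflections intrude -- since the interval product and all three consequences rest on this identification together with the length additivity that lets the interval split as a product.
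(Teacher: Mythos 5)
Your proposal is correct and follows essentially the same route as the paper: partition the generators by horizontal component, apply Proposition~\ref{prop:middle-recognize} to identify each component subgroup as a middle group, observe that cross-component generators commute while the overlap consists of the central $V_0$-translations (giving the central product), identify the interval as a product of special intervals via Theorem~\ref{thm:special-ints}, and obtain the three consequences from the disjoint-label product decomposition together with Proposition~\ref{prop:hor-maps}. The details you flag as needing care (the end-node condition and the exact identification of $R_H^{(i)}\cup T_F^{(i)}$ with the standard middle-group generators) are indeed the points the paper passes over quickly, but your treatment of them is sound.
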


\begin{proof}
  The group $F$ is minimally generated by the set $S_H \cup \{t_i\}$
  contained inside $R_H \cup T_F$ (with the $t_i$ being the factors of
  the diagonal translation $t_\lambda$ as described in
  Definition~\ref{def:factored-trans}) and note that both $S_H$ and
  $\{t_i\}$ can be partitioned based on the unique component of the
  horizontal root system involved in each motion.  By
  Proposition~\ref{prop:middle-recognize} the elements associated with
  each component generate a middle group.  Moreover, since generators
  associated to different components commute and $w$ can be factored
  into a product of special elements for these middle groups, the
  interval $[1,w]^F$ is a direct product of special intervals in
  middle groups.  By Theorem~\ref{thm:special-ints} each of these is a
  type~$B$ noncrossing partition lattice.  This also means that $F$ is
  almost, but not quite, a direct product of these middle groups
  because these groups have a nontrivial intersection.  They overlap
  in elements whose motions lie solely in the $V_0$ direction, a
  description which only applies to the pure translations that form
  their centers.  Thus $F$ is a central product rather than a direct
  product.  On the other hand, since $[1,w]^F$ is a direct product of
  lattices with disjoint edge labels, $\gfac$ is a direct product of
  annular braid groups.  The group $\ghor$ and $\chor$ are identified
  by applying Proposition~\ref{prop:hor-maps} to each factor.
\end{proof}

We illustrate Proposition~\ref{prop:products} with a concrete example.

\begin{exmp}[$\wt E_8$ groups]\label{ex:e8-cfac-chor}
  Since the horizontal $E_8$ root system decomposes as $\Phi_{A_1}
  \cup \Phi_{A_2} \cup \Phi_{A_4}$ (Table~\ref{tbl:horizontal}), the
  group $\cfac$ is a central product of $\midd(B_2)$, $\midd(B_3)$ and
  $\midd(B_5)$.  In addition,
  \begin{itemize}
    \item $[1,w]^F \cong NC_{B_2} \times NC_{B_3} \times NC_{B_5}$,
    \item $\gfac \cong \art(B_2) \times \art(B_3) \times \art(B_5)$,
    \item $\ghor \cong \art(\wt A_1) \times \art(\wt A_2) \times
      \art(\wt A_4)$, and
    \item $\chor \cong \cox(\wt A_1) \times \cox(\wt A_2) \times \cox(\wt A_4)$.
  \end{itemize}
\end{exmp}

\part{Main Theorems}\label{part:main-theorems}
In this final part we prove our four main results.

\section{Proof of Theorem A: Crystallographic Garside groups}

In this section we prove our first main result, that for every choice
of a Coxeter element $w$ in an irreducible euclidean Coxeter group $W
= \cox(\wt X_n)$, the group $\ggar = \gar(\wt X_n,w)$ is a Garside
group.  The most difficult step is to establish the lattice property
and we begin with a lemma which show that in discretely graded posets,
it is sufficient to work inductively and to establish that all pairs
of atoms have a well-defined join.

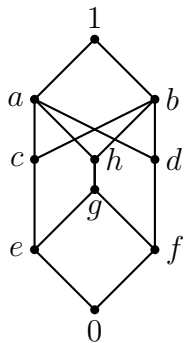
\begin{figure}
  \begin{tikzpicture}
    \coordinate (one) at (0,1.2);
    \coordinate (zero) at (0,-2.4);
    \coordinate (a) at (-.8,.4);
    \coordinate (b) at (.8,.4);
    \coordinate (c) at (-.8,-.4);
    \coordinate (d) at (.8,-.4);
    \coordinate (e) at (-.8,-1.6);
    \coordinate (f) at (.8,-1.6);
    \coordinate (g) at (0,-.8);
    \coordinate (h) at (0,-.4);
    \draw[-,thick] (b)--(one)--(a)--(e)--(zero)--(f);
    \draw[-,thick] (a)--(d)--(b)--(c);
    \draw[-,thick] (f)--(g)--(h)--(b);
    \draw[-,thick] (e)--(g)--(h)--(a);
    \draw[-,thick] (d)--(f);
    \fill (one) circle (.6mm) node[anchor=south] {$1$};
    \fill (a) circle (.6mm) node[anchor=east] {$a$};
    \fill (b) circle (.6mm) node[anchor=west] {$b$};
    \fill (c) circle (.6mm) node[anchor=east] {$c$};
    \fill (d) circle (.6mm) node[anchor=west] {$d$};
    \fill (e) circle (.6mm) node[anchor=east] {$e$};
    \fill (f) circle (.6mm) node[anchor=west] {$f$};
    \fill (g) circle (.6mm) node[anchor=north] {$g$};
    \fill (h) circle (.6mm) node[anchor=west] {$h$};
    \fill (zero) circle (.6mm) node[anchor=north] {$0$};
  \end{tikzpicture}
  \caption{Posets elements used in the proof of
    Lemma~\ref{lem:atom-subint}.\label{fig:atom-subint}}
\end{figure}

\begin{lem}[Atoms and subintervals]\label{lem:atom-subint}
  Let $P$ be a bounded poset that is graded with respect to a discrete
  weighting.  If all pairs of atoms in $P$ have well-defined joins and
  $P$ is not a lattice, then $P$ contains a proper subinterval that is
  not a lattice.
\end{lem}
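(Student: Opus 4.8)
The plan is to use the bowtie characterization of non-lattices from Proposition~\ref{prop:bowtie}, together with the observation recorded in the remark following it that a bowtie survives in any induced subposet containing its four elements. Write $\hat{0}$ and $\hat{1}$ for the minimum and maximum of $P$. Since $P$ is not a lattice it contains a bowtie $(a,b:c,d)$, and since $c,d$ are two \emph{distinct} maximal lower bounds of $\{a,b\}$, neither can equal $\hat{0}$ (otherwise $\hat{0}$ would be the unique maximal common lower bound). Hence $c,d>\hat{0}$, and since $P$ is bounded and discretely graded each lies above at least one atom. My basic move is \emph{localization}: whenever a join-less pair shares a common element above $\hat{0}$, the up-set of that element is a proper subinterval (it omits $\hat{0}$) that still contains a join-less pair, hence fails to be a lattice. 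In particular, if I can exhibit an atom $p$ with $p\le c$ and $p\le d$, then $[p,\hat{1}]$ is a proper non-lattice subinterval containing the whole bowtie, and I am done.

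So I may assume that $c$ and $d$ have no common atom. First I would pick atoms $p\le c$ and $p'\le d$; the no-common-atom assumption forces $p\not\le d$ and $p'\not\le c$. The hypothesis that all pairs of atoms have joins then supplies $m:=p\vee p'$. Because $a$ and $b$ are each upper bounds of both $p$ and $p'$ (as $p\le c\le a,b$ and $p'\le d\le a,b$), we get $m\le a$ and $m\le b$; moreover $m\ge p'$ together with $p'\not\le c$ gives $m\not\le c$, and $m\ge p>\hat{0}$. The pair I will actually exploit is $\{c,m\}$, which shares the atom $p$, since $p\le c$ and $p\le m$.

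The argument then splits on whether $c\vee m$ exists in $P$. If it does \emph{not}, then $\{c,m\}$ is a join-less pair sharing the atom $p$, and the localization move applied to $[p,\hat{1}]$ produces a proper non-lattice subinterval. If instead $j:=c\vee m$ exists, then since $a$ and $b$ are both upper bounds of $\{c,m\}$ we have $j\le a$ and $j\le b$, so $j$ is a common lower bound of $\{a,b\}$ with $j\ge c$; maximality of $c$ among the common lower bounds of $\{a,b\}$ forces $j=c$, whence $m\le j=c$, contradicting $m\not\le c$. Thus the second alternative is impossible, and in every remaining case I obtain a proper subinterval that is not a lattice.

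I expect the delicate point to be exactly this final dichotomy. The argument turns on extracting from the atom-join hypothesis an element $m$ that is forced below both $a$ and $b$ yet provably not below $c$, and then using the putative join $c\vee m$ as a lever: either it is missing, revealing a localizable join-less pair just one atom above $\hat{0}$, or it exists and collapses onto $c$, contradicting $m\not\le c$. The supporting claims—that join-less pairs and bowties genuinely persist in the induced up-sets $[p,\hat{1}]$, and that these up-sets are proper—are routine but should be verified, and the discreteness of the weighting is what guarantees the atoms below $c$ and $d$ exist in the first place.
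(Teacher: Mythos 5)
Your argument is correct and follows essentially the same route as the paper's proof: both start from a bowtie $(a,b:c,d)$, take atoms below $c$ and $d$, use the atom-join hypothesis to produce an element wedged below both $a$ and $b$, and then pass to the up-set of one of those atoms, which is a proper subinterval because the atom is not the minimum. The only real difference is the witness for non-latticeness in that subinterval: the paper takes a maximal lower bound $h$ of $\{a,b\}$ above the join of the two atoms, so that $\{a,b\}$ still fails to have a meet in $[e,\hat 1]$ or $[f,\hat 1]$, whereas you split off the common-atom case and exhibit the join-less pair $\{c,\,p\vee p'\}$ instead.
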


\begin{proof}
  Since $P$ is not a lattice, it contains a bowtie $(a,b:c,d)$ by
  Proposition~\ref{prop:bowtie}.  Let $e$ and $f$ be atoms in $P$
  below $c$ and $d$ respectively.  By assumption atoms $e$ and $f$
  have a join $g = e \join f$ and since $a$ and $b$ are upper bounds
  for $e$ and $f$, we have $a \geq g$ and $b \geq g$ by definition of
  being a join.  Finally, let $h$ a maximal lower bound for $a$ and
  $b$ that is above $g$.  See Figure~\ref{fig:atom-subint} and note
  that such $e$, $f$ and $h$ exist because of the discreteness of the
  grading.  If $h \neq c$, then $(a,b:c,h)$ is a bowtie in the proper
  subinterval $[e,1]$, if $h \neq d$, then $(a,b:h,d)$ is a bowtie in
  the proper subinterval $[f,1]$, and one of these conditions holds
  because $c$ and $d$ are distinct.
\end{proof}

The following corollary restates Lemma~\ref{lem:atom-subint} as a
positive assertion.

\begin{cor}[Lattice induction]\label{cor:lattice-induct}
  If $P$ is a discretely graded bounded poset in which all atoms have
  joins and all proper subintervals are lattices, then $P$ itself is a
  lattice.
\end{cor}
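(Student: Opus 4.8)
The plan is to recognize that this corollary is nothing more than the contrapositive of Lemma~\ref{lem:atom-subint}, so no genuinely new argument is required and the proof should be very short. First I would set up a proof by contradiction: assume that $P$ satisfies all the stated hypotheses but, contrary to the conclusion, is \emph{not} a lattice. The three standing hypotheses of the corollary---that $P$ is bounded, that it is graded with respect to a discrete weighting, and that every pair of atoms has a well-defined join---are verbatim the hypotheses of Lemma~\ref{lem:atom-subint}. (Here ``all atoms have joins'' and ``all pairs of atoms have well-defined joins'' say the same thing, since a join is always the join of a pair.) With the extra standing assumption that $P$ is not a lattice now in force, the lemma applies directly.

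The key step is then simply to invoke Lemma~\ref{lem:atom-subint}, which yields a proper subinterval of $P$ that is not a lattice. This immediately contradicts the final hypothesis of the corollary, namely that \emph{every} proper subinterval of $P$ is a lattice. Since the assumption that $P$ fails to be a lattice leads to a contradiction, I would conclude that $P$ is in fact a lattice, which is exactly the desired assertion.

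I do not expect any real obstacle, as the matching of hypotheses is immediate and no computation is involved. The only point I would pause to confirm is that the subintervals delivered by the lemma are genuinely \emph{proper}: in the lemma's construction they are of the form $[e,1]$ or $[f,1]$, where $e$ and $f$ are atoms lying strictly above the minimum element of $P$, so neither subinterval can equal all of $P$. This makes them proper subintervals in the precise sense demanded by the hypothesis, closing the argument.
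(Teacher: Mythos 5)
Your proof is correct and matches the paper exactly: the paper offers no separate argument, simply noting that the corollary restates Lemma~\ref{lem:atom-subint} as a positive assertion, i.e.\ its contrapositive. Your additional check that the subintervals $[e,1]$ and $[f,1]$ produced by the lemma are genuinely proper is a reasonable sanity check but not something the paper spells out.
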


In order to help investigate the lattice question in this context, the
first author wrote a program \texttt{euclid.sage} which is available
upon request.  Using this program we verified that these intervals are
lattices up through dimensions $9$ and we record this fact as a
proposition.

\begin{prop}[Low rank]\label{prop:low-rank}
  Let $w$ be a Coxeter element in an irreducible euclidean Coxeter
  group $\cox(\wt X_n)$.  If $n \leq 9$ then the interval $[1,w]^C$ is
  a lattice in the corresponding crystallographic group.
\end{prop}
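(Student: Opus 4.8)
The plan is to apply Corollary~\ref{cor:lattice-induct} to the interval $P = [1,w]^C$, which is bounded and discretely graded by the weighting introduced above. That corollary reduces the lattice property to two assertions: (i) every pair of atoms of $P$ has a well-defined join, and (ii) every proper subinterval of $P$ is a lattice. I would organize the argument as an induction on the rank $n$ (equivalently, on the weighted length of $w$), so that part (ii) is free to appeal to the statement in strictly lower rank, with the spherical case serving as the classical input.

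For part (ii), a proper subinterval is an interval $[x,y]^C$ with $(x,y)\neq(1,w)$, and the left action of $C$ on its Cayley graph identifies it with $[1,g]^C$ for the proper factor $g = x^{-1}y$, an element of strictly smaller weighted length. Here I would invoke the coarse structure of the interval together with Lemma~\ref{lem:int-rel} and Proposition~\ref{prop:products} to decompose $[1,g]^C$ as a product whose factors are each either a spherical noncrossing partition lattice (a lattice by the classical theory, cf. Theorem~\ref{thm:special-ints}) or a crystallographic interval attached to a proper root subsystem of rank $<n$ (a lattice by the inductive hypothesis). Since a product of lattices is a lattice, part (ii) follows.

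For part (i), note first that by Proposition~\ref{prop:balanced} the interval is self-dual, so verifying joins of atoms simultaneously settles meets of coatoms. The atoms are the minimum-weight generators, namely the horizontal reflections, the vertical reflections, and the factored translations. Although infinitely many of these occur, conjugation by $w$ is a lattice automorphism of $P$ and a pure translation on a suitable power, so it acts with only finitely many orbits on the atoms; this is exactly the subscript periodicity visible in the $\wt G_2$ computation. Moreover, any minimal upper bound of two atoms has weighted length bounded by a small constant, and Lemma~\ref{lem:convexity} together with the coarse structure confines the candidate upper bounds to a bounded, symmetry-finite region of $P$. Hence checking that each pair of atoms has a \emph{unique} minimal upper bound becomes a finite computation, and this is precisely what the program \texttt{euclid.sage} carries out for each irreducible type $\wt X_n$ with $n \leq 9$.

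The main obstacle is this finite-but-large atom-join verification, which splits into a conceptual and a computational half. The conceptual hurdle is the confinement step: one must prove that the minimal upper bounds of two atoms cannot escape to infinity in the vertical direction, since otherwise the join would be undetermined and the search would not terminate. This is exactly where the geometric control of Lemma~\ref{lem:convexity} and the structure of the Coxeter axis are indispensable. The computational half is the bookkeeping of certifying the absence of bowties among atoms across all relevant pairs; the combinatorial size of this check grows rapidly with $n$, which is why the statement is restricted to $n \leq 9$ and delegated to machine verification rather than proved uniformly by hand.
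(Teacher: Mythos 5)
Your proposal does not match the paper's argument: the paper's entire proof of this proposition is a direct appeal to machine verification (``Using this program we verified that these intervals are lattices up through dimensions~$9$''), with no inductive scaffolding, no reduction to atom joins, and no appeal to Corollary~\ref{cor:lattice-induct}. The proposition is recorded precisely so that it can serve as the \emph{base case} and sporadic-type input for the later inductive argument (Theorem~\ref{thm:lattice} and Lemmas~\ref{lem:projection}--\ref{lem:refl-join}); it is not itself proved by that machinery.

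Beyond the mismatch, two steps in your reduction are genuine gaps. First, your part~(ii) asserts that every proper subinterval $[1,g]^C$ decomposes as a product of spherical noncrossing partition lattices and lower-rank crystallographic intervals. Nothing in the paper establishes this: Proposition~\ref{prop:products} and Theorem~\ref{thm:special-ints} only describe $[1,w]^{\cfac}$ and the middle-group intervals, and for $g$ hyperbolic in the top row (or for $g$ requiring factored translations) the interval $[1,g]^C$ is not covered by either result, nor is it attached to a root subsystem of an irreducible euclidean type in any obvious way. Second, your confinement claim in part~(i) --- that all minimal upper bounds of two atoms have uniformly bounded weighted length and live in a bounded region --- is essentially the content of Lemma~\ref{lem:refl-join}, which the paper proves \emph{after} and \emph{by reduction to} Proposition~\ref{prop:low-rank} (the low-rank endpoint of that reduction is exactly this proposition). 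Lemma~\ref{lem:convexity} alone does not deliver it. Using that confinement to prove the low-rank case therefore inverts the paper's logical order, and without it your ``finite computation'' is not known to terminate, since the interval $[1,w]^C$ is infinite. The honest content of the proposition really is the unglamorous one: a finite, periodicity-exploiting computer check, taken on trust.
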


Since all five sporadic examples of irreducible euclidean Coxeter
groups are covered by Proposition~\ref{prop:low-rank}, we may turn our
attention to the four infinite families.  Before considering joins of
atoms in the intervals for the infinite euclidean families, it might
be useful to consider the properties of atomic joins in the Coxeter
intervals of the most classical spherical family.

\begin{rem}[Atomic joins in the symmetric group]\label{rem:atom-sym}
  If $W$ is the symmetric group, i.e. the spherical Coxeter group of
  type $A$, then its Coxeter element is an $n$-cycle and the interval
  $[1,w]^W$ is the lattice of noncrossing partitions.  The atoms in
  this case are the transpositions and these are represented as
  boundary edges or diagonals in the corresponding convex $n$-gon.
  Notice that the join of two atoms always has very low rank: it is
  reflection length $2$ or $3$ regardless of $n$.  It has length $2$
  when the edges are noncrossing or share an endpoint and it has
  length $3$ when they cross.  In all three situations the join is
  below the element that corresponds to the triangle or square which
  is the convex hull of the union of their endpoints.
\end{rem}

The situation in the infinite euclidean families is very similar in
the sense that joins of atoms are of uniformly low rank and they live
in subposets defined by the endpoints, or equivalently the
coordinates, involved.  The first crucial fact is that there is a
well-defined projection from the middle row to the top and from the
middle row to the bottom row.

\begin{lem}[Projection]\label{lem:projection}
  Let $w$ be a Coxeter element in an irreducible euclidean Coxeter
  group $W = \cox(\wt X_n)$.  For each element $u$ in the middle row
  of the coarse structure of $[1,w]^W$, the set of elements in the top
  row that are above $u$ have a unique minimum element.  Similarly,
  the set of elements in the bottom row that are below $u$ have a
  unique maximum element.
\end{lem}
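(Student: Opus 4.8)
The plan is to prove the assertion about the bottom row and then obtain the assertion about the top row for free. By Proposition~\ref{prop:balanced} the left-complement map is an order-reversing automorphism of $[1,w]^W$. Since it is order-reversing it interchanges the roles of a factor and its complement, so it carries the middle row (where both $u$ and its complement are vertical elliptic) to itself, and it carries the bottom row (horizontal elliptic $u$, hyperbolic complement) to the top row (hyperbolic $u$, horizontal elliptic complement). Consequently the statement ``the bottom-row elements below $u$ have a unique maximum,'' applied to every middle-row $u$, transforms under the complement into ``the top-row elements above $u$ have a unique minimum.'' Thus it suffices to produce, for each middle-row $u$, a largest horizontal elliptic element of $[1,w]^W$ lying below it.

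I will work throughout with the two basic invariants of \cite{BrMc-factor}: every element of $[1,w]^W\subseteq[1,w]^L$ is recorded by its move-set $\mov(\cdot)\subseteq V$ and its fix-set $\fix(\cdot)\subseteq E$, and for elliptic elements the order is controlled by these invariants, with $x\le y$ forcing $\fix(y)\subseteq\fix(x)$ and $\mov(x)\subseteq\mov(y)$. Let $V_0$ be the line spanned by the direction of the Coxeter axis and $V_H=V_0^\perp$ the horizontal subspace; an elliptic isometry is horizontal precisely when $\mov(\cdot)\subseteq V_H$, equivalently when its fix-set is invariant under translation along $V_0$. Fix a middle-row element $u$ and write $X=\fix(u)$; since $u$ is vertical, $X$ is not $V_0$-invariant. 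Any horizontal elliptic $b\le u$ satisfies $\fix(b)\supseteq X$ and has $V_0$-invariant fix-set, hence $\fix(b)\supseteq \hat X:=X+V_0$, the smallest $V_0$-invariant affine subspace containing $X$. The natural candidate for the maximum is therefore the bottom-row element $c$ with $\fix(c)=\hat X$, equivalently $\mov(c)=\mov(u)\cap V_H$. Because $\hat X$ is uniquely determined by $X$, there can be at most one such maximum; and in the ambient interval $[1,w]^L$ this $c$ manifestly exists and is the maximum, since there every affine subspace is realized by a unique elliptic in the bottom two rows and the bottom row is ordered by reverse inclusion of fix-sets.

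The main obstacle, and the only real work, is to show that this candidate $c$ lies in the \emph{Coxeter} interval $[1,w]^W$ and satisfies $c\le u$; once this is done, every bottom-row $b\le u$ has $\fix(b)\supseteq\hat X=\fix(c)$, whence $b\le c$, giving both maximality and uniqueness. My plan is to factor $u$. Choose a minimal-length reflection factorization of $u$ whose letters are edge labels of $[1,w]^W$; by \cite[\McReflections]{Mc-lattice} these are exactly the reflections whose fixed hyperplane contains an axial vertex, so each letter lies in $R_H$ or $R_V$. Using the Hurwitz action I would gather the horizontal letters into an initial product $c'$, a horizontal elliptic lying below $u$ in $[1,w]^W$, and then verify that $\fix(c')$ is exactly the intersection of the corresponding horizontal hyperplanes, namely $\hat X$. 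Pinning down this intersection is precisely where Lemma~\ref{lem:convexity} enters: it confines the axial vertices in each relevant hyperplane between consecutive parallel hyperplanes of the Coxeter complex, which controls which horizontal reflections occur below $u$ and forces $\fix(c')=\hat X$ rather than a larger $V_0$-invariant subspace.

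With $c=c'$ established inside $[1,w]^W$ and below $u$, the maximality argument above completes the bottom-row claim: the horizontal elliptics below $u$ are exactly those $b$ with $\fix(b)\supseteq\hat X$, and all of these lie below $c$. Applying the complement duality of Proposition~\ref{prop:balanced} then yields the dual statement that the top-row elements above $u$ have a unique minimum. I expect the existence/transfer step of the previous paragraph — verifying that the horizontal part of a factorization of $u$ genuinely realizes the subspace $\hat X$ within the Coxeter group, rather than merely within the full isometry interval $[1,w]^L$ — to be the crux, with the Convexity lemma and the edge-label characterization of \cite{Mc-lattice} doing the essential geometric work.
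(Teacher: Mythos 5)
Your reduction by self-duality and your observation that every horizontal elliptic $b\le u$ must have $\fix(b)\supseteq \hat X:=\fix(u)+V_0$ are both sound, but the step you yourself identify as the crux is not merely hard --- the statement you would need there is false. The element $c$ of $[1,w]^L$ with $\fix(c)=\hat X$, which is indeed the maximum bottom-row element below $u$ in the full isometry interval, does \emph{not} in general lie in the Coxeter interval $[1,w]^W$, and the true projection of $u$ can have a strictly larger fix-set. You can see this in the dual picture from the paper's own case analysis: the minimum top-row element above a vertical reflection is a pure translation (the first box of the top row, which is exactly what the fix-set/move-set formula predicts) in type $A$, but in types $B$, $C$ and $D$ it can lie in the second, third or even fourth box, i.e.\ it can be a hyperbolic of reflection length $3$, $4$ or $5$ rather than the predicted length-$2$ translation. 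Equivalently, for such elements every minimal factorization by edge labels of $[1,w]^W$ contains more than one vertical letter, so the horizontal prefix $c'$ you extract by Hurwitz moves satisfies $\fix(c')\supsetneq\hat X$. Once $c$ is unavailable, your maximality and uniqueness arguments (``every bottom-row $b\le u$ has $\fix(b)\supseteq\fix(c)$, whence $b\le c$'') both collapse, and nothing in the proposal rules out two incomparable maximal horizontal elliptics below $u$. Lemma~\ref{lem:convexity} is not the tool that closes this gap; in the paper it is used much later, for the consistency of reflections in the Artin group.

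The paper's actual proof is structured quite differently: it verifies the claim by computer for the sporadic types and the low-rank members of the infinite families, determines the upward projection of each \emph{vertical reflection} by explicit case analysis in types $A$ through $D$ (this is precisely where the variable-box behaviour above is recorded), and then handles an arbitrary middle-row $u$ by choosing a vertical reflection $a\le u$ and forming the join of $u$ with the upward projection of $a$ inside $[a,w]^W$ --- a join that exists because the complement of a vertical reflection is elliptic, so $[a,w]^W$ is a spherical-type interval and hence a lattice. The bottom-row statement then follows by the same self-duality you invoke. A uniform geometric identification of the projection of the kind you propose would eliminate the need for any case analysis or computation, which is a strong hint that the candidate cannot be pinned down by the invariants alone.
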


\begin{proof}
  For the five sporadic examples and the beginnings of the infinite
  families, we verified these assertions using the program
  \texttt{euclid.sage}.  Next we consider the elements in the first
  box of the middle row, the ones corresponding to vertical
  reflections.  Because of the explicit and regular nature of the
  infinite families (as illustrated by the computations given in
  \cite[Section~$11$]{Mc-lattice}), the list of top row elements above
  each vertical reflection can be explicitly written down and a unique
  minimal top element identified.  In type $A$, regardless of choice
  of Coxeter element, each vertical reflection is below a unique top
  row element in first box (i.e. a pure translation).  In type $C$,
  some vertical translations project upwards to elements in the first
  box of the row and other to the second.  In type $B$, each vertical
  translation projects upwards to a unique element in either the
  first, the second or the third box in the top row.  And in type $D$,
  each vertical translation projects upwards to a unique element in
  either the first or the fourth box in the top row.  

  Finally, let $u$ be an arbitrary element of the middle row and let
  $a$ be one of the vertical reflections below $u$.  Such a reflection
  must exists in any factorization of $u$ because, by definition of
  the middle row, some point experiences a vertical motion under $u$.
  We claim that the unique minimum top row element above $u$ is the
  join of $u$ and the projection of $a$ to the top row inside the
  interval $[a,w]^W$.  Because $a$ is a vertical reflection, its
  complement is also a vertical elliptic isometry and the interval
  $[a,w]^W$ is that of spherical type, thus a lattice, and so the join
  of these two elements is well-defined.  This element is clearly in
  the top row (because it is above the upward projection of $a$) and
  above $u$.  It is the minimum such element because any $v$ in the
  top row that is above $u$ is also above $a$, thus above the upward
  projection of $a$, and so above the join of $u$ and the upward
  projection of $a$.  The second assertion follows immediately from
  the first because these posets are self-dual
  (Proposition~\ref{prop:balanced}).
\end{proof}

Using Lemma~\ref{lem:projection} we define an \emph{upward projection
  map} from $[1,w]^\ccryst$ to $[1,w]^\cfac$ which is the identity on
$[1,w]^\cfac$ and sends elements in the middle row to the elements
described in the lemma.  It can be used to show that the meets and
joins that exist in the factor interval $[1,w]^\cfac$ remain meets and
joins inside the crystallographic interval $[1,w]^\ccryst$.

\begin{lem}[Factor meets and joins]\label{lem:factor-join}
  For each choice of Coxeter element $w$ in an irreducible euclidean
  Coxeter group $W = \cox(\wt X_n)$, the inclusion of the factor
  lattice $[1,w]^\cfac$ into the crystallographic interval
  $[1,w]^\ccryst$ preserves meets and joins.  In particular, any two
  elements in $[1,w]^\cfac$ have a well-defined meet in
  $[1,w]^\ccryst$ that agrees with their meet in $[1,w]^\cfac$ and a
  well-defined join in $[1,w]^\ccryst$ that agrees with their join in
  $[1,w]^\cfac$
\end{lem}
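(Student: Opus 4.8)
The plan is to bootstrap from three facts already in hand. First, the factor interval $[1,w]^{\cfac}$ is a lattice, since Proposition~\ref{prop:products} identifies it with a direct product of type~$B$ noncrossing partition lattices; thus $x\meet y$ and $x\join y$ are defined in $[1,w]^{\cfac}$ for all $x,y$. Second, all of the intervals in sight are self-dual (Proposition~\ref{prop:balanced}), and the left-complement anti-automorphism of $[1,w]^{\ccryst}$ restricts to that of $[1,w]^{\cfac}$ while interchanging meets and joins; so it suffices to prove that joins are preserved, the meet statement following formally. Third, by Lemma~\ref{lem:int-rel} we have $[1,w]^{\ccryst}=[1,w]^{\ccox}\cup[1,w]^{\cfac}$, so the elements of the crystallographic interval that are not already in the factor interval are exactly the middle-row elements of the coarse structure of $[1,w]^{\ccox}$.

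So I would fix $x,y\in[1,w]^{\cfac}$, set $z=x\join y$ as computed in the factor lattice, and show $z$ is the least upper bound of $\{x,y\}$ inside $[1,w]^{\ccryst}$. Since $z$ is manifestly an upper bound there, the task is to verify $z\le v$ for an arbitrary upper bound $v$ of $\{x,y\}$ in $[1,w]^{\ccryst}$. If $v\in[1,w]^{\cfac}$ this is immediate from the definition of $z$. By the previous paragraph the only other case is that $v$ is a middle-row element, i.e. a vertical elliptic isometry. Here the key observation is that every element below a point-fixing elliptic is again elliptic, because its move-set is contained in the linear subspace $\mov(v)$ and hence contains the origin; and the only elliptic elements of $[1,w]^{\cfac}$ are the bottom-row (horizontal) ones, since the vertical elliptics make up the middle row, which is disjoint from $[1,w]^{\cfac}$. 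Thus $x$ and $y$ are bottom-row. I would then invoke the dual half of Lemma~\ref{lem:projection}: the bottom-row elements below $v$ have a greatest element $\delta(v)$, and $\delta(v)$ is itself bottom-row and so lies in $[1,w]^{\cfac}$. As $x,y\le\delta(v)$, the element $\delta(v)$ is an upper bound of $x,y$ inside the factor lattice, whence $z\le\delta(v)\le v$. This exhibits $z$ as the least upper bound in $[1,w]^{\ccryst}$, so joins are preserved; meets follow by self-duality.

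The substantive lattice-theoretic weight has already been absorbed into Lemma~\ref{lem:projection}, so I expect the genuine obstacle to be a reconciliation of two partial orders. The projection $\delta(v)$ and the inequalities $x,y\le\delta(v)$ are produced inside $[1,w]^{\ccox}$, whereas the concluding inequality $z\le\delta(v)$ must be read inside the factor lattice $[1,w]^{\cfac}$; legitimacy requires that the two orders agree on the common bottom row. This is exactly the content of $[1,w]^{\cdiag}=[1,w]^{\ccox}\cap[1,w]^{\cfac}$ in Lemma~\ref{lem:int-rel}, together with the fact that the weight of a horizontal elliptic is the same whether or not factored translations are available — intuitively clear because a factored generator carries a nonzero component along the Coxeter axis $V_0$ while a horizontal elliptic does not, but a point to be pinned down uniformly from the move-set description of the order. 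Once the order-consistency and the elliptic/hyperbolic classification of the coarse-structure boxes are made airtight across all types, the argument above closes.
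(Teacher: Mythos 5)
Your argument is correct and is essentially the paper's proof read upside down: the paper fixes a putative maximal lower bound $a$ in the middle row and uses the upward projection of Lemma~\ref{lem:projection} to contradict its maximality, whereas you fix an arbitrary middle-row upper bound $v$ and use the downward (dual) projection to pass through $\delta(v)\in[1,w]^{\cfac}$; by the self-duality of Proposition~\ref{prop:balanced} these are the same argument. The auxiliary facts you verify (elements of $[1,w]^{\cfac}$ comparable with a middle-row element must lie in the bottom row, resp.\ top row, and the orders on the subintervals are the induced ones) are exactly the facts the paper uses implicitly via the coarse structure.
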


\begin{proof}
  Let $P = [1,w]^\ccryst$ be the crystallographic interval, let $Q =
  [1,w]^\cfac$ be the factor subposet and suppose that $u$ and $v$ are
  elements in $Q$ with a maximal lower bound $a$ in $P$ that is not
  their meet $b = u \meet_Q v$ in $Q$.  If $a$ is in $Q$ then $a=b$
  because $Q$ is a lattice, in particular a product of type $B$
  noncrossing partiition lattices.  Thus $a$ is not in $Q$ and must
  lie in the middle row of the coarse structure.  This means that $u$
  and $v$, being both above $a$ and in $Q$, must both lie in the top
  row.  By Lemma~\ref{lem:projection} there is a unique minimum top
  row element $c$ above $a$ which would, by definition, be below both
  $u$ and $v$, contradicting the maximality of $a$ as a lower bound
  for these elements.  Thus no such $u$ and $v$ exist. The assertion
  involving joins is true by duality.
\end{proof}

Lemma~\ref{lem:projection} can also be used to show that joins with
factored translations are well-defined.

\begin{lem}[Translation joins]\label{lem:trans-join}
  Let $w$ be a Coxeter element in an irreducible euclidean Coxeter
  group $W = \cox(\wt X_n)$.  If $a$ and $b$ are atoms in the
  crystallographic interval $[1,w]^\ccryst$ and one of them is a
  factored translation then their join is well-defined.
\end{lem}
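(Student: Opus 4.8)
The plan is to split on the type of the second atom $b$ and reduce every case to the factor lattice $[1,w]^\cfac$. If $b$ is either a horizontal reflection or a second factored translation, then both $a$ and $b$ already lie in $[1,w]^\cfac$. Since $[1,w]^\cfac$ is a product of type~$B$ noncrossing partition lattices (Proposition~\ref{prop:products}) and the inclusion $[1,w]^\cfac \into [1,w]^\ccryst$ preserves joins (Lemma~\ref{lem:factor-join}), the join $a\join b$ computed in $[1,w]^\cfac$ is also their join in $[1,w]^\ccryst$, and we are done. The one remaining case is when $a$ is a factored translation and $b$ is a vertical reflection, and the rest of the argument treats this case.

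First I would locate all of the common upper bounds. A factored translation is a nontrivial pure translation, hence a hyperbolic isometry, and in these intervals every element lying below an elliptic isometry is again elliptic (this is immediate from the description of the order in terms of min-sets and move-sets in \cite{BrMc-factor}). Consequently any $v$ with $v\geq a$ must itself be hyperbolic. Since the hyperbolic elements of $[1,w]^\ccryst$ are exactly those in the top row and the factored region of the coarse structure, and both of these regions are contained in $[1,w]^\cfac$, every common upper bound of $a$ and $b$ lies in the factor lattice $[1,w]^\cfac$.

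Next I would produce the candidate join using the upward projection $P\colon [1,w]^\ccryst \to [1,w]^\cfac$ defined after Lemma~\ref{lem:projection}, writing $b' = P(b)$ for the unique minimal top-row element above the vertical reflection $b$. Both $a$ and $b'$ lie in $[1,w]^\cfac$, so $j = a\join b'$ exists there, and since $j\geq a$ and $j\geq b'\geq b$ it is a common upper bound of $a$ and $b$ in $[1,w]^\ccryst$. To see that $j$ is the least such bound it suffices to show that every common upper bound $v$ satisfies $v\geq b'$, for then $v\geq a$ together with $v\geq b'$ gives $v\geq a\join b' = j$. When $v$ lies in the top row this is immediate: by the previous paragraph $v\in[1,w]^\cfac$, and as a top-row element above $b$ it satisfies $v\geq b'$ by the very minimality that defines $b'=P(b)$.

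The step I expect to be the main obstacle is exactly the inequality $v\geq b'$ when $v$ lies in the \emph{factored region} rather than the top row: there $v$ sits to the side of the top row and the defining minimality of $b'$ does not apply directly. What must be shown is that a vertical reflection lying below a factored-region element also lies below that element's upward projection. I would establish this from the component-wise structure of $\cfac$ as a central product of middle groups (Proposition~\ref{prop:products}), using that each factored translation and each horizontal reflection is confined to a single horizontal component $V_i$, together with the convexity constraint on axial vertices from Lemma~\ref{lem:convexity} to control how a single vertical reflection can factor across these components. Since the five sporadic types are already covered by Proposition~\ref{prop:low-rank}, only the explicitly describable infinite families remain, where this component bookkeeping can be carried out directly.
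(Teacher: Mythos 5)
Your overall strategy is the same as the paper's: dispose of the cases where both atoms lie in $[1,w]^\cfac$ via Lemma~\ref{lem:factor-join}, and in the remaining case (factored translation versus vertical reflection) take the join of the factored translation with the upward projection of the vertical reflection, then argue that every common upper bound dominates it. The paper's proof is exactly this argument, compressed into a few lines.

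The one place where you stop short is the case you yourself flag as ``the main obstacle'': a common upper bound $v$ lying in the factored region rather than the top row. Your proposed repair --- component bookkeeping in the central product of middle groups together with Lemma~\ref{lem:convexity}, carried out family by family --- is only a plan, not an argument, so as written the proof is incomplete. But the obstacle is illusory: that case is vacuous. The proof of Lemma~\ref{lem:int-rel} shows that no minimal length factorization of $w$ in $\ccryst$ contains both a vertical reflection and a factored translation. If $v$ were a factored-region element with $v \geq b$ for a vertical reflection $b$, a geodesic $1 \to b \to v \to w$ would be a minimal factorization of $w$ containing the vertical reflection $b$; hence it contains no factored translation, so $v$ is a product of reflections and lies in $[1,w]^\ccox$, contradicting the definition of the factored region as the set of elements requiring a factored translation in every minimal factorization. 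So every common upper bound of the two atoms is hyperbolic (being above a translation) and lies in $[1,w]^\ccox$ (being above a vertical reflection); that is, it lies in the top row, and the minimality defining the upward projection of Lemma~\ref{lem:projection} applies directly. This is precisely the assertion the paper makes when it says the only upper bounds are to be found in the top row. With that observation inserted in place of your sketched component argument, your proof closes and coincides with the paper's.
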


\begin{proof}
  Let $b \in T_F$ be the factored translation.  If $a$ is in $\cfac$
  then by Lemma~\ref{lem:factor-join} the join of $a$ and $b$ is
  well-defined.  The only remaining case is where $a$ is in the middle
  row of the coarse structure and we claim that the join of $b$ with
  the upward projection of $a$ to the top row
  (Lemma~\ref{lem:projection}) is the join of $a$ and $b$.  In this
  case, the only upper bounds for $a$ and $b$ are to be found in the
  top row of the coarse structure and any such element is above the
  projection of $a$ by definition and thus above its join with $b$.
  This completes the proof.
\end{proof}

And finally, we consider the case where both atoms are reflections.

\begin{lem}[Reflection joins]\label{lem:refl-join}
  Let $w$ be a Coxeter element in an irreducible euclidean Coxeter
  group $W = \cox(\wt X_n)$.  If $a$ and $b$ are reflections in the
  interval $[1,w]^\ccryst$ and $a$ is a vertical reflection then their
  join is well-defined.
\end{lem}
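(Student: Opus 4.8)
The plan is to exhibit a least common upper bound for the pair $a,b$ inside $[1,w]^\ccryst$, and I would begin with two reductions that isolate the role of the vertical hypothesis. Since every common upper bound of $a$ and $b$ dominates $a$, all of them lie in the subinterval $[a,w]^\ccryst$; moreover, because $a$ is a vertical reflection, no element of the bottom row of the coarse structure can lie above it (a bottom-row element is a horizontal elliptic isometry and factors entirely into horizontal reflections, so a vertical reflection cannot occur beneath it). Hence the common upper bounds occupy only the middle, top, and factored portions of the interval. The essential consequence of verticality is that the complement $a^{-1}w$ is again a vertical elliptic isometry, so $[a,w]^W$ is an interval of \emph{spherical} type and is therefore a lattice by the dual braid results for spherical groups.

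Next I would produce two candidate minimal upper bounds and compare them. Using the upward projection map of Lemma~\ref{lem:projection}, every top-or-factored common upper bound $v$ satisfies $v \geq \hat a$, where $\hat a$ is the unique minimal top-row element above $a$; since $\hat a$ and the projection of $b$ both lie in $\cfac$, their join $j$ exists by Lemma~\ref{lem:factor-join}, and $j$ is the least common upper bound among those in the top-or-factored part. Separately, any \emph{middle-row} common upper bound is a vertical elliptic lying in the spherical lattice $[a,w]^W$; replacing $b$ by its canonical image in this spherical interval I would compute a least middle-row candidate $m$ there, exactly as the join of atoms is computed in the noncrossing-partition lattice of Remark~\ref{rem:atom-sym}. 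Because the middle row lies below the top row in the poset order, if a middle-row common upper bound exists at all then $m \leq j$ and $m$ is the desired join; otherwise $j$ is the join. In either case self-duality of the interval (Proposition~\ref{prop:balanced}) lets me treat a horizontal $b$ by passing to the dual statement for the meet of the (vertical) complements, so it suffices to treat $b$ vertical.

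The hard part will be the two coherence claims that tie these candidates together. First, I must show that the middle-row candidate $m$ genuinely dominates $b$ and is independent of the auxiliary choices used to build it; this is where the spherical lattice $[a,w]^W$ and the uniqueness clause of Lemma~\ref{lem:projection} must be combined, for it is precisely the uniqueness of the minimal top-row element above each middle-row element that prevents two incomparable minimal upper bounds---a bowtie---from forming. Second, I must rule out that a common upper bound built using a factored translation slips strictly below the spherical candidate; here the vertical hypothesis again does the work, since it forces every such exotic upper bound into the image of the upward projection and hence above $\hat a \join b$. Controlling this interaction between the middle row and the factored box is the crux, and I expect it to be handled uniformly by the projection map rather than by a case-by-case analysis of the infinite families.
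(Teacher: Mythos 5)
Your opening reductions and your easy case agree with the paper: when $a$ and $b$ have no common upper bound in the middle row of the coarse structure, their join is the join of their upward projections, exactly by Lemma~\ref{lem:projection} and Lemma~\ref{lem:factor-join}. The crux, however, is the case where middle-row common upper bounds exist, and there your argument has a genuine gap in two places. First, the ``least middle-row candidate $m$'' is not well-defined by the tools you invoke. The interval $[a,w]^W$ is indeed of spherical type and hence a lattice, but $b$ does not lie in it, so the set $\{u \in [a,w]^W : u \geq b\}$ is not a principal filter of that lattice and need not have a unique minimal element: the meet, computed in $[a,w]^W$, of two middle-row common upper bounds of $a$ and $b$ has no reason to remain above $b$. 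Incomparable minimal upper bounds of a pair of reflections are exactly the bowties that make these intervals fail to be lattices in the reducible types (\McReducibleBowties{} of \cite{Mc-lattice}), so this possibility cannot be dismissed without an argument. Second, the inference ``the middle row lies below the top row, hence $m \leq j$'' is a non sequitur: a middle-row element lies below \emph{some} top-row elements, not below every top-row common upper bound, so even a genuine minimal middle-row upper bound need not be comparable to $j = \hat a \join \hat b$. (Your closing self-duality reduction is also off: dualizing replaces the reflection atoms by their coatom complements, so it yields a statement about meets of coatoms rather than reducing a horizontal $b$ to a vertical one.)

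The paper closes exactly this gap by a mechanism you explicitly decline to use: a reduction to low rank. For the infinite families it argues that every minimal upper bound of the two reflection atoms lies below a top-row element of uniformly bounded rank determined solely by the coordinates appearing in the roots of $a$ and $b$ and the type of $W$ --- the euclidean analogue of Remark~\ref{rem:atom-sym} --- so any bowtie between reflection atoms would already appear in a bounded-rank crystallographic interval; Proposition~\ref{prop:low-rank} (computer verification through rank $9$, which also disposes of the sporadic types) then rules this out. Your stated expectation that the projection map handles the middle-row interaction ``uniformly rather than by a case-by-case analysis of the infinite families'' is precisely the step for which you offer no argument, and it is the step the paper does \emph{not} obtain uniformly: it relies on the coordinate analysis of the infinite families together with the low-rank computations.
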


\begin{proof}
  If $a$ and $b$ have no upper bounds in the middle row of the coarse
  structure then their join is the join of their images under the
  upward projection map by Lemma~\ref{lem:projection} and
  Lemma~\ref{lem:factor-join}.  If $W$ is of sporadic type then the
  join of $a$ and $b$ exists by Proposition~\ref{prop:low-rank}.  And
  finally, if $W$ belongs to one of the infinite euclidean families,
  one can use properties of the noncrossing partition lattices in the
  spherical infinite families, and properties of the upward projection
  map to show that every possible minimal upper bound for $a$ and $b$
  is below a low-rank top row element solely defined by the set of
  coordinates involved in the roots of $a$ and $b$ and the type of
  $W$.  This is the euclidean analogue of the situation described in
  Remark~\ref{rem:atom-sym}.  In other words, if there is a pair of
  reflection atoms in a crystallographic interval for one of the
  infinite families that has no well-defined join, then there is such
  a pair in such an interval where the rank is low and uniformly
  bounded.  And since no such pair exists in low rank
  (Proposition~\ref{prop:low-rank}), no such pair exists at all.
\end{proof}

Combining these lemmas establishes the following.

\begin{thm}[Lattice]\label{thm:lattice}
  For each choice of Coxeter element $w$ in an irreducible euclidean
  Coxeter group $W = \cox(\wt X_n)$, the crystallographic interval
  $[1,w]^\ccryst$, in the corresponding crystallographic group
  $C=\cryst(\wt X_n)$, is a lattice.
\end{thm}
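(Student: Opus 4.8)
The plan is to deduce the lattice property directly from Corollary~\ref{cor:lattice-induct}, applied to $P = [1,w]^\ccryst$, by verifying its two hypotheses: that every pair of atoms has a join, and that every proper subinterval is a lattice. The atoms of $P$ are the horizontal reflections in $R_H$, the vertical reflections in $R_V$, and, when the horizontal root system has $k\geq 2$ components, the factored translations in $T_F$ (which then carry weight $\tfrac2k\leq 1$). I would settle the atom-join hypothesis by a short exhaustive case analysis over these three kinds of atoms. If one of the two atoms is a factored translation, their join exists by Lemma~\ref{lem:trans-join}. If both are reflections and at least one is vertical, their join exists by Lemma~\ref{lem:refl-join}. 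The only remaining case is a pair of horizontal reflections; these both lie in $\cfac$, where $[1,w]^\cfac$ is a product of type $B$ noncrossing partition lattices by Proposition~\ref{prop:products}, so they have a join there which is preserved in $[1,w]^\ccryst$ by Lemma~\ref{lem:factor-join}. Since these cases exhaust all pairs, every pair of atoms has a well-defined join.

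The more delicate hypothesis is that every proper subinterval is a lattice, and here I would argue by induction on the weighted length of the interval, invoking Corollary~\ref{cor:lattice-induct} at each stage. A proper subinterval $[x,y]$ is isomorphic as a labeled poset to $[1,x^{-1}y]^\ccryst$, an interval of strictly smaller length. The point I would establish is that the atom-join property descends to such subintervals: since the generating set is closed under conjugation and the interval is balanced and self-dual (Proposition~\ref{prop:balanced}), the factor $x^{-1}y$ of a minimal factorization of $w$ is again bounded by $w$, so its atoms are atoms of $[1,w]^\ccryst$ and their joins, computed in $[1,w]^\ccryst$, are forced below $x^{-1}y$ and hence remain joins inside the subinterval. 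Granting this, each subinterval again has all atom-pair joins, its own proper subintervals are shorter and so are lattices by the inductive hypothesis, and Corollary~\ref{cor:lattice-induct} applies. Taking the top interval yields the theorem.

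The main obstacle is exactly this descent step for subintervals. The three join lemmas are tailored to the coarse three-row structure of $[1,w]^\ccox$ sitting inside $[1,w]^\ccryst$, and a general subinterval $[1,x^{-1}y]^\ccryst$ need not visibly carry that structure; what rescues the induction is the closure property that a subword of a geodesic to $w$ stays below $w$, reducing every subinterval to an interval of the same global type but smaller length. I would also emphasize that Proposition~\ref{prop:low-rank} does quiet but essential work beneath the surface: the infinite-family cases of Lemma~\ref{lem:refl-join} are reduced to a uniformly bounded low rank and then settled by the computer verification in Proposition~\ref{prop:low-rank}, so that the sporadic groups together with the small-rank members of each infinite family anchor the entire argument.
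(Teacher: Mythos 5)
Your proposal is correct and runs on the same machinery as the paper's proof: the atom-pair case analysis is identical (pairs containing a factored translation via Lemma~\ref{lem:trans-join}, pairs of reflections containing a vertical one via Lemma~\ref{lem:refl-join}, and pairs of horizontal reflections pushed into the lattice $[1,w]^\cfac$ via Proposition~\ref{prop:products} and Lemma~\ref{lem:factor-join}), and both arguments close with Corollary~\ref{cor:lattice-induct} and lean on the computer verification of Proposition~\ref{prop:low-rank}. The one genuine difference is how the second hypothesis of Corollary~\ref{cor:lattice-induct} --- that every proper subinterval is a lattice --- gets discharged. The paper inducts on the rank $n$, with Proposition~\ref{prop:low-rank} covering the sporadic types and the base cases, and leaves the passage from ``lower-rank crystallographic intervals are lattices'' to ``proper subintervals of $[1,w]^\ccryst$ are lattices'' largely implicit. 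You instead induct on the weighted length of the interval: a proper subinterval $[x,y]$ is isomorphic to $[1,x^{-1}y]^\ccryst$ with $x^{-1}y$ again an element of $[1,w]^\ccryst$ (conjugation-closure of the generating set lets one slide a middle factor of a minimal factorization to the front without altering it), so its atoms are atoms of the big interval, their joins computed there fall below $x^{-1}y$ and hence persist in the subinterval, and discreteness of the weights makes the induction well-founded. This is a self-contained and arguably tighter way to meet the subinterval hypothesis, and it costs nothing extra, since Lemma~\ref{lem:refl-join} already performs its reduction of the infinite families to bounded rank internally rather than through the outer induction on $n$.
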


\begin{proof}
  Proposition~\ref{prop:low-rank} covers the five sporadic examples.
  and for the four infinite families we proceed by induction.  The
  base cases are again covered by Proposition~\ref{prop:low-rank}, so
  suppose by induction that $X$ is $A$, $B$, $C$ or $D$ and that all
  crystallographic intervals are lattices for $k < n$.  Atoms in
  $[1,w]^\ccryst$ correspond to elements in $R_H \cup R_V \cup T_F$
  and all possible combinations of pairs of atoms are covered by
  Lemma~\ref{lem:factor-join}, Lemma~\ref{lem:trans-join}, or
  Lemma~\ref{lem:refl-join}.  Thus all pairs of atoms have
  well-defined joins and the interval is a lattice by
  Corollary~\ref{cor:lattice-induct}.
\end{proof}

Theorem~\ref{thm:lattice} and Proposition~\ref{prop:balanced} show
that Proposition~\ref{prop:lattice-garside} can be applied and this
immediately proves the following slightly more explicit version of
Theorem~\ref{main:garside}.

\begin{thm}[Crystallographic Garside groups]\label{thm:cryst-gar}
  Let $w$ be a Coxeter element in an irreducible euclidean Coxeter
  group $W = \cox(\wt X_n)$ and let $C = \cryst(\wt X_n,w)$ be the
  corresponding crystallographic group with its natural weighted
  generating set.  The interval $[1,w]^C$ is a balanced lattice and,
  as a consequence, it defines an interval group $\ggar = \gar(\wt
  X_n,w)$ with a Garside structure of infinite type.
\end{thm}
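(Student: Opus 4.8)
The plan is to treat this statement as the assembly of three results already in hand, since its two assertions separate cleanly. The first assertion, that $[1,w]^\ccryst$ is a balanced lattice, is nothing more than the conjunction of Theorem~\ref{thm:lattice}, which gives that this interval is a lattice, and Proposition~\ref{prop:balanced}, which gives that it is balanced (indeed balanced and self-dual). So for the first half I would simply quote these two results and observe that together they deliver exactly the phrase ``balanced lattice.''

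For the second assertion I would apply Proposition~\ref{prop:lattice-garside}, whose hypotheses I must check for the pair consisting of $\ccryst = \cryst(\wt X_n,w)$ and its natural weighted generating set. There are three conditions. \emph{Symmetry} is automatic: the reflections in $R_H \cup R_V$ are involutions and hence self-inverse, while the pure translations and the factored translations occur together with their inverses. \emph{Discreteness of the weighting} holds because the natural weights take only the finitely many values $1$ (on reflections), $\frac{2}{k}$ (on factored translations) and $2$ (on diagonal translations); these form a discrete subset of the positive reals bounded away from $0$. \emph{Closure under conjugation} is the one point meriting care: conjugates of reflections are again reflections, and the factored and diagonal translations were built in Section~\ref{sec:horizontal} precisely as conjugation-closed families, so the full natural generating set of $\ccryst$ is closed under conjugation even though the edge labels that actually appear in $[1,w]^\ccryst$ form the smaller set $R_H \cup R_V \cup T_F$. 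With these three conditions verified, Proposition~\ref{prop:lattice-garside} applies with $G = \ccryst$ and $g = w$.

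The conclusion is then immediate: the interval group $\ggar = \ccryst_w = \gar(\wt X_n,w)$ carries a Garside structure in the expanded sense of Digne, which is what the word \emph{quasi-Garside} records here. The prefix reflects only that the generating set is infinite, with the discreteness of the grading of $[1,w]^\ccryst$ substituting for finiteness and forcing the standard Garside algorithms to terminate, exactly as discussed after Proposition~\ref{prop:lattice-garside}. The honest assessment of difficulty is that essentially all of the real work lies upstream, in Theorem~\ref{thm:lattice}; granting that lattice property, the present statement is a corollary whose only non-formal ingredient is the conjugation-closure of the natural generating set, and that follows directly from the constructions of the horizontal reflections and factored translations.
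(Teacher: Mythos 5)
Your proposal is correct and follows essentially the same route as the paper, which likewise obtains this theorem immediately by combining Theorem~\ref{thm:lattice} and Proposition~\ref{prop:balanced} and then invoking Proposition~\ref{prop:lattice-garside}. Your additional verification of the hypotheses of Proposition~\ref{prop:lattice-garside} (symmetry, discreteness of the weights, closure under conjugation) is a welcome expansion of what the paper leaves implicit, and your closing assessment that all the real work lies in Theorem~\ref{thm:lattice} matches the paper's treatment of this statement as an immediate consequence.
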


\section{Proof of Theorem B: Dual Artin subgroups}

In this section we prove Theorem~\ref{main:subgroup} by showing that
the Garside group $\gar(\wt X_n,w)$ is an amalgamated free product
with the dual Artin group $\dart(\wt X_n,w)$ as one of its factors.
The proof begins by noting the immediate consequences of
Lemma~\ref{lem:int-rel} on the level of presentations.

\begin{lem}[Presentation]\label{lem:presentation}
  For each choice of Coxeter element $w$ in an irreducible euclidean
  Coxeter group $W = \cox(\wt X_n)$, the Garside group $\ggar =
  \gar(\wt X_n,w)$ has a presentation whose generators and relations
  are obtained as a union of the generators and relations for
  presentations for $\gdiag$, $\gfac$ and $\gart$.
\end{lem}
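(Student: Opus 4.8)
The plan is to read all four presentations directly off the corresponding intervals and then to verify that the union of the three presentations on the smaller groups recovers the presentation on $\ggar$. By Definition~\ref{def:interval-gps} the group $\ggar$ is generated by the reflections and translations labeling edges of $[1,w]^\ccryst$ subject to the loops visible there, and $\gart$, $\gfac$, $\gdiag$ are presented in the same way by $[1,w]^\ccox$, $[1,w]^\cfac$, $[1,w]^\cdiag$. Throughout I would invoke the optional inclusion of the pure translations $T$ into each generating set (Definition~\ref{def:five-euc-gps}), so that $T$ becomes a common family of weight-$2$ generators shared by all four presentations; this is the device that lets the two sides communicate and that will reappear as the amalgamating subgroup in the next section.

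For the generators the argument is immediate. Lemma~\ref{lem:int-rel} gives $[1,w]^\ccryst = [1,w]^\ccox \cup [1,w]^\cfac$ as edge-labeled subgraphs of the Cayley graph of $\ccryst$, so the labels appearing in $[1,w]^\ccryst$ are exactly the union of those appearing in $[1,w]^\ccox$ (namely $R_H \cup R_V$, together with $T$) and those appearing in $[1,w]^\cfac$ (namely $R_H \cup T_F$, together with $T$). Since the generators $R_H \cup T$ of $\gdiag$ label the intersection $[1,w]^\cdiag = [1,w]^\ccox \cap [1,w]^\cfac$, adjoining them contributes nothing new, and the generating set of $\ggar$ is precisely the union of the three generating sets.

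The content lies in the relations. Because $[1,w]^\ccryst$ is a lattice (Theorem~\ref{thm:lattice}) the resulting germ is a genuine Garside germ, so its relations are generated by the length-$2$ (dual braid) relations, i.e. by the equalities among the factorizations of each weight-two element $z \leq w$; it therefore suffices to realize every such relation as a consequence of relations already present in the three subintervals. One inclusion is formal: every loop of $[1,w]^\ccox$, $[1,w]^\cfac$ or $[1,w]^\cdiag$ is a loop of $[1,w]^\ccryst$ and hence a relation of $\ggar$, yielding a surjection from the group presented by the union onto $\ggar$. The remaining task is to show that no factorization relation of $\ggar$ escapes this union. Here Lemma~\ref{lem:int-rel} is the crucial input: since no minimal-length factorization of $w$ uses both a vertical reflection and a factored translation, the only weight-two elements factored in both styles should be the pure translations $t \in T$. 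For such a $t$ the vertical-reflection family ($t = r r'$ with parallel $r, r' \in R_V$, visible in $[1,w]^\ccox$) and the factored-translation family ($t$ as a product of factored translations, visible in $[1,w]^\cfac$) are each individually homogeneous, and each is tied to the shared generator $t$ by a relation living wholly inside one subinterval; composing through $t$ then exhibits any seemingly mixed relation as a consequence of one relation from $\gart$ and one from $\gfac$.

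I expect the main obstacle to be exactly this last point: verifying at the level of weight-two elements that an element of $[1,w]^\ccryst$ which is not a pure translation cannot simultaneously admit a factorization through $R_V$ and one through $T_F$, so that the pure translations are the only bridge between the two sides. Lemma~\ref{lem:int-rel} is stated for factorizations of $w$ itself, so the careful step is to propagate its conclusion down to arbitrary weight-two subwords using the coarse-structure description of $[1,w]^\ccryst$ (Remark~\ref{rem:diag}) and the fact that $[1,w]^\cdiag$ is precisely the overlap $[1,w]^\ccox \cap [1,w]^\cfac$. Once the bridging is confined to $T$, the identification of $\ggar$ with the union of the three presentations — and, in the next section, with the amalgam $\gart *_{\gdiag} \gfac$ — becomes a routine bookkeeping matter.
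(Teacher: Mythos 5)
Your identification of the inputs is right: Lemma~\ref{lem:int-rel} together with the optional inclusion of the pure translations $T$ in all of the generating sets (Definition~\ref{def:five-euc-gps}) is exactly what drives this lemma, and indeed the paper offers no separate proof, treating the statement as an immediate consequence of Lemma~\ref{lem:int-rel} read at the level of Definition~\ref{def:interval-gps}. Your analysis of the generators and your description of the pure translations as the bridge between the two sides of the interval are both correct.

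The gap is the reduction you lean on at the start of the relation analysis: the claim that because $[1,w]^{\ccryst}$ is a lattice, the relations of $\ggar$ are generated by the weight-two (dual braid) relations. Nothing in the paper supports this implication, and it is not a formal consequence of the lattice property. The paper itself attributes the sufficiency of rank-two relations to transitivity of the Hurwitz action (see the discussion preceding Definition~\ref{def:g2-gens}), a fact established for reflection factorizations of Coxeter elements but never for the mixed generating set $R_H \cup R_V \cup T_F$ of the crystallographic group. Since your verification of the relations is carried out only for weight-two elements, the argument as written does not account for, say, the loop obtained by ascending from $1$ to $w$ through the middle row of the coarse structure and descending through the factored part. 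Fortunately the reduction is also unnecessary. The relations of an interval group are by definition the closed loops in the interval viewed as a subgraph of the Cayley graph; Lemma~\ref{lem:int-rel} says that $[1,w]^{\ccryst}$ is the union of the two connected subgraphs $[1,w]^{\ccox}$ and $[1,w]^{\cfac}$, whose intersection $[1,w]^{\cdiag}$ is connected and contains both $1$ and $w$. Hence every closed loop in $[1,w]^{\ccryst}$ is, after free reduction, a product of conjugates of loops each lying entirely in one of the two subgraphs (the usual van Kampen argument for graphs), so the normal closure of the union of the three relation sets is the full relation set of $\ggar$. Your ``composing through $t$'' observation is precisely the weight-two instance of this decomposition; replacing the unproved weight-two reduction with the connectivity of $[1,w]^{\cdiag}$ closes the gap.
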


\begin{prop}[Pushout]\label{prop:pushout}
  For each irreducible euclidean Coxeter group and for each choice of
  Coxeter element $w$, the Garside group $\ggar$ is the pushout of the
  diagram $\gfac \leftarrow \gdiag \rightarrow \gart$.  If the maps from
  $\gdiag$ to $\gfac$ and $\gart$ are both injective, then $\ggar$ is
  an amalgamated free product of $\gfac$ and $\gart$ over $\gdiag$
  and, in particular, $\gart$ injects into $\ggar$.
\end{prop}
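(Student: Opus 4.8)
The plan is to read the presentation of $\ggar$ supplied by Lemma~\ref{lem:presentation} as a presentation of a pushout and then invoke the classical structure theory of amalgamated products. Recall that the pushout of a span of groups $\gfac \xleftarrow{\phi} \gdiag \xrightarrow{\psi} \gart$ is presented by the disjoint union of presentations of $\gfac$ and $\gart$, together with one relation $\phi(z)=\psi(z)$ for each generator $z$ of $\gdiag$. The maps $\phi$ and $\psi$ appearing in Figure~\ref{fig:ten-gps} are the homomorphisms induced by the interval inclusions $[1,w]^\cdiag \subseteq [1,w]^\cfac$ and $[1,w]^\cdiag \subseteq [1,w]^\ccox$, and each carries an edge label of the diagonal interval to the identically named generator of $\gfac$ or $\gart$.

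The key step is to match the two presentations using Lemma~\ref{lem:int-rel}. Adopting throughout the generating sets that include the pure translations $T$, as permitted by Definition~\ref{def:five-euc-gps}, the intersection identity $[1,w]^\cdiag = [1,w]^\ccox \cap [1,w]^\cfac$ shows that the generators of $\gdiag$ are exactly the generators common to $\gfac$ and $\gart$ (namely $R_H \cup T$), and that every relation visible in the diagonal interval is visible in both larger intervals. Consequently $\phi$ and $\psi$ are the evident inclusions, the amalgamation relations $\phi(z)=\psi(z)$ merely identify the shared generators, and the generators and relations contributed by $\gdiag$ are redundant. After this identification the pushout presentation becomes a presentation of $\gfac$ glued to one of $\gart$ along their common generators and relations --- which is precisely the union presentation of $\ggar$ from Lemma~\ref{lem:presentation}. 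Hence $\ggar$ is the pushout of $\gfac \leftarrow \gdiag \rightarrow \gart$. The union identity $[1,w]^\ccryst = [1,w]^\ccox \cup [1,w]^\cfac$ confirms that no generator or relation of $\ggar$ is left out of this union.

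For the second assertion I would appeal to the standard theory of amalgamated free products. By definition the pushout of $\gfac \leftarrow \gdiag \rightarrow \gart$ is the amalgamated free product $\gfac *_{\gdiag} \gart$ as soon as the structure maps $\phi$ and $\psi$ are injective, and in that case the normal form theorem for amalgamated products --- equivalently, the action on the associated Bass--Serre tree --- guarantees that both factors embed into the amalgam. Under the injectivity hypothesis this yields in particular the desired injection of the dual Artin group $\gart = \dart(\wt X_n,w)$ into the Garside group $\ggar$.

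I expect the genuine difficulty to lie outside this proposition, in verifying the injectivity of the maps $\gdiag \to \gfac$ and $\gdiag \to \gart$ that is assumed here; that verification is carried out separately using the lattice property of these intervals together with the injectivity criterion of Proposition~\ref{prop:inj}. Within the argument above, the only point requiring care is the bookkeeping that identifies $\gdiag$ with the overlap of $\gfac$ and $\gart$ at the level of both generators and relations, and this is handled directly by the two identities of Lemma~\ref{lem:int-rel}.
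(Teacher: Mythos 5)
Your argument is correct and matches the paper's intent exactly: the paper states this proposition with no written proof, treating it as an immediate consequence of Lemma~\ref{lem:presentation} (the union presentation) together with the standard normal form theory of amalgamated free products, which is precisely the reasoning you spell out. Your identification of the shared generators $R_H \cup T$ via Lemma~\ref{lem:int-rel} and your closing remark that the real work lies in the separate injectivity lemmas both accurately reflect how the paper organizes the proof of Theorem~B.
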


We now show that these maps are injective.

\begin{lem}[$\ghor \into \gfac$]\label{lem:h-in-f}
  For each irreducible euclidean Coxeter group and for each choice of
  Coxeter element $w$, the horizontal group $\ghor$ injects into the
  factorable interval group $\gfac$.  As a consequence, the horizontal
  group $\ghor$ also injects into the diagonal interval group
  $\gdiag$.
\end{lem}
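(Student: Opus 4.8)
The plan is to deduce the injection one component at a time, leaning on the explicit product decompositions already recorded in Proposition~\ref{prop:products}. Let $k$ be the number of irreducible components of the horizontal root system, and for $1 \le i \le k$ let $M_i = \midd(B_{n_i})$ be the $i$-th middle group with special element $w_i$, so that $w$ factors as a product of the $w_i$. By Proposition~\ref{prop:products} the pulled-apart horizontal group is a direct product $\ghor \cong \prod_{i=1}^k \art(\wt A_{n_i-1})$ of euclidean braid groups, while the factorable interval group is a direct product $\gfac \cong \prod_{i=1}^k \art(B_{n_i})$ of annular braid groups; in each case the $i$-th factor is built from the generators $R_H^{(i)}$ attached to the $i$-th component (the factor of $\gfac$ using in addition the factored translation $t_i$).

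The first step is to observe that the natural map $\ghor \to \gfac$ of Figure~\ref{fig:ten-gps}, induced by the inclusion of relation sets, respects these decompositions. A generator $r \in R_H^{(i)}$ of the $i$-th factor of $\ghor$ is sent to the same horizontal reflection regarded as a generator of $\gfac$; since generators attached to distinct components commute and involve disjoint coordinate directions, $r$ lands in the $i$-th factor $\art(B_{n_i})$ and no factor of $\ghor$ maps across into another. Thus $\ghor \to \gfac$ is the direct product over $i$ of the maps $\art(\wt A_{n_i-1}) \to \art(B_{n_i})$ that send $R_H^{(i)}$ to itself.

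The second step is the factor-wise injectivity, which is exactly the content of Proposition~\ref{prop:hor-maps}. Within the $i$-th component the reflections $R_H^{(i)}$ label the edges of the type~$B$ noncrossing partition lattice $[1,w_i]^{M_i}$; by part~(2) the group they generate subject only to the relations visible there is $\art(\wt A_{n_i-1})$, and by part~(3) the resulting map into the annular braid group $\art(B_{n_i})$ is an injection. A direct product of injections is an injection, so $\ghor \into \gfac$, which is the first assertion. For the consequence, recall from Figure~\ref{fig:ten-gps} that $\ghor \to \gfac$ factors as $\ghor \to \gdiag \to \gfac$ through commuting maps induced by inclusions of relations; since the composite is injective, the first map $\ghor \into \gdiag$ is injective as well.

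I expect the only real obstacle to lie in the bookkeeping of the first step: one must be certain that the relations defining $\ghor$, namely the relations among $R_H$ visible in $[1,w]^W$, split exactly into the component-wise relations living in the separate intervals $[1,w_i]^{M_i}$, with no additional relations tying distinct components together beyond the commutations. This is precisely what the product structure of $[1,w]^{\cfac}$ from Proposition~\ref{prop:products} guarantees, since that interval is where these relations are read off; once the splitting is granted, the injectivity is the purely formal consequence of Proposition~\ref{prop:hor-maps} assembled above.
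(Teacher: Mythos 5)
Your proposal is correct and follows essentially the same route as the paper, whose proof simply cites Proposition~\ref{prop:hor-maps} applied to each factor of the product decomposition from Proposition~\ref{prop:products} for the first assertion, and the factorization of $\ghor \into \gfac$ through $\gdiag$ for the second. You have merely filled in the bookkeeping (that the map respects the component-wise splitting and that a product of injections is an injection) that the paper leaves implicit.
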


\begin{proof}
  The first assertion is a consequence of
  Proposition~\ref{prop:hor-maps} applied to each factor and the
  second assertion follows immediately since $\ghor \into \gfac$
  factors through $\gdiag$.
\end{proof}

\begin{lem}[$\gdiag \into \gfac$]\label{lem:d-in-f}
  For each irreducible euclidean Coxeter group and for each choice of
  Coxeter element $w$, the diagonal interval group $\gdiag$ injects
  into the factorable interval group $\gfac$.
\end{lem}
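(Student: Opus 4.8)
The plan is to apply Proposition~\ref{prop:inj} to the inclusion $\cdiag \subset \cfac$, so that the natural map $\gdiag \to \gfac$ of Figure~\ref{fig:ten-gps} becomes an injection. Concretely, it suffices to check two things: that the diagonal interval $[1,w]^\cdiag$ is a lattice, and that its inclusion into the factorable interval $[1,w]^\cfac$ is a lattice map preserving meets and joins. Given these, the Garside normal-form argument in the proof of Proposition~\ref{prop:inj} shows that the normal form of any nontrivial element of $\gdiag$ remains in normal form when read inside $\gfac$, so the map cannot kill it. One minor bookkeeping point is that a diagonal translation is a single weight-$2$ generator of $\cdiag$ but factors as a product of $k$ weight-$\frac{2}{k}$ factored translations in $\cfac$; since both realizations sit at height $2$, the weighted grading is respected and the poset inclusion is level-preserving.

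To verify the two conditions I would work inside the product lattice furnished by Proposition~\ref{prop:products}, namely $[1,w]^\cfac \cong \prod_{i=1}^k L_i$ with each $L_i$ a type~$B$ noncrossing partition lattice. By Lemma~\ref{lem:int-rel} we have $[1,w]^\cdiag = [1,w]^\ccox \cap [1,w]^\cfac$, which by Remark~\ref{rem:diag} is exactly the top-and-bottom portion of the coarse structure of $[1,w]^\cfac$, with the ``factored'' portion deleted. Moreover Lemma~\ref{lem:factor-join} together with Theorem~\ref{thm:lattice} tells us that $[1,w]^\cfac$ already embeds into the crystallographic lattice $[1,w]^\ccryst$ preserving meets and joins, so meets and joins of elements of $[1,w]^\cdiag$ may be computed componentwise in $\prod_i L_i$ and automatically land back in $[1,w]^\cfac$. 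Both conditions then reduce to a single closure statement: the componentwise meet and join in $\prod_i L_i$ of two elements of $[1,w]^\cdiag$ never enters the factored portion, i.e.\ stays in the top-and-bottom part and hence in $[1,w]^\cdiag$. The organizing principle is that the elements of $\cdiag$ are \emph{balanced} across components: a diagonal translation advances every factor $L_i$ in lockstep while each horizontal reflection in $R_H$ acts within a single factor and, being an involution, has trivial image, so the per-component displacement maps $\phi_i\colon \cfac \to \Z$ inherited from the constituent middle groups all agree on $\cdiag$, whereas the factored portion is precisely the locus where they disagree.

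This closure statement is the main obstacle, and it splits into an easy half and a hard half. For a meet the result lies below both arguments, so when one argument lies in the bottom portion the meet is forced below it and stays there, with the dual remark for joins of top elements; these cases cost nothing. The genuinely substantive case is the join of two bottom elements, and dually the meet of two top elements: a priori the componentwise join in $\prod_i L_i$ could acquire nonzero winding in some components and not others, landing in the factored portion and breaking the balance $\phi_1 = \cdots = \phi_k$. To rule this out I would use the explicit description of the type~$B$ noncrossing partition lattices and of the horizontal reflections $R_H$ to show that the per-component bottom loci are closed under join, so that balance is maintained; this is exactly where the concrete combinatorics of the middle groups and the coordinated nature of the diagonal translations, as opposed to the independent factored ones, do the essential work. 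Once it is in hand, $[1,w]^\cdiag$ is a meet- and join-preserving sublattice of $[1,w]^\cfac$, Proposition~\ref{prop:inj} applies, and the injection $\gdiag \into \gfac$ follows.
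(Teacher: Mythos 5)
Your strategy---apply Proposition~\ref{prop:inj} to the pair $\cdiag \subset \cfac$---founders on precisely the closure statement you defer to the end, because that statement is false. Fix a horizontal root in some component $V_i$ of the horizontal root system with $k \geq 2$ components. The interval $[1,w]^W$ contains exactly two reflections from the corresponding parallel family (the two whose hyperplanes bound the strip of axial vertices), and both lie in $R_H$, hence in $[1,w]^\cdiag$. Under the identification of the $i$-th factor of $[1,w]^\cfac$ with a type~$B$ noncrossing partition lattice (Proposition~\ref{prop:products}, Theorem~\ref{thm:special-ints}), this parallel pair becomes $r_{jl}$ and $r_{jl}(1)$ for some indices, i.e.\ the partitions with blocks $\{e_j,e_l\},\{-e_j,-e_l\}$ and $\{e_j,-e_l\},\{-e_j,e_l\}$. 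Their join in that lattice is the central block $\{\pm e_j,\pm e_l\}$, a \emph{vertical} element of the middle group lying above the factored translation $t_j$; paired with the identity in the other factors it is an element of $[1,w]^\cfac$ of weighted length $1+\frac{2}{k}$ that requires a factored translation in its construction, so it sits in the Factored portion of the coarse structure and not in $[1,w]^\cdiag$. By contrast, any common upper bound of this pair inside $[1,w]^\cdiag$ has weighted length at least $3$: no bottom-row elliptic lies above two parallel reflections, and by Proposition~\ref{prop:trans} the length-$2$ top-row elements (the diagonal translations) factor only through vertical reflections. So the inclusion $[1,w]^\cdiag \into [1,w]^\cfac$ does not preserve joins, the ``per-component bottom loci'' are not closed under join, and Proposition~\ref{prop:inj} cannot be applied to this pair. (The paper applies it only to $\cfac \into \ccryst$, where Lemma~\ref{lem:factor-join} genuinely supplies the meet- and join-preservation; note also that the lattice property for $[1,w]^\cdiag$ itself, which your argument would additionally require, is never established and is doubtful for the same reason.)

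The paper's actual proof of Lemma~\ref{lem:d-in-f} is algebraic rather than lattice-theoretic. It uses the alternate generating set $R_H\cup\{w\}$ for $\gdiag$ from Remark~\ref{rem:diag}, composes with the vertical displacement map $\gdiag\to\gfac\to\cfac\to\Z$ to see that any element of the kernel has $w$-exponent sum zero, rewrites such an element as a word in $R_H$ alone using the relations describing how $w$ conjugates $R_H$, and then invokes Lemma~\ref{lem:h-in-f} (that $\ghor\into\gfac$, a consequence of Proposition~\ref{prop:hor-maps} applied factor by factor) to conclude that the kernel is trivial. Any repair of your approach would have to abandon Proposition~\ref{prop:inj} in favor of an argument that tolerates the failure of join-preservation, which is in effect what this rewriting argument accomplishes.
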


\begin{proof}  
  Recall that $R_H \cup \{w\}$ is one possible generating set for the
  diagonal interval group $\gdiag$ (Remark~\ref{rem:diag}) and let $U$
  be a word in these generators that represents an element $u \in
  \gdiag$.  If $u$ is in the kernel of the map $\gdiag \to \gfac$,
  then $u$ is also in the kernel of the composite map $\gdiag \to
  \gfac \to \cfac \to \Z$ where the middle map is the natural
  projection and the final map to $\Z$ is the vertical displacement
  map.  Since this composition sends each horizontal reflection to $0$
  and each $w$ to a nonzero integer, we conclude that the exponent sum
  of the $w$'s inside $U$ is zero.  Using the relations in $\gdiag$
  which describe how $w$ conjugates the elements of $R_H$, we can then
  find a word $U'$ with no $w$'s which still represents $u$ in
  $\gdiag$.  This means that $u$ is in the subgroup generated by
  elements of $R_H$ which by Lemma~\ref{lem:h-in-f} we can identify
  with $\ghor$.  In particular, $u$ is in the kernel of the map $\ghor
  \to \gfac$ which is trivial by Lemma~\ref{lem:h-in-f} proving
  $\gdiag \into \gfac$.
\end{proof}

\begin{lem}[$\gdiag \into \gart$]\label{lem:d-in-a}
  For each irreducible euclidean Coxeter group and for each choice of
  Coxeter element $w$, the factorable interval group $\gfac$ injects
  into the Garside group $\ggar$.  As a consequence, the diagonal
  interval group $\gdiag$ injects into the Artin group $\gart$.
\end{lem}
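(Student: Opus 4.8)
The plan is to deduce both assertions formally from results already in hand, so that no new geometry is required. For the first assertion I would apply the abstract injectivity criterion of Proposition~\ref{prop:inj} to the pair $G' = \cfac$ and $G = \ccryst$. The factorable group $\cfac$ is a subgroup of the crystallographic group $\ccryst$, and their conjugacy-closed weighted generating sets are nested, with the factorable generating set contained in the crystallographic one, so the setup of Proposition~\ref{prop:inj} is available. The Coxeter element $w$ lies in $\cfac$; the interval $[1,w]^\cfac$ is a lattice because Proposition~\ref{prop:products} identifies it with a direct product of type~$B$ noncrossing partition lattices; and the interval $[1,w]^\ccryst$ is a lattice by Theorem~\ref{thm:lattice}. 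The final hypothesis, that the inclusion $[1,w]^\cfac \into [1,w]^\ccryst$ is a lattice homomorphism preserving meets and joins, is precisely the content of Lemma~\ref{lem:factor-join}. With all hypotheses verified, Proposition~\ref{prop:inj} yields that the natural map $\gfac \to \ggar$ is injective.

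For the second assertion I would run a short diagram chase inside the pushout of Proposition~\ref{prop:pushout}. That proposition realizes $\ggar$ as the pushout of $\gfac \leftarrow \gdiag \rightarrow \gart$, so the square commutes and the two composites $\gdiag \to \gfac \to \ggar$ and $\gdiag \to \gart \to \ggar$ agree. By Lemma~\ref{lem:d-in-f} the map $\gdiag \to \gfac$ is injective, and composing with the injection $\gfac \to \ggar$ just established shows that $\gdiag \to \gfac \to \ggar$ is injective. Since this composite factors as $\gdiag \to \gart \to \ggar$, any element of $\gdiag$ killed by the map to $\gart$ would already be killed by the injective composite, forcing it to be trivial. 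Hence $\gdiag \to \gart$ is injective, which is the desired conclusion.

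The bulk of the difficulty has already been absorbed by the earlier results, so I expect this argument to be essentially formal. The one step that requires genuine care is confirming that the hypotheses of Proposition~\ref{prop:inj} are met in the exact form stated, in particular that the meet- and join-preservation of the inclusion of intervals is the statement proved in Lemma~\ref{lem:factor-join} and that the two lattice conditions are supplied by Proposition~\ref{prop:products} and Theorem~\ref{thm:lattice}. Once these identifications are made, both injections follow immediately, and since Lemma~\ref{lem:d-in-f} together with the present lemma establishes that both legs $\gdiag \to \gfac$ and $\gdiag \to \gart$ are injective, the amalgamated free product description of $\ggar$ promised by Proposition~\ref{prop:pushout} becomes available.
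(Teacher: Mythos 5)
Your proposal is correct and follows essentially the same route as the paper: apply Proposition~\ref{prop:inj} to the pair $\cfac \subset \ccryst$ using the lattice properties from Proposition~\ref{prop:products} and Theorem~\ref{thm:lattice} together with the meet/join preservation of Lemma~\ref{lem:factor-join}, then deduce the second assertion by composing with the injection of Lemma~\ref{lem:d-in-f} and noting that the composite factors through $\gart$. The only cosmetic difference is that the paper cites Theorem~\ref{thm:cryst-gar} rather than Theorem~\ref{thm:lattice} for the crystallographic lattice condition, which is immaterial.
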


\begin{proof}
  The interval $[1,w]^\cfac$ is a lattice because it is a product of
  type $B$ partition lattices and $[1,w]^\ccryst$ is a lattice by
  Theorem~\ref{thm:cryst-gar}.  That they are balanced follows
  immediately from the fact that the corresponding generating sets in
  $\cfac$ and $\ccryst$ are closed under conjugation.  Finally, by
  Lemma~\ref{lem:factor-join} the inclusion of the former into the
  latter preserves meets and joins.  Thus by
  Proposition~\ref{prop:inj} the induced map from $\gfac$ to $\ggar$
  is injective.  Since $\gdiag \into \gfac$ by Lemma~\ref{lem:d-in-f},
  the composition injects $\gdiag$ into $\ggar$.  But $\gdiag \into
  \ggar$ factors through $\gart$, so the map $\gdiag \to \gart$ is
  also one-to-one.
\end{proof}

Proposition~\ref{prop:pushout} combined with Lemmas~\ref{lem:d-in-f}
and~\ref{lem:d-in-a} immediately prove the following slightly more
explicit version of Theorem~\ref{main:subgroup}.

\begin{thm}[Amalgamated free product]\label{thm:amalgamated}
  For each irreducible euclidean Coxeter group $\cox(\wt X_n)$ and for
  each choice of Coxeter element $w$, the Garside group $G = \gar(\wt
  X_n,w)$ can be written as an amalgamated free product of $W_w$ and
  $\gfac$ amalgamated over $\gdiag$ where $W_w$ is the dual Artin
  group $\dart(\wt X_n,w)$, $\gfac$ is the factorable inteval group,
  and $\gdiag$ is the diagonal interval group.  As a consequence, the
  dual Artin group $W_w$ injects into the Garside group $G$.
\end{thm}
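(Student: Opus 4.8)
The plan is to recognize this statement as an immediate assembly of the pushout description of $\ggar$ together with the two injectivity facts already in hand, so that the only real content is checking the hypotheses of a single prior proposition.

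First I would invoke Proposition~\ref{prop:pushout}, which identifies $\ggar$ as the pushout of the diagram $\gfac \leftarrow \gdiag \rightarrow \gart$ and, crucially, records that this pushout is genuinely an amalgamated free product precisely when both structure maps out of $\gdiag$ are injective. This reduces the entire task to verifying those two injectivity hypotheses. The injectivity of $\gdiag \to \gfac$ is exactly Lemma~\ref{lem:d-in-f}, and the injectivity of $\gdiag \to \gart$ is the ``as a consequence'' clause of Lemma~\ref{lem:d-in-a}. With both maps injective, Proposition~\ref{prop:pushout} yields that $\ggar$ is the amalgamated free product of $\gfac$ and $\gart$ over $\gdiag$.

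It then remains only to identify the factors correctly. By Definition~\ref{def:five-pres-gps} the group $\gart$ is $W_w = \dart(\wt X_n,w)$, the dual Artin group, while $\gfac$ is the factorable interval group and $\gdiag$ the diagonal interval group, matching the statement. The final assertion---that $W_w$ embeds in $G = \ggar$---is then the standard fact that each factor of an amalgamated free product injects into the product, which is already packaged into the conclusion of Proposition~\ref{prop:pushout}.

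At the level of this theorem there is no genuine obstacle: all the work has been displaced into the supporting lemmas. Tracing back where the difficulty actually lives, it is in Lemma~\ref{lem:d-in-a}, where establishing $\gfac \into \ggar$ through the lattice-embedding criterion of Proposition~\ref{prop:inj} requires both that the inclusion $[1,w]^\cfac \into [1,w]^\ccryst$ preserve meets and joins (Lemma~\ref{lem:factor-join}) and that $[1,w]^\ccryst$ be a lattice (Theorem~\ref{thm:cryst-gar}); the passage from $\gdiag \into \gfac \into \ggar$ to $\gdiag \into \gart$ is then the observation that this injective composite factors through $\gart$. Those are the substantive inputs, and the present theorem merely bolts them together with the amalgamation machinery.
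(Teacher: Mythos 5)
Your proposal is correct and matches the paper's proof exactly: the paper also derives the theorem as an immediate consequence of Proposition~\ref{prop:pushout} together with Lemmas~\ref{lem:d-in-f} and~\ref{lem:d-in-a}, with all the substantive work living in those supporting results. Your tracing of where the real difficulty sits (in Lemma~\ref{lem:d-in-a} via Proposition~\ref{prop:inj}, Lemma~\ref{lem:factor-join}, and Theorem~\ref{thm:cryst-gar}) is also an accurate reading of the paper's architecture.
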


Note that when the horizontal root system has only a single component,
$T \cong T_F$, $\gdiag \cong \gfac$ and $W_w \cong \ggar$.  This
occurs in types $C$ and $G$ and in type $A$ when $q=1$.

\section{Proof of Theorem C: Naturally isomorphic groups}
In this section we prove that the dual Artin group $\dart(\wt X_n,w)$
is isomorphic to the Artin group $\art(\wt X_n)$.  The first step is
to find homomorphisms between them.  In one direction this is easy to
do.

\begin{prop}[$A \onto \gart$]\label{prop:onto-dart}
  For every irreducible euclidean Coxeter group $W = \cox(\wt X_n)$
  and for each choice of Coxeter element $w$ as the product of the
  standard Coxeter generating set $S$, there is a natural map from the
  Artin group $A = \art(\wt X_n)$ onto the dual Artin group $\gart =
  \dart(\wt X_n,w)$ which extends the identification of the generators
  of $A$ with the subset of generators of $\gart$ indexed by $S$.
\end{prop}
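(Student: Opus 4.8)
The plan is to build the homomorphism generator by generator and then establish surjectivity separately. The underlying assignment is essentially forced: by the discussion following Definition~\ref{def:dual-artin}, the standard factorization of $w$ as the ordered product of $S$ is a minimal length reflection factorization, so each $s \in S$ already occurs as an edge label in $[1,w]^W$ and is therefore one of the generators of $\gart$. This gives the claimed identification of the generators of $A$ with the $S$-indexed generators of $\gart$, and the only thing left to verify for a well-defined homomorphism is that the defining relations of $A$ survive.

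For the relations, fix $s,t \in S$ with $m = m(s,t) < \infty$. The discussion following Definition~\ref{def:dual-artin}, together with the computation in Example~\ref{ex:dihedral} and the argument of \cite{BrMc00}, shows that the length-$m$ alternating Artin relation between $s$ and $t$ is a consequence of the dual braid (Hurwitz) relations visible in $[1,w]^W$: when $m = 2$ it is the commutation four-cycle, and when $m \geq 3$ it is recovered by eliminating the intermediate reflections from the dihedral chain of factorizations of the corresponding rank-two element, exactly as in the dihedral prototype. Since these are precisely the defining relations of $A$, the assignment $s \mapsto s$ extends to a homomorphism $A \to \gart$ that restricts to the identity on $S$.

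It remains to prove that this homomorphism is onto, equivalently that $S$ generates all of $W_w = \gart$. Let $H \leq W_w$ be the subgroup generated by $S$. A single Hurwitz move rewrites a consecutive pair $ab$ in a reflection factorization of $w$ as $(aba^{-1})\,a$, and the resulting equality $ab = (aba^{-1})\,a$ is a dual braid relation visible in $[1,w]^W$ and hence holds in $W_w$; thus the newly introduced reflection $aba^{-1}$ is a conjugate, by an element already present, of an element already present. Starting from the standard factorization $w = s_1 s_2 \cdots s_n$, whose every letter lies in $H$, an induction on the number of Hurwitz moves shows that every letter of every factorization in the Hurwitz orbit lies in $H$, since $H$ is closed under conjugation by its own elements. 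By the transitivity of the Hurwitz action on the minimal length reflection factorizations of a Coxeter element \cite{IgusaSchiffler10} (see also \cite{BDSW-hurwitz}), every reflection occurring in any such factorization --- that is, every generator of $W_w$ --- lies in $H$. Hence $H = W_w$ and the homomorphism is surjective.

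I expect the surjectivity step to be the crux. Existence of the map is a formal consequence of relations already catalogued in the interval, but surjectivity genuinely relies on the two global inputs used above: that every Hurwitz relation is visible inside $[1,w]^W$, so that each conjugation keeps us inside $H$, and that the Hurwitz action is transitive, so that a single seed factorization reaches every reflection of $R_0$. The transitivity theorem of Igusa and Schiffler is what makes the argument go through uniformly for the euclidean groups, where $[1,w]^W$ need not be a lattice.
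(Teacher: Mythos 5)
Your proposal is correct and follows essentially the same route as the paper: the Artin relations are verified by rewriting the factorization so that $s$ and $t$ are adjacent and then eliminating the intermediate reflections from the dihedral chain as in Example~\ref{ex:dihedral}, and surjectivity comes from transitivity of the Hurwitz action. Your expansion of the surjectivity step (the induction showing the subgroup generated by $S$ absorbs every reflection reached by Hurwitz moves) is a correct and welcome elaboration of what the paper states in one sentence.
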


\begin{proof}
  For every pair of elements in $S$, there is a rewritten
  factorization of $w$ where they occur successively and then the
  Hurwitz action on this pair produces the dual dihedral Artin
  relations corresponding to the angle between these two facets of
  $\sigma$ (Example~\ref{ex:dihedral}).  Systematically eliminating
  the other variables shows that these two elements in the dual Artin
  group satisfy the appropriate Artin relation.  This shows that the
  function injecting the generating set of the Artin group into the
  dual Artin group extends to a group homomoprhism.  The fact that it
  is onto is a consequence of the transitivity of the Hurwitz action.
\end{proof}

\begin{rem}[A $\wt G_2$ map]
  As an example of such a homomorphism, consider the simplex in the
  $\wt G_2$ tiling bounded by the lines $c_0$, $a_1$ and $d_1$ with
  bipartite Coxeter element $w = a_1 d_1 c_0$ in the notation of
  Definition~\ref{def:g2-gens}.  Proposition~\ref{prop:onto-dart}
  gives a homomorphism from the Artin group $\art(\wt G_2)$ with
  generators that we call $a$, $c$ and $d$ satisfying the relations
  $aca=cac$, $ad=da$ and $cdcdcd=dcdcdc$ to the dual Artin group
  $\dart(\wt G_2,w)$ extending the map sending $a$, $c$ and $d$ to
  $a_1$, $c_0$ and $d_1$, respectively.
\end{rem}

Defining a homomorphism in the other direction is more difficult
because we need to describe where the infinitely many generators are
to be sent and we need to check that infinitely many dual braid
relations are satisfied.  The first step is to describe certain
portions of the Cayley graph of an irreducible Artin group that are
already well understood.  These are portions of the Coxeter group
Cayley graph that lift to the Artin group.

\begin{defn}[Cayley graphs and Coxeter groups]
  The standard way to view the right Cayley graph of an irreducible
  euclidean Coxeter group with respect to a Coxeter generating set $S$
  is to consider the cell complex dual to the Coxeter complex.  The
  dual complex for the $\wt A_2$ Coxeter group, for example, is a
  hexagonal tiling of $\R^2$.  The dual complex has one vertex for
  each chamber of the Coxeter complex (and thus one vertex for each
  element of $W$) and it is convenient to place this vertex at the
  center of the insphere of this simplex so that it is equidistant
  from each facet.  Once labels are added to the edges of the
  $1$-skeleton of the dual cell complex, this becomes either the full
  right Cayley graph of $W$ with respect a simple system $S$, or it is
  a portion of the left Cayley graph with respect to the set of all
  reflections.  To get the full right Cayley graph we label the edges
  leaving a particular chamber $\sigma$ and then propigate the labels
  so that they are invariant under the group action.  To get a portion
  of the left Cayley graph we label the edges dual to the facets of
  the simplices by the unique hyperplane the facet determines.
\end{defn}

Converting between left Cayley graph labels and right Cayley graph
labels is a matter of conjugation.

\begin{rem}[Converting Labels]
  Suppose that we have picked a vertex corresponding to a chamber as
  our basepoint and indexed the vertices by the unique group element
  in $W$ which takes our base vertex to this vertex and suppose
  further that $a$, $c$ and $d$ are part of the standard generating
  set leaving our base vertex $v_1$.  In the right Cayley graph the
  edge connecting the adjacent vertices $v_{ac}$ and $v_{acd}$ is
  labeled by $d$ but in the corresponding portion of the left Cayley
  graph, its label is the reflection $(ac)d(ac)^{-1}$.  This is
  because this is the reflection we multiply by on the left to get
  from $ac$ to $acd$.  Geometrically we are conjugating the label in
  the right Cayley graph by the path in the right Cayley graph from
  $v_1$ to its starting vertex.
\end{rem}

There are a variety of ways that the unoriented right Cayley graph for
an irreducible euclidean Coxeter group can be converted into a portion
of the right Cayley graph for the corresponding Artin group.  We
describe two such procedures.

\begin{defn}[Standard flats]
  Let $W = \cox(\wt X_n)$ be an irreducible euclidean Coxeter group
  and let $A = \art(\wt X_n)$ be the corresponding Artin group.  If we
  pick a vector $\gamma$ that is generic in the sense that none of the
  roots of the hyperplanes of $W$ are orthogonal to $\gamma$, then we
  can orient the edges of the right Cayley graph of $W$ (which are
  transverse to the hyperplanes) according to the direction that forms
  an acute angle with $\gamma$.  Such a Morse function turns the
  boundary of every $2$-cell in the dual cell complex into an Artin
  relation.  In particular, the $2$-skeleton of the dual cell complex
  is simply connected and its labeled oriented $1$-skeleton is a
  portion of the right Cayley graph of $A$ that we call a
  \emph{standard flat}.  The terminology reflects the fact that the
  polytopes in the dual cell complex with labelled oriented edges can
  be added to the presentation complex for the Artin group $A$ without
  changing its fundamental group.  The universal cover of the result
  is known as the \emph{Salvetti complex} \cite{Sa87,Sa94}.  If each
  polytope is given the natural euclidean metric that it inherits,
  then a standard flat represents the $1$-skeleton of a metric copy of
  $\R^n$ inside the Salvetti complex.
\end{defn}

An easy way to create a standard flat is to let $\gamma$ be a generic
perturbation of the direction of the Coxeter axis.  What we really
need is a slight variation of this procedure.

\begin{defn}[Axial flats]
  Let $W = \cox(\wt X_n)$ be an irreducible euclidean Coxeter group
  with Coxeter element $w$ and let $A = \art(\wt X_n)$ be the
  corresponding Artin group.  Orient the edges of the dual cell
  complex as follows.  For hyperplanes that cross the Coxeter axis,
  orient the transverse edges so that their direction vector forms an
  acute angle with the direction of the Coxeter axis.  For the other
  hyperplanes with horizontal normal vectors, orient the transverse
  edges to point to the side that does not contain the Coxeter axis.
  We call such an oriented $1$-skeleton an \emph{axial flat}.  As
  before every $2$-cell in the dual cell complex has a boundary
  labelled by an Artin relation so this simply-connected $2$-complex
  lives in the Salvetti complex for $A$.  In fact, it is easy to see
  that it can be constructed by assembling sectors of standard flats
  around the column containing the Coxeter axis.
\end{defn}

Next we use axial flats to define reflections in euclidean Artin
groups.

\begin{defn}[Facets and reflections]\label{def:facet}
  Let $W = \cox(\wt X_n)$ be an irreducible euclidean Coxeter group
  with Coxeter element $w$ and fix a simplex $\sigma$ in the Coxeter
  complex or, equivalently, fix a vertex in the dual cell complex.
  For each facet of each simplex in the Coxeter complex we define a
  reflection in the corresponding Artin group $A = \art(\wt X_n)$ as
  follows.  Orient the edges of the dual cell complex so that it is
  the axial flat for $w$ and then conjugate the labelled oriented edge
  transverse to the specified facet by a path in the axial flat from
  the fixed basepoint to the start of the transverse edge.  
\end{defn}

Many of the facets belonging to a common hyperplane determine the same
reflection in the Artin group but describing which ones are equal is
slightly subtle.  

\begin{lem}[Consistency]\label{lem:consistent}
  Let $W = \cox(\wt X_n)$ be an irreducible euclidean Coxeter group
  with Coxeter element $w$ and a fixed base simplex.  Let $H$ be a
  hyperplane in the Coxeter complex, let $P$ be convex hull of the
  axial vertices in $H$ and suppose that $P$ contains at least one
  facet of a chamber.  If $\sigma_1$ and $\sigma_2$ are simplices on
  the same side of $H$ and $P \cap \sigma_i$ is a facet of $\sigma_i$
  for $i= 1,2$, then the reflections $r_1$ and $r_2$ that they define
  in the axial flat are equal in the Artin group $A=\art(\wt X_n)$.
\end{lem}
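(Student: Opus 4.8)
The plan is to reduce this global statement to a single local, one-step comparison and then read off the equality from the structure of the axial flat. First I would connect the facets $F_1 = P\cap\sigma_1$ and $F_2 = P\cap\sigma_2$ by a gallery lying inside $P$. Since $P$ is convex and, by hypothesis, contains at least one full chamber-facet, the facets of the form $P\cap\sigma$ with $\sigma$ on the fixed side of $H$ and $P\cap\sigma$ a facet of $\sigma$ cover a full-dimensional subregion of $P$ whose dual graph is connected. A generic segment in the relative interior of $P$ from $F_1$ to $F_2$ therefore crosses a sequence of such facets in which consecutive members share an $(n-2)$-dimensional face, so it suffices to treat the case where $F_1$ and $F_2$ themselves share such a face $\tau$.

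For adjacent facets I would localize at $\tau$. The face $\tau$ lies in $H$ and in the other walls through it, whose dihedral stabilizer is some $I_2(m)$, and the $2m$ chambers around $\tau$ are dual to a single $2m$-gon $2$-cell of the dual cell complex; by the construction of the axial flat the boundary of this $2m$-gon is an Artin relation holding in $A$. Here $\sigma_1$ and $\sigma_2$ are the two extreme chambers on the fixed side of $H$, so their transverse edges $e_1,e_2$ are exactly the two $H$-crossing edges of the $2m$-gon, while the upper arc between them is a path $q$ from $\sigma_1$ to $\sigma_2$ and the lower arc is a path $q'$ crossing the same $m-1$ intermediate walls. Writing each reflection as the conjugate of its transverse-edge label by a path from the basepoint (Definition~\ref{def:facet}) and routing the path to $\sigma_2$ through $q$, the claim $r_1=r_2$ reduces to the identity $\ell_1 = q\,\ell_2\,q^{-1}$ in $A$, where $\ell_i$ is the label of $e_i$. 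The $2m$-gon relation itself, however, only yields $\ell_1\,q' = q\,\ell_2$, i.e. $\ell_1 = q\,\ell_2\,q'^{-1}$, so the real content is that $q'$ and $q$ represent the same element of $A$.

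The step I expect to be the main obstacle is controlling the \emph{orientations} in the axial flat so that this last equality of arcs holds. Two points must be verified. First, $e_1$ and $e_2$ must be oriented consistently: this follows from the defining rule for an axial flat, since both edges are transverse to the common wall $H$, and the axis lies entirely to one side of $H$ when $H$ is horizontal while $H$ meets the axis transversally with a globally fixed sense when $H$ is vertical. Second, I must rule out a wall $H'$ through $\tau$ whose transverse orientation reverses relative to the axis and so breaks the match between the arcs $q$ and $q'$. This is exactly where Lemma~\ref{lem:convexity} enters: because $P$ is trapped between two consecutive hyperplanes normal to any root that $r_H$ carries from the positive to the negative axial side, no such sign-reversing wall can separate $F_1$ from $F_2$ within $P$. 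Consequently every wall crossed along $q$ is orientation-compatible with $H$, the mirror arcs $q$ and $q'$ contribute identical elements of $A$, and the $2m$-gon relation conjugates $\ell_2$ by $q$ on the nose.

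Finally I would concatenate the one-step equalities along the gallery from $F_1$ to $F_2$ to conclude that $r_1=r_2$ in $A$, which completes the proof. The only genuinely delicate input is the orientation bookkeeping of the preceding paragraph; everything else is the connectivity reduction and the standard fact that the boundaries of the dual $2$-cells are Artin relations in the axial flat.
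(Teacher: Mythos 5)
Your proposal is correct and rests on the same two ingredients as the paper's proof: the fact that every $2$-cell of the axial flat reads an Artin relation (so that loops in the axial flat die in $A$), and Lemma~\ref{lem:convexity} as the device that excludes sign-reversing walls and hence makes the reflected path read the same word as the original. The decomposition, however, is genuinely different. The paper never reduces to adjacent facets: it takes a single \emph{minimal} path $q$ in the ambient axial flat from $\sigma_1$ to $\sigma_2$, notes that minimality forces $q$ to cross only hyperplanes separating $\sigma_1$ from $\sigma_2$, applies Lemma~\ref{lem:convexity} to conclude that none of these reverses sign under reflection across $H$, and then observes that $q\, s_2\, q^{-1} s_1^{-1}$ closes up into a null-homotopic loop, yielding $s_1 = q s_2 q^{-1}$ in one stroke. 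Your version localizes that null-homotopy to a chain of single $2m$-gon $2$-cells around shared codimension-$2$ faces; this buys a completely explicit identification of which Artin relation is invoked at each step, at the cost of the extra reduction claim that $F_1$ and $F_2$ can be joined inside $P$ by a gallery of facets of the required form. That reduction is the one place to be more careful: a generic segment in $P$ may a priori cross facets that merely meet $P$ without being contained in it, so you should either show that the facets contained in $P$ already form a connected gallery, or, more robustly, note that your local $2$-cell step only needs each intermediate chamber to lie in the open slab of Lemma~\ref{lem:convexity} for every sign-reversing root, which holds for any chamber whose $H$-facet meets the interior of $P$. The paper's choice of a minimal gallery in the ambient flat sidesteps this issue entirely, while your cell-by-cell route makes the role of the individual Artin relations more transparent.
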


\begin{proof}
  The idea of the proof is straightforward.  Let $p$ be a path in the
  axial flat from the fixed base simplex to $\sigma_1$ and let $q$ be
  a path from $\sigma_1$ to $\sigma_2$ (also in the axial flat) that
  is as short as possible.  By construction $r_1 = (p) s_1 (p)^{-1}$
  and $r_2 = (p q) s_2 (p q)^{-1}$ for appropriate standard generators
  $s_1$ and $s_2$.  Because $q$ is as short as possible, it only
  crosses the hyperplanes that separate $\sigma_1$ from $\sigma_2$ and
  by Lemma~\ref{lem:convexity} this only includes hyperplanes whose
  normal vectors do not change sign in the axial flat when reflected
  across the hyperplane $H$.  In particular, the path $q s_2 q^{-1}
  s_1^{-1}$ is visible as a closed loop in the axial flat.  As a
  consequence it is trivial in $A$ and this relation shows that the
  elements $r_1$ and $r_2$ are equal.
\end{proof}

The necessity of the specificity given in Lemma~\ref{lem:consistent}
can be seen even in the $\wt G_2$ case.  We continue to use the
notation of Definition~\ref{def:g2-gens}.

\begin{rem}[Consistency]
  Consider the four line segments of the hyperplane $e_3$ inside the
  lightly shaded strip of Figure~\ref{fig:g2-axis}.  The reflections
  in $\art(\wt G_2)$ that they determine are $(d)c(d)^{-1}$,
  $(dac)a(dac)^{-1}$, $(dacd)a(dacd)^{-1}$ and
  $(dacdca)c(dacdca)^{-1}$.  All four belong to the convex hull of the
  axial vertices in the $e_3$ hyperplane and it is straightforward to
  show that all four expressions represent the same group element in
  $\art(\wt G_2)$.  On the other hand, consider the two line segments
  of the $c_2$ hyperplane inside the lightly shaded strip.  The
  reflections in $\art(\wt G_2)$ that they determine are
  $(ad)c(ad)^{-1}$ and $(dc)a(dc)^{-1}$.  The first is bounded by two
  axial vertices, the second is not and these two reflections are not
  equal in the Artin group.
\end{rem}

Fortunately the level of consistency available is sufficient to
establish the homomorphism we require.

\begin{defn}[Dual reflections in the Artin group]\label{def:dual-reflections}
  Let $W = \cox(\wt X_n)$ be an irreducible euclidean Coxeter group
  with Coxeter element $w$ and a fixed base simplex.  For each
  reflection labeling an edge in the interval $[1,w]^W$ we define an
  element of the Artin group $A = \art(\wt X_n)$ as follows.  When the
  axial vertices in the fixed hyperplane of a reflection $r$ have a
  convex hull which contains a facet of a simplex, we define the
  corresponding reflection in $A$ as described in
  Definition~\ref{def:facet}.  By Lemma~\ref{lem:consistent} the
  element defined is independent of the facet in the convex hull that
  we use.  This applies to all vertical reflections and to those
  horizontal reflections which contain a facet of the boundary of the
  convex hull of all axial vertices.  We call these the \emph{standard
    horizontal reflections}.  For the nonstandard horizontal
  reflections we proceed as follows.  By
  Proposition~\ref{prop:products} the subgroup $H_w$ generated by the
  horizontal reflections can be identified with a product of $k$
  euclidean braid groups.  From this identification it is clear that
  the standard horizontal reflections generate.  Next, in the axial
  flat we can see that the reflections in $A$ corresponding to the
  standard horizontal reflections satisfy the Artin relations
  associated with the dihedral angles between their hyperplanes.  This
  means that there is a natural homomorphism from the subgroup of
  $H_w$ to the subgroup generated by the images of the standard
  horizontal reflections in $A$.  We use this map to define the images
  of the nonstandard horizontal reflections in~$A$.
\end{defn}

\begin{prop}[Pure Coxeter element]\label{prop:pure-cox-elt}
  Let $W = \cox(\wt X_n)$ be an irreducible euclidean Coxeter group
  with Coxeter element $w$ and a fixed base simplex.  If $w^p$ is the
  smallest power of $w$ which acts on the Coxeter complex as a pure
  translation and $r$ is a standard horizontal reflection in the Artin
  group $A = \art(\wt X_n)$, then $w^p$ and $r$ (viewed as elements in
  $A$) commute.  As a consequence, $w^p$ centralizes the full subgroup
  of $A$ generated by these standard horizontal reflections.
\end{prop}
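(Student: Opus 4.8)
The plan is to prove that $w^p$ commutes with each standard horizontal reflection $r$ separately; the concluding statement is then automatic, since an element commuting with every generator of a subgroup centralizes that subgroup. So I would fix such an $r$ and unwind its definition: by Definition~\ref{def:facet} and Definition~\ref{def:dual-reflections}, $r$ arises from a facet $F$ of a chamber $\sigma$, where $F$ lies in the convex hull $P$ of the axial vertices contained in the fixed hyperplane $H$ of $r$, the recipe being to conjugate the standard generator labeling the edge transverse to $F$ by a path in the axial flat running from the base chamber to that edge. Since $r$ is \emph{standard}, $P$ contains a facet of a chamber, so the hypotheses of Lemma~\ref{lem:consistent} are available.

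The argument rests on three geometric observations. First, because $r$ is horizontal its fixed hyperplane $H$ has a horizontal normal vector and hence contains the direction of the Coxeter axis; as $w^p$ is a pure translation in that direction, $w^p$ fixes $H$ setwise and carries each side of $H$ to itself. Second, $w^p$ maps the Coxeter axis to itself and therefore permutes the axial simplices and the axial vertices, so it preserves $P$ and sends $\sigma$ to a chamber $w^p\sigma$ on the same side of $H$ whose facet $w^p F$ is again contained in $P$. Third, $w^p$ preserves the axis and its direction and sends horizontal hyperplanes to horizontal hyperplanes preserving the side away from the axis, so it respects the orientation rule defining the axial flat; thus $w^p$ acts on the axial flat as a label- and orientation-preserving automorphism.

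With this in place I would run a comparison between two descriptions of the reflection attached to $w^p\sigma$. On one hand, Lemma~\ref{lem:consistent} applies to the pair $\sigma$ and $w^p\sigma$, which lie on the same side of $H$ and meet $P$ in facets, so they define the \emph{same} element of $A$. On the other hand, the reflection determined by $w^p\sigma$ is exactly the conjugate $w^p r (w^p)^{-1}$: the left action of $w^p\in A$ on the Cayley graph realizes the geometric $w^p$-translation of the axial flat, so a path from the base chamber to the transverse edge of $w^p F$ may be taken as the $w^p$-translate of the path used for $F$ preceded by a path realizing $w^p$, whence the conjugating word acquires exactly one factor of $w^p$ on the left. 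Equating the two descriptions yields $r = w^p r (w^p)^{-1}$, i.e. $w^p$ and $r$ commute, and the centralizer statement follows.

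The step demanding the most care, and which I expect to be the main obstacle, is the equivariance used above: one must verify that the abstract element $w^p\in A$ genuinely implements the geometric translation of the axial flat, so that the reflection attached to $w^p F$ is literally the conjugate of the one attached to $F$. This requires pinning down two points — that the axial flat is invariant under $w^p$ (supplied by the orientation computation of the second paragraph) and that tracing the base chamber to its $w^p$-translate along the flat returns the group element $w^p$ rather than some other lift of the underlying isometry — after which the simple connectivity of the axial flat, in which every $2$-cell is an Artin relation, makes the conjugating element independent of the path chosen. Everything else is either definitional or furnished by Lemma~\ref{lem:consistent}, whose own input is the convexity statement of Lemma~\ref{lem:convexity}.
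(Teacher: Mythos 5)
Your argument is correct and follows essentially the same route as the paper's proof: both rest on the observation that the axial flat, the axial vertices, and the convex hull $P$ inside the fixed hyperplane of $r$ are all invariant under the vertical translation induced by $w^p$, so that Lemma~\ref{lem:consistent} applied to a facet in $P$ and its $w^p$-translate identifies $r$ with $(w^p)r(w^p)^{-1}$. The equivariance point you flag (that the element $w^p\in A$ genuinely realizes the translation of the axial flat) is glossed over in the paper's own two-line proof as well, so your treatment is if anything more careful.
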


\begin{proof}
  This follows immediately from Lemma~\ref{lem:consistent}.  The
  convex hull of all axial vertices is, metrically speaking, a product
  of simplices cross the reals and the convex hull $P$ of the axial
  vertices contained in the fixed hyperplane of $r$ is one facet of
  this product of simplices cross the reals.  The entire configuration
  in the axial flat is invariant under the vertical translation
  induced by $w^p$ and thus $r$ and $(w^p) r (w^p)^{-1}$ define the
  same reflection in~$A$.
\end{proof}

We are now ready to define a homomorphism from the dual Artin group to
the Artin group.

\begin{prop}[$\gart \onto A$]\label{prop:onto-art}
  For every irreducible euclidean Coxeter group $W = \cox(\wt X_n)$
  for every choice of Coxeter element $w$ as the product of the
  standard Coxeter generating set $S$, the map on generators described
  above extends to a group homomorphism from the dual Artin group
  $\gart = \dart(\wt X_n,w)$ onto the Artin group $A = \art(\wt X_n)$.
\end{prop}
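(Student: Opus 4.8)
The plan is to verify that the assignment of dual generators to elements of $A$ described in Definition~\ref{def:dual-reflections} respects all the defining relations of the dual Artin group $\gart = \dart(\wt X_n,w)$, so that it extends to a homomorphism, and then to observe that surjectivity is essentially automatic once the standard generating set $S$ lands in the image. The dual Artin group is presented by the reflections labeling edges in $[1,w]^W$ subject to the dual braid (Hurwitz) relations visible in the interval, so it suffices to check that each such relation is carried to a valid relation in $A$.

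First I would organize the defining relations according to the coarse structure of $[1,w]^W$ (Figure~\ref{fig:coarse}), since a dual braid relation arises from factoring a rank-two element, and the type of that element determines the geometric nature of the relation. The purely horizontal relations---those among standard horizontal reflections coming from rank-two elliptic elements in the horizontal part---are handled by construction: Definition~\ref{def:dual-reflections} uses Proposition~\ref{prop:products} to identify the horizontal subgroup $H_w$ with a product of euclidean braid groups, and the images of the standard horizontal reflections in $A$ were chosen so that they satisfy exactly the Artin relations dictated by the dihedral angles between their hyperplanes, which are visible in the axial flat. The images of the nonstandard horizontal reflections are then \emph{defined} through this identification, so every relation living entirely inside $H_w$ is satisfied tautologically.

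The substantive work lies with the relations that involve at least one vertical reflection, namely the dual braid relations coming from the vertical reflections (first box, middle row) and from the rotations fixing a single point (second box, middle row), together with the pure-translation factorizations (first box, top row). For each such relation I would realize all the participating reflections concretely inside a single axial flat via Definition~\ref{def:facet}, using Lemma~\ref{lem:consistent} to guarantee that the element assigned to each reflection is well defined independently of the chosen facet. The key leverage is that a dual braid relation $ab = bc = ca$ (or a longer cyclic relation, as in the $\wt G_2$ lists of Definition~\ref{def:dual-relations}) records a closed loop in the dual cell complex around a rank-two element, and the boundary of each $2$-cell in the axial flat is by construction an Artin relation. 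The main obstacle will be precisely the translation relations of the form $t = r_{i+1}r_i$ with $r_i = (w^{ip})\,r\,(w^{-ip})$ from Proposition~\ref{prop:trans}, because these relate reflections in distinct fundamental domains separated by a full period of the vertical translation; to show these hold in $A$ I would invoke Proposition~\ref{prop:pure-cox-elt}, which states that $w^p$ centralizes the standard horizontal reflections, so that conjugation by the appropriate power of $w$ sends the Artin-group reflection attached to $r_i$ to that attached to $r_{i+1}$ and the translation relation becomes a genuine identity in $A$.

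Once every defining relation of $\gart$ is checked to hold among the chosen images, the universal property of the interval-group presentation (Definition~\ref{def:dual-artin}) yields the homomorphism $\gart \to A$. For surjectivity, note that the standard generating set $S$ consists of reflections that appear as edge labels in $[1,w]^W$ (as recorded in the remark following Definition~\ref{def:dual-artin}, $S \subset R_0$), and under the map each element of $S$ is sent to the corresponding standard generator of $A$; since $S$ generates $A$, the map is onto. I expect the verification of the vertical and mixed relations to be the genuinely delicate part, with Lemma~\ref{lem:consistent} and Proposition~\ref{prop:pure-cox-elt} doing the heavy lifting and the convexity input of Lemma~\ref{lem:convexity} ensuring that the relevant loops really do close up inside the axial flat.
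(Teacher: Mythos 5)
Your overall architecture matches the paper's: sort the dual braid relations by the coarse structure of $[1,w]^W$, dispose of the purely horizontal ones tautologically via Definition~\ref{def:dual-reflections} and Proposition~\ref{prop:products}, handle the pure-translation factorizations of Proposition~\ref{prop:trans} by conjugating with $w^p$ and invoking Proposition~\ref{prop:pure-cox-elt}, and get surjectivity from $S \subset R_0$. Those parts are essentially the paper's argument.

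The gap is in your treatment of the relations coming from the second box of the middle row, the rank-two vertical elliptics. You assert that such a dual braid relation ``records a closed loop in the dual cell complex around a rank-two element'' and that the boundary of each $2$-cell in the axial flat is an Artin relation, as if this settled the matter. It does not. The boundary of a $2$-cell gives the \emph{standard} Artin relation $stst\cdots = tsts\cdots$ between two generators of the right Cayley graph; the dual braid relation is a cyclic chain $r_1 r_2 = r_2 r_3 = \cdots$ among \emph{all} the reflections of $A$ fixing the codimension-$2$ rotation axis, each defined in Definition~\ref{def:facet} as a conjugate of a standard generator by a path in the axial flat. These reflections live in many different chambers around the axis (six of them already in the $\wt G_2$ example of Definition~\ref{def:dual-relations}), and verifying that their images in $A$ satisfy the dual relations requires knowing that the parabolic subgroup they generate sits inside $A$ the way you expect. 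The paper supplies exactly this: all the reflections in such a factorization fix an axial vertex $v$ of some axial simplex $\sigma'$, the facets of $\sigma'$ give an alternative simple system $S'$ for $A$, van der Lek's theorem \cite{vdLek83} shows the subset of $S'$ fixing $v$ generates a spherical-type Artin group that \emph{injects} into $A$, and then the Bessis/Brady--Watt equivalence of dual and standard presentations for spherical Artin groups delivers the dual braid relations. Without the injectivity of the spherical parabolic and the spherical dual-presentation theorem, your loop-in-the-axial-flat argument only yields the standard Artin relations, not the infinitely many conjugated dual relations you need to check.
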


\begin{proof}
  Let $\sigma$ be the chamber in the Coxeter complex of $W$ bounded by
  the fixed hyperplanes of the reflections indexed by $S$ and consider
  the function from the reflections in $[1,w]^W$ to $A$ which sends
  each reflection to the reflection in $A$ as defined in
  Definition~\ref{def:dual-reflections}.  We only need to show that
  this function extends to a homomorphism.  As mentioned in
  Definition~\ref{def:dual-relations} there are three types of dual
  braid relations in the interval $[1,w]^W$.  The ones indexed by the
  third box in the bottom row are relations among horizontal
  reflections and their satisfaction was described in
  Definition~\ref{def:dual-reflections}.  
  
  The ones indexed by the second box in the middle row are vertical
  elliptics which rotate around a codimension~$2$ subspace.  Since its
  right complement is also vertical elliptic, all the reflections in
  the factorization fix an axial vertex $v$ which belongs to some
  axial simplex $\sigma'$.  The reflections in the Artin group $A$
  that fix the facets of $\sigma'$ form an alternative simple system
  $S'$ for $A$.  Using an old result from van der Lek's thesis, the
  subset of elements of $S'$ that fix $v$ generate an Artin group
  which injects into $A$, in this case an Artin group of spherical
  type \cite{vdLek83}.  Using the known equivalence of dual and
  standard presentations for spherical Artin groups we see that these
  dual braid relations are satisfied by their images in~$A$.

  Finally, the ones indexed by the first box in the top row are the
  various ways to factor a pure translation $t$ in $W$ and these are
  described in Proposition~\ref{prop:trans}.  Using the Hurwitz action
  there is a factorization of $w$ in $A$ that maps to a horizontal
  factorization of $w$ in $W$.  In particular, there is an element $t$
  in $A$ that differs from $w$ by a product of (the images of)
  horizontal reflections and which has a factorization $t = r' r$ in
  $A$ into reflections where $r$ and $r' = (w^p) r (w^{-p})$ are
  defined by vertically shifted facets of simplices.  The first
  observation combined with Proposition~\ref{prop:pure-cox-elt} shows
  that this $t$ commutes with $w^p$ inside $A$.  If we define
  reflections $r_i = (w^{ip}) r (w^{-ip})$ as the reflections in $A$
  defined by the various vertical shifts of the facet that defines
  $r$, then $t = (w^{ip}) t (w^{-ip}) = (w^{ip}) r_1 r_0 (w^{-ip}) =
  r_{i+1} r_i$ shows that all of factorizations of $t$ in the interval
  $[1,w]^W$ are also satisfied in $A$.  Since all three types of dual
  braid relations are satisfied, the function on reflections extends
  to a homomorphism, and this homomorphism is onto because its image
  includes a generating set for the Artin groups $A$.
\end{proof}

Our third main result now follows as a easy corollary.

\setcounter{main}{\mainisomorphic}
\begin{main}[Naturally isomorphic groups]
  For each irreducible euclidean Coxeter group $W = \cox(\wt X_n)$ and
  for each choice of Coxeter element $w$ as the product of the
  standard Coxeter generating set $S$, the Artin group $A=\art(\wt
  X_n)$ and the dual Artin group $W_w = \dart(\wt X_n,w)$ are
  naturally isomorphic.
\end{main}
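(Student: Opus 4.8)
The plan is to show that the two surjections already constructed between $A = \art(\wt X_n)$ and $W_w = \dart(\wt X_n,w)$ are mutually inverse. Write $\phi\colon A \onto W_w$ for the surjection of Proposition~\ref{prop:onto-dart}, which extends the identification of the standard generating set $S$ of $A$ with the subset of dual generators of $W_w$ indexed by $S$, and write $\psi\colon W_w \onto A$ for the surjection of Proposition~\ref{prop:onto-art}, which carries each dual reflection to the element of $A$ attached to it in Definition~\ref{def:dual-reflections}. I would prove the theorem by showing that the composite $\psi\circ\phi\colon A \to A$ is the identity; the rest is then formal.

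Since $\psi\circ\phi$ is a homomorphism and $S$ generates $A$, it suffices to check that $\psi\circ\phi$ fixes each $s\in S$. Here I would take the fixed base simplex of Definition~\ref{def:dual-reflections} to be the chamber $\sigma$ bounded by the fixed hyperplanes of the reflections indexed by $S$, matching the choice made in the proof of Proposition~\ref{prop:onto-art}. By construction $\phi$ sends $s$ to the dual generator indexed by $s$, namely the reflection labeling the edge of $[1,w]^W$ that fixes the corresponding facet of $\sigma$. Applying $\psi$ to this reflection means conjugating the oriented edge of the axial flat transverse to that facet by a path from the basepoint $v_\sigma$ to the start of the edge. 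Because the facet is a facet of the base simplex itself, the start of the transverse edge is $v_\sigma$, the conjugating path is trivial, and the resulting element of $A$ is exactly the standard generator $s$. Hence $\psi\circ\phi$ fixes $S$ and is the identity on $A$.

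With $\psi\circ\phi = \mathrm{id}_A$ in hand, the conclusion follows at once: a left inverse makes $\phi$ injective, and since $\phi$ is already surjective by Proposition~\ref{prop:onto-dart} it is a bijection, hence an isomorphism. Its inverse is $\psi$, which is therefore an isomorphism as well. Because $\phi$ by construction extends the identification of $S$ with the dual generators indexed by $S$, it is the desired natural isomorphism.

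The step I expect to be the main obstacle is the verification that $\psi$ returns the standard generator $s$, rather than a conjugate or an inverse of it, when applied to the dual generator for a facet of the base chamber. This rests on two bookkeeping points that must be confirmed: that the base simplex of Definition~\ref{def:dual-reflections} is chosen compatibly with the basepoint chamber of Proposition~\ref{prop:onto-art}, so that the conjugating path really is trivial, and that each $s\in S$ is handled by the direct construction of Definition~\ref{def:facet} (as a vertical reflection or as a standard horizontal reflection) rather than by the indirect definition reserved for nonstandard horizontal reflections. Given the consistency already established in Lemma~\ref{lem:consistent} and the care taken in Definitions~\ref{def:facet} and~\ref{def:dual-reflections}, I anticipate this to be a brief but essential check rather than a substantive difficulty.
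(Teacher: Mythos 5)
Your proposal is correct and follows essentially the same route as the paper: compose the two surjections of Propositions~\ref{prop:onto-dart} and~\ref{prop:onto-art}, observe that the composite $A \to A$ fixes each $s \in S$ (the paper secures the bookkeeping point you flag by noting that since $w$ is the product of the elements of $S$, every vertex of the base chamber $\sigma$ is axial, so each facet of $\sigma$ lies in the relevant convex hull and the direct construction of Definition~\ref{def:facet} applies with a trivial conjugating path), and conclude that the first map is injective as well as surjective. The only difference is that you spell out the verification that $\psi\circ\phi$ fixes $S$ in slightly more detail than the paper does.
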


\begin{proof}
  Let $\sigma$ be the chamber in the Coxeter complex for $W$ whose
  facets index the reflections in $S$.  Because $w$ is obtained as a
  product of the elements in $S$, every vertex of $\sigma$ is an axial
  vertex and all of $\sigma$ is contained in the convex hull of the
  axial vertices.  By composing the surjective homomorphisms described
  in Propositions~\ref{prop:onto-art} and~\ref{prop:onto-dart} we find
  a map from $A$ to itself which must be the identity homomorphism
  since it fixes each element of the generating set $S$.  This means
  the first map in the composition from $A$ to $W_w$ is injective as
  well as surjective and thus an isomorphism.
\end{proof}

\section{Proof of Theorem D: Euclidean Artin groups}

In a recent survey article Eddy Godelle and Luis Paris highlighted how
little we know about general Artin groups by stating four basic
conjectures that remain open \cite{GoPa-basic}.  Their four
conjectures are:
\begin{itemize}
  \item[(A)] All Artin groups are torsion-free.
  \item[(B)] Every non-spherical irreducible Artin group has a trivial
    center.
  \item[(C)] Every Artin group has a solvable word problem.
  \item[(D)] All Artin groups satisfy the $K(\pi,1)$ conjecture.
\end{itemize}
Godelle and Paris also remark that these conjectures remain open and
are a ``challenging question'' even in the case of the euclidean Artin
groups.  These are precisely the conjectures that we set out to
resolve.  In this section we prove our final main result,
Theorem~\ref{main:artin}, which resolves the first three of these
questions for euclidean Artin groups.  Most of the structural
properties follows from the existence of a classifying space which is
itself an easy corollary of Theorems~\ref{main:subgroup}
and~\ref{main:isomorphic}.

\begin{prop}[Classifying space]\label{prop:classifying-space}
  Every irreducible Artin group of euclidean type is the fundamental
  group of a finite dimensional classifying space.
\end{prop}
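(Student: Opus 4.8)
The plan is to realize each euclidean Artin group as a subgroup of a crystallographic Garside group and then transport the finite-dimensional classifying space of the latter down to the former. First I would combine the three preceding main results: Theorem~\ref{main:isomorphic} identifies $\art(\wt X_n)$ with the dual Artin group $\dart(\wt X_n,w)$ for any choice of Coxeter element $w$, while Theorem~\ref{main:subgroup} exhibits this dual Artin group as one of the two factors of an amalgamated free product decomposition of the crystallographic Garside group $\gar(\wt X_n,w)$ and in particular shows that it injects into $\gar(\wt X_n,w)$. Composing these identifications realizes $\art(\wt X_n)$ as a subgroup of $\gar(\wt X_n,w)$.

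Next I would invoke the Garside machinery. Theorem~\ref{thm:cryst-gar} guarantees that $\gar(\wt X_n,w)$ carries a Garside structure in the expanded sense of Digne, and Theorem~\ref{thm:consequences} then supplies a finite-dimensional classifying space $X$ for $\gar(\wt X_n,w)$ whose dimension equals the length of the longest chain in the defining interval $[1,w]^\ccryst$. Passing to the universal cover $\wt X$, we obtain a contractible, finite-dimensional complex on which $\gar(\wt X_n,w)$ acts freely.

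The final step is the standard observation that a finite-dimensional classifying space is inherited by every subgroup. Restricting the free action on $\wt X$ to the copy of $\art(\wt X_n)$ sitting inside $\gar(\wt X_n,w)$ gives a free action of the Artin group on the same contractible, finite-dimensional complex, so the quotient $\wt X / \art(\wt X_n)$ is a finite-dimensional classifying space for $\art(\wt X_n)$.

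I do not anticipate a substantive obstacle here, since all of the real work is already packaged into the three earlier main theorems; the only point requiring care is to confirm that the classifying space produced by the Garside structure is a genuine CW complex, so that the cover corresponding to the subgroup is again an honest classifying space. This is precisely the content of the constructions of \cite{DePa99} and \cite{ChMeWh04} cited in the proof of Theorem~\ref{thm:consequences}, so granting that input the argument is complete.
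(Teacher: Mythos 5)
Your proposal is correct and follows the same route as the paper: identify $\art(\wt X_n)$ with the dual Artin subgroup of the crystallographic Garside group via Theorems~\ref{main:isomorphic} and~\ref{main:subgroup}, obtain a finite-dimensional classifying space for the Garside group from Theorem~\ref{thm:consequences}, and pass to the cover corresponding to the subgroup. The paper's proof is just a terser version of exactly this argument.
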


\begin{proof}
  By Theorem~\ref{thm:consequences}, the Garside group $\gar(\wt
  X_n,w)$ has a finite-dimensional classifying space and the cover of
  this space corresponding to the subgroup $\art(\wt X_n)$ is a
  classifying space for the Artin group.
\end{proof}

\begin{rem}[Finite-dimensional]
  The reader should note that the spaces involved are
  finite-dimensional but not finite.  More precisely, because the
  interval $[1,w]^C$ has infinitely many elements, the natural
  classifying space constructed for $\gar(\wt X_n,w)$ has infinitely
  many simplices, but their dimension is nevertheless bounded above by
  the combinatorial length of the longest chain.
\end{rem}

To compute the center of $\art(\wt X_n)$ we recall an elementary
observation about euclidean isometries which quickly leads to the
well-known fact that irreducible euclidean Coxeter groups are
centerless.

\begin{lem}[Coxeter groups]\label{lem:cox-center}
  Let $W = \cox(\wt X_n)$ be an irreducible euclidean Coxeter group,
  let $u\in W$ be an elliptic isometry and let $v\in W$ be a
  hyperbolic isometry.  If $\lambda$ is the translation vector of $v$
  on $\ms(v)$ and $\fix(u)$ is not invariant under $\lambda$, then $u$
  and $v$ do not commute.
\end{lem}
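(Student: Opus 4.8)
The plan is to prove the contrapositive: assuming that $u$ and $v$ commute, I will show that $\fix(u)$ is invariant under $\lambda$. The first step is the elementary observation that commuting isometries preserve each other's fixed sets. Concretely, if $x\in\fix(u)$ then $u(v(x))=v(u(x))=v(x)$, so $v(x)\in\fix(u)$ and hence $v(\fix(u))\subseteq\fix(u)$. Since $u$ is elliptic its fixed set $\fix(u)=\ms(u)$ is an affine subspace of $E$, and $v$ is a bijective isometry carrying $\fix(u)$ onto an affine subspace of the same dimension; a $d$-dimensional affine subspace contained in a $d$-dimensional affine subspace must coincide with it, so in fact $v(\fix(u))=\fix(u)$.

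The one nontrivial input is that $W$ is crystallographic, so the image of $v$ under the quotient $W\onto W_0=\cox(X_n)$ — equivalently, its orthogonal linear part $A$ — has finite order, say $A^N=\id$. Writing $v(x)=Ax+b$ with $A$ orthogonal, the move-set is the affine space $\mov(v)=b+\mathrm{im}(A-I)$, whose direction $\mathrm{im}(A-I)$ equals $\ker(A-I)^{\perp}$ because $A$ is orthogonal. Its unique minimal vector $\lambda$ is therefore the orthogonal projection of $b$ onto $\ker(A-I)$. A direct computation then gives $v^{N}(x)=A^{N}x+\bigl(\sum_{i=0}^{N-1}A^{i}\bigr)b=x+N\lambda$, since $\sum_{i=0}^{N-1}A^{i}$ acts as multiplication by $N$ on $\ker(A-I)$ (where $A=I$) and as $0$ on $\ker(A-I)^{\perp}$ (where $(A-I)\sum_{i=0}^{N-1}A^{i}=A^{N}-I=0$ and $A-I$ is invertible). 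Hence $v^{N}$ is the pure translation $t_{N\lambda}$.

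Since $v$ preserves $\fix(u)$, so does the power $v^{N}=t_{N\lambda}$, which says precisely that the affine subspace $\fix(u)$ is carried to itself by translation through $N\lambda$. An affine subspace $p+U$ is invariant under a translation exactly when the translation vector lies in its direction space $U$; thus $N\lambda\in U$, and because $U$ is a linear subspace this forces $\lambda\in U$ as well, so $\fix(u)$ is invariant under $t_\lambda$. This contradicts the hypothesis and completes the contrapositive. I expect the crystallographic finiteness of the point group to be the main substantive ingredient, as it is what upgrades the hyperbolic element $v$ to an honest translation after passing to a power; the final passage from $N\lambda$-invariance to $\lambda$-invariance is immediate from the linearity of the direction space and is exactly where the stated hypothesis on $\fix(u)$ enters.
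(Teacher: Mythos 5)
Your proof is correct and follows essentially the same route as the paper's: both pass to a power of $v$ that is a pure translation by a multiple of $\lambda$ (the paper simply asserts this, citing that $v\in W$; you derive it from the finite order of the linear part) and then observe that commuting with $u$ forces $\fix(u)$ to be invariant under that translation, hence under $\lambda$ itself. Your write-up is merely more detailed, making explicit both the computation $v^N=t_{N\lambda}$ and the final reduction from $N\lambda$-invariance to $\lambda$-invariance, which the paper leaves implicit.
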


\begin{proof}
  Because $v$ lives in $W$, there is a power $v^m$ that is a pure
  translation with translation vector $m\lambda$.  If $u$ commutes
  with $v$ then $u$ commutes with $v^m$ but the fixed set of the
  conjugation of $u$ by $v^m$ is the translation of the fixed set of
  $u$ by $m \lambda$, contradiction.
\end{proof}

\begin{cor}[Trivial center]\label{cor:cox-center}
  Every irreducible euclidean Coxeter group has a trivial center.
\end{cor}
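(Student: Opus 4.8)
The plan is to take a nontrivial central element $z \in W$ and derive a contradiction by exhibiting a group element that fails to commute with $z$, all by invoking Lemma~\ref{lem:cox-center}. Every nontrivial euclidean isometry lying in $W$ is either elliptic, fixing a proper affine subspace, or hyperbolic, so I would split into these two cases. In each case the strategy is the same: produce a witness of the opposite type (elliptic versus hyperbolic) whose fix-set or translation direction is incompatible with $z$, so that the non-commutation conclusion of the lemma applies.

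First, suppose $z$ is hyperbolic, translating its min-set $\ms(z)$ by a nonzero vector $\lambda$. Because $W = \cox(\wt X_n)$ is irreducible, its root system spans the space $V$, so there is a reflection $r$ whose root $\alpha$ satisfies $\alpha \cdot \lambda \neq 0$; equivalently, the fixed hyperplane $\fix(r)$ is not invariant under translation by $\lambda$. Applying Lemma~\ref{lem:cox-center} with the elliptic $u = r$ and the hyperbolic $v = z$ shows that $r$ and $z$ do not commute, contradicting the centrality of $z$.

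Next, suppose instead that $z$ is a nontrivial elliptic isometry, so that $\fix(z)$ is a proper affine subspace whose direction $\dir(\fix(z))$ is a proper linear subspace of $V$. Since $W$ is crystallographic with a full-rank translation lattice, there is a pure translation $t = t_\mu \in W$ whose vector $\mu$ does not lie in $\dir(\fix(z))$, so that $\fix(z)$ is not invariant under $\mu$. As $t$ is hyperbolic with translation vector $\mu$, Lemma~\ref{lem:cox-center} with $u = z$ and $v = t$ yields that $z$ and $t$ do not commute, again contradicting centrality. Hence $W$ admits no nontrivial central element, which is exactly the claim.

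The only content beyond Lemma~\ref{lem:cox-center} lies in the two existence claims: a reflection whose hyperplane is not parallel to the hyperbolic translation direction, and a lattice translation whose vector escapes the elliptic fixed direction. Both follow immediately from the \emph{essentialness} of the irreducible euclidean action, namely that the roots (hence reflection hyperplanes) and the translation lattice each span $V$. For this reason I do not expect a genuine obstacle; the work is simply selecting the correct witness in each case, and the case split by isometry type is what makes the single lemma do double duty.
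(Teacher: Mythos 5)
Your proof is correct and follows exactly the paper's approach: the paper's proof is the one-line remark that Lemma~\ref{lem:cox-center} makes it easy to find a noncommuting elliptic for each hyperbolic and vice versa, and you have simply supplied the (correct) details of the two witness constructions, using that the roots and the translation lattice each span $V$.
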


\begin{proof}
  Using the criterion of Lemma~\ref{lem:cox-center}, it is easy to
  find a noncommuting hyperbolic for each elliptic in $W$ and a
  noncommuting elliptic for each hyperbolic in $W$.
\end{proof}

We note one quick consequence for Artin groups.

\begin{lem}[Powers of $w$]\label{lem:w-powers}
  For each irreducible euclidean Coxeter group and for each choice of
  Coxeter element $w$, the nontrivial powers of $w$ are not central in
  the Artin group $\gart$.
\end{lem}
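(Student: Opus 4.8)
The plan is to push the question down to the underlying Coxeter group, where the noncentrality of powers of a Coxeter element is already available. First I would record the relevant map: the dual presentations give a natural surjection $\pi\colon \gart \onto \cox(\wt X_n)$, the vertical arrow of Figure~\ref{fig:maps} obtained by imposing the relations $r^2 = 1$ on the reflection generators. Since $w$ is, by construction, the product of the standard Coxeter generators $S$, its image $\pi(w)$ is precisely the Coxeter element $w$ in $W = \cox(\wt X_n)$, and so $\pi(w^k) = w^k$ in $W$ for every $k$.

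Next I would observe that $w^k$ is a nonidentity element of $W$ for every $k \neq 0$. A Coxeter element of an irreducible euclidean Coxeter group is a hyperbolic isometry whose min-set is the Coxeter axis, so it translates that axis by a nonzero vector $\mu$; hence $w^k$ translates by $k\mu \neq 0$ and in particular is not the identity. By Corollary~\ref{cor:cox-center} the group $W$ is centerless, so the nontrivial element $w^k$ fails to be central: there is some $g \in W$ with $g\, w^k \neq w^k\, g$.

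Finally I would lift this witness. Using surjectivity of $\pi$, choose $\tilde{g} \in \gart$ with $\pi(\tilde{g}) = g$. Then applying $\pi$ to the commutator gives $\pi(\tilde{g}\, w^k\, \tilde{g}^{-1} w^{-k}) = g\, w^k\, g^{-1} w^{-k} \neq 1$ in $W$, so the element $\tilde{g}\, w^k\, \tilde{g}^{-1} w^{-k}$ is nontrivial in $\gart$. Thus $w^k$ does not commute with $\tilde{g}$ and is therefore not central in $\gart$, which is the assertion.

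There is essentially no hard step here; the argument is just the elementary fact that a surjection cannot carry a central element to a noncentral one. The only two points needing care are that the projection $\pi$ genuinely sends the dual Garside element to the Coxeter element (immediate from the definition of $w$ as the ordered product of $S$) and that this Coxeter element has infinite order (immediate from its description as a hyperbolic isometry whose min-set is one-dimensional). Both are already established in the earlier discussion of Coxeter intervals.
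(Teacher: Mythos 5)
Your proposal is correct and follows essentially the same route as the paper: project to the Coxeter group $\cox(\wt X_n)$, observe that $w^k$ maps to a nontrivial hyperbolic element, invoke Corollary~\ref{cor:cox-center} to find a noncommuting witness, and lift it through the surjection. The only difference is that you spell out the commutator bookkeeping and the infinite-order claim explicitly, which the paper leaves implicit.
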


\begin{proof}
  For each nonzero integer $m$, the element $w^m$ projects to a
  nontrivial hyperbolic element in $\ccox$. By
  Corollary~\ref{cor:cox-center} there is an element $u$ in $\ccox$
  that does not commute with $w^n$ and because the projection map
  $\gart \onto \ccox$ is onto, it has preimages that do not commute
  with $w^n$ in $\gart$.
\end{proof}

We also derive a second more substantial consequence.

\begin{lem}[$\ggar \leadsto \gfac$]\label{lem:g-to-f}
  For each irreducible euclidean Coxeter group and for each choice of
  Coxeter element $w$, the simples in $\ggar$ which commute with $w$
  are simples in $\gfac$.  As a consequence, the elements of $\ggar$
  which commute with $w$ are contained in the subgroup $\gfac$.
\end{lem}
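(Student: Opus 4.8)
The plan is to reduce to a statement about individual simples via Proposition~\ref{prop:nf} and then eliminate the single problematic family of simples. By that proposition an element of $\ggar$ commutes with $w$ exactly when its Garside normal form $w^n u_1 \cdots u_k$ is built from simples $u_i$ each commuting with $w$. The element $w$ is the maximum of $[1,w]^\cfac$, and $\gfac \into \ggar$ by Lemma~\ref{lem:d-in-a}, so $w^n$ already lies in the subgroup $\gfac$; hence once I show that every simple commuting with $w$ lies in $[1,w]^\cfac$ (the simples of $\gfac$), both assertions follow simultaneously. By Lemma~\ref{lem:int-rel} and Remark~\ref{rem:diag} the interval $[1,w]^\cfac$ is exactly the top, bottom, and factored portions of the coarse structure, so $[1,w]^\ccryst \setminus [1,w]^\cfac$ is precisely the middle row. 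Thus the entire content is to prove that no middle-row simple commutes with $w$.

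First I would pass to the crystallographic group through the surjection $\ggar \onto \ccryst$ (Figure~\ref{fig:ten-gps}): if a simple $u$ commutes with $w$ in $\ggar$, then its image $\bar u$ commutes with the hyperbolic Coxeter element $w$ in $\ccryst$. Suppose, for contradiction, that $u$ lies in the middle row; then $\bar u$ is a vertical elliptic isometry lying in $W = \cox(\wt X_n)$. The key geometric point is that the fix-set of a vertical elliptic isometry is not invariant under the vertical translation vector $\lambda$ of $w$: the move-set $\mov(\bar u)$ of an elliptic isometry is exactly the orthogonal complement of the direction space of $\fix(\bar u)$, and ``vertical'' says some element of $\mov(\bar u)$ has nonzero vertical component, which is precisely the statement that the axis direction is not tangent to $\fix(\bar u)$. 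With $\fix(\bar u)$ not $\lambda$-invariant, Lemma~\ref{lem:cox-center} shows that $\bar u$ and $w$ do not commute in $W$, and since $W$ is a subgroup of $\ccryst$ they do not commute there either --- contradicting the previous sentence. Hence $u$ avoids the middle row.

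It follows that every simple of $\ggar$ commuting with $w$ is a simple of $\gfac$, which is the first assertion. For the consequence, an arbitrary element commuting with $w$ has normal form $w^n u_1 \cdots u_k$ with each $u_i \in [1,w]^\cfac$ and $w^n \in \gfac$, so the whole product lies in the subgroup $\gfac$. I expect the main obstacle to be the middle paragraph, namely pinning down the correct invariant --- the direction space of the fix-set against the translation axis --- and confirming that a middle-row element meets the hypotheses of Lemma~\ref{lem:cox-center} exactly; the remaining steps are routine bookkeeping with the coarse structure and the normal-form machinery already established.
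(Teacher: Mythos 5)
Your proof is correct and follows essentially the same route as the paper: reduce to simples via Proposition~\ref{prop:nf}, identify the complement of $[1,w]^\cfac$ in $[1,w]^\ccryst$ with the middle row, and rule out middle-row simples with Lemma~\ref{lem:cox-center}. The only difference is cosmetic --- you run the key step as a contrapositive and spell out the fact that $\mov(\bar u)$ is the orthogonal complement of the direction space of $\fix(\bar u)$, which the paper leaves implicit when it asserts that a commuting elliptic has a vertically invariant fixed set.
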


\begin{proof}
  If $u$ is a simple in $\ggar$ which commutes with $w$ then the image
  of $u$ as a euclidean isometry commutes with the power $w^m$ whose
  image as a euclidean isometry in $\ccryst$ is a pure translation in
  the direction of the Coxeter axis.  When $u$ is elliptic, by
  Lemma~\ref{lem:cox-center} it has a vertically invariant fixed set,
  it does not belong to the middle row of the coarse structure, and
  thus $u \in [1,w]^\cfac$.  The extension from simples to elements
  follows from Proposition~\ref{prop:nf}.
\end{proof}

\begin{lem}[$\gfac \leadsto \Z^k$]\label{lem:f-to-z^k}
  For each irreducible euclidean Coxeter group and for each choice of
  Coxeter element $w$, the centralizer of $w$ in $\gfac$ is the group
  $\Z^k \cong \langle w_i \rangle$ where the $w_i$ are the special
  elements in the factors whose product is $w$.
\end{lem}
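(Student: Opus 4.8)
The plan is to exploit the direct product decomposition of $\gfac$ supplied by Proposition~\ref{prop:products} and reduce the centralizer computation to a single annular braid factor, where the Garside machinery of Propositions~\ref{prop:nf} and~\ref{prop:commuting-with-w} applies verbatim. First I would recall that Proposition~\ref{prop:products} identifies $\gfac$ with a genuine direct product $A_1 \times \cdots \times A_k$ of annular braid groups $A_i = \art(B_{n_i})$, and that under this identification $w$ corresponds to the tuple $(w_1,\ldots,w_k)$ whose $i$-th entry is the special (dual Garside) element of the factor $A_i$. In a direct product, two elements commute precisely when their components commute in each coordinate, so the centralizer splits as $Z_{\gfac}(w) = \prod_{i=1}^k Z_{A_i}(w_i)$. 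It therefore suffices to compute the centralizer of $w_i$ inside the single factor $A_i$.

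For one factor, I would use that $A_i$ is a Garside group with Garside element $w_i$, so that Proposition~\ref{prop:nf} applies: an element of $A_i$ commutes with $w_i$ if and only if its Garside normal form is built out of simples that commute with $w_i$. By Proposition~\ref{prop:commuting-with-w}, the only simples in $[1,w_i]^{A_i}$ commuting with $w_i$ are the two bounding elements $1$ and $w_i$. Since no proper simple (one strictly between $1$ and $w_i$) can appear in the normal form of a centralizing element, every such normal form collapses to a pure power $w_i^n$. Hence $Z_{A_i}(w_i) = \langle w_i\rangle$, and this subgroup is infinite cyclic because $A_i$ is torsion-free by Theorem~\ref{thm:consequences} and $w_i \neq 1$. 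Reassembling the factors yields $Z_{\gfac}(w) = \langle w_1,\ldots,w_k\rangle \cong \Z^k$, the isomorphism holding because the $w_i$ lie in distinct direct factors and so generate a free abelian group of rank $k$.

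The only genuinely delicate point is the reduction in the second paragraph: one must check that ``built out of simples commuting with $w_i$'', combined with the classification of those simples as exactly $\{1,w_i\}$, really forces the normal form to reduce to $w_i^n$ rather than leaving room for nontrivial proper simples. This is where Proposition~\ref{prop:commuting-with-w} does the essential work, and it is precisely the reason the type~$B$ noncrossing partition analysis of Section~\ref{sec:special-noncross} was carried out. Everything else is the formal behaviour of centralizers in a direct product, which is routine.
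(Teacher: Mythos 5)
Your proof is correct and follows essentially the same route as the paper: both arguments rest on the direct product decomposition of $\gfac$ from Proposition~\ref{prop:products}, the classification of simples commuting with the special element from Proposition~\ref{prop:commuting-with-w}, and the normal-form criterion of Proposition~\ref{prop:nf}. The only cosmetic difference is that you split the centralizer across the factors first and then apply the Garside machinery in each annular braid factor separately, whereas the paper applies Proposition~\ref{prop:nf} once to $\gfac$ with Garside element $w$ after observing that its simples are products of simples commuting with the $w_i$; the two bookkeeping orders are interchangeable.
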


\begin{proof}
  By Proposition~\ref{prop:products}, the group $\gfac$ and the
  interval $[1,w]^{\gfac}$ split as direct products.  Thus the simples
  that commute with $w$ are products of the simples in each factor
  that commute with the factor $w_i$.  Since by
  Proposition~\ref{prop:commuting-with-w} such a simple in each factor
  must be $1$ or $w_i$, there are exactly $2^k$ simples that commute
  with $w$.  And since the $w_i$ commute with each other in $\gfac$
  they generate a subgroup isomorphic to $\Z^k \cong \langle w_i
  \rangle$, with one $\Z$ from each factor.  All of these commute with
  $w$ and by Proposition~\ref{prop:nf} these are the only elements
  that commute with $w$.
\end{proof}

\begin{lem}[$\Z^k \leadsto \Z$]\label{lem:z^k-to-z}
  For each irreducible euclidean Coxeter group and for each choice of
  Coxeter element $w$, the intersection of $\gdiag$ and the group
  $\Z^k$ (generated by the $w_i$ factors of $w$) is an infinite cyclic
  subgroup generated by $w$.  In symbols $\gdiag \cap \langle w_i
  \rangle \cong \langle w \rangle$.
\end{lem}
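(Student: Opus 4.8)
The plan is to build $k$ homomorphisms $\phi_1,\dots,\phi_k\colon \gfac \to \Z$ that detect how many copies of each special factor $w_i$ occur, and then to show that these maps all agree once restricted to $\gdiag$. By Proposition~\ref{prop:products} the factorable interval group splits as a direct product $\gfac \cong \art(B_{n_1}) \times \cdots \times \art(B_{n_k})$ in which $w_i$ is the special element of the $i$-th factor, and by Lemma~\ref{lem:f-to-z^k} the subgroup $\langle w_i \rangle$ is precisely the internal $\Z^k$ spanned by these commuting factors, with $w = \prod_i w_i$. I would let $\phi_i$ be the composite of the projection $\gfac \to \art(B_{n_i})$ with the global winding number map $\art(B_{n_i}) \onto \Z$ of Definition~\ref{def:middle-maps}. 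Since the special element of an annular braid group has winding number $1$ and distinct factors are independent, $\phi_i(w_j) = \delta_{ij}$, so the combined map $\Phi = (\phi_1,\dots,\phi_k)\colon \gfac \to \Z^k$ restricts to the identity on $\langle w_i \rangle \cong \Z^k$.

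Next I would restrict attention to $\gdiag$, which injects into $\gfac$ by Lemma~\ref{lem:d-in-f} and is generated by $R_H \cup \{w\}$ by Remark~\ref{rem:diag}. On this generating set all the $\phi_i$ coincide. Indeed, each horizontal reflection $r \in R_H^{(i)}$ lies in the kernel $\art(\wt A_{n_i-1})$ of the $i$-th winding number map and is trivial in every other factor, so $\phi_j(r) = 0$ for all $j$; and because $w = \prod_i w_i$ we get $\phi_j(w) = \sum_i \phi_j(w_i) = 1$ for all $j$. Hence each difference $\phi_i - \phi_j$ vanishes on a generating set of $\gdiag$ and therefore vanishes on all of $\gdiag$, which says exactly that $\Phi(\gdiag)$ is contained in the diagonal $\{(m,\dots,m) \mid m \in \Z\} \subset \Z^k$.

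Finally I would combine the two observations. If $x \in \gdiag \cap \langle w_i \rangle$, write $x = \prod_i w_i^{a_i}$ using the identification $\langle w_i \rangle \cong \Z^k$; then $\Phi(x) = (a_1,\dots,a_k)$ lies in the diagonal, forcing all $a_i$ to equal a common integer $m$, so that $x = (\prod_i w_i)^m = w^m$ since the $w_i$ commute in $\gfac$. Conversely $w = \prod_i w_i$ lies in both $\gdiag$ and $\langle w_i \rangle$, and $\phi_1(w^m) = m$ shows that $w$ has infinite order. Therefore $\gdiag \cap \langle w_i \rangle = \langle w \rangle \cong \Z$.

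The only real content is the verification in the second paragraph, namely that every horizontal reflection has winding number zero in each factor while the special element $w$ has winding number one in each factor; both facts are immediate from the product decomposition of Proposition~\ref{prop:products} and the description of the winding number map in Definition~\ref{def:middle-maps}. Once these generator values are in hand, the rest is a formal computation inside $\Z^k$, so I do not anticipate any genuine obstacle beyond correctly matching up the factors of $\gfac$ with the components of the horizontal root system.
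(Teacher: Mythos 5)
Your proposal is correct and follows essentially the same route as the paper: both construct the map $\gfac \to \Z^k$ from the global winding number maps of the factors, observe that it restricts to an isomorphism on $\langle w_i \rangle$, and show that the image of $\gdiag$ is the diagonal subgroup by evaluating on a generating set (horizontal reflections go to $0$, the diagonal element to $(1,\ldots,1)$). The only cosmetic difference is that the paper computes the generator values by passing to vertical displacement maps on $\cdiag$, whereas you evaluate the winding numbers directly on $R_H \cup \{w\}$; the content is the same.
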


\begin{proof}
  Combining the global winding number maps for each factor
  (Definition~\ref{def:middle-maps}) produces a map $\gfac \to \Z^k$
  which sends $w_i$ to $e_i$, the $i$-th unit vector in $\Z^k$ which
  restricts to an isomorphism on the subgroup $\langle w_i \rangle$ in
  $\gfac$.  The image of $\gdiag$ under the composition $\gdiag \into
  \gfac \to \Z^k$ is the set of $k$-tuples with all coordinates equal.
  To see this we view an element of $\gdiag$ as a product of simples
  thought of as elements of $\cdiag$ rather than $\gdiag$.  The
  relevant maps are now vertical displacement maps rather than global
  winding number maps.  From this perspective it is clear that the
  horizontal reflections are sent to the zero vector under this
  composition and that every diagonal translation $t$ is sent to the
  vector with all coordinates equal to $1$.  Thus the only elements in
  the intersection are those with the same number of $w_i$'s for each
  $i$.  Using the fact that they commute with each other, we can thus
  rewrite this expression as a power of $w = \prod_i^k w_i$.
\end{proof}

And finally we put all the pieces together.

\begin{prop}[Center]\label{prop:center}
  An irreducible euclidean Artin group has a trivial center.
\end{prop}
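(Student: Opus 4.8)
The plan is to chase a central element $z$ of the Artin group $\gart$ through the chain of subgroups assembled in the preceding lemmas until it is forced to be a power of $w$, and then to invoke Lemma~\ref{lem:w-powers} to conclude that it is trivial. Recall that by Theorem~\ref{main:isomorphic} the Artin group $\art(\wt X_n)$ is identified with the dual Artin group $\gart$, so it suffices to show that the center of $\gart$ is trivial.

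First I would observe that since $z$ is central in $\gart$ it commutes with the Coxeter element $w$, and because $\gart$ injects into the Garside group $\ggar$ (Theorem~\ref{thm:amalgamated}), $z$ commutes with $w$ inside $\ggar$ as well. Lemma~\ref{lem:g-to-f} then places $z$ inside the factorable subgroup $\gfac$, and Lemma~\ref{lem:f-to-z^k} identifies the centralizer of $w$ there as the free abelian group $\langle w_i \rangle \cong \Z^k$ generated by the special elements of the factors. Thus $z$ lies simultaneously in $\gart$ and in $\langle w_i \rangle \subset \gfac$.

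Next I would exploit the amalgamated free product structure $\ggar = \gart *_{\gdiag} \gfac$ provided by Theorem~\ref{thm:amalgamated}. In any amalgamated free product $A *_C B$ the intersection of the two factors inside the product is exactly the amalgamated subgroup $C$. Since $z$ belongs to both $\gart$ and $\gfac$, this standard fact gives $z \in \gdiag$. Combining $z \in \gdiag$ with $z \in \langle w_i \rangle$ and Lemma~\ref{lem:z^k-to-z}, which computes $\gdiag \cap \langle w_i \rangle = \langle w \rangle$, we conclude that $z = w^m$ for some integer $m$.

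Finally, Lemma~\ref{lem:w-powers} asserts that no nontrivial power of $w$ is central in $\gart$. Since $z$ was assumed central, we must have $m = 0$, so $z = 1$ and the center is trivial. I expect the only real content to be the careful bookkeeping of the successive containments; the single step that needs genuine attention is the appeal to the amalgam, where one must be sure that the copies of $\gart$ and $\gfac$ meeting in $z$ are precisely the canonical factors of the amalgamated free product, so that their intersection really is $\gdiag$ and not something larger.
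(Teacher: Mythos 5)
Your argument is correct and follows exactly the same chain as the paper's proof: centralize $w$, apply Lemma~\ref{lem:g-to-f} to land in $\gfac$, Lemma~\ref{lem:f-to-z^k} to land in $\langle w_i\rangle$, the amalgam from Theorem~\ref{thm:amalgamated} to land in $\gdiag$, Lemma~\ref{lem:z^k-to-z} to get a power of $w$, and Lemma~\ref{lem:w-powers} to finish. The one point you flag for attention --- that the factors meeting in $z$ are the canonical ones so their intersection is exactly $\gdiag$ --- is precisely the step the paper also relies on.
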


\begin{proof}
  Let $\gart$ be a dual euclidean Artin group with special element
  $w$.  If $u$ is central in $\gart$ then $u$ commutes with $w$ and by
  Proposition~\ref{prop:nf}, $u$, viewed as an element of $\ggar$, has
  a Garside normal form built out of simples that commute with $w$.
  By Lemma~\ref{lem:g-to-f} the only such simples are simples in
  $\gfac$, so $u \in \gfac$ and by Lemma~\ref{lem:f-to-z^k} the
  element $u$ in fact belongs to the subgroup $\Z^k \cong \langle w_i
  \rangle$ generated by the special factors $w_i$ of $w$.  This means
  that $u$ is in $\gfac \cap \gart$ and thus in $\gdiag$ by the
  amalgamated free product structure of $\ggar$
  (Theorem~\ref{thm:amalgamated}).  But by Lemma~\ref{lem:z^k-to-z}
  the only portion of the $\Z^k \cong \langle w_i \rangle$ contained
  in $\gdiag$ is the subgroup $\Z \cong \langle w \rangle$.  In
  particular, $u = w^n$ for some $n$.  And finally, by
  Lemma~\ref{lem:w-powers} the nontrivial powers of $w$ are not
  central in $\gart$, so the center of $\gart$ is trivial.
\end{proof}

These combine to give our main result.

\setcounter{main}{\mainartin}
\begin{main}[Euclidean Artin groups]
  Every irreducible euclidean Artin group $\art(\wt X_n)$ is a
  torsion-free centerless group with a solvable word problem and a
  finite-dimensional classifying space.
\end{main}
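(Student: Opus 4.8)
The plan is to assemble the final theorem as a straightforward consequence of the structural results already established, since by this point almost all the work is done. The four claimed properties—torsion-freeness, triviality of the center, solvability of the word problem, and the existence of a finite-dimensional classifying space—each follow from one of the earlier statements, so the proof amounts to invoking them in the right order and connecting the dots.

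First I would recall that by Theorem~\ref{main:isomorphic} the Artin group $\art(\wt X_n)$ is isomorphic to the dual Artin group $\dart(\wt X_n,w)$, and by Theorem~\ref{main:subgroup} (equivalently Theorem~\ref{thm:amalgamated}) this dual Artin group injects into the crystallographic Garside group $\gar(\wt X_n,w)$. Thus $\art(\wt X_n)$ is isomorphic to a subgroup of a group carrying a Garside structure in the expanded sense of Digne. The properties of torsion-freeness and solvability of the word problem are inherited by subgroups: $\gar(\wt X_n,w)$ is torsion-free and has a decidable word problem by Theorem~\ref{thm:consequences}, and both properties pass to the subgroup $\art(\wt X_n)$. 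The finite-dimensional classifying space is exactly the content of Proposition~\ref{prop:classifying-space}, which constructs it as a cover of the finite-dimensional classifying space for the Garside supergroup guaranteed by Theorem~\ref{thm:consequences}.

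The remaining property, triviality of the center, is precisely Proposition~\ref{prop:center}, which has already been proved by the chain of lemmas tracking which elements commute with the Garside element $w$: Lemma~\ref{lem:g-to-f} confines such elements to $\gfac$, Lemma~\ref{lem:f-to-z^k} confines them further to the abelian subgroup $\Z^k$ generated by the special factors $w_i$, Lemma~\ref{lem:z^k-to-z} intersects this with $\gdiag$ to leave only powers of $w$, and Lemma~\ref{lem:w-powers} rules out nontrivial powers of $w$ as central elements. Since $\art(\wt X_n)\cong \dart(\wt X_n,w)$, the center of the Artin group is trivial.

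I do not anticipate any genuine obstacle here, as all the substantive arguments have been discharged earlier; the one point requiring a word of care is simply noting that each desired property—torsion-freeness and decidability of the word problem—is one that descends from a group to its subgroups, so that the injection $\art(\wt X_n)\into\gar(\wt X_n,w)$ suffices to transfer them. The proof itself is therefore just a short paragraph citing Theorem~\ref{thm:consequences}, Proposition~\ref{prop:classifying-space}, and Proposition~\ref{prop:center}.
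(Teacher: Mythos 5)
Your proposal is correct and follows essentially the same route as the paper: both assemble the theorem from the injection $\art(\wt X_n)\cong\dart(\wt X_n,w)\into\gar(\wt X_n,w)$, Theorem~\ref{thm:consequences}, Proposition~\ref{prop:classifying-space}, and Proposition~\ref{prop:center}. The only cosmetic difference is that the paper deduces torsion-freeness from the finite-dimensional classifying space rather than from its inheritance by subgroups of the torsion-free Garside group; both arguments are valid.
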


\begin{proof}
  Because $\art(\wt X_n)$ is isomorphic to $\gart$ which is a subgroup
  of a Garside group $\ggar = \gar(\wt X_n,w)$, the standard solution
  to the word-problem in $\ggar$ gives a solution to the word problem
  in $\gart$ and by Proposition~\ref{prop:classifying-space} it has a
  finite dimensional classifying space.  Groups with
  finite-dimensional classifying spaces are torsion-free and by
  Proposition~\ref{prop:center} its center is trivial.
\end{proof}

The fourth question of Godelle and Paris, the $K(\pi,1)$ conjecture,
would have a positive resolution if one could establish the following.

\begin{conj}[Homotopy equivalence]
  The classifying space for each irreducible Artin group of euclidean
  type constructed here, should be homotopy equivalent to the standard
  topological space with this fundamental group constructed from the
  action of the corresponding Coxeter group on its complexified
  hyperplane complement.
\end{conj}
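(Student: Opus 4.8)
The plan is to prove the conjecture by producing an explicit homotopy equivalence between the two spaces it compares and then transporting asphericity across it. Write $Y$ for the standard topological model in the statement, so that $\pi_1(Y) \cong A = \art(\wt X_n)$ and the universal cover of $Y$ is the Salvetti complex, and write $B$ for the finite-dimensional classifying space produced in Proposition~\ref{prop:classifying-space}: namely the cover corresponding to the subgroup $A \cong \gart$ of the Charney--Meier--Whittlesey classifying space of the Garside group $\ggar$ \cite{ChMeWh04}. Both spaces have fundamental group $A$, and $B$ is aspherical by Theorem~\ref{thm:consequences}. It therefore suffices to construct a map $Y \to B$ inducing the identity on $\pi_1$ and to prove that it is a homotopy equivalence; equivalently, it suffices to show that $Y$ is itself aspherical, i.e. that the Salvetti complex is contractible, which is exactly the $K(\pi,1)$ conjecture in this case.

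First I would set up the comparison geometrically using the axial flats. The axial flat attached to $w$ is a simply connected subcomplex of the Salvetti complex, a metric copy of $\R^n$ whose oriented labelled edges realize the dual reflections of Definition~\ref{def:dual-reflections} as genuine elements of $A$. Because the flats assembled around the column containing the Coxeter axis already display, inside the Salvetti complex, every minimal factorization recorded in the interval $[1,w]^W$, they provide a natural map from the order complex of $[1,w]^W$ (the combinatorial model underlying the dual presentation) into $Y$. On the other side, the cells of $B$ are indexed by chains in the larger crystallographic lattice $[1,w]^C$, and the amalgamated free product decomposition $\ggar = \gart *_{\gdiag} \gfac$ of Theorem~\ref{thm:amalgamated} pins down the part of $B$ attached to the factor $\gart \cong A$. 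The second step is to upgrade the flat-assembly map into an actual cellular map between this $A$-part of $B$ and $Y$ that is the identity on $\pi_1$.

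The decisive step is to prove that this comparison is a homotopy equivalence, and here I would run discrete Morse theory on $Y$, or on its universal cover. The generic direction used to orient the standard flats, perturbed toward the Coxeter axis exactly as in the definition of the axial flats, supplies a Morse function. The goal is to exhibit an acyclic matching whose critical cells are precisely the cells coming from the interval model, so that $Y$ collapses onto the image of the flat-assembly map. Once such a matching is in hand, the collapse is a homotopy equivalence, and combined with the asphericity of $B$ it proves simultaneously the asserted homotopy equivalence and the asphericity of $Y$.

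The hard part is this final matching in the infinite, genuinely euclidean setting. In the spherical case the interval is the whole group, the arrangement is finite and central, and the analogous collapse is classical; here two new difficulties appear. First, the Garside structure lives on the strictly larger group $\ggar \supsetneq A$, and the extra cells of $B$ contributed by the factorable part $\gfac$ and the factored translations $T_F$ have no counterpart as reflections or hyperplanes in $W$; the matching must therefore cancel exactly this factorable contribution, and I expect one must feed in both the amalgam structure of Theorem~\ref{thm:amalgamated} and the centralizer computations of Lemmas~\ref{lem:g-to-f}--\ref{lem:z^k-to-z} to certify that the $A$-part of $B$ is isolated cleanly. Second, the arrangement is infinite and only periodic under the translation lattice, so the chosen matching must be locally finite and descend $A$-equivariantly to $Y$. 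Verifying that an acyclic matching with the predicted critical cells actually exists --- as opposed to merely writing down a $\pi_1$-isomorphism, which the construction gives for free --- is the genuine obstacle, and it is precisely this verification that keeps the statement a conjecture rather than a theorem here.
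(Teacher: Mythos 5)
This statement is stated in the paper as an open conjecture; the authors give no proof, and in fact they introduce it precisely because establishing it would resolve the $K(\pi,1)$ conjecture for euclidean Artin groups, which they explicitly leave open. Your proposal does not close that gap: it is a research program rather than a proof. The reduction you make at the outset is correct --- since the space $B$ from Proposition~\ref{prop:classifying-space} is aspherical with fundamental group $A$, the asserted homotopy equivalence holds if and only if the standard model $Y$ (whose universal cover is the Salvetti complex) is itself aspherical --- but this observation only restates the conjecture as the $K(\pi,1)$ conjecture; it does not make progress on it. Everything after that point is a plan whose decisive step, the construction of an acyclic matching collapsing $Y$ (or its universal cover) onto the image of the interval model, is never carried out, and you say so yourself in the final paragraph.

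Concretely, the step that would fail under scrutiny is the assertion that the axial flats ``already display, inside the Salvetti complex, every minimal factorization recorded in the interval $[1,w]^W$'' in a way that yields a cellular map from the order complex of $[1,w]^W$ into $Y$ inducing the right thing on higher homotopy. The paper only uses the axial flats to define images of individual dual generators and to verify the dual braid \emph{relations} (Lemma~\ref{lem:consistent}, Proposition~\ref{prop:onto-art}); this is a statement about $\pi_1$ and nothing more. Promoting it to a statement about the full homotopy type of $Y$ requires exactly the kind of equivariant, locally finite collapse you describe, and no mechanism in the paper (the amalgam of Theorem~\ref{thm:amalgamated}, the centralizer computations, or the lattice property of $[1,w]^C$) supplies one. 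So the honest summary is: your proposal correctly identifies what would need to be proved and why it is hard, but it proves nothing beyond the trivial equivalence with asphericity of the Salvetti complex, and the statement remains a conjecture.
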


And finally, there is another obvious question to ask at this point,
although we suspect that it may have a negative answer.

\begin{quest}
  Is there a natural way to extend the definitions of $\cryst(\wt
  X_n,w)$ and $\gar(\wt X_n,w)$ to other infinite Coxeter groups so
  that they retain their key properties?  In particular, is every
  Artin group isomorphic to a subgroup of a suitably-defined Garside
  group?
\end{quest}

\def\cprime{$'$}
\providecommand{\bysame}{\leavevmode\hbox to3em{\hrulefill}\thinspace}
\providecommand{\MR}{\relax\ifhmode\unskip\space\fi MR }
\providecommand{\MRhref}[2]{%
  \href{http://www.ams.org/mathscinet-getitem?mr=#1}{#2}
}
\providecommand{\href}[2]{#2}


\begin{thebibliography}{BDSW14}

\bibitem[All02]{Al02}
Daniel Allcock, \emph{Braid pictures for {A}rtin groups}, Trans. Amer. Math.
  Soc. \textbf{354} (2002), no.~9, 3455--3474 (electronic). \MR{1911508
  (2003f:20053)}

\bibitem[BDSW14]{BDSW-hurwitz}
Barbara Baumeister, Matthew Dyer, Christian Stump, and Patrick Wegener, \emph{A
  note on the transitive {H}urwitz action on decompositions of parabolic
  {C}oxeter elements}, Proc. Amer. Math. Soc. Ser. B \textbf{1} (2014),
  149--154. \MR{3294251}

\bibitem[Bes03]{Be03}
David Bessis, \emph{The dual braid monoid}, Ann. Sci. \'Ecole Norm. Sup. (4)
  \textbf{36} (2003), no.~5, 647--683. \MR{MR2032983 (2004m:20071)}

\bibitem[Bir74]{Bi74}
Joan~S. Birman, \emph{Braids, links, and mapping class groups}, Princeton
  University Press, Princeton, N.J., 1974, Annals of Mathematics Studies, No.
  82. \MR{0375281 (51 \#11477)}

\bibitem[BM00]{BrMc00}
Thomas Brady and Jonathan~P. McCammond, \emph{Three-generator {A}rtin groups of
  large type are biautomatic}, J. Pure Appl. Algebra \textbf{151} (2000),
  no.~1, 1--9. \MR{2001f:20076}

\bibitem[BM10]{BrMc10}
Tom Brady and Jon McCammond, \emph{Braids, posets and orthoschemes}, Algebr.
  Geom. Topol. \textbf{10} (2010), no.~4, 2277--2314.

\bibitem[BM15]{BrMc-factor}
Noel Brady and Jon McCammond, \emph{Factoring {E}uclidean isometries},
  Internat. J. Algebra Comput. \textbf{25} (2015), no.~1-2, 325--347.
  \MR{3325886}

\bibitem[BS72]{BrSa72}
Egbert Brieskorn and Kyoji Saito, \emph{Artin-{G}ruppen und
  {C}oxeter-{G}ruppen}, Invent. Math. \textbf{17} (1972), 245--271. \MR{48
  \#2263}

\bibitem[BW02]{BrWa02a}
Thomas Brady and Colum Watt, \emph{{$K(\pi,1)$}'s for {A}rtin groups of finite
  type}, Proceedings of the Conference on Geometric and Combinatorial Group
  Theory, Part I (Haifa, 2000), vol.~94, 2002, pp.~225--250. \MR{1 950 880}

\bibitem[CMW04]{ChMeWh04}
R.~Charney, J.~Meier, and K.~Whittlesey, \emph{Bestvina's normal form complex
  and the homology of {G}arside groups}, Geom. Dedicata \textbf{105} (2004),
  171--188. \MR{MR2057250 (2005e:20057)}

\bibitem[CP03]{ChPe03}
Ruth Charney and David Peifer, \emph{The {$K(\pi,1)$}-conjecture for the affine
  braid groups}, Comment. Math. Helv. \textbf{78} (2003), no.~3, 584--600.

\bibitem[DDM13]{DehornoyDigneMichel13}
Patrick Dehornoy, Fran{\c{c}}ois Digne, and Jean Michel, \emph{Garside families
  and {G}arside germs}, J. Algebra \textbf{380} (2013), 109--145. \MR{3023229}

\bibitem[Deh15]{DDGKM-garside}
Patrick Dehornoy, \emph{Foundations of {G}arside theory}, EMS Tracts in
  Mathematics, vol.~22, European Mathematical Society (EMS), Z\"urich, 2015,
  With Fran\c{c}ois Digne, Eddy Godelle, Daan Krammer and Jean Michel,
  Contributor name on title page: Daan Kramer. \MR{3362691}

\bibitem[Del72]{De72}
Pierre Deligne, \emph{Les immeubles des groupes de tresses g\'en\'eralis\'es},
  Invent. Math. \textbf{17} (1972), 273--302. \MR{0422673 (54 \#10659)}

\bibitem[Dig06]{Digne06}
F.~Digne, \emph{Pr\'esentations duales des groupes de tresses de type affine
  {$\widetilde A$}}, Comment. Math. Helv. \textbf{81} (2006), no.~1, 23--47.

\bibitem[Dig12]{Digne12}
\bysame, \emph{A {G}arside presentation for {A}rtin-{T}its groups of type
  {$\widetilde C_n$}}, Ann. Inst. Fourier (Grenoble) \textbf{62} (2012), no.~2,
  641--666. \MR{2985512}

\bibitem[DP99]{DePa99}
Patrick Dehornoy and Luis Paris, \emph{Gaussian groups and {G}arside groups,
  two generalisations of {A}rtin groups}, Proc. London Math. Soc. (3)
  \textbf{79} (1999), no.~3, 569--604. \MR{2001f:20061}

\bibitem[DP02]{DaPr02}
B.~A. Davey and H.~A. Priestley, \emph{Introduction to lattices and order},
  second ed., Cambridge University Press, New York, 2002. \MR{MR1902334
  (2003e:06001)}

\bibitem[GP12]{GoPa-basic}
Eddy Godelle and Luis Paris, \emph{Basic questions on {A}rtin-{T}its groups},
  Configuration spaces, CRM Series, vol.~14, Ed. Norm., Pisa, 2012,
  pp.~299--311. \MR{3203644}

\bibitem[Hum90]{Humphreys90}
James~E. Humphreys, \emph{Reflection groups and {C}oxeter groups}, Cambridge
  Studies in Advanced Mathematics, vol.~29, Cambridge University Press,
  Cambridge, 1990. \MR{1066460 (92h:20002)}

\bibitem[IS10]{IgusaSchiffler10}
Kiyoshi Igusa and Ralf Schiffler, \emph{Exceptional sequences and clusters}, J.
  Algebra \textbf{323} (2010), no.~8, 2183--2202. \MR{2596373 (2011b:20118)}

\bibitem[KP02]{KePe02}
Richard~P. Kent, IV and David Peifer, \emph{A geometric and algebraic
  description of annular braid groups}, Internat. J. Algebra Comput.
  \textbf{12} (2002), no.~1-2, 85--97, International Conference on Geometric
  and Combinatorial Methods in Group Theory and Semigroup Theory (Lincoln, NE,
  2000). \MR{1902362 (2003f:20056)}

\bibitem[McCa]{McCammond-cont-braids}
Jon McCammond, \emph{Pulling apart orthogonal groups to find continuous
  braids}, Preprint 2010.

\bibitem[McCb]{Mc-artin-survey}
\bysame, \emph{The structure of euclidean {A}rtin groups}, To appear in the
  Proceedings of the 2013 Durham conference on Geometric and Cohomological
  group theory {\tt arXiv:1312.7781}.

\bibitem[McC15]{Mc-lattice}
\bysame, \emph{Dual euclidean {A}rtin groups and the failure of the lattice
  property}, J. Algebra \textbf{437} (2015), 308--343. \MR{3351966}

\bibitem[OR07]{OrRa07}
Rosa Orellana and Arun Ram, \emph{Affine braids, {M}arkov traces and the
  category {$\mathcal O$}}, Algebraic groups and homogeneous spaces, Tata Inst.
  Fund. Res. Stud. Math., Tata Inst. Fund. Res., Mumbai, 2007, pp.~423--473.
  \MR{2348913 (2008m:17034)}

\bibitem[Rei97]{Re97}
Victor Reiner, \emph{Non-crossing partitions for classical reflection groups},
  Discrete Math. \textbf{177} (1997), no.~1-3, 195--222. \MR{99f:06005}

\bibitem[Sal87]{Sa87}
M.~Salvetti, \emph{Topology of the complement of real hyperplanes in {${\bf
  C}^N$}}, Invent. Math. \textbf{88} (1987), no.~3, 603--618. \MR{884802}

\bibitem[Sal94]{Sa94}
Mario Salvetti, \emph{The homotopy type of {A}rtin groups}, Math. Res. Lett.
  \textbf{1} (1994), no.~5, 565--577. \MR{1295551}

\bibitem[Squ87]{Squier87}
Craig~C. Squier, \emph{On certain {$3$}-generator {A}rtin groups}, Trans. Amer.
  Math. Soc. \textbf{302} (1987), no.~1, 117--124. \MR{887500 (88g:20069)}

\bibitem[Sta97]{EC1}
Richard~P. Stanley, \emph{Enumerative combinatorics. {V}ol. 1}, Cambridge
  Studies in Advanced Mathematics, vol.~49, Cambridge University Press,
  Cambridge, 1997, With a foreword by Gian-Carlo Rota, Corrected reprint of the
  1986 original. \MR{98a:05001}

\bibitem[Sul10]{Sulway-diss}
Robert Sulway, \emph{Braided versions of crystallographic groups}, Ph.D.
  thesis, University of California, Santa Barbara, 2010.

\bibitem[tD98]{tD98}
Tammo tom Dieck, \emph{Categories of rooted cylinder ribbons and their
  representations}, J. Reine Angew. Math. \textbf{494} (1998), 35--63,
  Dedicated to Martin Kneser on the occasion of his 70th birthday. \MR{1604452
  (99h:18010)}

\bibitem[vdL83]{vdLek83}
Harm van~der Lek, \emph{Extended {A}rtin groups}, Singularities, Part 2
  (Arcata, Calif., 1981), Proc. Sympos. Pure Math., vol.~40, Amer. Math. Soc.,
  Providence, RI, 1983, pp.~117--121. \MR{85b:14005}

\end{thebibliography}
\end{document}